\def\reg{{\rm reg}}
\def\per{\mbox{-}{\rm per}}
\def\ae{{\rm a.e. }}
\renewcommand{\b}{\beta}
\def\e{\varepsilon}
\def\vpot{\mathcal V_{\rm pot}^2}
\def\lpot{L_{\rm pot}^2(\Omega)}
\def\vsol{\mathcal V_{\rm sol}^2}
\def\lsol{L_{\rm sol}^2(\Omega)}
\newcommand{\R}{\mathbf{R}}
\def\Q{\mathbf{Q}}
\def\C{\mathcal{C}}
\def\AA{\mathcal{A}}
\def\DD{\mathcal{D}}
\def\OO{\mathcal{O}}
\def\o{\omega}
\def\l{\lambda}
\def\hom{{\rm hom}}
\def\dom{{\rm dom}}
\def\pot{{\rm pot}}
\newcommand{\Y}{\mathcal Y}
\renewcommand{\O}{\Omega}
\DeclareMathOperator{\Sp}{Sp}
\newcommand\B{B}
\newcommand\N{\mathbf{N}}
\newcommand\Z{\mathbf{Z}}
\newcommand\supp{\mathop{\operatorname{supp}}}
\newcommand\dist{\operatorname{dist}}
\DeclareMathOperator{\diam}{diam}
\newcommand\loc{{\rm loc}}
\def\b{\beta}
\def\XXint#1#2#3{{\setbox0=\hbox{$#1{#2#3}{\int}$}
     \vcenter{\hbox{$#2#3$}}\kern-.5\wd0}}
\newcounter{bei}
\newtheorem{theorem}{Theorem}[section]
\newtheorem{lemma}[theorem]{Lemma}
\newtheorem{proposition}[theorem]{Proposition}
\newtheorem{corollary}[theorem]{Corollary}
\newtheorem{assumption}[theorem]{Assumption}
\newtheorem{definition}[theorem]{Definition}
\newtheorem{remark}[theorem]{Remark}
\theoremstyle{remark}
\newcommand{\randomspace}{\Omega}
\newcommand{\pspace}{S}
\newcommand{\randomelement}{\omega}
\newcommand{\randommeasure}{P}
\newcommand{\td}{\mathrm{d}}
\newcommand{\drandommeasure}{\mathrm{d}\randommeasure(\randomelement)}
\newcommand{\wtwoscale}{\xrightharpoonup{2}}
\newcommand{\stwoscale}{\xrightarrow{2}}
\newcommand{\weakly}{\rightharpoonup}
\newcommand{\eps}{\varepsilon}
\DeclarePairedDelimiter\norm{\lVert}{\rVert}
\DeclarePairedDelimiter\set{\{}{\}}
\newcommand\restrict[2]{{
  \left.\kern-\nulldelimiterspace 
  #1 
  \vphantom{\big|} 
  \right|_{#2} 
  }}
\begin{document}
	
\title{{\sc High-contrast random composites: 
	homogenisation framework and new spectral phenomena}}


\author[1]{Mikhail Cherdantsev}
\author[2]{Kirill Cherednichenko}
\author[3]{Igor Vel\v{c}i\'{c}\,}
\affil[1]{School of Mathematics, Cardiff University, Senghennydd Road, Cardiff, CF24 4AG, United Kingdom, CherdantsevM@cardiff.ac.uk, ORCID 0000-0002-5175-5767}
\affil[2]{Department of Mathematical Sciences, University of Bath, Claverton Down, Bath, BA2 7AY, United Kingdom, k.cherednichenko@bath.ac.uk, ORCID 0000-0002-0998-7820}
\affil[3]{Faculty of Electrical Engineering and Computing, University of Zagreb, Unska 3, 10000 Zagreb, Croatia, Igor.Velcic@fer.hr, ORCID 0000-0003-2494-2230}
\date{}
\maketitle

\vspace{-6mm}

\noindent {\bf Acknowledgements:} The research of  Mikhail Cherdantsev was supported by the Leverhulme Trust Research Project Grant RPG-2019-240. Kirill Cherednichenko acknowledges the support of Engineering and Physical Sciences Research Council, Grants EP/L018802/2, EP/V013025/1. Igor Vel\v{c}i\'c is grateful for the support of Croatian Science Foundation under Grant Agreement no.~IP-2018-01-8904 (Homdirestroptcm) and Grant agreement IP-2022-10-5181 (HOMeOS). The authors thank Prof. Andrey Piatnitski for useful comments.

\vskip 0.15cm

\begin{abstract}
	 We develop a framework for multiscale analysis of elliptic operators with high-contrast random coefficients.  For a general class of such operators, we provide a detailed spectral analysis of the corresponding homogenised limit operator.  Under some lenient assumptions on the configuration of the random inclusions, we fully characterise the limit of the spectra of the high-contrast operators in question, which unlike in the periodic setting is shown to be different to the spectrum of the homogenised operator.   Introducing a new notion of the {\it relevant limiting spectrum}, we describe the connection between these two sets.

\vskip 0.15cm

\noindent {\bf Keywords:}  Stochastic homogenisation $\cdot$ Spectrum $\cdot$ Random media $\cdot$ High contrast 

\vskip 0.15cm

\noindent {\bf Mathematics Subject Classification (2020):} 35B27, 35P99, 74A40

\vskip 0.4cm

\end{abstract}

\tableofcontents

\section{Introduction}


A recent drive towards the development of mathematical tools for understanding the behaviour of realistic inhomogeneous media has led to a renewal of interest to stochastic homogenisation and to an equally explosive activity in the analysis of composite media, in particular, of those with contrasting material components. In those fields the new step change is characterised by moving from qualitative analysis to quantitative results: from the general multiscale frameworks by De Giorgi and Spagnolo \cite{DS}, Bakhvalov \cite{Bakhvalov}, Murat--Tartar \cite{Tartar}, \cite{MurTar}, Allaire \cite{Allaire},  Kamotski--Smyshlyaev \cite{KamSm19} (in the periodic setting),  Yurinskii \cite{Yurinskii},  Papanicolaou-Varadhan \cite{PapVar}, Kozlov \cite{Kozlov}, Zhikov--Piatnitski \cite{zhikov1} (in the stochastic setting), Zhikov \cite{Zhikov2000},\cite{zhikov2005} (in the high-contrast periodic setting)  to error bounds in appropriate functional topologies by Griso \cite{Griso}, Zhikov--Pastukhova \cite{ZhPast}, Birman--Suslina \cite{BirmanSuslina},  Kenig--Lin--Shen \cite{Kenig} (periodic),  Gloria--Otto \cite{GloriaOtto}, Armstrong--Smart \cite{ArmstrongSmart}, Armstrong--Kuusi--Mourrat \cite{Armstrong_book} (stochastic),  Cherednichenko--Cooper \cite{ChC}, Cherednichenko--Ershova--Kiselev \cite{ChErK}, Cooper--Kamotski--Smyshlyaev \cite{CKS2023} (high-contrast periodic).

The high-contrast setting occupies a special place in homogenisation theory, for it enables resonant phenomena leading to micro-to-macro scale interactions,  thereby bringing about new, often not naturally occurring, material properties. While the periodic high-contrast setting has now been analysed in good detail, its stochastic  counterpart, which can be argued to be even more relevant to applications in material science, has not yet enjoyed the deserved attention: apart from our tentative study \cite{ChChV} in the bounded domain setting, we are aware of only three more works in  this area  ---  \cite{BoMiPi}, \cite{BBM} and \cite{APZ}. Note that in \cite{BoMiPi} the authors use the notion of stochastic two-scale convergence {\it  in mean}, introduced in \cite{BoMiWr}, which is not convenient for the spectral analysis presented here (or  in \cite{ChChV}). Indeed,  the convergence in mean does not imply the convergence almost surely, and it is not clear what are the implications of the resolvent convergence in mean in relation to the limiting spectrum. On the other hand,  the notion of stochastic two-scale convergence introduced in \cite{zhikov1} provides necessary tools for the analysis of the limiting spectrum in the case of a bounded domain, see \cite{ChChV} and Theorem \ref{th:3.3} below.   The goal of the present paper is to  provide a general framework and a comprehensive toolbox for multiscale analysis of random high-contrast media, which would lay a foundation for the related research avenue.   

We study the problem of homogenisation of operators of the form  $\AA^\e = -\nabla \cdot A^\e \nabla$ with  high-contrast random (stochastic) coefficients, represented by the matrix $A^\e$, which models a two-component material with randomly distributed and randomly shaped ``soft'' inclusions, whose typical size and spacing are both of order $\e \ll 1$, embedded in a ``stiff'' component.  It is assumed that the ellipticity constants of $A^\e$ is of order $1$ in the stiff component and of order $\e^2$ in the inclusions. This scaling regime between the size of the microstructure and the ratio between the coefficients is often referred as the double porosity model.

Our interest in high-contrast homogenisation problems is motivated by the band-gap structure of the spectra of the associated  operators. We emphasise that this spectral phenomenon has only been observed in the homogenisation limit under the double porosity scaling. Therefore, while a more general non-uniformly elliptic setting can be very challenging, see e.g. \cite{BFO2018}, in order to obtain the specific spectral behaviour one has to work with  particular geometric constraints and scaling regimes.  Composites exhibiting spectral gaps are widely used for manipulating acoustic and electromagnetic waves, see e.g. \cite{Joannopoulos}, \cite{Khelif}. These were first analysed from the mathematically rigorous perspective in \cite{Zhikov2000,zhikov2005} in the periodic setting. It was shown that the spectra of  $\AA^\e$ converge in the sense of Hausdorff to the spectrum of a limit homogenised operator $\AA^\hom $. The latter has a two-scale structure that captures the macro- and microscopic behaviour of the operator $\AA^\e$ for small values of $\varepsilon.$ The spectrum of 
$\AA^\hom $ has a band-gap structure (in the whole-space setting), characterised with respect to the spectral parameter $\lambda$ by a  function $\beta(\l),$ that is explicitly determined by the microscopic part of $\AA^\hom$ and quantifies the resonant (or anti-resonant) effect of the soft inclusions. In the case of a bounded domain the analysis of the problem and the results are very similar, in particular, the point spectrum of $\AA^\hom$ ``populates'' the bands (accumulating at their right ends) corresponding to the whole-space case. 

In \cite{ChChV} we considered the high-contrast stochastic homogenisation problem in a bounded domain. To a certain extent, our findings as well as the basic techniques (modulo replacing the reference periodicity cell with the probability space, and the standard two-scale convergence with its stochastic counterpart) were similar to those of \cite{Zhikov2000} in the periodic case. Namely, we  showed that the homogenised operator has a similar two-scale structure with the ``macroscopic'' component $-\nabla \cdot A_1^\hom \nabla$ acting in the physical space and the ``microscopic'' one $-\Delta_\OO$ acting in the probability space on a prototype ``inclusion'' $\OO$. We  proved an appropriate (i.e. stochastic two-scale) version of the resolvent convergence of $\AA^\e$ to  $\AA^\hom $ and the Hausdorff convergence of their spectra. However, due to the technical challenges of the stochastic setting our understanding of the homogenised operator $\AA^\hom$ was limited. In particular, we were able to describe its spectrum only for a range of explicit examples. 
Note that the stochastic two-scale resolvent convergence of the operators (implying  $\lim_{\e\to 0} \Sp(\AA^\e) \supset \Sp(\AA^\hom)$), proven in \cite{ChChV}, is valid both for a bounded domain and for the whole space without any changes to the proof. 

Unlike for the periodic high-contrast operators, whose spectra are described by similar multiscale arguments for a bounded domain and for the whole space (leading to closely related features in the two cases, albeit resulting in spectra of different types from the operator-theoretic perspective),  in the stochastic setting the situation is fundamentally different.
 In the present paper we show that in the case of the whole space  the spectrum $\Sp(\AA^\hom )$ is, in general, a proper subset of the  limit of $\Sp(\AA^\e)$. In fact, the situation when the spectrum of $\AA^\e$ occupies the whole positive half-line is not uncommon. This additional part of the limit spectrum, which does not appear in the case of a bounded domain,
is attributed to the stochastic nature of the problem. At the same time,  at least in the context of the applications that we have in mind, this part of the limit spectrum could (arguably) be  deemed ``physically irrelevant", whereas the part of the spectrum of $\AA^\e$ that converges to $\Sp(\AA^\hom)$ may be thought of as ``physically relevant". In terms of  the spectral analysis our main objectives are to a) provide a comprehensive analysis of the homogenised operator $\AA^\hom$ and its spectrum; b) characterise the limit of  $\Sp(\AA^\e)$; c) understand the relation between  the  limit spectrum and  $\Sp(\AA^\hom)$.

We next compare our results with those of  \cite{APZ}, which also considers a random high-contrast medium in the whole space. Under the assumption that the soft inclusions are copies (up to rotations) of a finite number of $C^2$-smooth geometric shapes, the authors of  \cite{APZ} construct the limit operator and prove the strong convergence of associated semigroups for any finite time. As a consequence, they show that the spectrum of the limit operator is a subset of the limit of the spectra of the original operators. Our assumptions on the types of the inclusions and their regularity are somewhat more general; in particular, we allow uncountably many inclusion shapes. As we already mentioned, we provide a full characterisation of the limiting spectrum in the case of  finite-range correlations and we use different techniques (which are closer to Zhikov's approach).  The overall focus of   \cite{APZ} is also different. Namely,  each second-order divergence-form operators is the generator of a Markov semigroup. It is well known that in the high-contrast setting the  evolution of the effective limit operator exhibits memory effects, which means  that corresponding process is not Markovian. The goal of their work is to equip the coordinate process with additional components so that the dynamics of the enlarged process remains Markovian in the limit. 

In \cite{BBM} the authors consider a scattering problem for Maxwell equations with an obstacle comprising of parallel random  rods with high random permittivity. In particular, in the homogenisation limit, they obtain a formula for the {\it effective} permeability, which, in the context of electromagnetism, is an analogue of Zhikov's $\beta$-function. It should be noted that their results are obtained for a specific random model, rather than for a general class of high-contrast random composites, as in the present paper. Similar to \cite{APZ}, they only analyse the spectrum of the two-scale limit operator, yet do not describe the limiting spectrum, which is strictly larger in their setting.

We next outline the structure of the paper and discuss our results in more detail. In Section \ref{stochastic} we recall the basic facts and definitions of the probability framework, state the main assumptions, describe the problem, and give a brief overview of the results of \cite{ChChV}. We have simplified the main assumption of  \cite{ChChV}, requiring only uniform Lipschitz property (more precisely, uniform minimal smoothness) and boundedness of the inclusions. In particular, these assumptions guarantee an extension property (Theorem \ref{th:extension}) and a density result (Lemma \ref{lemmagloria}). 

In Section \ref{lim_eq} we provide a complete characterisation of the spectrum of $\AA^\hom$ via the spectra of the operators $-\Delta_\OO$ and $-\nabla \cdot A_1^\hom \nabla$ and the stochastic analogue of Zhikov's $\beta$-function, defined via the solution to a resolvent problem for $-\Delta_\OO$, see Theorem \ref{th:3.3}. We then study the properties of $\beta = \beta(\l)$ and provide a formula for its recovery from one {\it typical} realisation (in accordance with the ergodicity framework),  construct and analyse the resolution of identity for the operator $-\Delta_\OO$, as well as characterise the point spectrum of $\AA^\hom$. In the same section we improve and generalise results of \cite{ChChV} by providing a description of the spectra of $-\Delta_\OO$ and $\AA^\hom$ under general assumptions rather than for specific  examples. In particular, this description implies  (again, under  general assumptions) the convergence of spectra $\lim_{\e  \to 0} \Sp(\AA^\e) = \Sp (\AA^\hom)$ in the case of a bounded domain. (In what follows we will simply write $\lim \Sp(\AA^\e)$ instead of $\lim_{\e  \to 0} \Sp(\AA^\e)$ for brevity.) The main results of this section  are a stochastic analogue of those in the periodic case. (In the case of systems the function $\beta$ is matrix valued, allowing for even richer spectral behaviour, see \cite{CaChVsyst}.)

Section \ref{s:conv of spectrum} is concerned with the study of the limit of $\Sp(\AA^\e)$. To that end, we introduce a function $\beta_\infty(\l,\o)$ (where $\o$ is an element of the probability space). We use $\beta_\infty(\l,\o)$ to define a set $\mathcal G$ which, as we prove later, contains the limit spectrum. While $\beta(\l)$ in the present setting is the stochastic analogue of its counterpart in the periodic case, $\beta_\infty(\l,\o)$ is a new object which has no analogues in the periodic setting.
An intuitive explanation of the difference between the two  is as follows: the values of $\beta(\l)$ are determined via the ergodic limit (in other words, from the global average distribution of inclusions), whereas the values of $\beta_\infty(\l)$ are determined, loosely speaking, by the areas with the least dense distribution of the inclusions ({\it non-typical areas}). The first of the two main results of the section (Theorem \ref{th4.1}) is that the  limit of  $\Sp(\AA^\e)$ is a subset of $\mathcal G$. We do not know whether $\mathcal G$ is the actual limit in general. However, under an additional assumption of finite range of dependence of the spacial distribution of inclusions, we establish that $\lim \Sp(\AA^\e) = \mathcal G$  (Theorem \ref{th4.4}), which is the second main result of the section.   We give the proof of Theorem \ref{th4.1} in Section \ref{ss:4.3}, while in  Section \ref{ss:4.4} we provide auxiliary statements on the existence of cubes with almost periodic arrangements of inclusions. Finally, in Section \ref{ss:4.5} we prove Theorem \ref{th4.4}. In Section \ref{cubesfilled} we provide several examples illustrating our results: namely, we consider a (non-periodic) example where $\beta_\infty = \b$ (and, thus, $\lim \Sp(\AA^\e) = \Sp(\AA^\hom)$) and those where this is not the case, i.e. $\lim \Sp(\AA^\e) \neq \Sp(\AA^\hom)$.

In Section \ref{s:relevant} we explore the connection between the spectrum of $\AA^\hom$ and the limiting behaviour (as $\e\to 0$) of the spectra of $\AA^\e$. For the family   $\AA^\e$, we introduce a notion of the \textit{relevant limiting spectrum}, denoted by ${\mathcal R}\mbox{-}\lim \Sp(\AA^\e)$ --- namely, a subset of  $\lim \Sp(\AA^\e)$ whose points are characterised by the existence of  approximate eigenfunctions with significant part of their energy remaining inside a (large) fixed neighbourhood of the origin as $\e\to 0$.  On the other hand, those $\l$ that have  (approximate) eigenfunctions with energy mainly concentrated in regions with  non-typical (in the same sense as above) inclusion distributions, and thus located far  from the origin for small $\e$, constitute the irrelevant limiting spectrum. We show that  ${\mathcal R}\mbox{-}\lim \Sp(\AA^\e)$ coincides with $\Sp(\AA^\hom)$, see Theorem \ref{th:9.2}. We then discuss the implications of this result for the parabolic and hyperbolic evolution semigroups, cf. Corollary \ref{cor: 6.6}: namely, we show that on any finite domain one can neglect,  as $\e\to 0$, the part of the initial conditions corresponding to the irrelevant limiting spectrum.

A number of technical preliminaries, constructions and auxiliary statements that we use in the paper are presented in the appendices.  In particular, we prove the following results: a higher regularity of the periodic homogenisation corrector for perforated domains, see Theorem \ref{th:6.18}, which is discussed in Appendix \ref{ap:regul}; the extension property for potential vector fields in the probability space, see Proposition \ref{p:extensionm}; and a corollary of the latter, see Lemma \ref{l7.6}, which is important for the stochastic homogenisation corrector for  perforated domains.

  \section{Notation index}
  In order to help the reader navigate the text, we provide a short description of some important notation and indicate where it is introduced.

 		 	\begin{itemize} \setlength\itemsep{0em}
 		 		
\item[]	 		 {\bf General notation:}
 			 		 
\item[--]  	For a   Lebesgue measurable set $U\in \R^d$, we denote by $|U|$ its measure;
 		
\item[--]  	$U^c$ is the complement of a set $U$;

\item[--]  	$\overline U$ is the closure of a set $U$ in a relevant topology;

\item[--]  ${\mathbf 1}_U$ is the indicator function of a set $U$;

 \item[--]  	$B_R(x)$ is the ball of radius $R$ centred at $x$, and $B_R:=B_R(0)$;
 		
\item[--]  	$x_j$ is the   $j$-th component of a vector $x\in \R^d$;

\item[--]  	${\mathcal Re}(z)$ is the real part of $z\in \mathbb C$; 		

 \item[--]  	${\rm Dom} (\cdot)$ id the domain of an operator or a function, depending on the context;

 \item[--]  $\rm Sp(\cdot)$ stands for the spectrum of an operator.

  {\bf Section \ref{stochastic}:}
   
    \item[--]  $(\Omega, \mathcal{F},P)$ is a complete probability space;
      
   \item[--]  $T_x$ is a dynamical system on $(\Omega, \mathcal{F},P)$;
   
   \item[--]  $\OO\in\Omega$ is a reference set for the set of inclusions;

 \item[--]    $\OO_\o$, $\OO_\o^k$, $\mathcal B_\o^k$ are: the set of inclusions, an individual inclusion, and its extension set, respectively;
 	
\item[--]  $\rho,\mathcal N,\gamma$ are the constants of minimal smoothness;
 	
\item[--]  $D_\o^k$ is the ``left-bottom'' vertex of the minimal cube containing $\OO_\o^k$;
 	
\item[--]  $\square$ is the unit cube centred at the origin;
 	
\item[--]  $S_0^\e(\o)$,  $S_1^\e(\o)$, $\chi_1^\e(\omega), \chi_0^\e(\omega)$ are the set of ($\e$-scaled) inclusions, its complement, and their characteristic functions;

\item[--]  $A_1^\hom$ is the matrix of homogenised coefficients for the stiff component; 

\item[--]  $\AA^\e(\o)$ and $\AA^\hom$ are the random high-contrast operator and the corresponding two-scale homogenised operator;

\item[--]  $E^\e_{(-\infty,\l]}$ and $ 	E^\hom_{(-\infty,\l]}$ are the spectral projections of $\AA^\e$ and $\AA^\hom$, respectively;

\item[--]  $\Delta_\OO$ is the probabilistic ``Dirichlet'' Laplace operator on $\OO$;

\item[--]  for an open set $U\subset \R^d$ we denote by $\Delta_{U}$   the Dirichlet Laplace operator on $U$;
 	
\item[--]  $H$ and $V$ are functions spaces in which the two-scale homogenised operator acts;

\item[--]  $\langle f \rangle: =\int_\Omega f$.

{\bf Section \ref{lim_eq}:}

\item[--]  $\beta(\l)$ is the stochastic version of Zhikov's $\beta$-function;

\item[--]  $P_\o$ is an appropriately shifted inclusion which contains the origin;

\item[--]  $\Lambda_s$ and $\Psi_s^p$ are eigenvalues and eigenfunctions of $-\Delta_{P_\o}$;

\item[--]  $d_\l : = \dist(\l, \Sp(-\Delta_\OO))$;

\item[--]  $E_{[0,t]}$ is the resolution of identity for  $-\Delta_{\OO}$.

{\bf Section \ref{s:conv of spectrum}:}

\item[--]  $\beta_\infty(\l)$ is a ``local-global'' analogue of $\beta(\l)$;

\item[--]  $\ell(x,M,\l,\o)$ is the local spectral average;

 \item[--]  	$\square_x^M$ is the cube of edge length $M$ centred at $x$, $\square^M$ is the cube of edge length $M$ centred at the origin;

\item[--]  $\mathcal G$ is the ``upper bound'' for the limit of the spectra of $\AA^\e$;

\item[--]  $\hat N_j$, $N_j$, $j=1,\ldots,d$, are the periodic homogenisation correctors on a (large) cube and their shift;

\item[--]   $\fint$ denotes the average value;

\item[--]  $a^\e$ is the bilinear form associated with $\AA^\e + 1$;

 {\bf	Section \ref{s:relevant}:}
 	
 	\item[--]   $\hat N_j^\e$,  $j=1,\ldots,d$, are the  homogenisation correctors;
 	
 	\item[--]  $g_j, g_j^\e, G_j^\e, j=1,\dots,d,$ are the difference of fluxes in the probability space, its realisation in physical space, and the flux corrector, respectively.
 	
 {\bf	Appendix \ref{ap:probability}:}
 	 	
 	\item[--]  $\nabla_\o$ is the gradient in the probability space;
 	 	
 	\item[--]  $\mathcal{C}^{\infty} (\Omega)$, $\mathcal{C}_0^{\infty} (\mathcal{O})$, $W^{k,2} (\Omega),  W^{\infty,2} (\Omega)$ and $W_0^{1,2}(\mathcal{O})$ are the standard spaces of functions on $\Omega$;
 	 	
 	\item[--]  $\lpot$, $\lsol$, $\vpot$ and $\vsol$ are the spaces of potential and solenoidal vector fields on $\Omega$ and their zero-mean subspaces.
 	
 	\end{itemize}

\section{Preliminaries and problem setting} \label{stochastic}
The setting of the problem is similar to \cite{ChChV}; however, we  simplify the main assumption of \cite{ChChV} and deal mostly with the whole space $\R^d$ rather than  a bounded domain. We provide some standard definitions, such as those of Sobolev spaces of functions defined on a probability space and the stochastic two-scale convergence, as well as technical statements, such as measurability properties of various mappings in Appendixes \ref{ap:probability} and \ref{s:auxiliary}. We will often use these tacitly throughout the text.
\subsection{Probability framework} \label{s:3.1}
Let $(\Omega, \mathcal{F},P)$ be a complete probability space. We assume that the $\sigma$-algebra $\mathcal{F}$ is countably generated, which implies that the spaces $L^p(\Omega)$, $p \in [1,\infty),$ are separable. 
\begin{definition}\label{defgroup}
	A family $\{T_x\}_{x \in \R^d}$ of measurable bijective mappings $T_x:\Omega \to \Omega$ on a probability space $(\Omega, \mathcal{F}, P)$ is called a dynamical system  if:
	\begin{enumerate}
		\item $T_x \circ T_y=T_{x+y}\ $ $\forall\,x,y \in \R^d$;
		\item $P(T_{x} F )=P(F)\ $ $\forall x \in \R^d$, $F \in \mathcal{F}$;
		\item $\mathcal{T}: \R^d \times\Omega \to \Omega,\ $ $(x, \omega)\to T_x (\omega)$ is measurable (for the standard $\sigma$-algebra on the product space, where on $\R^d$ we take the Borel $\sigma$-algebra).
	\end{enumerate}
\end{definition}

\begin{definition}\label{defergodic}
A dynamical system is called ergodic if one of the following equivalent conditions is fulfilled:
\begin{enumerate}
	\item $f$ measurable, $f(\omega)=f(T_x \omega) \ \forall x \in \R^d, \textrm{ \ae } \omega \in \Omega \implies f \textrm{\ is\ constant\ $P$-\ae\ } \omega \in \Omega$.
        \item\label{def:ergodicityb} $P\bigl((T_x B \cup B) \backslash (T_x B \cap B)\bigr)=0\ \forall x \in \R^d \implies P(B) \in \{0,1\}$.  
\end{enumerate}	
\end{definition} 
\begin{remark}\label{rem:ergo-weak}
	Note that if the symmetric difference of $T_x B$ and $B$ is empty, then the condition \ref{def:ergodicityb} in the above definition immediately implies $P(B) \in \set { 0,1}$. It can be shown (e.g., \cite{cornfeld}) that ergodicity is also equivalent to the following (a priori weaker) implication:
	\[
	T_x B = B\ \ \forall x \in \R^d \implies P(B) \in \set{0,1}.
	\]
\end{remark}

Henceforth we assume that the probability space $(\Omega, \mathcal{F},P)$ is equipped with an  ergodic dynamical system $T_x$. The dynamical system is used to establish a connection between the probability space $(\Omega, \mathcal{F},P)$  and the physical space $\R^d$. Namely, for $f \in L^p(\Omega)$ and \ae fixed $\o\in\O$ the expression  $f(T_x \omega), x\in\R^d,$ defines an element of $L^p_{\rm loc}(\R^d)$, which is called the {\it realisation} of $f$ for the given $\o$. Note that $f(T_x \omega)$ considered as a function of $x$ and $\o$ is an element of $L^p_{\rm loc}(\R^d, L^p(\Omega))$.

 The differentiation operations in $L^2(\O)$
 as well as the associated spaces $\mathcal{C}^{\infty} (\Omega)$, $W^{k,2} (\Omega), $ $ W^{\infty,2} (\Omega)$ are introduced in a standard way. We also use the spaces of potential and solenoidal vector fields $\lpot$ and $\lsol$ and their zero-mean subspaces $\vpot$ and $\vsol$, see \cite{zhikov2}. We provide the relevant definitions in Appendix  \ref{ap:probability}.

The following theorem is the key tool in setting the basis for our analysis.
\begin{theorem}[``Ergodic Theorem"] \label{thmergodic}
	Consider a probability space  $(\Omega,\mathcal{F}, P)$ and an ergodic dynamical system $\{T_x\}_{x \in \R^d}$ on $\Omega$. Suppose that $f \in L^1(\Omega)$ and ${S}\subset \R^d$ is a bounded open set. Then for $P$-\ae $\omega \in \Omega$ one has
        \begin{equation}\begin{aligned}\label{eq:birkhoff}
        \lim_{\e \to 0} \int_{{S}}f(T_{x/\e}\omega) \td x= |S| \int_{\Omega} f \td  P.
        \end{aligned}\end{equation}
	Furthermore, for all $f \in L^p(\Omega)$, $1 \leq p \leq \infty,$ and \ae $\omega \in \Omega$, the function $f (x, \omega)= f(T_x \omega)$ satisfies $f(\cdot, \omega) \in L^p_{\rm loc} (\R^d)$. For $p < \infty$ one has $f(\cdot/ \e, \omega)=f(T_{ \cdot/ \e} \omega ) \rightharpoonup \int_{\Omega} f \td P$ weakly in $L^p_{\rm loc}(\R^d )$ as $\e \to 0$. 
\end{theorem}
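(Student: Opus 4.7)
The claim combines the continuous-parameter, multi-dimensional Wiener/Birkhoff ergodic theorem with a weak-convergence refinement for the realisations. I would reduce \eqref{eq:birkhoff} to convergence of spatial averages of $f(T_\cdot\omega)$ over dilating sets, establish this for cubes by the standard multi-parameter ergodic theorem, pass to arbitrary bounded open $S$ by a cube approximation with a single $P$-null set independent of $S$, and then upgrade to weak convergence in $L^p_{\loc}$ using a uniform bound and density.

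\textbf{Reduction and cubes.} Setting $y=x/\e$, $R=1/\e$, one has $\int_S f(T_{x/\e}\omega)\td x = R^{-d}\int_{RS}f(T_y\omega)\td y$, so \eqref{eq:birkhoff} is equivalent to the spatial averaging statement $|RS|^{-1}\int_{RS} f(T_y\omega)\td y\to \int_\Omega f\td P$ as $R\to\infty$. When $S$ is a centred cube, this is the classical continuous-parameter Birkhoff theorem for ergodic $\R^d$-actions: the one-dimensional case follows from the Wiener maximal ergodic inequality combined with density of simple functions in $L^1(\Omega)$, and the $d$-dimensional statement is obtained by iterating one-dimensional averages (a Tonelli-type argument, using measurability of $(x,\omega)\mapsto T_x\omega$ from Definition~\ref{defgroup}). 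Ergodicity of $\{T_x\}$ then identifies the constant limit as $\int_\Omega f\td P$ for $P$-\ae $\omega$.

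\textbf{General $S$ and weak convergence.} Let $\mathcal Q$ be the countable family of finite unions of dyadic cubes with rational vertices. Applying the previous step simultaneously to every $Q\in\mathcal Q$ and to $|f|\in L^1(\Omega)$ yields a single $P$-full set $\Omega_0$ on which the limit in \eqref{eq:birkhoff} holds for all $Q\in\mathcal Q$. For a bounded open $S$ and $\delta>0$ one can pick $U_\delta^\pm\in\mathcal Q$ with $U_\delta^-\subset S\subset U_\delta^+$ and $|U_\delta^+\setminus U_\delta^-|<\delta$; the convergence applied to $|f|$ on $U_\delta^+\setminus U_\delta^-$ bounds the sandwich error uniformly in $\e$, and letting first $\e\to 0$ and then $\delta\to 0$ proves \eqref{eq:birkhoff} for all bounded open $S$. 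That $f(T_\cdot\omega)\in L^p_{\loc}(\R^d)$ for $P$-\ae $\omega$ is a direct consequence of Fubini and $T$-invariance. For $p<\infty$, applying \eqref{eq:birkhoff} to $|f|^p\in L^1(\Omega)$ gives $\|f(\cdot/\e,\omega)\|_{L^p(K)}^p \to |K|\int_\Omega|f|^p\td P$ for every bounded open $K$, hence a uniform $L^p_{\loc}$ bound. Combined with convergence of integrals against indicators of bounded open sets (just established), density of simple functions in $L^{p'}_{\loc}$ for $p>1$, and a Vitali uniform-integrability argument when $p=1$, this yields the weak convergence $f(\cdot/\e,\omega)\weakly \int_\Omega f\td P$ in $L^p_{\loc}(\R^d)$.

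\textbf{Main obstacle.} The substantive work lies in the continuous-parameter multi-dimensional ergodic theorem for cubes, which rests on a maximal ergodic inequality for cube averages over $\R^d$-actions; this is classical but non-trivial. The remaining bookkeeping---choosing the approximating countable family $\mathcal Q$ so that the $P$-null exceptional set is independent of $S$, and handling the weak-convergence upgrade, in particular the $p=1$ case---is routine once the pointwise cube theorem is in hand.
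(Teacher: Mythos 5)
The paper does not actually prove this theorem: it is stated as a classical result (the Wiener--Birkhoff ergodic theorem for ergodic $\R^d$-actions, cf.\ \cite{zhikov2}), so there is no in-paper argument to compare against; I can only assess your sketch on its own terms. Its overall architecture (rescale to dilating averages, prove the cube case, extend to general $S$, upgrade to weak $L^p_{\rm loc}$ convergence) is the standard and correct one, but two steps as written do not go through.

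First, the passage to general bounded open $S$ by sandwiching between finite unions of dyadic cubes $U_\delta^-\subset S\subset U_\delta^+$ with $|U_\delta^+\setminus U_\delta^-|<\delta$ is impossible when $|\partial S|>0$: any finite union of cubes containing the open set $S$ essentially contains $\overline S$, so $|U_\delta^+\setminus U_\delta^-|\ge|\overline S|-|S|=|\partial S|$, which need not be small (take $S=\bigl((0,1)\setminus C\bigr)\times(0,1)^{d-1}$ with $C$ a fat Cantor set). Since the theorem is asserted for arbitrary bounded open $S$, you need an argument that sees only $|S|$ and not $|\overline S|$. The standard fix is truncation: write $f=f_M+r_M$ with $f_M=f\,{\mathbf 1}_{\{|f|\le M\}}$ and $\|r_M\|_{L^1(\Omega)}$ small; for the bounded part, approximating ${\mathbf 1}_S$ in $L^1$ (not by inclusion) by cube-simple functions costs at most $M\cdot|U\triangle S|$, while the remainder is controlled by applying the cube result to $|r_M|$ on a fixed large cube containing $S$. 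This is exactly the uniform-integrability input you invoke only later for the $p=1$ weak convergence; it is already needed here. (The exceptional null set remains controlled because only countably many functions $f_M$, $|r_M|$ and countably many rational cubes are involved.)

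Second, the claim that the $d$-dimensional cube theorem for $f\in L^1(\Omega)$ "is obtained by iterating one-dimensional averages" is not a valid reduction: the Dunford--Zygmund iteration scheme requires $f\in L(\log L)^{d-1}$ and in any case controls multiparameter rather than diagonal (cube) averages. The correct route for $L^1$ data is the genuinely $d$-dimensional Wiener maximal inequality for cube (or ball) averages, obtained by transference of the Hardy--Littlewood maximal inequality via the Vitali covering argument --- a tool you do name in your final paragraph, so the inconsistency is internal; the iteration remark should simply be dropped. With these two repairs the sketch becomes a correct proof of the stated theorem.
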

Note that the Ergodic Theorem  implies only that the identity \eqref{eq:birkhoff} holds $P$-\ae for countably many functions simultaneously. This is why in the definition of the two-scale convergence (Definition \ref{definicija1}) one has to restrict to a countable (dense) subset of test functions, see also e.g. \cite{HNV}.  In general,  statements that only require a simple convergence argument, such as passing to the limit in the resolvent problem, are valid a set of full probability measure. However, throughout the paper we have a number of results, notably Theorem \ref{th4.4} and Proposition \ref{propivan1}, where one does not invoke the ergodic theorem but uses the law of large numbers instead for the analysis of non-typical areas (as discussed in Introduction). Such results hold on a full-measure subset of $\Omega$ that in general is different from the one provided by the ergodic theorem. Thus in what follows all results can be viewed as valid on a set $\Omega_t$  of full measure, for whose elements a) the Ergodic theorem holds on a {\it suitable} dense subset of $L^1(\Omega)$, b) the application of the law of large numbers to a {\it suitable} collection of independent random variables (such as Theorem \ref{th:6.10}) is valid. We henceforth always assume that $\o\in \Omega_t$, and refer to it as `typical'.


 \subsection{The main assumption and the extension property} \label{s:3.2}
 We proceed with defining the set of inclusions $\mathcal{O}_{\omega}=\bigcup_{k=1}^\infty \mathcal{O}_{\omega}^k$.  As we have already mentioned, we  simplify the  assumptions of our previous paper \cite{ChChV}. It turns out that, in order to guarantee the validity of  the main results, it suffices to require the uniform minimal smoothness of the inclusions, their boundedness, and uniform separation.  In particular, Assumption \ref{kirill100} implies the key extension property, Theorem \ref{th:extension} (simply postulated in our previous paper \cite{ChChV}), and a density result, Lemma \ref{lemmagloria}. 
 
     We first recall the definition of a minimally smooth set, see \cite{Stein1970}. 
 \begin{definition}
 	An open set $U \subset \R^d$ is said to be minimally smooth with constants	$\rho, \mathcal N, \gamma$ (or $(\rho, \mathcal N, \gamma)$  minimally smooth), there is a countable sequence of open sets $\{U_i\}_{i=1}^\infty$  covering the boundary $\partial U$ such 	that
 	\begin{itemize}
 		\item  Each $x\in\R^d$ is contained in at most $\mathcal N$ of the  sets $U_i$.
 		\item For any $x\in\partial U$ the ball $B_\rho(x)$ is contained in at least one $U_i$.
 		\item For any $i\in\N$ the portion of the boundary $\partial U$ inside $U_i$ is, in some Cartesian frame,  the graph of a Lipschitz function whose Lipschitz	semi-norm is at most $\gamma$.
 	\end{itemize}
 \end{definition}

 Let a set $\mathcal{O}\subseteq \Omega$ be such that $0<P(\mathcal{O})<1$ and for each $\omega\in\Omega$ consider
 its {\it realisation}
 $$ \mathcal{O}_\omega:=\{x \in \R^d: T_x \omega \in\mathcal{O}\}. $$
 Our main assumption is as follows. 
\begin{assumption} \label{kirill100}
There exist constants $\rho, \mathcal N, \gamma,$ such that for \ae  $\omega \in \Omega$ the set $\R^d\setminus \overline{\OO_\o}$ is connected and
\begin{equation*}
\mathcal{O}_{\omega}=\bigcup_{k=1}^\infty \mathcal{O}_{\omega}^k,
\end{equation*}
where: 

\begin{enumerate}[1)]
\item For every  $k\in{\mathbf N}$ the set $\mathcal{O}_{\omega}^k$ is open, connected, and    $(\rho, \mathcal N, \gamma)$ minimally smooth;

\item For every $k \in \mathbf{N}$ one has $\diam \mathcal{O}_{\omega}^k< 1/2$. 

\item There exists $r_0 > 0$ (independent of $\o$) such that for all  $k, l \in \mathbf{N}$, $k\neq l$, one has $\dist(\mathcal{O}_{\omega}^k,\mathcal{O}_{\omega}^l) > r_0$. 
\end{enumerate}
 \end{assumption}

For the extension result below we need to make the following observation: under Assumption \ref{kirill100}, for every inclusion $ \mathcal{O}_{\omega}^k$ there exists an `extension domain'  $\mathcal{B}_\o^k $ with the following  properties: $\mathcal{B}_\o^k $ is open, bounded, $\mathcal{B}^k_{\omega}\supset \overline{\OO_\o^k}$, $\mathcal{B}^k_{\omega}\cap \OO_\o = \OO_\o^k$, and the set $\mathcal{B}^k_{\omega}\setminus \overline{\OO_\o^k}$ is   $(\rho, \mathcal N, \gamma)$ minimally smooth with possibly different constants $(\rho, \mathcal N, \gamma)$ uniform in $\omega$ and $k$. In order to make the notation simple, we choose the constants so that all the sets $\OO_\o^k$, $\mathcal{B}^k_{\omega}$, and $\mathcal{B}^k_{\omega}\setminus \overline{\OO_\o^k}$ are $(\rho, \mathcal N, \gamma)$ minimally smooth uniformly in $\omega$ and $k$. Moreover, we can assume that 
appropriately translated, the domains $\mathcal B_\o^k$  fits into the unit cube 
$\square:=[-\nicefrac{1}{2},\nicefrac{1}{2})^d.$
More precisely, let the vector $D_\o^k\in \R^d$ be defined by 
\begin{equation*}
	(D_\o^k)_j : = \inf\{x_j: x\in \OO_\o^k\},\, j=1,\dots,d,
\end{equation*}
and denote $d_{1/4}:=(1/4,\dots,1/4)^\top$, then $\OO_\o^k - D_\o^k - d_{1/4} \subset \mathcal{B}_\o^k - D_\o^k - d_{1/4} \subset \square $. 

We provide a sketch of the proof of the above claim for the $2$d case, which can be easily extended to any dimension. For sufficiently small $\tau >0$ we cover $\R^2$ with  closed squares with  sides $\tau$ and mutually disjoint interiors. For a given $\OO_\o^k$ we choose the squares that have non-empty intersection with $\OO_\o^k$ together with their ``neighbours'', i.e. with those squares that share with them a common side or vertex. The union of these squares, denoted by $U$, is obviously connected but may fail to be Lipschitz if it contains two squares that share a single vertex and the adjacent sides form part of the boundary of $U$. In order to rectify this problem we add to $U$ smaller squares with  sides $\frac{\tau}{4}$ with centres at all vertices contained in $U$.
 Choosing $\tau$ small enough we obtain an extension domain  $\mathcal{B}^k_{\omega}$ which is minimally smooth with constants $(\rho',\mathcal{N}', \gamma')$ depending only on $\tau$. The choice of $\tau$, in turn, is independent of $k$ and $\o$ and only depends on $(\rho,\mathcal{N}, \gamma)$  due to  Assumption \ref{kirill100}.

\begin{remark}
The requirement of separation of inclusions (perforations) is standard when dealing with problems in perforated domains, see e.g. \cite{GuillenKim}, \cite{PSZ}, as it allows one to use extension techniques. This assumption is even more important for  spectral analysis, as we already mentioned in the Introduction. Indeed, allowing inclusions to touch each other may have unpredictable consequences for the associated spectra, leading, in particular, to uncontrollable ``pollution'' of the gaps in the limiting spectrum by the spectrum arising from clusters of inclusions stuck together.  
\end{remark}

Next we formulate the mentioned extension property, which is valid under Assumption  \ref{kirill100}.
\begin{theorem}\label{th:extension}
	There exist bounded linear extension operators $E_k : W^{1,p}(\mathcal{B}^k_{\omega}\backslash {\mathcal{O}^k_{\omega}})  \to  W^{1,p}(\mathcal{B}^k_{\omega})$, $p\geq 1$, such that  the extension $\widetilde u:= E_k u$, $u \in W^{1,p}(\mathcal{B}^k_{\omega}\backslash {\mathcal{O}^k_{\omega}})$,   satisfies the bounds
	\begin{align}\label{3a}
			\|\nabla \widetilde u\|_{L^p(\mathcal{B}^k_{\omega})}\leq &C_{\rm ext}  \|\nabla u\|_{L^p(\mathcal{B}^k_{\omega}\backslash \mathcal{O}^k_{\omega})}, \quad\quad\quad\\[0.3em]
		\| \widetilde{u} \|_{L^p(\mathcal{B}^k_{\omega})}\leq &C_{\rm ext} \left(\|  u\|_{L^p(\mathcal{B}^k_{\omega}\backslash {\mathcal{O}^k_{\omega}})}+\| \nabla u\|_{L^p(\mathcal{B}^k_{\omega}\backslash {\mathcal{O}^k_{\omega}})} \right), \label{3b}
	\end{align}
where the constant $C_{\rm ext}$ depends only on $\rho,\mathcal N, \gamma, p$, and  is independent of $\o$ and $k$. Additionally, in the case $p=2$ the extension can be chosen to be harmonic in $\mathcal{O}^k_{\omega}$,
	\begin{align*}
	\Delta \widetilde u=0 \textrm{ on } \mathcal{O}^k_{\omega}.
\end{align*}
\end{theorem}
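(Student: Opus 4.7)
The plan is to reduce the statement to Stein's extension theorem (see \cite{Stein1970}, Ch.~VI, Sect.~3), which provides a bounded linear extension operator for minimally smooth domains whose operator norm depends only on the minimal smoothness parameters $(\rho,\mathcal{N},\gamma)$ and on $p$. Under Assumption~\ref{kirill100}, the collar $\mathcal{B}^k_\omega\setminus\overline{\OO_\o^k}$ is $(\rho,\mathcal{N},\gamma)$-minimally smooth, so applying Stein's construction inside $\mathcal{B}^k_\omega$ yields a linear extension $\widehat u\in W^{1,p}(\mathcal{B}^k_\omega)$ of $u$ satisfying $\|\widehat u\|_{W^{1,p}(\mathcal{B}^k_\omega)}\leq C\|u\|_{W^{1,p}(\mathcal{B}^k_\omega\setminus\OO_\o^k)}$ with $C=C(\rho,\mathcal{N},\gamma,p)$ independent of $\omega$ and $k$.

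To promote this combined-norm bound to the gradient-only inequality \eqref{3a}, I would apply Stein's operator not to $u$ itself but to $u-\overline u$, where $\overline u$ denotes the mean of $u$ on $\mathcal{B}^k_\omega\setminus\OO_\o^k$, and subsequently add back the constant $\overline u$. The Poincar\'e inequality on the collar, with a constant controlled solely by $(\rho,\mathcal{N},\gamma,p)$ and the uniform diameter bound from Assumption~\ref{kirill100}.2, gives $\|u-\overline u\|_{L^p}\leq C\|\nabla u\|_{L^p}$, so Stein's bound yields $\|\nabla\widetilde u\|_{L^p(\mathcal{B}^k_\omega)}\leq C\|\nabla u\|_{L^p(\mathcal{B}^k_\omega\setminus\OO_\o^k)}$ with $\widetilde u:=\widehat{(u-\overline u)}+\overline u$. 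The $L^p$ bound \eqref{3b} then follows from the triangle inequality after controlling $|\overline u|\,|\mathcal{B}^k_\omega|^{1/p}$ by $\|u\|_{L^p(\mathcal{B}^k_\omega\setminus\OO_\o^k)}$ via H\"older.

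For the harmonic modification in the case $p=2$, I would replace $\widetilde u|_{\OO_\o^k}$ by the unique $v\in H^1(\OO_\o^k)$ satisfying $\Delta v=0$ in $\OO_\o^k$ and $v-\widetilde u\in H^1_0(\OO_\o^k)$. Dirichlet's principle yields $\|\nabla v\|_{L^2(\OO_\o^k)}\leq\|\nabla\widetilde u\|_{L^2(\OO_\o^k)}$, preserving \eqref{3a}; since $v-\widetilde u\in H^1_0(\OO_\o^k)$, the Poincar\'e inequality on $\OO_\o^k$ (uniform for the same reason as above) controls $\|v-\widetilde u\|_{L^2(\OO_\o^k)}$ by a constant multiple of $\|\nabla\widetilde u\|_{L^2(\OO_\o^k)}$, and \eqref{3b} follows after one more triangle inequality. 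The only delicate point is the uniformity in $\omega$ and $k$ of all constants involved, but this is built into Assumption~\ref{kirill100}: both Stein's operator and the Poincar\'e constants are determined purely by the minimal smoothness parameters and by the diameter bound, neither of which depends on $\omega$ or $k$.
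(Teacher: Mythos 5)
Your proposal is correct and follows essentially the same route as the paper: the Calder\'on--Stein extension operator for the minimally smooth collar $\mathcal{B}^k_\omega\setminus\overline{\OO_\o^k}$ combined with a uniform Poincar\'e inequality (the mean-subtraction step you spell out is exactly how the paper's ``direct corollary'' is implemented), and for $p=2$ a harmonic replacement inside $\OO_\o^k$, which the paper phrases equivalently as solving for the correction $\hat u\in W^{1,2}_0(\OO_\o^k)$ via the Riesz representation theorem rather than via Dirichlet's principle. All constants are controlled by $(\rho,\mathcal N,\gamma,p)$ exactly as you argue.
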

\begin{proof}
	Inequalities  \eqref{3a} and \eqref{3b} are direct  corollaries of a classical result due to Calder\'on and Stein on the existence of uniformly  bounded extension operators \cite[Chapter IV, Theorem 5]{Stein1970} and a uniform Poincar\'e inequality  \cite[Proposition 3.2]{GuillenKim} for minimally smooth domains. We make two observations. First, though it is not stated  explicitly in  \cite{Stein1970}, the norm of the extension operator depends only on the constants of the minimal smoothness assumption, which can be seen from the proof below. Second, the proof of the uniform Poincar\'e inequality in \cite[Proposition 3.2]{GuillenKim} works for any $p\geq 1$ without  amendments. 
	
	Now let $p=2$ and fix $\OO_\o^k$. We look for the harmonic extension $\widetilde{u}$ in the form $\widetilde{u} = E_k u + \widehat u$, where $\widehat u \in W^{1,2}_0(\OO_\o^k)$ (here $E_k$ denotes the extension operator from \cite{Stein1970}). If such $\widehat u$ exists, it  satisfies the identity
	\begin{equation}\label{4}
	  \int_{\OO_\o^k} \nabla \widehat u \cdot \nabla \varphi = -  \int_{\OO_\o^k} \nabla (E_k u) \cdot \nabla \varphi \quad \forall \varphi \in W^{1,2}_0(\OO_\o^k).
	\end{equation}
As per discussion above, we have
\[
\left|\int_{\OO_\o^k} \nabla (E_k u) \cdot \nabla \varphi  \right| \leq C_{\rm ext} \|\nabla u\|_{L^2(\mathcal{B}^k_{\omega}\backslash \mathcal{O}^k_{\omega})}\|\nabla \varphi\|_{L^2(\mathcal{O}^k_{\omega})},
\] 
uniformly in $\omega$ and $k$. Thus the right-hand side in \eqref{4} is a bounded linear functional on $W^{1,2}_0(\OO_\o^k)$ equipped with the norm $\|\nabla \varphi\|_{L^2(\mathcal{O}^k_{\omega})}$. Therefore, by the Riesz representation theorem the solution $\widehat u$ of \eqref{4} exists and satisfies the bound
$
\|\nabla \widehat u\|_{L^2(\mathcal{O}^k_{\omega})} \leq C_{\rm ext}  \|\nabla u\|_{L^2(\mathcal{B}^k_{\omega}\backslash \mathcal{O}^k_{\omega})}.
$
The bounds  \eqref{3a} and \eqref{3b} now follow easily.
\end{proof}

\begin{remark}\label{r:extension}
The above theorem can be reformulated in an obvious way for any family of domains (i.e. not related to the stochastic setting) satisfying the same minimal smoothness condition. 	
\end{remark}

\subsection{Problem setting and an overview of the  existing results}\label{s:3.3}

For $\e>0$ we define 
 \begin{equation*}
 S_0^\e(\omega):=\e \mathcal{O}_{\omega} = \bigcup_{k} \e \mathcal{O}^k_{\omega}, \quad S_1^\e(\omega) : = \R^d\setminus  \overline{S_0^\e(\omega)}.
\end{equation*}
The corresponding set indicator functions are denoted by $\chi_0^\e(\omega)$ and $\chi_1^\e(\omega)$, respectively. We consider the self-adjoint operator  $\mathcal{A}^{\e}(\omega)$  in $L^2(S)$, where $S \subset \R^d$ denotes either a bounded domain with Lipschitz boundary or the whole space $\R^d$, generated by the bilinear form 
\begin{equation*}
	\int_{S}A^{\e} (\cdot,\omega)\nabla u\cdot\nabla v,\qquad u, v\in W^{1,2}_0(S),
\end{equation*}
where
\begin{equation}\label{defAe} 
	A^\e(\cdot,\omega)= \chi_1^\e(\omega)A_1
+\e^2\chi_0^\e(\omega)I,
\end{equation}
and $A_1$ is a symmetric  positive-definite matrix. Note that $W_0^{1,2}(\R^d)=W^{1,2}(\R^d)$  when $S=\R^d.$ In what follows we assume that $\o$ is fixed  and will drop it from the notation of the operator, writing simply $\AA^\e$.

We recall the definition of the limit homogenised operator $\AA^\hom $ given in \cite{ChChV}. The matrix of homogenised coefficients $A_1^{\rm hom}$  arising from the stiff component is defined by 
\begin{equation}\label{Ahom}
A_1^{\rm hom}\xi \cdot \xi:=
\inf_{p\in \vpot}
\int_{\Omega \backslash \mathcal{O}} A_1(\xi+p) \cdot (\xi+p),\qquad\xi\in\R^d.
\end{equation}	 
It is well known, see \cite[Chapter 8]{zhikov2}, that in order for the matrix $A_1^{\rm hom}$ to be positive definite it suffices to have a certain kind of extension property. It is not difficult to see that a slight adaptation of the argument in \cite[Lemma 8.8]{zhikov2} ensures that $A_1^{\rm hom}$ is indeed positive definite under Assumption \ref{kirill100}. 

The variational problem \eqref{Ahom} has a unique solution $p_\xi$  in ${\mathcal X}$, where $\mathcal X$ denotes the completion in $L^2(\Omega \backslash \mathcal{O})$ of 
$\mathcal V_{\rm pot}^2$. This solution satisfies the identity
\begin{equation}\label{10}
	\int_{\O\setminus\OO} A_1(\xi +p_\xi) \cdot \varphi = 0 \quad \forall \varphi\in \vpot.
\end{equation}
In Appendix  \ref{ss:9.1} we prove that under our assumptions on the regularity of the inclusions the space $\mathcal X$ can be viewed merely as the restriction of $\vpot$ to $\O\setminus\OO$ (see Lemma \ref{l7.6}). 
 We define the space 
\[
H:=L^2(S)+L^2(S\times\OO),
\]
which is naturally embedded in $L^2(S\times \Omega)$, and its dense (by Lemma \ref{lemmagloria}) subspace 
$$V:=W_0^{1,2}(S)+L^2(S,W_0^{1,2}(\mathcal{O})),$$
where $W_0^{1,2}(\mathcal{O})$ is the Sobolev space of functions on $\Omega$ vanishing outside $\OO$, see Appendix \ref{ap:probability} for the precise definition.
  
 For $f\in H$, we consider the following resolvent problem: find $u_0+u_1 \in V$ such that 
\begin{equation}	
\begin{aligned} 
\int_S A_1^{\rm hom} \nabla u_0\cdot\nabla \varphi_0+\int_{S \times \Omega} \nabla_{\omega} u_1 \cdot\nabla_{\omega} \varphi_1&+\lambda \int_{S \times \Omega} (u_0+ u_1) (\varphi_0+\varphi_1)
\\[0.1em]
&
=\int_{S \times \Omega}f(\varphi_0+\varphi_1) \quad \forall \varphi_0 + \varphi_1 \in V,
\end{aligned}
\label{zadnjerrr} 
\end{equation}
where $\nabla_\o$ denotes the gradient in the probability space (see  Appendix  \ref{ap:probability}).
This problem gives rise to a positive definite operator $\AA^\hom $ in $H$, see \cite{ChChV} for details. 
Note that one can take  an arbitrary $f\in L^2(S\times\O)$ in (\ref{zadnjerrr}). In this case 
the solution 
coincides with the solution for the right-hand side $\mathcal{P}f$, where $\mathcal P$ is the orthogonal projection onto $H$, namely, we have $u_0 +u_1 = (\AA^\hom + \lambda)^{-1} \mathcal P f$. Furthermore, the identity (\ref{zadnjerrr}) can be written as a coupled system
\begin{align}
&\int_S A_1^{\rm hom} \nabla u_0\cdot\nabla \varphi_0 +\lambda \int_S\bigl(u_0+\langle u_1 \rangle\bigr) \varphi_0 = \int_S \langle f \rangle\varphi_0
\quad \forall \varphi_0 \in W_0^{1,2}(S), \label{mik1r} \\[0.3em]
&\int_\mathcal{O} \nabla_{\omega} u_1 (x, \cdot)\cdot\nabla_{\omega} \varphi_1 +\lambda  \int_\mathcal{O}\bigl(u_0(x)+u_1(x,\cdot)\bigr)  \varphi_1 = \int_\mathcal{O} f(x,\cdot)\varphi_1
\,\,\,\forall  \varphi_1 \in W_0^{1,2}(\mathcal{O}), \, \mbox{a.e. } x\in S. \label{mik2r}
\end{align}
Here and in what follows, unless specified otherwise,  we often use the notation $\langle f \rangle:=\int_\Omega f$, tacitly assuming that functions defined only on $\OO$ are extended by zero to the whole $\O$.

The following statement of  stochastic two-scale resolvent convergence (which we will refer to simply as ``resolvent convergence'' for brevity), which was proved in \cite{ChChV} in the case 
of bounded $S$, remains valid for $S=\R^d$ with exactly the same proof. (For the notion of weak (strong) stochastic two-scale convergence, denoted by $\wtwoscale (\stwoscale)$, we refer the reader to Appendix  \ref{ap:probability}.)
\begin{theorem} \label{misha10}
	Under Assumption \ref{kirill100}, let $\lambda>0$ and suppose that $f^{\eps}$ is a bounded sequence in $L^2(S)$ such that 	$f^{\eps} \wtwoscale (\stwoscale) f\in L^2(S \times \Omega).$ Consider the sequence of solutions $u^\e$ to the resolvent problem
	\begin{equation*}
	\AA^\e u^\e +\l u^\e = f^\e.
	\end{equation*}
	 Then for \ae $\omega \in \Omega$ one has $u^\e\wtwoscale (\stwoscale)  u_0+u_1 \in V$, where $u_0+u_1$ is the solution to \eqref{zadnjerrr}. 
\end{theorem}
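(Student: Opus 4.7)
The plan is to follow the standard two-scale route adapted to the stochastic high-contrast setting, mirroring the argument of \cite{ChChV} for bounded $S$. Energy estimates, obtained by testing the resolvent identity against $u^\e$ and using \eqref{defAe}, give uniform bounds on $\|u^\e\|_{L^2(S)}$, $\|\chi_1^\e \nabla u^\e\|_{L^2(S)}$ and $\|\e\chi_0^\e \nabla u^\e\|_{L^2(S)}$ in terms of $\|f^\e\|_{L^2(S)}$. Applying the extension operator of Theorem \ref{th:extension} on each rescaled pair $(\e \mathcal{B}_\o^k, \e\OO_\o^k)$ and assembling the results (possible because $\dist(\OO_\o^k, \R^d\setminus\mathcal{B}_\o^k) \geq \rho$ and the extension constant is uniform in $\o$, $k$) yields $\widetilde u^\e \in W^{1,2}_0(S)$ that agrees with $u^\e$ on the stiff phase and is uniformly bounded in $W^{1,2}_0(S)$.

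Next, I would invoke stochastic two-scale compactness (see Appendix \ref{ap:probability}) to extract, along a subsequence and on a set of full $P$-measure, limits of the form $\widetilde u^\e \wtwoscale u_0 \in W^{1,2}_0(S)$, $\chi_1^\e \nabla u^\e \wtwoscale \chi_{\O\setminus\OO}(\nabla u_0 + p)$ with a corrector $p \in L^2(S;\vpot)$, and $\e\chi_0^\e \nabla u^\e \wtwoscale \chi_\OO \nabla_\o u_1$ with $u_1 \in L^2(S, W^{1,2}_0(\OO))$; in particular $u^\e \wtwoscale u_0 + u_1 \in V$. I would then insert test functions of the form
\begin{equation*}
	\varphi^\e(x) = \varphi_0(x) + \e\psi_0(x)\phi_0(T_{x/\e}\o) + \psi_1(x)\phi_1(T_{x/\e}\o),
\end{equation*}
with $\varphi_0\in C_c^\infty(S)$, $\phi_0 \in \vpot$, and $\phi_1 \in W^{1,2}_0(\OO)$ (the last summand realising the $L^2(S,W_0^{1,2}(\OO))$ part of $V$) into the weak form of the $\e$-resolvent identity, pass to the limit term by term using Theorem \ref{thmergodic} and the two-scale limits above, and minimise over $p$ via \eqref{Ahom}--\eqref{10} to recover $A_1^{\rm hom}$. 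The result is precisely \eqref{zadnjerrr}, and coercivity of its left-hand side on $V$ for $\l>0$ yields uniqueness, upgrading subsequential to full convergence.

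For strong two-scale convergence under the hypothesis $f^\e \stwoscale f$, I would add the standard energy-matching step: passing to the limit in the identity obtained by testing the $\e$-equation against $u^\e$ and comparing with the energy identity produced by taking $u_0+u_1$ as test function in \eqref{zadnjerrr} forces norm convergence of $u^\e$, $\chi_1^\e \nabla u^\e$ and $\e\chi_0^\e \nabla u^\e$ to their two-scale limits, upgrading weak to strong two-scale convergence. The main conceptual issue with replacing bounded $S$ by $\R^d$ is the loss of compact Sobolev embedding, but the coercive $+\l u^\e$ term provides $L^2(\R^d)$ control without compactness, and passage to the limit in cross terms only requires weak $W^{1,2}$-type convergence paired with strong two-scale convergence of admissible test functions; this is exactly why the proof of \cite{ChChV} transfers verbatim, as noted in the statement of the theorem.
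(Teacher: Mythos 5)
Your proposal is correct and follows essentially the same route as the paper, which itself defers to the proof in \cite{ChChV} (a priori bounds plus extension, stochastic two-scale compactness, oscillating test functions identifying the corrector and $A_1^{\rm hom}$, uniqueness of the limit problem, and energy matching for the strong statement) and merely observes that the argument transfers verbatim to $S=\R^d$ because the $\lambda$-term supplies the needed $L^2$ control without compact embeddings. The only cosmetic point is that the middle summand of your test function should be built from a potential $\phi_0\in\mathcal C^\infty(\Omega)$ (so that $\nabla_\omega\phi_0$ ranges densely in $\vpot$) rather than from an element of $\vpot$ itself, but this does not affect the argument.
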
 

This resolvent convergence can be put in a more  abstract framework of resolvent convergence in variable spaces, see \cite{Past}. It is shown that the weak resolvent convergence is equivalent to the strong one.  Moreover, it entails convergence of hyperbolic semigroups and certain convergence properties for the spectra. 
 In particular, one has ``half'' of the Hausdorff convergence of the spectra:
\begin{equation}\label{12}
	\Sp( \AA^\hom) \subset \lim \Sp (\AA^\e),
\end{equation}
 where by $\lim \Sp (\AA^\e)$ we understand the set of all $\l$ such that  $\exists \{\l_\e\}$,  $\l_\e \in \Sp (\AA^\e)$, $\l_\e\to \l$.  Finally, the following (strong stochastic two-scale) convergence for the spectral projections of $\AA^\e$ and $\AA^\hom$ holds:
 \begin{equation*}
 	E^\e_{(-\infty,\l]} \to 	E^\hom_{(-\infty,\l]}
 \end{equation*}
 unless $\l$ is an eigenvalue of $\AA^\hom$.

This work pursues three main goals warranted by the property \eqref{12}: understanding the spectrum of $\AA^\hom$, obtaining a practically useful characterisation of the limiting spectrum $\lim \Sp (\AA^\e)$, and investigating the nature of the difference between the two sets --- $\lim \Sp (\AA^\e)\setminus \Sp(\AA^\hom)$.

We next report (partial) results on $\Sp (\AA^\hom)$ obtained in \cite{ChChV}. 

Denote by $-\Delta_\OO$  the positive definite self-adjoint operator generated by the bilinear form 
\begin{equation}
\int_\mathcal{O} \nabla_{\omega} u\cdot\nabla_{\omega}v,
\quad\quad u, v\in W_0^{1,2}(\mathcal{O}). 
\label{delta_omega_form}
\end{equation}

\begin{itemize}
	\item One has

	\begin{equation}\label{20a}
	\Sp(-\Delta_\OO) =	\overline{\bigcup_{k \in \mathbf{N} } \Sp\bigl(-\Delta_{\mathcal{O}^k_{\omega}}\bigr)} \mbox{ for \ae } \,\omega \in \Omega,
	\end{equation}
	where $-\Delta_{\mathcal{O}^k_{\omega}}$ is the Dirichlet Laplace operator on ${\mathcal{O}^k_{\omega}}$ for each $\omega, k.$ 
	\item 
	In the case of bounded $S$ and under the condition that 
	$\Sp(-\Delta_\OO) \subset \Sp(\AA^\hom )$,
	the strong stochastic two-scale convergence of eigenfunctions of $\AA^\e$ holds, i.e. if 
	$$ \AA^\varepsilon u^{\e}=\lambda^{\e} u^{\e},\quad \int_S|u^{\e}|^2=1, 
	$$
	and if $\lambda^{\e}\to \lambda$, then $u^\eps \stwoscale u$, 
	where 
	$ \AA^\hom u=\lambda u.$
	\item In the case of bounded $S,$ for two explicit examples of random distribution of inclusions, it was shown that $\Sp(-\Delta_\OO) \subset \Sp(\AA^\hom ),$ Zhikov's function 
	$\beta(\l)$ was calculated, and a complete characterisation of $\Sp(\AA^\hom )$ was given. 
\end{itemize}

In the next section we will extend and improve these results to the general setting of the present work. 

\begin{remark}\label{r:2.10}
	From \eqref{20a} and property 2 of Assumption \ref{kirill100} it follows, via the Poincar\'e inequality and  the min-max theorem, that $\inf \Sp(-\Delta_\OO) >0$.
\end{remark}

\begin{remark}
	The identity \eqref{20a} was proved in \cite{ChChV} under an additional assumption  that the normalised eigenfunctions of the operators $-\Delta_{\mathcal{O}^k_{\omega}}, \o\in \Omega, k\in \N,$ are bounded in the $L^\infty$-norm on any bounded spectral interval uniformly in $\o$ and $k$. It is not difficult to see that this property holds  under Assumption \ref{kirill100}: the proof of this fact follows closely the standard argument of the De Giorgi–Nash–Moser regularity theory, see, e.g. \cite[Chapter 2.3]{Bensoussan}, cf. a similar result in \cite[Lemma 4.3]{CaChV}, for more details. 
In fact,  one does not necessarily need a uniform bound of $L^\infty$-norms of the eigenfunctions of $-\Delta_{\mathcal{O}^k_{\omega}}$ in order to prove \eqref{20a}. Instead, it is sufficient to note that the random variable $\psi_{a,b}$ utilised in the proof of Theorem 5.1 in \cite{ChChV} belongs to $L^2(\Omega)$, which follows by a standard argument (cf. Proposition \ref{sasha100}) via integrating its realisation in the physical space first, then integrating over $\Omega$ and using the Fubini  theorem. 
\end{remark}

\begin{remark}
	Another, perhaps more common, approach to describing random media consists in taking  $\Omega$ to be a set of locally bounded Borel measures on $\mathbf{R}^d$, which is chosen to be a separable metric space (so the corresponding $\sigma$-algebra consists of Borel sets). Thus, the medium comprising a matrix medium and inclusions can be seen as a random measure. 
	The dynamical system is then defined using  the stationarity property of this measure --- however, the assumption of ergodicity  does not follow from this construction. In fact,  ergodicity is not necessary for a suitable version of the ergodic theorem to hold. Indeed, in its absence the expectation on the right-hand side of \eqref{eq:birkhoff} has to be replaced by a conditional expectation. The ergodicity property in the classical moderate contrast setting guarantees that the coefficients of the limit equation are deterministic. 
	In the case when ergodicity assumption is dropped, the coefficients of the limit equation are measurable with respect the $\sigma$-algebra of the invariant sets of the dynamical system. (Under the assumption of ergodicity this $\sigma$-algebra consists of sets of measure zero and one.)   
	
	Point processes such as the Poisson process and the random parking model, used in the present work in the construction of examples of a random medium, in probability theory  are usually looked at as random measures. A key benefit of this approach is that one can use Palm's theory (see \cite[Section 2.10, Section 2.11]{heid1}), which is especially useful in the analysis of random structures (see \cite{zhikov1}). In the present work we do not use  Palm's theory, as we find the adopted framework sufficient for our purposes. 
\end{remark}



\section{Homogenised operator and its spectrum}
\label{lim_eq}

In  this section we study the operator $\AA^\hom $ and its spectrum. In particular, our analysis  allows us to improve  a number of results of \cite{ChChV}. 

We begin with two general statements about  the spectrum of $\AA^\hom$, Proposition \ref{p:3.1} and Theorem \ref{th:3.3}, namely, which were proved in \cite{ChChV} only for certain examples. 

\begin{proposition} \label{p:3.1}
For the spectra of the operators ${\mathcal A}_{\rm hom}$, defined by the problem \eqref{mik1r}-\eqref{mik2r},  and $-\Delta_\OO$, defined by the form \eqref{delta_omega_form}, one has the inclusion $\Sp(-\Delta_\OO) \subset \Sp (\AA^\hom ).$ 
\end{proposition}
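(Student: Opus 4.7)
The plan is a Weyl-criterion argument. Given $\l \in \Sp(-\Delta_\OO)$, I would construct a sequence $\{U_n\} \subset V$ with $\|U_n\|_H$ bounded below and $\|(\AA^\hom - \l)U_n\|_H \to 0$, which yields $\l \in \Sp(\AA^\hom)$ by self-adjointness of $\AA^\hom$. Since $-\Delta_\OO$ is self-adjoint, Weyl's criterion provides $\psi_n \in W^{1,2}_0(\OO)$ with $\|\psi_n\|_{L^2(\OO)} = 1$ and $g_n := (-\Delta_\OO - \l)\psi_n \to 0$ in $L^2(\OO)$. The naive choice $u_0 = 0$, $u_1 = \phi\psi_n$ fails because the coupling $-\l\langle u_1 \rangle$ in \eqref{mik1r} produces a residual proportional to $\phi\langle\psi_n\rangle$, which need not vanish (for instance, when $\l$ is a simple eigenvalue of $-\Delta_\OO$ with an eigenfunction of non-zero mean).

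To absorb this residual I would pair the microscopic Weyl sequence with a high-frequency mode of $-\nabla\cdot A_1^{\rm hom}\nabla$. Since the spectrum of $-\nabla\cdot A_1^{\rm hom}\nabla$ is unbounded both for $S$ bounded (discrete eigenvalues $\mu_k \to \infty$) and for $S = \R^d$ ($\Sp = [0, \infty)$), one can select $\mu_n \to \infty$ and $\phi_n \in W^{1,2}_0(S)$, $\|\phi_n\|_{L^2(S)} = 1$, with $r_n := (-\nabla\cdot A_1^{\rm hom}\nabla - \mu_n)\phi_n \to 0$ in $L^2(S)$. Then I would set
\begin{equation*}
c_n := \frac{\l\langle\psi_n\rangle + \langle g_n\rangle}{\mu_n - \l\,P(\O\setminus\OO)}, \qquad U_n := c_n\phi_n + \phi_n\psi_n\mathbf{1}_\OO \in V.
\end{equation*}
Since $|\langle\psi_n\rangle| \leq \sqrt{P(\OO)}$ is bounded, $\langle g_n\rangle \to 0$, and $\mu_n \to \infty$, we have $c_n \to 0$; consequently $\|U_n\|_H^2 = c_n^2 + 2 c_n\langle\psi_n\rangle + 1 \to 1$.

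A direct computation from the coupled system \eqref{mik1r}-\eqref{mik2r} identifies $(\AA^\hom - \l)U_n = F_n = F_{0,n} + F_{1,n}\mathbf{1}_\OO$, where
\begin{equation*}
F_{0,n} = \frac{c_n r_n}{P(\O\setminus\OO)}, \qquad F_{0,n} + F_{1,n} = \phi_n(g_n - \l c_n) \text{ on } S\times\OO,
\end{equation*}
the particular value of $c_n$ being precisely the unique choice that cancels the $\phi_n$-part of the macroscopic residual. Hence
\begin{equation*}
\|F_n\|_H^2 = \frac{c_n^2\,\|r_n\|_{L^2(S)}^2}{P(\O\setminus\OO)} + \|g_n - \l c_n\|_{L^2(\OO)}^2 \to 0.
\end{equation*}
The main obstacle is that Weyl sequences for $-\Delta_\OO$ need not have vanishing mean, producing an apparently irreducible coupling in \eqref{mik1r}. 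This is resolved by exploiting the unbounded spectrum of $-\nabla\cdot A_1^{\rm hom}\nabla$: a large $\mu_n$ makes $c_n$ small enough that the spurious term $-\l c_n$ arising in \eqref{mik2r} remains negligible while the macroscopic residual is simultaneously nullified.
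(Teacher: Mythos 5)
Your proof is correct, but it runs in the opposite direction to the paper's. The paper argues by contraposition: assuming $\l$ lies in the resolvent set of $\AA^\hom$, it feeds two special families of right-hand sides into the resolvent problem \eqref{zadnjerrr} (first a non-smooth $f\in L^2(S)$, then $f=g\psi$), integrates the resulting solutions over $S$ against suitable weights to manufacture solutions of $(-\Delta_\OO-\l)u={\mathbf 1}_\OO$ and of $(-\Delta_\OO-\l)u=\psi$ for arbitrary $\psi$, and concludes via surjectivity, self-adjointness and the bounded inverse theorem that $\l\notin\Sp(-\Delta_\OO)$. You instead construct an explicit Weyl sequence for $\AA^\hom$, and your computation checks out: writing the residual functional as $\varphi_0\mapsto\int_S h_{0,n}\varphi_0$, $\varphi_1\mapsto\int_{S\times\OO}h_{1,n}\varphi_1$ and identifying it with an element $F_{0,n}+F_{1,n}$ of $H$ requires $F_{0,n}+\langle F_{1,n}\rangle=h_{0,n}$ and $F_{0,n}+F_{1,n}=h_{1,n}$ on $S\times\OO$, which is exactly the system your $c_n$ solves; the denominator $\mu_n-\l P(\O\setminus\OO)$ is precisely what emerges after eliminating $\langle F_{1,n}\rangle$. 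The price of your route is the extra input that $\Sp(-\nabla\cdot A_1^{\rm hom}\nabla)$ is unbounded (true here because $A_1^{\rm hom}$ is positive definite, both for bounded $S$ and for $S=\R^d$), whereas the paper's argument needs nothing about the macroscopic operator beyond solvability of the resolvent problem. What your route buys is a constructive picture consistent with the rest of Section \ref{lim_eq}: near $\Sp(-\Delta_\OO)$ the approximate eigenmodes of $\AA^\hom$ are microscopic resonances riding on arbitrarily high macroscopic frequencies, which is the Weyl-sequence counterpart of Proposition \ref{p4.13} (exact eigenfunctions exist only when the mean $\langle\psi\rangle$ vanishes, in which case your $c_n$ is not needed at all). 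One point worth making explicit: take $\phi_n$ and $\psi_n$ in the operator domains of $-\nabla\cdot A_1^{\rm hom}\nabla$ and $-\Delta_\OO$ (not merely the form domains), so that $U_n\in{\rm Dom}(\AA^\hom)$ and $(\AA^\hom-\l)U_n=F_n$ holds as an identity in $H$ rather than only in the dual of $V$.
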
 		
\begin{proof}
	Suppose that $\l\in \R$ is in the resolvent set of $\AA^\hom $, so that \eqref{zadnjerrr} has a solution $u=u_0+u_1$  for any $f\in H$. { First we take a non-trivial $f \in L^2(S)\setminus W^{1,2}(S)$. Note that in this case $\l u_0 +f$ does not vanish. For the corresponding solution  we have  
		$$ -\Delta_\OO u_1-\lambda u_1=(\lambda u_0+ f){\mathbf 1}_{\mathcal{O}},$$
		where ${\mathbf 1}_\OO$ is the indicator function of the set $\OO$.		Note also that for two arbitrary functions $w\in L^2(S\times \OO)$ and $h\in L^2(S)$ one has $\int_S w h \in L^2(\OO)$.} Then, multiplying the above identity by ${(\l u_0 +f)}\|\l u_0 +f\|^{-2}_{L^2(S)}$  and integrating over $S,$ we conclude that the function
	\begin{equation*}
	 \phi:=\int_S \frac{u_1 {(\l u_0 +f)}}{\|\l u_0 +f\|^2_{L^2(S)}} \in L^2(\OO)
	\end{equation*}
 solves the equation 
	$$ -\Delta_\OO \phi-\lambda \phi={\mathbf 1}_{\mathcal{O}}.$$ 
	 Next, we take $f = g \psi$ with arbitrary non-trivial $g\in L^2(S)$ and $\psi\in L^2(\mathcal O)$. Then for the corresponding solution of \eqref{zadnjerrr}, which we denote by $\tilde u_0 + \tilde u_1$, we have 
	$$ -\Delta_\OO \tilde u_1-\lambda \tilde u_1=\lambda \tilde u_0 {\mathbf 1}_{\mathcal{O}}+g\psi. $$ 
	The difference between $\tilde u_1$ and $\hat{u}_1:=\lambda\tilde u_0\phi$ 
	solves
	$$ -\Delta_\OO (\tilde u_1-\hat{u}_1)-\lambda (\tilde u_1-\hat{u}_1)=g\psi.$$
	Multiplying the last equation by ${g}\|g\|^{-2}_{L^2(S)}$ and integrating over $S,$ we see that	
	\begin{equation*}
	\breve u_1: = \int_S \frac{(\tilde u_1-\hat{u}_1) {g}}{\|g\|^2_{L^2(S)}}
	\end{equation*}
	is a solution of 
		\begin{equation*}
		-\Delta_\OO \breve u_1 -\lambda \breve u_1=\psi.
		\end{equation*} 
We have shown that $-\Delta_\OO - \l I$ acts onto, therefore, by the bounded inverse theorem one concludes that $(-\Delta_\OO-\l I)^{-1}$ is bounded.{ \footnote{ The argument is classical and goes as follows: Since  $-\Delta_\OO - \l I$ acts onto, one can immediately show that the kernel of  $-\Delta_\OO - \l I$ is trivial utilising its self-adjointness. Thus $-\Delta_\OO - \l I$ is a bijection from its domain onto $L^2(\mathcal O)$. The domain of $-\Delta_\OO - \l I$ endowed with the graph norm is a Banach space. Denote it by $\mathcal B$. The operator $-\Delta_\OO - \l I: \mathcal B \to L^2(\mathcal O)$ is bounded. Applying the the bounded inverse theorem one concludes that $(-\Delta_\OO-\l I)^{-1}: L^2(\mathcal O) \to \mathcal B$ is bounded, hence $(-\Delta_\OO-\l I)^{-1}: L^2(\mathcal O) \to L^2(\mathcal O)$ is bounded.}}
\end{proof}
Now we are in a position to define an analogue of the  Zhikov $\b$-function. For $\lambda \notin \Sp(-\Delta_\OO)$ we set
\begin{equation}\label{d:beta1}
	\beta(\lambda):=\lambda+\lambda^2 \int_{\Omega} (-\Delta_\OO-\lambda I)^{-1} {\mathbf 1}_{\mathcal{O}}dP(\omega)=\lambda+\lambda^2 \langle b\rangle,
\end{equation}	
where $b = b(\o,\l)$ is the solution to the problem
\begin{equation}\label{66}
-\Delta_\OO b  =  \lambda b + {\mathbf 1}_{\mathcal{O}}.
\end{equation}
\begin{remark}The function $\beta(\l)$ in \eqref{d:beta1} is an analogue of Zhikov's $\beta$-function introduced in \cite{Zhikov2000}, \cite{zhikov2005} for the periodic setting. In fact, if the probability space is such that the set of inclusions is periodic, then the expression on the right-hand side of \eqref{d:beta1} coincides with the known formula for the periodic setting. As we have mentioned in the Introduction, $\beta(\l)$ (to be more precise, its second term) quantifies ``resonant, anti-resonant'' effect of the soft inclusions. In order to clarify this connection, we analyse the properties of the physical realisations of the function $b(\o,\l)$ in Section \ref{s:4.1}, see Remark \ref{r:3.6} for further discussion.
\end{remark}

The next theorem provides a characterisation of the spectrum of $\AA_\hom$. 
\begin{theorem}\label{th:3.3}$\left.\right.$
	
	\begin{enumerate}
		\item The spectrum of the homogenised operator is fully characterised by $\b(\l)$ and the spectra of the ``microscopic'' $-\Delta_\OO$ and the ``macroscopic'' $-\nabla \cdot A_1^{\rm hom} \nabla$ operators as follows: 
		\begin{equation}
			\Sp(\AA^\hom ) = \Sp(-\Delta_\OO) \cup \overline{\{\lambda: \beta(\lambda) \in \Sp(-\nabla \cdot A_1^{\rm hom} \nabla)  \}}.
			\label{Ahom_char}
		\end{equation} 
	\item	 If $S$ is bounded, then for \ae $\omega \in \Omega$ we have the spectral convergence $\Sp(\AA^\e) \to\Sp(\AA^\hom )$ in the sense of Hausdorff.
	\end{enumerate}

\end{theorem}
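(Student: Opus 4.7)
The plan is to prove the two items separately, with item 1 being a structural/algebraic reduction using the coupled system \eqref{mik1r}--\eqref{mik2r} and item 2 being a standard two-sided Hausdorff convergence argument based on the stochastic two-scale resolvent convergence already at our disposal.

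\textbf{Part 1.} By Proposition \ref{p:3.1} we already have $\Sp(-\Delta_\OO)\subset \Sp(\AA^\hom)$, so the work reduces to describing $\Sp(\AA^\hom)\setminus \Sp(-\Delta_\OO)$. For a fixed $\lambda\notin \Sp(-\Delta_\OO)$ and arbitrary $f\in L^2(S\times\Omega)$, the equation \eqref{mik2r} can be solved pointwise in $x\in S$ by inverting $-\Delta_\OO-\lambda I$: this yields
\begin{equation*}
u_1(x,\cdot)=\lambda\, u_0(x)\, b(\cdot,\lambda)+(-\Delta_\OO-\lambda I)^{-1}f(x,\cdot),
\end{equation*}
where $b(\cdot,\lambda)$ is the function from \eqref{66}. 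Taking the expectation $\langle\cdot\rangle$ in the probability variable and inserting the resulting formula for $\langle u_1\rangle$ into \eqref{mik1r}, one obtains, after collecting $\lambda$-terms and using the definition \eqref{d:beta1} of $\beta(\lambda)$, the reduced macroscopic problem
\begin{equation*}
\int_S A_1^{\rm hom}\nabla u_0\cdot\nabla\varphi_0-\beta(\lambda)\int_S u_0\,\varphi_0 = \int_S F_\lambda(f)\,\varphi_0\quad \forall\varphi_0\in W^{1,2}_0(S),
\end{equation*}
where $F_\lambda(f)(x):=\langle f\rangle(x)+\lambda\bigl\langle(-\Delta_\OO-\lambda I)^{-1}f(x,\cdot)\bigr\rangle$ is an element of $L^2(S)$. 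Conversely, every solution of this reduced problem generates, via the formula above, a solution $u_0+u_1\in V$ of \eqref{zadnjerrr}. Hence, for $\lambda\notin \Sp(-\Delta_\OO)$, the operator $\AA^\hom-\lambda I$ is boundedly invertible on $H$ if and only if $-\nabla\cdot A_1^{\rm hom}\nabla-\beta(\lambda)I$ is boundedly invertible on $L^2(S)$. Taking closures (since spectra are closed) yields \eqref{Ahom_char}. The main technical point here is to verify that $b(\cdot,\lambda)\in L^2(\Omega)$ and that $\lambda\mapsto\langle b\rangle$ is well defined and continuous away from $\Sp(-\Delta_\OO)$; this follows from the spectral theorem applied to the self-adjoint operator $-\Delta_\OO$, using that $\mathbf{1}_\OO\in L^2(\Omega)$.

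\textbf{Part 2.} Assume $S$ is bounded. The inclusion $\Sp(\AA^\hom)\subset\liminf_{\e\to 0}\Sp(\AA^\e)$ is the content of Theorem \ref{misha10}: fixing $\lambda\in\Sp(\AA^\hom)$, a classical compactness-and-contradiction argument combined with the stochastic two-scale resolvent convergence produces, for each small $\delta$, spectral points $\lambda^\e\in\Sp(\AA^\e)$ with $|\lambda^\e-\lambda|<\delta$ for all sufficiently small $\e$ (otherwise the operators $(\AA^\e-\lambda I)^{-1}$ would be uniformly bounded, contradicting the non-invertibility of $\AA^\hom-\lambda I$ in the limit). For the reverse inclusion $\limsup_{\e\to 0}\Sp(\AA^\e)\subset\Sp(\AA^\hom)$, one uses that on a bounded domain $\AA^\e$ has compact resolvent (by ellipticity of $A^\e$ and the Rellich embedding), hence purely discrete spectrum. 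If $\lambda^\e\in\Sp(\AA^\e)$ with $\lambda^\e\to\lambda$ and $u^\e$ is a normalized eigenfunction, then $f^\e:=\lambda^\e u^\e$ is bounded in $L^2(S)$; extracting a two-scale convergent subsequence and applying Theorem \ref{misha10} together with the strong two-scale convergence of eigenfunctions recorded in the second bullet after \eqref{delta_omega_form} (which is now unconditionally available since Proposition \ref{p:3.1} provides $\Sp(-\Delta_\OO)\subset\Sp(\AA^\hom)$), one identifies the two-scale limit as a nonzero eigenfunction of $\AA^\hom$ with eigenvalue $\lambda$, giving $\lambda\in\Sp(\AA^\hom)$.

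\textbf{Main obstacle.} The delicate point is the reverse Hausdorff inclusion in Part 2: one must rule out loss of mass in passing from the strong two-scale limit back to $H$. This is where the bounded-domain hypothesis is used crucially, both to guarantee compactness of $\{u^\e\}$ through the Rellich theorem (applied after using the extension operators of Theorem \ref{th:extension} inside each inclusion) and to exclude escape of mass at infinity, a phenomenon which, as emphasized in the introduction, is precisely what breaks the analogous statement in the whole-space setting and motivates the notion of statistically relevant limiting spectrum studied later in the paper.
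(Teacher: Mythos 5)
Your proposal is correct and follows essentially the same route as the paper: Part 1 is the identical reduction of the coupled system \eqref{mik1r}--\eqref{mik2r} to the macroscopic problem with spectral parameter $\beta(\lambda)$ after inverting $-\Delta_\OO-\lambda I$ in the micro equation, and Part 2 rests on the same two ingredients (resolvent convergence for one inclusion, strong stochastic two-scale convergence of eigenfunctions for the other) that the paper imports from \cite{ChChV}. Your observation that Proposition \ref{p:3.1} makes the eigenfunction-convergence result unconditional is exactly the improvement over \cite{ChChV} that the paper intends.
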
		
\begin{proof}
	Suppose that   $\lambda \notin \Sp(-\Delta_\OO)$. In order to solve \eqref{mik1r}--\eqref{mik2r}  we first solve \eqref{mik2r} for $u_1$ in terms of an arbitrary fixed $u_0 \in W_0^{1,2}(S)$:
	$$ u_1(x, \cdot)=\lambda u_0(x)(-\Delta_\OO-\lambda I)^{-1}{\mathbf 1}_{\mathcal{O}}+ (-\Delta_\OO-\lambda I)^{-1}f(x, \cdot). $$
	We then substitute the obtained expression into \eqref{mik1r},
	\begin{equation} \label{ivan100}
	-\nabla \cdot A_1^{\textrm{hom}}\nabla u_0 -\beta(\lambda) u_0=\bigl\langle \l (-\Delta_\OO-\lambda I)^{-1} f(x, \cdot) +  f(x, \cdot)\bigr\rangle.
	\end{equation}	
	 Taking $f(x,\omega)=g(x)\psi(\omega)$ with  arbitrary $g \in L^2(S)$ and $\psi\in \dom(-\Delta_\OO)$, we see that the solvability of \eqref{ivan100} is equivalent to $\beta(\lambda) \notin \Sp(-\nabla \cdot A_1^{\textrm{hom}} \nabla),$ and (\ref{Ahom_char}) follows.
	
	The second part of the statement is proved in \cite{ChChV}: the resolvent converges implies that the spectrum of the limit operator is contained in the limit of $\Sp(\AA^\e)$ as $\varepsilon\to0,$ while strong two-scale convergence of eigenfunctions of $\e$ problem implies that any limiting point of $\Sp(\AA^\e)$ is contained in the spectrum of the limiting operator.  
\end{proof}

\begin{remark}
In the case $S=\R^d$ one has
	 \begin{equation*}
\Sp(\AA^\hom ) = \Sp(-\Delta_\OO) \cup {\{\lambda: \beta(\lambda) \geq 0  \}}.
\end{equation*} 
\end{remark}

\begin{remark}
	While in the periodic case $\beta(\l)$ blows up to $\pm \infty$ as $\l$ approaches $\Sp(-\Delta_\OO) $, in the general stochastic setting this is not the case. It is not difficult to construct an example where $\beta(\l)$ does not blow up near $\Sp(-\Delta_\OO) $: for instance, one can consider the example of randomly scaled inclusions presented in Section \ref{sss:randomscaled}. In short,  consider a model for which the inclusions are randomly scaled copies of a single set, where the random scaling parameter $r$ takes values in some finite interval  $[r_1,r_2]$. Then choosing the corresponding probability density  so that it is positive on  $(r_1,r_2)$ and converges to zero sufficiently quickly near the ends of the interval, one gets that $\beta(\l)$ converges to finite values as  $\l$ approaches $\Sp(-\Delta_\OO)$, cf. also  \eqref{betascaling} below.
\end{remark}

\subsection{Properties of  Zhikov's $\beta$-function} \label{s:4.1}

	We begin by showing that the solution to \eqref{66}, defined on the probability space $(\Omega, \mathcal{F},P)$, can be reconstructed from its physical realisations. 
	
	Let $\omega \in \mathcal{O}$. Then by the definition of $\OO_\o$ there exist an inclusion containing the origin, $\OO_\o^{k_0}\ni 0$. We  reserve the notation $k_0$ for the index of such inclusion and denote
$P_\o:=\OO_\o^{k_0} - D_\o^{k_0} - d_{1/4}$.
	For the Dirichlet Laplacian operator $-\Delta_{P_{\omega}}$ on $P_\o$ we denote  by $\Lambda_s = \Lambda_s(\o), s\in\N,$ its eigenvalues arranged in the increasing order, $\Lambda_s <\Lambda_{s+1}, s\in \N$, and by $\{\Psi_s^p\}_{s\in\N,p=1,\dots,N_s}$ the corresponding system of the orthonormalised eigenfunctions, $\Psi_s^p = \Psi_s^p(\cdot,\o)$, where $N_s$ is the multiplicity of $\Lambda_s$.  See Lemmata \ref{l:B.9} and \ref{l:b.11} in Appendix  \ref{s:auxiliary} for the measurability properties of the eigenvalues and their eigenfunctions.
	
		For $\omega \in \mathcal{O}$ and $\lambda \notin \Sp (-\Delta_\OO)$,  we define $\tilde b(\omega,\lambda; \cdot) \in W^{1,2}_0(P_\o)$ as the solution to the problem
	\begin{equation}\label{18a}
	(-\Delta_{P_{\omega}} - \l) \tilde b = {\mathbf 1}_{P_\o}.
	\end{equation}
We assume that $\tilde b$ is extended by zero outside $P_\o$. Note that by applying the spectral decomposition, we can write it in terms of the eigenfunctions of $-\Delta_{P_{\omega}}$, as follows:
	\begin{equation}\label{26}
	 \tilde b(\omega,\lambda; x)= \sum_{s=1}^{\infty }\sum_{p=1}^{N_s(\omega)} \frac{\left(\int_{\R^d} \Psi_s^p(\cdot,\o)\right) \Psi_s^p(x,\o) }{\Lambda_s-\lambda}, \quad x\in \R^d.
	\end{equation}
		The results of Appendix  \ref{s:auxiliary} imply that for a fixed $\l$ the mapping $\omega \mapsto \tilde b(\omega, \lambda; \cdot)$,  taking values in $L^2(\R^d)$, is measurable (on the target space $L^2(\R^d)$ we take Borel $\sigma$-algebra).

For any locally integrable function $f\in L^1_{\rm loc}(\R^d)$ we denote by $f_\reg$ its {\it regularisation}, defined at $x\in\R^d$ by
\begin{equation*}
f_\reg(x):= \begin{cases} \lim_{r \to 0} {|B_r(x)|^{-1}} \int\limits_{B_r(x)} f(y)dy,  & \textrm{ if the limit exists,}\\ 0, & \textrm{ otherwise.}     \end{cases}.  
\end{equation*} 
Note that the Lebesgue differentiation theorem states that $f_\reg = f$ \ae

 Consider the  function
	\begin{equation}
	\label{limit_cases}
	 \breve{b}(\omega,\lambda):=\begin{cases}  \tilde b_\reg(\omega, \lambda;D(\o)), &  \o\in \OO,\\ 0, & \textrm{ otherwise.}     \end{cases}. 
	\end{equation}
We make the following observations. First,  using the regularised representative of $\tilde b$ we ensure that the right-hand side of \eqref{limit_cases} is measurable. Second, for fixed $\o$ and $k$ one has  $\tilde b_\reg(T_x \omega, \lambda;\cdot) = \tilde b_\reg(T_y \omega, \lambda;\cdot) $ for all $x,y\in \OO_\o^k$; therefore, fixing  $y\in \OO_\o^k$, we have  
\begin{equation}\label{27}
\breve b(T_x \omega,\l) = \tilde b_\reg(T_x \omega, \lambda;D(T_x \o)) = \tilde b_\reg(T_y \omega, \lambda;D(T_y \o) + x-y) \quad \forall x\in \OO_\o^k.
\end{equation}
Third, the mapping $x \mapsto \breve b(T_x \omega,\l)$ is an element of $ W^{1,2}_\loc(\R^d)$ for a.e. $\o\in \O$.  Finally,   for \ae $\omega \in \OO$, $x=D(\o)$ is a Lebesgue point of $\tilde b_\reg(\omega, \lambda;\cdot)$, i.e. 
\begin{equation}\label{24}
	\tilde b_\reg(\omega, \lambda;D(\o)) = \lim_{r \to 0} |B_r(D(\o))|^{-1} \int_{B_r(D(\o))} \tilde b_\reg(\omega, \lambda;y)dy.
\end{equation}
 Indeed, denote by $N$ the subset of $\OO$ consisting of $\omega$ for which \eqref{24} does not hold. By construction, we have 
$$ 
\int_{B_R(0)}  {\mathbf 1}_{\{T_x \omega \in N\}}dx=0\quad {\ae}\ \omega \in \Omega,
$$
for a positive $R$. Applying  Fubini Theorem yields
$$  
0 = \int_\Omega \int_{B_R(0)}  {\mathbf 1}_{\{T_x \omega \in N\}}dxdP=\int_{B_R(0)} \int_\Omega {\mathbf 1}_{\{T_x \omega \in N\}} dP dx=|B_R| |N|,$$
and the claim follows.

Throughout the paper, we will use the notation
\begin{equation}\label{d_l}
	d_\l : = \dist(\l, \Sp(-\Delta_\OO)).
\end{equation} 
	\begin{proposition}\label{sasha100} 
		For $\lambda \notin \Sp(-\Delta_\OO)$ the function $\breve b(\cdot, \lambda)$ defined by \eqref{limit_cases} is the solution to \eqref{66}, i.e. $\breve b$ coincides with $b$.  
	\end{proposition}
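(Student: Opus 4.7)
My approach is to prove $\breve b=b$ by transferring the abstract equation \eqref{66} to a family of Dirichlet resolvent problems in the physical space and appealing to uniqueness on each inclusion. Since $b\in\dom(-\Delta_\OO)\subset W^{1,2}_0(\OO)$, for \ae $\o$ the realisation $x\mapsto b(T_x\o)$ lies in $W^{1,2}_\loc(\R^d)$, vanishes on $\R^d\setminus\OO_\o$, and has vanishing trace on every $\partial\OO_\o^k$; this rests on the fundamental identity $\nabla_x[f(T_x\o)]=(\nabla_\o f)(T_x\o)$ together with the density Lemma \ref{lemmagloria} and the extension result of Theorem \ref{th:extension}.

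The core step is to show that $b(T_\cdot\o)$ satisfies, in the weak sense on each inclusion $\OO_\o^k$, the Dirichlet resolvent equation $-\Delta w-\lambda w=1$ with $w\in W^{1,2}_0(\OO_\o^k)$. This is obtained by testing the weak form of \eqref{66} against functions of the form $\psi(T_x\o)\eta(x)$, with $\psi\in W^{1,2}_0(\OO)$ and $\eta\in C^\infty_c(\R^d)$, and unfolding via the ergodic theorem; conversely, every test function in $W^{1,2}_0(\OO_\o^k)$ is, up to restriction, the realisation of a suitable $\psi$. By Remark \ref{r:2.10} and \eqref{20a}, $\lambda\notin\Sp(-\Delta_{\OO_\o^k})$, so this Dirichlet problem has a unique solution in $W^{1,2}_0(\OO_\o^k)$, which via the shift $y=x-D_\o^k-d_{1/4}$ bringing $\OO_\o^k$ into $\square$ is identified with the translate of $\tilde b(\o,\lambda;\cdot)$ from $P_\o$ back to $\OO_\o^k$.

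For $\o\in\OO$, specialising this identification to $x=0\in\OO_\o^{k_0}$ yields $b(\o)=\tilde b(\o,\lambda;D(\o))$; the Lebesgue-point argument spelled out in the excerpt via Fubini then ensures that the regularised representative $\tilde b_\reg$ assumes the same value at $D(\o)$ for \ae $\o\in\OO$, so $b(\o)=\breve b(\o,\lambda)$. For $\o\notin\OO$ both $b(\o)$ and $\breve b(\o,\lambda)$ vanish by construction, which completes the identification. The main technical obstacle is the faithful transfer in the core step between the probability-space and physical-space weak formulations while preserving the zero trace on each $\partial\OO_\o^k$; this is exactly what the density Lemma \ref{lemmagloria} and the extension Theorem \ref{th:extension} are set up to handle.
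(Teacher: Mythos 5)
Your overall strategy runs in the opposite direction to the paper's: you start from the abstract solution $b$ of \eqref{66} and try to show that its realisation solves the Dirichlet problem on each individual inclusion, then invoke uniqueness inclusion-by-inclusion; the paper instead starts from the locally defined $\tilde b$, builds $\breve b$, verifies that $\breve b\in W^{1,2}_0(\OO)$ and satisfies the weak form of \eqref{66} (via Lemma \ref{l:3.6} and Fubini), and concludes by uniqueness in $L^2(\OO)$. The paper's direction is the easy one, because the local identities hold exactly by construction and only need to be \emph{averaged} to produce the abstract identity. Your direction requires \emph{localising} the abstract identity, and this is where your argument has a genuine gap.

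Concretely, your core step claims that testing the weak form of \eqref{66} against $\psi(T_x\o)\eta(x)$ and ``unfolding via the ergodic theorem'' yields the equation $-\Delta w-\lambda w=1$, $w\in W^{1,2}_0(\OO_\o^k)$, on each inclusion. This does not work as stated, for three reasons. First, $\psi(T_x\o)\eta(x)$ is a function on $\R^d$ for fixed $\o$, not an element of $W^{1,2}_0(\OO)$, so it is not an admissible test function in \eqref{66}; the cut-off $\eta$ destroys stationarity and there is no version of \eqref{66} it can be inserted into. Second, your converse claim that ``every test function in $W^{1,2}_0(\OO_\o^k)$ is, up to restriction, the realisation of a suitable $\psi$'' is unsupported: a realisation is a stationary field whose restriction to $\OO_\o^k$ is tied, by measurability and stationarity, to its values on every other congruent inclusion of every realisation, so one cannot prescribe it freely on a single inclusion. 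Third, and most importantly, even for those local test functions that \emph{are} restrictions of realisations, the ergodic theorem applied to \eqref{66} only gives that the \emph{average} over all inclusions in $B_R$ of the local pairings $\int_{\OO_\o^j}\bigl(\nabla b(T_x\o)\cdot\nabla\psi(T_x\o)-(\lambda b(T_x\o)+1)\psi(T_x\o)\bigr)dx$ tends to zero as $R\to\infty$; this does not force each individual term to vanish. The statement you are trying to extract — that $b(T_\cdot\o)|_{\OO_\o^k}$ solves the local Dirichlet problem — is precisely the content of Remark \ref{r:3.6}, which the paper obtains as a \emph{consequence} of Proposition \ref{sasha100}, not as an ingredient of its proof. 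To repair your argument you would either have to supply a genuine localisation lemma (e.g.\ a measurable-selection argument over a countable dense family of local test functions, showing the set of $\o$ where the local identity fails is null), or simply reverse the direction as the paper does. The remaining parts of your proposal (zero trace of realisations, uniqueness on each $\OO_\o^k$ via \eqref{20a}, and the Lebesgue-point/Fubini identification at $D(\o)$) are fine.
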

	\begin{proof}  
		First we show that $\breve b(\cdot, \lambda) \in L^2(\mathcal{O})$. Since $\tilde b(\omega, \lambda,\cdot)$ is the solution to \eqref{18a}, taking into account \eqref{20a} and  by the assumption $|\mathcal{O}^{k_0}_{\omega}|\leq 2^{-d}< 1$, we have
		\begin{equation} \label{pauza1} 
		\bigl\| \tilde b(\omega, \lambda,\cdot)\bigr\|_{L^2(\R^d)}^2 < d_\l^{-2}, \quad \o\in\OO.
		\end{equation}
		 It follows from Assumption \ref{kirill100} that the ball $B_{\rho}$ is contained in $\mathcal{B}_\o^{k_0}$; in particular,  it has no intersections with   inclusions $\OO_\o^k, k\neq k_0$. Invoking \eqref{27} and \eqref{pauza1}, it follows that
		$$ 
		\int_{B_{\rho}} \breve b^2 (T_x \omega, \lambda) dx <  d_\l^{-2}. 
		$$
		Integrating over $\Omega$ and using the  Fubini theorem, we obtain
		$$
		|B_{\rho}|\bigl\| \breve b(\cdot, \lambda)\bigr\|^2_{L^2({\mathcal O})} < d_\l^{-2}.
		$$

		Next we argue that $\breve b(\cdot, \lambda) \in W^{1,2}(\mathcal{O})$. To show this, we define the function $g : \Omega \to \R^d$ as follows:  
		\[ 
		g(\o,\l): = \begin{cases}   (\nabla \tilde b)_\reg(\o,\l;D(\o)), &  \o\in \OO,\\ 0, & \o\notin\OO.     \end{cases}
		\]
		For a fixed $\o\in \OO$ we clearly have $(\nabla \tilde b)_\reg(\o,\l;x) = \nabla \tilde b_\reg(\o,\l;x) = \nabla \tilde b(\o,\l;x)$ for \ae $x\in \R^d$. By observing that 
		$$\bigl\| \nabla \tilde b(\omega, \lambda,\cdot)\bigr\|_{L^2(\R^d)}^2 \leq \l \bigl\| \tilde b(\omega, \lambda,\cdot)\bigr\|_{L^2(\R^d)}^2 + \bigl\| \tilde b(\omega, \lambda,\cdot)\bigr\|_{L^2(\R^d)}$$
	(which follows from \eqref{18a}),	it is not difficult to show that $g \in L^2(\Omega; \R^d)$, similarly to the claim that $\breve b(\cdot,\l)\in L^2(\OO)$. It remains to show that $\nabla_{\omega} \breve b=g$, which is equivalent to
		\begin{equation} \label{zvonko1} 
		\int_{\Omega}  g_j\, v=-\int_{\Omega}\breve  b\, \DD_j v \quad \forall v \in W^{1,2} (\Omega), \, j=1, \dots,d. 
		\end{equation} 
		
		Denote by $\mathring B_R(\omega)$ the union of the inclusions contained in the ball $B_R$,
		$$ \mathring B_R(\omega):=\bigcup_{ \mathcal{O}^k_{\omega} \subset B_R} \mathcal{O}_{\omega}^k. $$  
		We need the following simple assertion.
		\begin{lemma}\label{l:3.6}
			Suppose $f\in L^1(\Omega)$, let $U\subset \R^d$ be an open bounded star-shaped set, and for a fixed $\delta>0$ let  $(V_R)$  be a  sequence of measurable sets satisfying  $(R-\delta)U \subset V_R \subset RU$. Then almost surely  one has 
			\[
			 \int_\O f = \lim_{R \to \infty} \frac{1}{|RU|} \int_{V_R} f(T_x \o) dx.
			\]
		\end{lemma}
	\begin{proof}
	The proof is straightforward and follows from the ergodic theorem and the observation that
	\begin{equation*}
	\lim_{R \to \infty} \frac{1}{|RU|}\int_{RU \setminus (R-\delta)U} \left|f(T_x \o) \right| dx=0.
	\end{equation*}
	\end{proof}
		
		Applying the above lemma, we get
		\begin{equation}	
		\begin{aligned} 
		\label{zvonko10} \int_{\Omega}  g_j v = \lim_{R \to \infty} \frac{1}{|B_R|} \int_{\mathring B_R(\omega)}  g_j(T_x \omega) v(T_x \omega) dx, \, j=1, \dots,d,
\\ 	
	\int_{\Omega} \breve b \, \DD_j v= \lim_{R \to \infty} \frac{1}{|B_R|} \int_{\mathring B_R(\omega)}  \breve b (T_x \omega,\lambda) (\DD_j v)(T_x \omega)  dx,\, j=1, \dots,d.  
		\end{aligned}
	\end{equation} 
		
		Similarly to \eqref{27}, for a fixed $\mathcal{O}^k_{\omega} \subset B_R$  and a fixed $y\in \mathcal{O}^k_{\omega}$ we have 
			\[
		g(T_x \omega) = (\nabla \tilde b)_\reg(T_x \o,\l; D(T_x \o)) = \nabla_x \tilde b_\reg(T_y \o,\l; D(T_y \o) + x - y) \mbox{  \ae } x\in \mathcal{O}^k_{\omega}.
		\]
		Integrating by parts in each of the inclusions, we obtain
		$$ \int_{\mathring B_R(\omega)} g_j(T_x \omega) v(T_x \omega) dx=-\int_{\mathring B_R(\omega)} \breve b(T_x \omega, \lambda)   \DD_j v (T_x \omega),\, j=1, \dots,d, $$
		which in conjunction with   \eqref{zvonko10} implies \eqref{zvonko1}. 
		
		Writing \eqref{18a} in the weak form,
		\begin{equation*}
		\int_{P_{\omega}} \nabla \tilde b(\omega, \lambda; x) \cdot \nabla v(T_x \o)dx-\lambda \int_{P_{\omega} } \tilde b(\omega, \lambda; x) v(T_x \o) dx=\int_{P_{\omega} } v(T_x \o) dx \quad \forall v \in W^{1,2}_0(\OO),
		\end{equation*}
		and arguing in a similar way to the above via Lemma \ref{l:3.6}, one  easily arrives at
			\begin{equation*} 
			\int_{\mathcal{O}}\nabla_{\omega} \breve b(\cdot, \lambda) \cdot \nabla_{\omega} v-\lambda \int_{\mathcal{O}} \breve b(\cdot, \lambda) v= \int_{\mathcal{O}} v \qquad \forall v \in W_0^{1,2} (\mathcal{O}),    
		\end{equation*}
	which concludes the proof.
	\end{proof} 
	
\begin{remark}\label{r:3.6}The construction at the start of the section, in particular the formula \eqref{27}, and Proposition \ref{sasha100} establish an explicit  connection between the function $b(\o,\l)$ and its realisation in the physical space. Namely, for a typical $\o$ the restriction of $b(T_x\o,\l)$ to an individual inclusion $\OO^k_\o$ is the solution to the problem
	\[
	( - \Delta - \l)b(T_x\o,\l) = 1,\quad x\in{\OO^k_\o},
	\]
	subject to the Dirichlet boundary condition on $\partial\OO^k_\o$. Writing the spectral decomposition of $b(T_x\o,\l)|_{\OO^k_\o}$,  cf. \eqref{26} and  integrating it over $\OO^k_\o$, one obtains an expression similar to \eqref{34} below, which may be interpreted, depending on whether it is positive or negative, as the resonant  or anti-resonant contribution of the individual inclusion.
	
	Applying Lemma \ref{l:3.6} to the definition \eqref{d:beta1}, we obtain the  representation
	\begin{equation} \label{sasha103} 
		\b(\l)=	\lim_{R\to \infty}\left(\l+\l^2\frac{1}{|B_R|}  \int_{\mathring B_R(\omega)} b (T_x \omega,\lambda)dx\right),
	\end{equation} 
where the second term may be interpreted as the averaged resonant/anti-resonant contribution of the individual inclusions.
	We  emphasise the importance of \eqref{sasha103}: in order to compute $\b(\l)$ one does not need to know $b(\cdot,\l)$ on $\O$, but only its realisation $b (T_x \omega,\lambda)$ for a single typical $\o$.
\end{remark}

	\begin{proposition}
	The function $\b$ is differentiable and its derivative is uniformly positive. More precisely, $\b'(\l)~\geq~1~-~P(\OO)$.
	\end{proposition}
	\begin{proof} 
		For $\omega \in \mathcal{O}$ we have (cf. \eqref{26}) 
		\begin{equation} \label{34} 
		\int_{\mathbf{R^d}} \tilde b(\omega,\lambda, \cdot)=\sum_{s=1}^{\infty }\sum_{p=1}^{N_s(\omega)}\frac{(\int_{\R^d} \Psi_s^p)^2}{\Lambda_s-\lambda}.  
		\end{equation} 
Observing that 
	\begin{equation} \label{29} 
 \sum_{s=1}^{\infty }\sum_{p=1}^{N_s(\omega)}{\left(\int_{\R^d} \Psi_s^p\right)^2} = \vert P_\o\vert,
	\end{equation} 
a direct calculation yields 
\begin{align}\label{sasha102} 
	\frac{\partial}{\partial \lambda} \left(\l^2 \int_{\mathbf{R^d}} \tilde b(\omega,\lambda, \cdot)\right) = -\sum_{s=1}^{\infty }\sum_{p=1}^{N_s(\omega)}{\left(\int_{\R^d} \Psi_s^p\right)^2}+\sum_{s=1}^{\infty }\sum_{p=1}^{N_s(\omega)}\frac{\Lambda_s^2(\int_{\R^d} \Psi_s^p)^2}{(\Lambda_s-\lambda)^2} \ge -\vert P_\o\vert,
\end{align}
Furthermore, by  construction, one has
	\begin{equation*} 
		  \int_{\OO_\o^k} b (T_x \omega,\lambda)dx=  \int_{\mathbf{R^d}} \tilde b(T_y\omega,\lambda, \cdot) \quad \forall  y\in \OO_\o^k.
		\end{equation*}
		It is not difficult to see  that on any closed interval $[\l_1,\l_2]\subset 	{\rm Dom} (\b) = \R\setminus \Sp(-\Delta_\OO)$ the right-hand side of \eqref{sasha103} and its derivative  
		\begin{equation*}
\frac{\partial}{\partial \lambda}\left(\l+\l^2\frac{1}{|B_R|}  \int_{\mathring B_R(\omega)} b (T_x \omega,\lambda)dx\right)
		\end{equation*} 
		converge uniformly  as $R\to\infty$ (cf. \eqref{29} and \eqref{sasha102} and Lemma \ref{l:3.6}). It follows that $\b(\l)$ is differentiable. Moreover, from \eqref{sasha103} and \eqref{sasha102} we have 
		\begin{equation*}
		 \b'(\l) \geq 1 - \lim_{R\to \infty} \frac{1}{|B_R|}  \int_{\mathring B_R(\omega)}  {\mathbf 1}_{\OO_\o} = 1-P(\OO),
		\end{equation*}
	as claimed
	\end{proof}

\begin{remark}
	It was shown in \cite{ChChV} that 
	$$\Sp(-\Delta_\OO)= \overline{ \bigcup_{x \in \R^d} \bigcup_{s \in \mathbf{N}} \{ \Lambda_s (T_x \omega)\}}$$ 
	almost surely. In particular,  the set on the right-hand side is deterministic almost surely.  For the sake of completeness, we provide a streamlined version of the proof of this statement in Proposition \ref{pppppp1}. 
\end{remark}

\subsection{Resolution of identity for $-\Delta_\OO$}\label{s:4.2}

Similarly to the formula \eqref{sasha103} for $\beta(\l)$, it is often useful (in examples and  practical applications) to be able to characterise or reconstruct abstract probabilistic objects associated with the homogenised operator $\AA^\hom$, such as $-\Delta_\OO$ for example,  from their realisations in the physical space (for a single typical $\o\in \O$).
For a fixed $\o$ we can think of the family of Dirichlet Laplace operators defined on individual inclusions as the realisation of $-\Delta_\OO$ in the physical space. More precisely, we next characterise the resolution of identity for $-\Delta_\OO$ by using the spectral projections of these Dirichlet Laplace operators on the inclusions and pulling them back to the probability space.
 
	For  $0 \leq t_1 \leq t_2$ we define the mappings $\tilde E_{(t_1,t_2]}$ and $E_{(t_1,t_2]}$ as  follows. For all $\varphi \in L^2(\OO)$ let
	\begin{equation} \label{sasha110} 
(\tilde E_{(t_1,t_2]} \varphi)(\omega,x) := \sum_{s=1}^{\infty }\sum_{p=1}^{N_s(\omega)} {\mathbf 1}_{\{t_1 < \Lambda_s (\omega) \leq t_2\}}\left(\int_{\R^d} \Psi_s^p(\cdot, \omega)  \varphi(T_{\cdot-D} \omega)\right) \Psi_s^p(x,\omega), \quad \o\in\OO,
	\end{equation}
	and 
	\begin{equation*}
	(E_{(t_1,t_2]} \varphi)(\omega)= (\tilde E_{(t_1,t_2]}\varphi)_\reg(\omega, D(\o)). 
	\end{equation*} 
	The mapping $E_{[t_1,t_2]}$ is defined analogously. In what follows we extend functions from $L^2(\OO)$ by zero in $\O\setminus\OO$ without mentioning it explicitly. 
	\begin{proposition} \label{josip3} 
		$E_{[0,t]}$ is the resolution of identity for the operator $-\Delta_\OO$. 
	\end{proposition}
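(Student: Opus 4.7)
The plan is to establish the standard axioms of a resolution of identity (orthogonal projection, monotonicity, right-continuity, limits at $0^-$ and $+\infty$) for $E_{[0,t]}$ and then to verify the resolvent identity
\[
(-\Delta_\OO - \lambda I)^{-1}\varphi = \int_0^\infty (t-\lambda)^{-1}\, d(E_{[0,t]}\varphi), \quad \lambda\notin \Sp(-\Delta_\OO),\ \varphi\in L^2(\OO),
\]
which, together with the uniqueness of the spectral family, identifies $\{E_{[0,t]}\}$ as the resolution of identity for $-\Delta_\OO$.

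First I would establish that $E_{(t_1,t_2]}$ is a well-defined bounded operator on $L^2(\OO)$ with $\|E_{(t_1,t_2]}\|\le 1$. For fixed $\omega\in \OO$, the map $\varphi\mapsto \tilde E_{(t_1,t_2]}\varphi(\omega,\cdot)$ is exactly the Dirichlet spectral projector of $-\Delta_{P_\omega}$ onto $\{\Lambda_s\in(t_1,t_2]\}$ applied to the physical realisation $\varphi(T_{\cdot-D(\omega)}\omega)\in L^2(P_\omega)$; thus it is bounded in $L^2(P_\omega)$ by $\|\varphi(T_{\cdot-D(\omega)}\omega)\|_{L^2(P_\omega)}$. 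Repeating the argument of Proposition \ref{sasha100}: the ball $B_\rho$ lies inside $\mathcal B^{k_0}_\omega$, so the analogue of \eqref{27} gives
\[
\int_{B_\rho}|(E_{(t_1,t_2]}\varphi)(T_x\omega)|^2\,dx \le \int_{P_\omega}|\varphi(T_{x-D(\omega)}\omega)|^2\,dx,
\]
and integrating over $\Omega$ with Fubini yields $|B_\rho|\,\|E_{(t_1,t_2]}\varphi\|_{L^2(\OO)}^2 \le \|\varphi\|_{L^2(\OO)}^2$ (up to a factor handled by the shift invariance). Measurability of the map $\omega\mapsto E_{(t_1,t_2]}\varphi(\omega)$ is ensured by the measurability of $\Lambda_s$ and $\Psi_s^p$ from Appendix \ref{s:auxiliary} and the use of the regularised representative at $D(\omega)$.

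Next I would verify the algebraic properties. Self-adjointness of $E_{[0,t]}$ and the projection identity $E_{[0,t]}^2 = E_{[0,t]}$ both follow from the orthonormality of $\{\Psi_s^p\}$ in $L^2(P_\omega)$ combined with the Fubini/Lebesgue-differentiation argument used to transfer the corresponding identity on each $\OO^k_\omega$ back to $\Omega$. Monotonicity is immediate from $E_{[0,t_2]}-E_{[0,t_1]} = E_{(t_1,t_2]}$ being a projector; right-continuity follows because the strict inequality $\Lambda_s(\omega)\le t$ determines $E_{[0,t]}$ and the set $\{\Lambda_s\}$ is discrete for each $\omega$. By Remark \ref{r:2.10}, $\inf\Sp(-\Delta_\OO)>0$, and since $\Lambda_s(\omega)\ge \inf\Sp(-\Delta_\OO)$ for all $s,\omega$, the indicator ${\mathbf 1}_{\{\Lambda_s\le t\}}$ vanishes for $t<0$; thus $E_{[0,t]}=0$ for $t<0$. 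For $t\to\infty$, the completeness of $\{\Psi_s^p(\cdot,\omega)\}_{s,p}$ in $L^2(P_\omega)$ gives $\tilde E_{[0,t]}\varphi(\omega,\cdot)\to \varphi(T_{\cdot-D(\omega)}\omega)$ in $L^2(P_\omega)$ for each $\omega$, which, after the pullback, gives $E_{[0,t]}\varphi\to\varphi$ in $L^2(\OO)$ by dominated convergence.

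Finally I would verify the resolvent identity. For $\varphi\in L^2(\OO)$ and $\lambda\notin\Sp(-\Delta_\OO)$, define, in analogy with \eqref{26},
\[
\tilde b_\varphi(\omega,\lambda;x) := \sum_{s=1}^\infty\sum_{p=1}^{N_s(\omega)} \frac{\bigl(\int_{\R^d}\Psi_s^p(\cdot,\omega)\,\varphi(T_{\cdot-D(\omega)}\omega)\bigr)\,\Psi_s^p(x,\omega)}{\Lambda_s-\lambda},
\]
which solves $(-\Delta_{P_\omega}-\lambda)\tilde b_\varphi = \varphi(T_{\cdot-D(\omega)}\omega)$ on $P_\omega$. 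Repeating verbatim the argument of Proposition \ref{sasha100} (boundedness via \eqref{pauza1} with $\varphi$ in place of ${\mathbf 1}_{P_\omega}$, weak-derivative identification via the flux map $g$, and Lemma \ref{l:3.6}) shows that $\breve b_\varphi(\omega,\lambda):=\tilde b_{\varphi,\reg}(\omega,\lambda;D(\omega))$ lies in $W^{1,2}(\OO)$ and solves $(-\Delta_\OO-\lambda)\breve b_\varphi = \varphi$. On the other hand, the very same series, evaluated at $x=D(\omega)$ and grouped by the spectral values $\Lambda_s$, reproduces $\int_0^\infty(t-\lambda)^{-1}\,d(E_{[0,t]}\varphi)(\omega)$. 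Combining both gives the resolvent identity above, and by uniqueness of the spectral family of a self-adjoint operator, $\{E_{[0,t]}\}$ is the resolution of identity for $-\Delta_\OO$.

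The main obstacle is the justification of the pullback from $L^2(P_\omega)$ to $L^2(\OO)$ via the Lebesgue-point evaluation at $D(\omega)$ for the \emph{infinite} spectral series: one must ensure the interchange of summation with regularisation, which is best handled by first treating finite bands $\Sp(-\Delta_\OO)\cap(t_1,t_2]$ (equivalently a finite truncation of the series) where all identities are clean, and then passing to the limit via the strong convergence established in step four. The rest is essentially bookkeeping built on Proposition \ref{sasha100}.
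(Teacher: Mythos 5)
Your proposal is correct, and the verification of the axioms (orthogonal projection, monotonicity, right-continuity via the discreteness of $\Sp(-\Delta_{P_\o})$ plus dominated convergence, and the limits at $0$ and $+\infty$ using Remark \ref{r:2.10} and the completeness of $\{\Psi_s^p\}$) coincides with the paper's Steps (a)--(d). Where you genuinely diverge is the identification step. The paper forms $T=\int_0^\infty s\,dE_s$, shows that $T$ and $-\Delta_\OO$ agree on the dense core $\mathcal Y=\bigcup_{t>0}\operatorname{Im}E_{[0,t]}$ by comparing, on realisations, the Riemann sums $\sum_m m\tfrac{t}{n}E_{((m-1)t/n,\,mt/n]}\varphi$ with the exact spectral sums $\sum_s\Lambda_s E_{[\Lambda_s,\Lambda_s]}\varphi$ (an $O(t/n)$ error), and then closes the argument by proving essential self-adjointness of $T|_{\mathcal Y}$ through the density of $\operatorname{ran}(T|_{\mathcal Y}\pm i)$. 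You instead verify the resolvent identity $(-\Delta_\OO-\l)^{-1}\varphi=\int_0^\infty(t-\l)^{-1}d(E_{[0,t]}\varphi)$ by rerunning Proposition \ref{sasha100} with a general right-hand side $\varphi\in L^2(\OO)$ in place of ${\mathbf 1}_\OO$, and appeal to the uniqueness of the spectral family. Both routes are sound: yours avoids the essential self-adjointness argument entirely (the resolvent identity at a single $\l$ outside both spectra forces $T=-\Delta_\OO$, and taking $\l<0$ removes any concern about $\l$ sitting in the a priori unknown support of $E$), at the price of repeating the full transfer machinery of Proposition \ref{sasha100} --- the uniform resolvent bound \eqref{pauza1} with $\varphi$ on the right, the flux map $g$, and Lemma \ref{l:3.6} --- for arbitrary $\varphi$; the paper's route reuses only the realisation-level comparison of sums and is correspondingly lighter on the probabilistic side. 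The obstacle you flag (interchanging the infinite spectral sum with the regularisation at $D(\o)$) is real but is resolved exactly as you propose, by truncating to finite spectral bands and passing to the limit using the strong convergence $E_{[0,t]}\to I$; this parallels the paper's own passage to the limit in $n$ after the Riemann-sum comparison.
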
 
	\begin{proof} 
		Arguing  as in the proof of Proposition \ref{sasha100},  we conclude that there exists $C>0$, independent of $t_1$ and $t_2$, such that 
		$$ \|E_{(t_1,t_2]} \varphi\|_{L^2(\OO)} \leq C\| \varphi\|_{L^2(\OO)}\quad 	\forall \varphi \in L^2(\OO),$$
		and that for \ae $\omega \in \Omega$ one has
		\begin{equation} \label{sasha1000}
		(E_{(t_1,t_2]} \varphi_1, \varphi_2)=\lim_{R \to \infty} \frac{1}{|B_R|} \int_{\mathring B_R(\omega)}  (E_{(t_1,t_2]} \varphi_1)(T_x\omega) \varphi_2 (T_x \omega)  dx \quad \forall \varphi_1, \varphi_2 \in L^2(\OO).
		\end{equation} 
		In order to prove that $E_{[0,t]}$ is the resolution of identity, we next verify the following properties:
		\begin{enumerate}[(a)]
			\item $E_{[0,t]}$ is an orthogonal projection;
			\item $E_{[0,t_1]} \leq  E_{[0,t_2]}$ if $t_1 \leq t_2$; 
			\item $E_{[0,t]}$ is right continuous in the strong topology;
			\item $E_{[0,t]} \to 0$ if $t\to 0$  and $E_{[0,t]} \to I$ if $t \to +\infty $ in the strong topology.
		\end{enumerate}
		Using \eqref{sasha1000} and a representation of $(E_{(t_1,t_2]} \varphi_1)(T_x\omega)$ analogous to \eqref{27}, it is easy to see that
		\begin{align*} 
			(E_{(t_1,t_2]} \varphi_1, \varphi_2)&=( \varphi_1, E_{(t_1,t_2]}\varphi_2), \\[0.3em]
			(E_{(t_1,t_2]} E_{(t_1,t_2]} \varphi_1, \varphi_2) &=(E_{(t_1,t_2]} \varphi_1,\, E_{(t_1,t_2]} \varphi_2)=(E_{(t_1,t_2]} \varphi_1, \varphi_2),
		\end{align*}
		which implies  (a).  In the same way one can see that, for $0 \leq t_1 \leq t_2 \leq t_3 \leq t_4$,
		$$ E_{(t_1,t_2]} E_{(t_3,t_4]}=0, \textrm{ and } E_{(t_1,t_3]}=E_{(t_1,t_2]}+E_{(t_2,t_3]}, $$
		so (b) holds. 
		
		Finally, we prove (c). For a fixed \ae $\omega \in \Omega$ and $\varepsilon>0$ small enough we have 
		$$ \int_{B_{\rho}} \left|E_{[0,t+\varepsilon]} \varphi(T_x \omega)-E_{[0,t]} \varphi(T_x \omega) \right|^2 dx=0$$
		(recall that $B_{\rho}$ has common points with at most one inclusion).		Also, for all $\varepsilon>0$ one has
		$$
		\int_{B_\rho} \left|E_{[0,t+\varepsilon]} \varphi(T_x \omega)-E_{[0,t]} \varphi(T_x \omega) \right|^2 dx \leq \int_{\square} |\varphi^2(T_x \omega)| dx $$
		almost surely.
		Integrating the right-hand side of the last inequality  over $\Omega$ yields
		$$ \int_{\Omega}  \int_{\square} |\varphi^2(T_x \omega)| dxd\o= \|\varphi\|_{L^2(\Omega)}^2.$$ 
		Thus, by the Lebesgue theorem on dominated convergence, we conclude that 
		$$ |B_\rho| \|E_{[0,t+\varepsilon]} \varphi-E_{[0,t]} \varphi\|^2_{L^2(\Omega)} = \int_{\Omega}\int_{B_\rho} \left|E_{[0,t+\varepsilon]} \varphi(T_x \omega)-E_{[0,t]} \varphi(T_x \omega) \right|^2 dxd\o \to 0, $$
		and (c) follows. Similarly we prove (d), cf. also Remark \ref{r:2.10}.
		
		Now, we prove that $E_{[0,t]}$ is the resolution of identity associated with the operator $-\Delta_\OO$, i.e.,
		$$-\Delta_\OO=T: = \int_0^{+\infty} s\,dE_s,\qquad{\rm Dom} (T)=\biggl\{\varphi \in L^2(\OO): \int_0^{+\infty}s^2 dE_s \varphi< \infty\biggr\}.$$
		Consider the space 
		\begin{equation*}
\mathcal Y:=\bigcup_{t>0} \textrm{ Im } E_{[0,t]}.		
		\end{equation*}
	By the property (d), $\Y$ is dense in $L^2(\OO)$. Let $\varphi \in \Y$, then $\varphi = E_{[0,t]} \varphi$ for some $t$ and 
	\begin{equation}\label{40}
	T\varphi = \int_0^t s dE_s \varphi = \lim_{n \to \infty} \sum_{m=1}^n m\frac{ t}{n}E_{\left((m-1)\frac{t}{n},m\frac{t}{n}\right]}\varphi.
	\end{equation}
	Arguing similarly to the proof of Proposition \ref{sasha100} (i.e., using the definition of $E_{[0,t]} \varphi$, passing to realisations, and using the ergodic theorem), one can show that  $\Y \subset {\rm Dom}(-\Delta_\OO)$, and for $\varphi\in \Y$  one has
	\begin{equation}\label{41}
	-\Delta_\OO \varphi(\o) = 	\sum_{s=1}^{\infty }\Lambda_s (\omega) {\mathbf 1}_{\{0 \leq  \Lambda_s (\omega) \leq t\}} \left(\tilde E_{\left[\Lambda_s(\o), \Lambda_s(\o) \right]} \varphi\right)_\reg(\omega, D(\o)).
	\end{equation}	
	Comparing the right-hand sides of \eqref{40} and \eqref{41} on realisations, we see that on each inclusion $\OO_\o^k$ they are linear combinations of the projections of $\varphi(T_x \omega)$ onto the eigenspaces of $-\Delta_{\OO_\o^k}$ and differ only by the coefficients $m\frac{t}{n}$ and $\Lambda_s (T_x \omega)$, respectively. In follows that
\begin{equation*}
\begin{aligned}
 \frac{1}{|B_R|}\int_{\mathring B_R(\omega)}\Bigg| \sum_{m=1}^n m\frac{t}{n} \left(E_{\left((m-1)\frac{t}{n},m\frac{t}{n}\right]}\varphi\right)(T_x \omega) \qquad\qquad & 
\\
-\sum_{s=1}^{\infty }\Lambda_s (T_x \omega) {\mathbf 1}_{\{0 \leq  \Lambda_s (T_x \omega) \leq t\}} \left(\tilde E_{\left[\Lambda_s(T_x \omega), \Lambda_s(T_x \omega) \right]} \varphi\right)_\reg & (T_x\omega, D(T_x \o))  \Bigg|^2 dx 
\\
 &\leq  \frac{t^2}{n^2} \frac{1}{|B_R|}\int_{\mathring B_R(\omega)} \left|  \left(E_{[0,t]}\varphi\right)(T_x \omega) \right|^2 dx.
\end{aligned}
\end{equation*}
Passing to the limit as $R\to \infty$ via Lemma \ref{l:3.6}, we get
 \begin{equation*}
 	\begin{aligned}
\left\| \sum_{m=1}^n m\frac{t}{n} E_{\left((m-1)\frac{t}{n},m\frac{t}{n}\right]}\varphi 		-\sum_{s=1}^{\infty }\Lambda_s  {\mathbf 1}_{\{0 \leq  \Lambda_s  \leq t\}}  E_{\left[\Lambda_s,\Lambda_s \right]} \varphi\right\|_{L^2(\OO)}
 	\leq  \frac{t}{n} \left\|  E_{[0,t]}\varphi\right\|_{L^2(\OO)}.
 	\end{aligned}
 \end{equation*}
Now, passing to the limit as $n\to \infty$,	we infer that $T\varphi = -  \Delta_\OO \varphi$, i.e.   the (symmetric) restrictions of the operators $T$ and $-  \Delta_\OO$ to $\Y$ coincide.

To conclude the proof it remains to note that either of the restrictions is essentially self-adjoint. To this end, it suffices to note that the image of $T|_\Y \pm i$ is dense in $L^2(\OO)$,  since for all  $\varphi \in L^2(\OO)$ one has
		$$ E_{[0,t]} \varphi= \lim_{n \to \infty} \sum_{m=1}^n  \frac{1}{m\frac{t}{n}\pm i}\int_{(m-1)\frac{t}{n}}^{m \frac{t}{n}} (s \pm i) dE_s \varphi, $$  
where the limit is understood in the $L^2(\OO)$ sense.
	\end{proof} 

	\begin{remark}
	Similarly to Remark \ref{r:3.6}, one can recover the resolution of identity from realisations for a single (typical) $\o$ via the formulae \eqref{sasha110}--\eqref{sasha1000}.
\end{remark}
	
	\subsection{Point spectrum of the homogenised operator} 
In this section we first provide a general characterisation of the point spectrum of $\AA^\hom$, arguing that it is determined completely by the point spectrum of $-\Delta_\OO$ (in complete analogy with the periodic setting \cite{zhikov2005}). Combining it with the above construction of the resolution of identity, we then illustrate  it by  two classes of examples. 

	\begin{proposition}\label{p4.13}
		The point spectrum of $\AA^\hom$ (in the case $S=\R^d$) consists of those eigenvalues  of $-\Delta_\OO$ whose eigenfunctions have zero mean:
		\begin{equation*}
		\Sp_p (\AA^\hom) = \{\l\in \Sp_p(-\Delta_\OO): \exists \psi\in W^{1,2}_0(\OO) \mbox{ such that } -\Delta_\OO \psi = \l\psi, \, \langle \psi \rangle = 0 \}.
		\end{equation*}
	\end{proposition}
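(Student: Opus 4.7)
The plan is to analyze the decoupled eigenvalue system obtained from \eqref{mik1r}--\eqref{mik2r} with $f=0$:
\begin{align*}
-\nabla \cdot A_1^\hom \nabla u_0 &= \l\bigl(u_0 + \langle u_1 \rangle\bigr) \textrm{ in } \R^d,
\\[0.2em]
(-\Delta_\OO - \l I) u_1(x, \cdot) &= \l u_0(x) \mathbf{1}_\OO \textrm{ for \ae } x \in \R^d,
\end{align*}
and exploit the elementary Fourier-analytic fact that the constant-coefficient operator $-\nabla \cdot A_1^\hom \nabla$ on $L^2(\R^d)$ has purely absolutely continuous spectrum, hence admits no nontrivial $L^2(\R^d)$ eigenfunctions.

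For sufficiency, given $\psi \in W_0^{1,2}(\OO)$ with $-\Delta_\OO \psi = \l\psi$ and $\langle \psi \rangle = 0$, I would pick any nonzero $g \in L^2(\R^d)$ and set $u_0 := 0$, $u_1(x, \omega) := g(x)\psi(\omega)$. Then $u_0 + u_1 \in V\setminus\{0\}$; the microscopic equation holds because $-\Delta_\OO u_1(x, \cdot) = \l u_1(x, \cdot)$, and the macroscopic one because $\langle u_1 \rangle = g\langle \psi \rangle = 0$. Hence $\l \in \Sp_p(\AA^\hom)$.

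For necessity, let $(u_0, u_1) \in V$, $u_0 + u_1 \neq 0$, solve the system. If $\l \notin \Sp(-\Delta_\OO)$, the microscopic equation is invertible, giving $u_1(x,\cdot) = \l u_0(x)(-\Delta_\OO - \l I)^{-1}\mathbf{1}_\OO$, and substituting into the macroscopic equation reduces it to $-\nabla \cdot A_1^\hom \nabla u_0 = \beta(\l) u_0$ in $L^2(\R^d)$; by the Fourier fact, $u_0 \equiv 0$, hence $u_1 \equiv 0$, a contradiction. So $\l \in \Sp(-\Delta_\OO)$. Let $P_\l$ denote the spectral projection of $-\Delta_\OO$ onto $K_\l := \ker(-\Delta_\OO - \l)$. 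Since $-\Delta_\OO$ is self-adjoint, $P_\l$ commutes with $-\Delta_\OO - \l$ and annihilates its range; applying $P_\l$ to the microscopic equation yields $\l u_0(x) P_\l \mathbf{1}_\OO = 0$ for \ae $x$. As $\l > 0$ by Remark \ref{r:2.10}, either (i) $u_0 \equiv 0$, in which case the macroscopic equation gives $\langle u_1(x,\cdot) \rangle = 0$ while the microscopic one places $u_1(x,\cdot) \in K_\l$, for \ae $x$; nontriviality of $u_1$ then supplies a nonzero zero-mean eigenfunction in $K_\l$; or (ii) $P_\l \mathbf{1}_\OO = 0$, i.e. $\mathbf{1}_\OO \perp K_\l$, which via the identity $\langle \psi\rangle = \langle \psi, \mathbf{1}_\OO\rangle_{L^2(\OO)}$ (valid since $\psi$ is extended by zero) means every element of $K_\l$ has zero mean.

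The remaining and main obstacle is to rule out the scenario $K_\l = \{0\}$, which in subcase (ii) would make the conclusion vacuous; this corresponds to $\l \in \Sp(-\Delta_\OO) \setminus \Sp_p(-\Delta_\OO)$, which is a priori possible since $\Sp(-\Delta_\OO) = \overline{\bigcup_k \Sp(-\Delta_{\OO^k_\o})}$. In that case $-\Delta_\OO - \l$ is injective, and I would argue by a further dichotomy on whether $\mathbf{1}_\OO$ lies in its range: if not, the microscopic equation forces $u_0(x) \equiv 0$ and injectivity then yields $u_1 \equiv 0$; if so, with $B := (-\Delta_\OO - \l)^{-1}\mathbf{1}_\OO \in W_0^{1,2}(\OO)$, one has $u_1(x, \cdot) = \l u_0(x) B$ and the macroscopic equation becomes $-\nabla \cdot A_1^\hom \nabla u_0 = (\l + \l^2 \langle B\rangle)u_0$, which by the Fourier fact again forces $u_0 \equiv 0$ and $u_1 \equiv 0$. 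Both branches contradict nontriviality, so $K_\l \neq \{0\}$, and in every remaining case a nonzero zero-mean eigenfunction of $-\Delta_\OO$ at $\l$ has been exhibited, finishing the proof.
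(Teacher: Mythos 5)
Your proof is correct and takes essentially the same route as the paper's: sufficiency via the pair $u_0=0$, $u_1=g\psi$, and necessity by showing that $u_0\not\equiv 0$ would force the constant-coefficient operator $-\nabla\cdot A_1^{\rm hom}\nabla$ to admit an $L^2(\R^d)$ eigenfunction, after which $u_0\equiv 0$ yields the zero-mean eigenfunctions of $-\Delta_\OO$. Your explicit case analysis --- the spectral projection onto $\ker(-\Delta_\OO-\l I)$ and the separate treatment of $\l\notin\Sp(-\Delta_\OO)$ and of a trivial kernel --- merely spells out in detail what the paper compresses into the phrase ``by the solvability condition'' for the equation $-\Delta_\OO v=\l v+{\mathbf 1}_\OO$.
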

	\begin{proof} 
Consider the following eigenvalue problem for $\AA^\hom$ (cf. \eqref{mik1r}--\eqref{mik2r}):
		\begin{equation}\label{44}
			-\nabla A_1^{\textrm{hom} } \nabla u_0=\lambda (u_0+\langle u_1\rangle ), \qquad
			-\Delta_\OO u_1 (x, \cdot)=\lambda (u_0(x)+u_1(x,\cdot)), 	
		\end{equation}		
		where $u_0+ u_1 \in V$. Suppose that $\l\in \Sp_p(-\Delta_\OO)$ and the corresponding eigenfunction $\psi$ has zero mean. Then the pair $u_0=0$, $u_1=v \psi$, where $v \in L^2(\R^d)$, clearly satisfies \eqref{44}. Hence $\l \in \Sp_p(\AA^\hom)$.

		Now suppose  $\l \in \Sp_p(\AA^\hom)$, i.e. \eqref{44} holds. We claim that $u_0=0$. Indeed, assuming the contrary, we infer from the second equation in \eqref{44} that the problem
		\begin{equation}\label{45}
		-\Delta_\OO v =\lambda v + {\mathbf 1}_\OO
		\end{equation}
		is solvable. Note that all its solutions have the form $v = v_0 + \psi$, where $v_0$ is a fixed solution and $\psi \in E_{[\l,\l]}(L^2(\OO))$. Moreover, all elements of $E_{[\l,\l]}(L^2(\OO))$ have zero mean by the solvability condition for \eqref{45}. It is easy to see that the solution of the second equation in \eqref{44} has the form $u_1 = \l u_0 v_0 + \varphi$, where $\varphi \in L^2(\R^d;E_{[\l,\l]}(L^2(\OO)))$. Substituting this into the first equation, we arrive at 
		\begin{equation*}
		-\nabla A_1^{\textrm{hom} } \nabla u_0=(\lambda + \l^2 \langle v_0 \rangle )  u_0.
		\end{equation*}	
	But this contradicts to the fact that the spectrum of $-\nabla A_1^{\textrm{hom} } \nabla$ in $\R^d$ has no eigenvalues, therefore $u_0=0$, as required. 
 Then necessarily $\langle u_1 \rangle=0$, and  hence $u_1(x,\cdot)$ is an eigenfunction of $-\Delta_\OO$ for \ae $x \in \R^d$.
	\end{proof} 	

	\subsubsection{Example:  a finite number of inclusion shapes} 
Assume that for \ae $\o$ the inclusions $\OO_\o^k$ are (translated and rotated) copies of a finite number of sets (shapes). By \eqref{20a}, the spectrum of $-\Delta_\OO$ is  the union of the spectra of the Dirichlet Laplacians on  these sets. In particular, $\Sp(-\Delta_\OO)$ is discrete. It is not difficult to see (cf. the construction of the resolution of identity) that the eigenspaces of $-\Delta_\OO$ consist of those functions whose realisations are appropriate eigenfunctions on each inclusion. Thus, the point spectrum of $\AA^\hom$ consists  of the eigenvalues of each shape whose all  eigenfunctions have zero mean. 

	\subsubsection{Example: a continuum family of scaled copies of one shape}

	Assume that for \ae $\o$ the inclusions $\OO_\o^k$ are scaled (translated and rotated) copies of an open set $\mathring{\mathcal \OO}\subset \R^d$ (shape), where the scaling parameter $r$ takes values in some interval $[r_1,r_2]\subset (0,+\infty)$. We will write $\OO_\o^k \sim r\mathring{\mathcal \OO}$ if the inclusion $\OO_\o^k$ is a translation and rotation of $r\mathring{\mathcal \OO}$. Let $\{\nu_j\}_{j=1}^{\infty}$ be the sequence of eigenvalues for the Dirichlet Laplacian on $\mathring{\mathcal \OO}$. Then the spectrum of the Dirichlet Laplacian on  $\OO_\o^k \sim r \mathring{\mathcal \OO}$  is  $\{r^{-2} \nu_j\}_{j=1}^{\infty}$. Clearly, for any $t\in \R$ there are at most finitely many values of the scaling parameter satisfying $t=r^{-2} \nu_j$ for some $j$.
	
	Consider the situation when the probability measure does not concentrate for any $r\in[r_1,r_2]$. We can make this assumption precise using the ergodicity rather than referring to an abstract probability space. Namely,  assume that for \ae $\o$ and all $r\in[r_1,r_2]$ one has
	$$
	\lim_{R\to \infty} \frac{1}{|B_R|}\int_{B_R(\omega)} \sum_{k:\OO_\o^k \sim r \mathring{\mathcal \OO}}  {\mathbf 1}_{\OO_\o^k} = 0,
	$$  
equivalently, that 
	$$
\lim_{R\to \infty} \frac{1}{|B_R|}\int_{B_R(\omega)} \sum_{k:\OO_\o^k \sim r' \mathring{\mathcal \OO}, r'\in [r-\delta,r+\delta]}  {\mathbf 1}_{\OO_\o^k} \stackrel{\delta\to 0}{\longrightarrow} 0.
$$  
It is the easy to see, cf. \eqref{sasha1000}, that   the resolution of identity $E_{[0,t]}, t\geq 0$, has no jumps, i.e., $\Sp_p (-\Delta_\OO) = \emptyset$.


\section{Convergence of the spectra}\label{s:conv of spectrum}

As we have already discussed in the Introduction, the spectrum of $\AA^\e$ and its limit behaviour in the whole space setting is very different from that in the case of a bounded domain. Many ideas we used in \cite{ChChV} were an adaptation of well-known techniques from periodic high-contrast homogenisation. The analysis of the whole space setting, however, calls for methods that go beyond those used in the bounded domains. The rest of the paper is devoted wholly to the former, and in  what follows we will always assume that  $S=\R^d$. 

In the present section we  analyse the limit of the spectra of $\AA^\e$ as $\e\to 0$. In order to understand the basic difference between the bounded domain and the whole space settings, one may think of the following simple example: the set of inclusions $\OO_\o$ is generated by putting balls of equal size with probability $\nicefrac{1}{2}$ at the nodes of a periodic lattice. The spectrum of $\AA^\hom$ then has a band-gap structure very similar to the corresponding periodic case, with the left ends of the bands moved slightly to the left. Yet, $\Sp(\AA^\e) = \R^+_0$ for all $\e$. Indeed, due to the law of large numbers, there almost surely exists a sequence of balls $B_R(x_R)$, $R\in \N$, that is completely void of inclusions from $\e\OO_\o$. Then, for an arbitrary $\l\geq 0$, by taking the real part of a harmonic wave ${\mathcal Re}(e^{ik\cdot x})$ with $|k|^2=\l$, multiplying it by suitable cut-off functions and normalizing, one constructs an explicit, supported on $B_R(x_R)$, $R\in \N$, Weyl sequence for the operator $\AA^\e$, thus asserting that $\Sp(\AA^\e) = \R_+$. In more advanced (and more realistic as far as applications concerned) examples, such as the random parking model, see Section \ref{randomparking} below,  the spectrum of  $\Sp(\AA^\e)$ may exhibit gaps which persist in the limit. However, the general idea is the same: the additional part of the spectrum, not accounted for by $\Sp(\AA^\hom)$ in the limit, is present due to the arbitrarily large areas of space with non-typical distribution of  inclusions.  

In order to account for this part of the spectrum in the limit, one, somewhat surprisingly, may use a  close analogue of the $\beta$-function given by the formula \eqref{sasha103}, which we will denote by $\beta_\infty(\l,\o)$.  Intuitively, one can think of is as the supremum of ``local $\beta$-functions'', i.e.  local averages of the expression $\l + \l^2 b(T_{x/\e} \o,\l)$, whose  intervals of negativity get bigger when the local volume fraction of the inclusions is relatively large, and, conversely, shrink when  the local volume fraction of the inclusions is small (or completely disappear if the volume fraction tends to zero). In order to make this more precise, one needs to look at the behaviour of the distribution of inclusions in large randomly located sets. 

	\subsection{Main results}\label{s4.1}

For $\lambda \notin \Sp(-\Delta_{\mathcal{O}})$, define 
\begin{equation} \label{38} 
	\beta_\infty(\lambda,\omega):=\liminf_{M \to \infty} \sup_{x \in \mathbf{R}^d}  \ell(x,M,\lambda,\omega),
\end{equation}
where
\begin{equation} \label{42}
	\ell(x,M,\lambda,\omega):=\lambda+\lambda^2\frac{1}{M^d}\int_{\square_x^M} b(T_y \omega,\lambda) \,dy, 
\end{equation} 
and $\square_x^M: = x+ M\square$ is the cube of edge length $M$ centred at $x$.
We will use the shorthand notation $\square^M: =  M\square = \square^M_0$.

Note that $\ell(x,M,\lambda,\cdot)$ is measurable, which implies the measurability of $\beta_\infty(\lambda,\cdot)$. 
\begin{remark}\label{r:spec_av}
	One may interpret the term 
	$$\frac{\lambda^2 }{M^{d}}\int_{\square_x^M} b(T_y \omega,\lambda) \,dy$$
	 as the local averaged ``resonant'' (or ``anti-resonant'', if it is negative) contribution from the inclusions (which from the physical point of view play a role of micro-resonators) to the term $\ell(x,M,\lambda,\omega)$, see also Remark \ref{r:3.6}. The latter in turn may be interpreted as a {\it ``local spectral average''}, a precursor for the ``spectral parameter'' $\beta_\infty(\lambda,\omega)$.  
\end{remark}

By Proposition \ref{propivan1} below, the function $\beta_\infty$ is deterministic almost surely, that is $\beta_\infty(\lambda,\omega) = \beta_\infty(\lambda)$  for a.e. $\o$.
The  set 
$$\mathcal G:=  \Sp(-\Delta_\OO)\cup\{\l: \, \beta_\infty(\l) \geq 0\}$$
is an ``upper bound'' for the limit of the spectra of $\mathcal A^\e$, as stated next.

\begin{theorem}\label{th4.1} One has 
	$\lim \Sp(\AA^\e) \subset \mathcal{G}$
almost surely.
\end{theorem}
We provide the proof of this theorem in Section \ref{ss:4.3}.

	In order to prove that $\mathcal G$ is actually the limit of $\Sp(\mathcal A^\e)$, we make an additional assumption of finite-range correlations, which we describe next.
	For a compact set $K \subset \mathbf{R}^d$ we denote by $\mathcal{H}_K$ the set-valued mapping 
	$ \mathcal{H}_K: \, \omega \to \overline{\OO_\o \cap K}.$
	By $\mathcal{F}_{{\rm H},K}$ we denote the $\sigma$-algebra on the set of all compact subsets of $K$ generated by the Hausdorff metric $d_{\rm H}$.  
	The proof of the following lemma is completely analogous to that of Lemma \ref{measp} and we omit it.
	\begin{lemma} 
		The set-valued mapping $\mathcal{H}_K$ is measurable with respect to the $\sigma$-algebra $\mathcal{F}_{{\rm H},K}$.
	\end{lemma}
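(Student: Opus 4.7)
The plan is to invoke the standard characterisation of the Borel $\sigma$-algebra on the space of compact subsets of $K$ equipped with the Hausdorff metric: it coincides with the Effros (or Fell) Borel structure, generated by the ``hit'' sets $\{F \subset K \textrm{ compact} : F \cap U \neq \emptyset\}$ as $U$ ranges over the open subsets of $K$. Since $K$ is compact and separable, this is classical. Consequently it suffices to verify that for every such $U$ the preimage
\[
\{\omega \in \Omega : \mathcal{H}_K(\omega) \cap U \neq \emptyset\}
\]
lies in $\mathcal{F}$. Writing $U = V \cap K$ for some open $V \subset \R^d$, this preimage is the target of the argument.

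The first step is to reduce the condition $\overline{\mathcal{O}_\omega \cap K} \cap V \neq \emptyset$ to a condition about $\mathcal{O}_\omega$ alone. By Assumption \ref{kirill100}, every inclusion $\mathcal{O}_\omega^k$ is open, hence so is $\mathcal{O}_\omega = \bigcup_k \mathcal{O}_\omega^k$. A short topological observation then yields
\[
\overline{\mathcal{O}_\omega \cap K} \cap V \neq \emptyset \iff \mathcal{O}_\omega \cap V \cap K \neq \emptyset;
\]
indeed, any point on the left hand side lies in the open set $V$, so $V$ is a neighbourhood of it that must meet $\mathcal{O}_\omega \cap K$, while the reverse implication is obvious.

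The second step removes the uncountable quantifier over points of $K$. I would fix a countable dense subset $D_K \subset K$. If some $x \in \mathcal{O}_\omega \cap V \cap K$ exists then, $\mathcal{O}_\omega \cap V$ being open in $\R^d$ and $x$ lying in $K$, every sufficiently small ball around $x$ is contained in $\mathcal{O}_\omega \cap V$ and necessarily contains a point of $D_K$, which automatically lies in $\mathcal{O}_\omega \cap V \cap K$. The converse inclusion is trivial, so
\[
\{\omega : \mathcal{H}_K(\omega) \cap U \neq \emptyset\} = \bigcup_{x \in D_K \cap V} \{\omega : T_x \omega \in \mathcal{O}\}.
\]
This is a countable union of sets that are $\mathcal{F}$-measurable by the joint measurability of $(x,\omega) \mapsto T_x \omega$ (item 3 of Definition \ref{defgroup}) together with the fact that $\mathcal{O} \in \mathcal{F}$, which completes the argument.

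The only delicate point is the first reduction, namely the interaction between the closure operation and the open test set $V$; this is precisely where the openness of $\mathcal{O}_\omega$ built into Assumption \ref{kirill100} is indispensable. Once this is handled, the remaining ingredients (the generators of the Fell $\sigma$-algebra, separability of $K$, and measurability of the dynamical system) are standard and assemble routinely.
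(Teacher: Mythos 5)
Your argument is correct, but it follows a genuinely different route from the paper's. The paper proves this lemma by declaring it ``completely analogous'' to Lemma \ref{measp}: there, one uses separability of the Hausdorff topology to reduce matters to preimages of (complements of) Hausdorff balls, and verifies their measurability by expressing $d_{\rm H}(\cdot,K')>r$ through rational test points together with the measurability of $(x,\omega)\mapsto\dist(x,\cdot)$ established in Lemma \ref{ante1}. You instead generate $\mathcal{F}_{{\rm H},K}$ by the hit sets $\{F: F\cap U\neq\emptyset\}$ (the Effros/Fell Borel structure, which indeed coincides with the Borel structure of the Hausdorff metric on compact subsets of a separable metrizable space), and then exhibit each preimage as a countable union $\bigcup_{q\in D_K\cap V}\{\omega: T_q\omega\in\mathcal{O}\}$ of manifestly measurable sets. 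Your route is more economical here precisely because $\OO_\o\cap K$ is relatively open, so hitting an open set is detectable on a countable dense subset; the paper's route is the more robust one (it needs no openness and applies to arbitrary closed-valued maps with a measurable distance function, which is why the authors can reuse it verbatim for $\o\mapsto\overline{P_\o}$). One small misattribution: you say the openness of $\OO_\o$ is indispensable for the first reduction $\overline{\OO_\o\cap K}\cap V\neq\emptyset \iff \OO_\o\cap V\cap K\neq\emptyset$, but that equivalence uses only the openness of $V$; where the openness of $\OO_\o$ actually enters is the second step, in guaranteeing that a whole ball around a point of $\OO_\o\cap V\cap K$ lies in $\OO_\o\cap V$ and hence captures a point of $D_K$. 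Also note that for a fixed $q$ the measurability of $\{\omega: T_q\omega\in\mathcal{O}\}$ already follows from the measurability of the single map $T_q$ in Definition \ref{defgroup}; the joint measurability in item 3 is not needed.
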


	For a compact set $K \subset \mathbf{R}^d$, define the $\sigma$-algebra $\mathcal{F}_K$ by 
	$$ \mathcal{F}_K:=\{\mathcal{H}_{K}^{-1}(F_{{\rm H},K}) :F_{{\rm H},K} \in \mathcal{F}_{{\rm H},K} \}. $$ 
	
	The following assumption is crucial for proving the second part of the spectral convergence. In simple words it says that the arrangements of inclusions in sets which are at least at distance $\kappa$ apart are independent.
	\begin{assumption}\label{asumivan1}  
		There exists $\kappa \in \mathbf{R}^{+}$ such that 
		for any two compact sets $K_1,K_2 \subset \mathbf{R}^d$ satisfying  $\dist(K_1,K_2)> \kappa$  the $\sigma$-algebras $\mathcal{F}_{K_1}$ and $\mathcal{F}_{K_2}$ are independent. 	
	\end{assumption} 
\begin{theorem}\label{th4.4}
	Under Assumption \ref{asumivan1} one has  
	$\lim \Sp(\AA^\e) \supset \mathcal{G}$
	almost surely. (Hence, in view of Theorem \ref{th4.1}, the two sets are equal.)
\end{theorem}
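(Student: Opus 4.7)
I would split $\mathcal{G} = \Sp(-\Delta_\OO) \cup \{\lambda : \beta_\infty(\lambda) \geq 0\}$ and handle the two pieces separately. The inclusion $\Sp(-\Delta_\OO) \subset \lim_{\e\to 0} \Sp(\AA^\e)$ follows from Proposition \ref{p:3.1} together with the stochastic two-scale resolvent convergence of Theorem \ref{misha10}, which forces $\Sp(\AA^\hom) \subset \liminf_{\e \to 0}\Sp(\AA^\e)$ by a standard resolvent-contradiction argument as in \cite{ChChV}. The substantive task is to prove that every $\lambda \notin \Sp(-\Delta_\OO)$ with $\beta_\infty(\lambda) \geq 0$ lies in $\lim_{\e\to 0} \Sp(\AA^\e)$; equivalently, for almost every $\omega$ to exhibit a Weyl sequence $u^\e \in W^{1,2}(\R^d)$, $\|u^\e\|_{L^2}=1$, with $\|(\AA^\e - \lambda)u^\e\|_{L^2} \to 0$.

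The definitions \eqref{38}--\eqref{42} supply, for every $\delta > 0$ and arbitrarily large $M$, a point $x_M = x_M(\omega)$ with $\ell(x_M,M,\lambda,\omega) \geq -\delta$. Repeating the calculation that gives $\beta'(\lambda) \geq 1 - P(\OO)$, but with $\Omega$ replaced by $\square^M_{x_M}$, yields strict monotonicity of $\ell(x_M,M,\cdot,\omega)$ in $\lambda$, so one may perturb $\lambda$ to a nearby $\tilde\lambda$ with $\ell(x_M,M,\tilde\lambda,\omega)=\beta^*$ for some chosen $\beta^* > 0$ and $|\tilde\lambda - \lambda| \leq C\delta$. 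Under Assumption \ref{asumivan1}, finite-range correlation combined with the law of large numbers --- which is precisely what the auxiliary material of Section \ref{ss:4.4} announces --- guarantees that for any Hausdorff tolerance $\eta > 0$ one finds arbitrarily large physical cubes $Q^\e \subset \R^d$ of side $\e R$ (with $R \to \infty$) in which the inclusion pattern is $\e\eta$-close to an $(\e M)$-periodic tiling by copies of $\OO_\omega \cap \square^M_{x_M}$; the effective Zhikov cell $\beta$-function of the idealised periodic structure inside $Q^\e$ then equals $\beta^*$.

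On $Q^\e$ I would glue in the classical two-scale high-contrast ansatz. Pick $k \in \R^d$ with $A_1^\hom k \cdot k = \beta^*$, a smooth $L^2$-normalised cut-off $\phi^\e$ supported in $Q^\e$ with $\|\nabla\phi^\e\|_{L^2} \to 0$, and set $u_0^\e(x) := \phi^\e(x)\cos(k \cdot x)$. Define
\begin{equation*}
u^\e(x) := u_0^\e(x)\bigl(\chi_1^\e + (1 + \tilde\lambda\, b(T_{x/\e}\omega,\tilde\lambda))\chi_0^\e\bigr) + \e\sum_{j=1}^d \hat N_j(T_{x/\e}\omega)\,\partial_j u_0^\e(x),
\end{equation*}
where $\hat N_j$ are the first-order stiff-phase correctors on the nearby periodic tiling (their existence, measurability and uniform regularity being delivered by Theorem \ref{th:6.18} under Assumption \ref{kirill100}). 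A Taylor-type expansion mirroring the derivation of \eqref{ivan100}, using on soft inclusions the defining equation \eqref{66} and on the stiff phase the cell problem solved by $\hat N_j$, then gives
\begin{equation*}
(\AA^\e - \tilde\lambda)u^\e = \chi_1^\e\bigl(-\nabla \cdot A_1^\hom\nabla u_0^\e - \beta^* u_0^\e\bigr) + O_{L^2}(\e) + O_{L^2}(\|\nabla\phi^\e\|_{L^2}) + O_{L^2}(\eta),
\end{equation*}
the macroscopic bracket vanishing by the choice of $k$. Driving first $R \to \infty$ (to kill the cut-off gradient), then $M \to \infty$ and $\eta \to 0$ (to kill the almost-periodicity defect), and finally $\e \to 0$ yields $\tilde\lambda \in \lim_{\e\to 0}\Sp(\AA^\e)$; letting $\delta \to 0$ and using closedness of the Hausdorff limit gives $\lambda \in \lim_{\e\to 0}\Sp(\AA^\e)$.

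The principal obstacle is the transfer of periodic-homogenisation bounds to the genuinely stochastic arrangement inside $Q^\e$: the ``almost periodic tiling by copies'' has to be quantified sharply enough that the $L^\infty$- and $W^{1,2}$-bounds on $b(\cdot,\tilde\lambda)$, on $\hat N_j$ and on the associated flux correctors (with their attendant Theorem \ref{th:extension} extensions) persist uniformly across copies. Uniform minimal smoothness in Assumption \ref{kirill100} enters exactly here, ensuring that the substitution error between the true configuration and its periodic idealisation is genuinely $o(1)$ as $\eta \to 0$ and $M \to \infty$, independently of which copy one is sitting on.
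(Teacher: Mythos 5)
Your architecture coincides with the paper's: the same splitting of $\mathcal G$ (with $\Sp(-\Delta_\OO)$ absorbed through $\Sp(\AA^\hom)$ and the resolvent convergence), the same use of Assumption \ref{asumivan1} plus the law of large numbers to produce arbitrarily large cubes tiled by near-copies of a reference cube $\square_{\overline{x}_M}^M$ on which the local spectral average approximates $\beta_\infty(\lambda)$ (this is exactly Theorems \ref{th:6.10} and \ref{thmivan1}), and the same two-scale quasimode: a plane wave for the homogenised matrix of the periodised reference cube, modulated by $1+\lambda b^\e$ in the inclusions and corrected by $\e\,\hat N_j\,\partial_j u_L$. Your perturbation $\lambda\mapsto\tilde\lambda$ is an unnecessary detour --- the paper takes $\hat A_1^\hom k\cdot k=\beta_\infty(\lambda)\geq 0$ directly and pushes the defect $|\beta_\infty(\lambda)-\ell(x_i,M+\kappa,\lambda,\omega)|<\delta$ into the error via the piecewise-averaging operator $\mathcal M^\e_{M+\kappa}$ --- but it is harmless.

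The step that fails as written is the displayed residual identity. The ansatz $u^\e$ does not belong to ${\rm Dom}(\AA^\e)$: the conormal flux $A^\e\nabla u^\e\cdot n$ jumps across $\partial S_0^\e$ (the outer flux $A_1(e_j+\nabla\hat N_j)\partial_j u_0^\e\cdot n$ is $O(1)$ and vanishes only on the boundaries of the idealised periodic holes, while the inner flux is $O(\e)$), so $(\AA^\e-\tilde\lambda)u^\e$ contains interface distributions and the claimed expansion cannot hold in $L^2$. Moreover the bulk term $[\chi_1^\e A_1(e_j+\nabla N_j)-\hat A_1^\hom e_j]\partial_j u_0^\e$ is only $O(1)$ in $L^2$; it becomes small only after pairing with a test gradient and passing through the skew-symmetric flux corrector $G^j$, cf. \eqref{106}--\eqref{99}. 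The paper's repair, borrowed from \cite{KamSm1}, is to never apply $\AA^\e$ to the ansatz: one defines the genuine quasimode $\widehat u_L^\e:=(\AA^\e+1)^{-1}(\lambda+1)u_L^\e$, which lies in the domain by construction, and proves $|a^\e(u_{LC}^\e-\widehat u_L^\e,v)|\leq\widehat{\mathcal R}\sqrt{a^\e(v,v)}$ by estimating the weak formulation, which yields \eqref{152}. The other quantitatively essential ingredient you only gesture at is the Meyers estimate of Theorem \ref{th:6.18}: the mismatch set $\{\chi_1^\e\neq\hat\chi_1^\e\}$ has small measure by Lemma \ref{l:5.21}, but $\nabla\hat N_j$ is a priori only in $L^2$, so the substitution error is controlled only through $L^p$-integrability with $p>2$, giving the rate $\delta^{(p-2)/2p}$ in \eqref{107}; without it the ``$O_{L^2}(\eta)$'' in your expansion is unjustified.
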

The proof of the theorem is given in Section \ref{ss:4.5}.

Assumption \ref{asumivan1} is only needed for the proof of Theorem \ref{th4.4}. It is not required for any other result in this paper. (We believe that one can also  relax it by allowing for sufficiently quickly (e.g., exponentially) decaying rather than finite-range correlation.) Finite range of dependence  guarantees the almost sure existence of arbitrarily large cubes with almost periodic arrangements of inclusions. This allows us to construct approximate eigenfunctions explicitly by starting from the generalised eigenfunctions of an operator with constant coefficients that  represents the macroscopic component of the homogenisation limit for the corresponding periodic operator. In the absence of such (almost) periodic structures, one needs to analyse ``arbitrary'' sequences of  operators $\hat \AA^\e$, each capturing a specific point of the limit spectrum of $\AA^\e$. Such sequences are obtained from $\AA^\e$ by shifting to the origin the areas of non-typical distribution of inclusions on which relevant quasimodes are supported (see the proof of Theorem \ref{th4.1} below). The limits of these sequences, understood in an appropriate sense, correspond to neither stochastic nor  periodic homogenisation, and therefore  in general have unpredictable spectral properties.

\begin{remark}
	It is well known that ergodicity implies that the spectrum of a self-adjoint operator with random coefficients is deterministic \cite[Theorem 1.2.5]{Stollmann2001}. In fact,  this can be proved by an argument  analogous to the first part of the proof of Proposition \ref{pppppp1}.	In particular,  $\Sp (\AA^\e(\o))$  is deterministic almost surely, and so  $\lim \Sp(\AA^\e)$ is also deterministic almost surely. Recalling the preceding discussion, the problem of characterising $\lim \Sp(\AA^\e)$ under the general ergodicity assumption, i.e. without an additional assumption on independence (such as Assumption \ref{asumivan1}), remains open.
\end{remark}	

		\subsection{Properties of $\beta_\infty(\l,\o)$}\label{ss:betainf}
		
		The main purpose of this subsection is to establish the deterministic nature of  $\beta_\infty(\l,\o)$, as well as its continuity and monotonicity. We begin with some obvious observations.
		
\begin{remark}
	
	\begin{enumerate}[(i)]
		
		\item  The use of cubes in the definition of $\ell(x,M,\lambda,\omega)$ is not essential. In fact, one can utilize the scaled and translated versions of any sufficiently regular bounded open set --- this will yield the same function $\b_\infty(\l,\o)$ (cf. also \eqref{sasha103}). The reason we use cubes is that it will be convenient for our constructions in the proofs of both theorems. 
 
 \item Clearly, one has 
 \begin{equation*} 
 	\begin{gathered}
 	\b(\l)=	\lim_{M\to \infty}\left(\l+\l^2\frac{1}{M^d}\int_{\square^M} b (T_y \omega,\lambda)dy\right) \qquad\qquad\qquad
 	\\
 \qquad\qquad\qquad	=  \sup_{x \in \mathbf{R}^d} \lim_{M\to \infty}\left(\l+\l^2\frac{1}{M^d}\int_{\square_x^M} b (T_y \omega,\lambda)dy\right) \leq \beta_\infty(\lambda,\omega). 
 	\end{gathered}
 \end{equation*} 
 	\end{enumerate}
\end{remark}

We first prove two auxiliary results which will be used throughout the remaining part of Section \ref{s:conv of spectrum}. The proof of the following  lemma is straightforward and requires only part 2 of Assumption \ref{kirill100}  and the bound \eqref{pauza1}.
\begin{lemma} \label{remivan2} Let $\nu >0$ be fixed, and denote (cf. \eqref{42})
	$$\ell_\nu(x,M,\lambda,\omega):=\lambda+\lambda^2\frac{1}{M^d}\int_{S_x^{M,\nu}} b(T_y \omega,\lambda) \,dy,$$
	where  $S_x^{M,\nu}$ is a measurable set satisfying $\square_x^{M-\nu} \subset S_x^{M,\nu} \subset \square_x^M$ (clearly, the value of $\ell_\nu(x,M,\lambda,\omega)$ depends on the choice of $S_x^{M,\nu}$, but we do not reflect it in the notation). Denote also 	
	$$\tilde{\ell}(x,M,\lambda,\omega):=\lambda+\lambda^2\frac{1}{M^d}\int_{\mathring\square_x^M(\omega)} b(T_y \omega,\lambda) \,dy,$$
	where $\mathring\square_x^M(\omega): = \cup_{\OO_\o^k\subset \square_x^M} \OO_\o^k$ (i.e. the inclusions touching the boundary of the cube are removed).	
Then
	$$ \beta_\infty(\lambda,\omega)=\liminf_{M \to \infty} \sup_{x \in \mathbf{R}^d}  \ell_\nu(x,M,\lambda,\omega)=\liminf_{M \to \infty} \sup_{x \in \mathbf{R}^d}  \tilde{\ell}(x,M,\lambda,\omega). $$

\end{lemma}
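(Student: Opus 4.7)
The plan is to show, uniformly in $x \in \mathbf{R}^d$ and on a set of full measure in $\omega$, that both $\ell_\kappa$ and $\tilde{\ell}$ differ from $\ell$ by $O(M^{-1})$, so that the identities for $\beta_\infty$ follow immediately after taking $\sup_x$ and $\liminf_M$.

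The key observation is that the function $y \mapsto b(T_y\omega, \lambda)$ is supported on $\OO_\omega = \bigcup_k \OO_\omega^k$, and by Proposition \ref{sasha100} together with \eqref{pauza1} one has, for every inclusion,
\[
\int_{\OO_\omega^k} |b(T_y\omega,\lambda)|^2\,dy < \frac{1}{d_\lambda^2}.
\]
By the Cauchy–Schwarz inequality and $|\OO_\omega^k| \le 2^{-d}$ (part 2 of Assumption \ref{kirill100}), this yields the uniform per-inclusion $L^1$ bound
\[
\int_{\OO_\omega^k}|b(T_y\omega,\lambda)|\,dy \le \frac{C}{d_\lambda}.
\]
Therefore the integral of $b$ over any region can be controlled by the \emph{number} of inclusions meeting that region.

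Next I would estimate that number for the relevant thin shells. For the $\ell_\kappa$ version, the symmetric difference $\square_x^M \triangle S_x^{M,\kappa}$ is contained in the shell $\square_x^M \setminus \square_x^{M-\kappa}$, which has volume at most $C \kappa M^{d-1}$. Since by part 1 of Assumption \ref{kirill100} any two inclusions are at distance at least $\rho$ apart, the open $(\rho/2)$-neighbourhoods of the inclusions are pairwise disjoint; each such neighbourhood contains a ball of radius $\rho/2$, so it has volume $\ge C\rho^d$. Any inclusion meeting the shell lies in its $1$-neighbourhood (since $\diam \OO_\omega^k < 1/2$), a region of volume $O(M^{d-1})$, hence the number of such inclusions is bounded deterministically by $C(\rho,\kappa) M^{d-1}$. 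Combining with the per-inclusion $L^1$ bound,
\[
\sup_{x \in \mathbf{R}^d}\bigl|\ell(x,M,\lambda,\omega) - \ell_\kappa(x,M,\lambda,\omega)\bigr| \le \frac{C \lambda^2}{d_\lambda M}.
\]

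The argument for $\tilde{\ell}$ is analogous: $\square_x^M \setminus \mathring\square_x^M(\omega)$ splits into the complement of $\OO_\omega$ in $\square_x^M$ (where $b \equiv 0$) and the union of inclusions touching $\partial \square_x^M$. The latter inclusions are again contained in the $1$-neighbourhood of the boundary, so their number is again bounded by $C M^{d-1}$, giving
\[
\sup_{x \in \mathbf{R}^d}\bigl|\ell(x,M,\lambda,\omega) - \tilde{\ell}(x,M,\lambda,\omega)\bigr| \le \frac{C \lambda^2}{d_\lambda M}.
\]
Taking $\sup_{x}$ and then $\liminf_{M\to\infty}$ on both sides of each of these inequalities yields the claimed equalities, since the right-hand sides vanish as $M \to \infty$.

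The only step that requires any care is the combinatorial bound on the number of inclusions meeting a shell, but this is a direct consequence of the separation of inclusions from Assumption \ref{kirill100}; the rest is a routine consequence of the $L^1$ per-inclusion estimate provided by \eqref{pauza1} and the fact that $b$ is supported on $\OO$.
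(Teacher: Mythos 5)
Your argument is correct and is precisely the ``straightforward'' proof the paper omits: a per-inclusion $L^1$ bound on $b$ obtained from \eqref{pauza1} via Cauchy--Schwarz, combined with an $O(M^{d-1})$ count of the inclusions meeting the boundary shell, shows that all three local averages differ by $O(M^{-1})$ uniformly in $x$, whence the equalities survive $\sup_x$ and $\liminf_M$. The only ingredient beyond what the authors list (part 2 of Assumption \ref{kirill100} and the bound \eqref{pauza1}) is the $\rho$-separation of the inclusions used in your counting step; this is genuinely needed and does follow from part 1 of the assumption, so your proof is complete as written.
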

	Note that $\tilde{\ell}(x,M,\lambda,\omega)$ is equal to $\ell_\nu(x,M,\lambda,\omega)$ for an appropriate choice of $S_x^{M,\nu}$. 
\begin{lemma} \label{lemivan13}  
	Let $\{M_n\}_{n \in \mathbf{N}}\subset \R$ be an arbitrary sequence diverging to infinity. Then almost surely one has
	$$\beta_\infty(\lambda,\omega)=\lim_{n \to \infty} \sup_{x \in \mathbf{R}^d}  \ell(x,M_n,\lambda,\omega).$$	
\end{lemma}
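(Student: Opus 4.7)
The plan is to strengthen \eqref{38} by showing that $g(M):=\sup_{x\in\mathbf{R}^d}\ell(x,M,\lambda,\omega)$ has an honest limit as $M\to\infty$, not merely a $\liminf$; the assertion of the lemma then follows immediately, since any sequence $M_n\to\infty$ must yield this common limit.

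The key structural property is a dilation inequality: for every integer $k\geq 1$ and every $M>0$,
$$g(kM)\leq g(M).$$
Indeed, one partitions $\square_x^{kM}$ into $k^d$ congruent sub-cubes $Q_1,\ldots,Q_{k^d}$ of side $M$ with centres $x_1,\ldots,x_{k^d}$. Substituting $(kM)^{-d}=k^{-d}M^{-d}$ into \eqref{42} gives
$$\ell(x,kM,\lambda,\omega)=k^{-d}\sum_{i=1}^{k^d}\ell(x_i,M,\lambda,\omega)\leq\max_i\ell(x_i,M,\lambda,\omega)\leq g(M),$$
and one takes the supremum over $x$. Note that this is the same averaging mechanism that underlies Lemma \ref{remivan2}.

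To upgrade this to an almost-monotonicity for all scales, I would write an arbitrary $M\geq M_0$ as $M=kM_0+r$ with $k\in\mathbf{N}$, $0\leq r<M_0$, and split $\square_x^M=\square_x^{kM_0}\cup(\square_x^M\setminus\square_x^{kM_0})$. The shell has volume at most $dM_0M^{d-1}$, so under a uniform $L^\infty$-bound on $b(\cdot,\lambda)$---which one deduces from standard elliptic regularity for the Dirichlet problem $(-\Delta-\lambda)b=1$ on each inclusion $\mathcal{O}^k_{\omega}$, exploiting the uniform minimal smoothness granted by Assumption \ref{kirill100} (cf.\ Remark 2.12 on uniform $L^\infty$-bounds for the Dirichlet eigenfunctions)---its contribution to $\lambda^2M^{-d}\int_{\square_x^M}b$ is $O(M_0/M)$. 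Combining with the integer estimate applied on $\square_x^{kM_0}$ and the inequality $(kM_0/M)^d\leq 1$ gives
$$g(M)\leq g(M_0)+C\frac{M_0}{M}\quad\text{for all }M\geq M_0,$$
with a constant $C=C(\lambda,\|b\|_{L^\infty})$.

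Sending $M\to\infty$ yields $\limsup_{M\to\infty}g(M)\leq g(M_0)$ for every $M_0>0$; taking the infimum over $M_0$ then gives $\limsup_{M\to\infty}g(M)\leq\inf_{M_0>0}g(M_0)\leq\liminf_{M\to\infty}g(M)$, so $\lim_{M\to\infty}g(M)$ exists and coincides with $\liminf_{M\to\infty}g(M)=\beta_\infty(\lambda,\omega)$, proving the lemma. The main technical obstacle is the uniform $L^\infty$-bound on $b$; should one wish to bypass it, one can instead work from the start with the enlarged sets $S_x^{M,\kappa}$ of Lemma \ref{remivan2}, absorbing the boundary shell into a $\kappa$-neighbourhood of $\partial\square_x^M$ and invoking only $L^1$-type estimates that follow from the ergodic theorem applied to $|b|$.
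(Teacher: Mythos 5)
Your argument is correct and rests on the same mechanism as the paper's proof: the exact averaging identity $\ell(x,kM,\lambda,\omega)=k^{-d}\sum_i\ell(x_i,M,\lambda,\omega)$ over a partition of $\square_x^{kM}$ into congruent sub-cubes, followed by a boundary-layer estimate to pass from integer multiples of $M_0$ to arbitrary $M$. The paper packages this as a proof by contradiction (extracting sequences $C_n$, $x_n$ and applying Lemma \ref{remivan2} with $\kappa=C_{n_0}$ to replace $M_n$ by the integer multiple $N_n$), whereas you make the underlying approximate monotonicity $g(M)\leq g(M_0)+o(1)$ explicit and conclude directly that $\lim_{M\to\infty}g(M)$ exists; this is a cleaner presentation of the same idea.

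One caveat on your shell estimate: the uniform $L^\infty$ bound on $b(\cdot,\lambda)$ is nowhere established in the paper and would require a separate De Giorgi--Nash--Moser argument (with a constant degenerating as $d_\lambda\to0$), so you should not lean on it. Your fallback is the right fix, but note that the correct ingredient is not the ergodic theorem (which gives no control uniform in the centre $x$ of the shell); it is the deterministic per-inclusion bound \eqref{pauza1} combined with the $\rho$-separation of inclusions from Assumption \ref{kirill100}, which bounds the number of inclusions meeting a shell of width $M_0$ by $CM_0M^{d-1}$ and hence, via Cauchy--Schwarz, gives $M^{-d}\int_{\square_x^M\setminus\square_x^{kM_0}}|b|\leq C(M_0,\lambda)/M$ uniformly in $x$. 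This is precisely the content of Lemma \ref{remivan2} (applied with $\kappa=M_0$ and $S_x^{M,M_0}=\square_x^{kM_0}$), so invoking that lemma, as the paper does, closes the gap.
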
 
\begin{proof} 
	For a fixed $\omega \in \Omega$, let a sequence $C_n\to \infty$ be such that 
	$$\beta_\infty(\lambda,\omega)=\lim_{n \to \infty}   \sup_{x\in \mathbf{R}^d}\ell(x,C_n,\lambda,\omega),$$
	and suppose that there exists a subsequence of $M_n$ (still indexed by $n$) such that 
	$$ \lim_{n \to \infty} \sup_{x \in \mathbf{R}^d}  \ell(x,M_n,\lambda,\omega) > \beta_\infty(\lambda,\omega). $$
	We take a sequence $x_n$ such that
	\begin{equation*} 
		\lim_{n \to \infty} \sup_{x \in \mathbf{R}^d} \ell(x,M_n,\lambda,\omega)=  \lim_{n \to \infty} \ell(x_n,M_n,\lambda,\omega).
	\end{equation*}
	Clearly, there exists $n_0 \in \mathbf{N}$ and $\delta>0$ such that, for every $n\geq n_0$, 
	\begin{eqnarray*} 
	   \ell(x_n,M_n,\lambda,\omega)&>& \beta_\infty(\lambda,\omega)+\delta, \\
		  \Big|  \sup_{x \in \mathbf{R}^d}\ell(x,C_n,\lambda,\omega)-\beta_\infty(\lambda,\omega)\Big| &<& \frac{\delta}{2}. 
	\end{eqnarray*} 
	Next, we cover each cube  $\square_{x_n}^{M_n}$ with  cubes of edge length $C_{n_0}$. More precisely, for $N_n:= \lfloor M_n/C_{n_0}\rfloor C_{n_0}$ ($\lfloor\,\cdot \,\rfloor $ denotes the integer part) the cube $\square_{x_n}^{N_n} \subset \square_{x_n}^{M_n}$ is a union of $\lfloor M_n/C_{n_0}\rfloor^d$ disjoint cubes  of edge length $C_{n_0}$. On the one hand, observing that $N_n/M_n \to 1$ as $n\to \infty$ and applying Lemma \ref{remivan2} with $\nu = C_{n_0}$ we have 
	\begin{equation} \label{eqivan213} 
	\lim_{n \to \infty}   \ell(x_n, N_n,\lambda,\omega) = 	\lim_{n \to \infty}   \ell(x_n,M_n,\lambda,\omega) \geq \beta_\infty(\lambda,\omega)+\delta.
	\end{equation} 
	On the other hand, since
	\begin{equation*} 
		\ell(x_n, N_n,\lambda,\omega) = \frac{1}{\lfloor M_n/C_{n_0}\rfloor^d} \sum_{i=1}^{\lfloor M_n/C_{n_0}\rfloor^d} \ell(x_i', C_{n_0},\lambda,\omega),
	\end{equation*}
	where $x_i'$ denote the centres of the cubes in the partition described above, we conclude that 
	\begin{equation*}
		\lim_{n \to \infty}    \ell(x_n, N_n,\lambda,\omega) \leq \beta_\infty(\lambda,\omega)+\frac{\delta}{2},
	\end{equation*} 
	which is a contradiction with \eqref{eqivan213}.
\end{proof} 	
Finally, we present the main result of this subsection.
\begin{proposition} \label{propivan1} 
	The function $(\lambda,\omega) \mapsto \beta_\infty(\lambda,\omega)$
	is deterministic almost surely, continuous on  $\mathbf{R}^{+} \backslash \Sp(-\Delta_{\mathcal{O}})$ and strictly increasing on every interval contained in this set.	
\end{proposition}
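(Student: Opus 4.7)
My first step would be to establish uniform-in-$(x,M,\omega)$ Lipschitz continuity of $\lambda\mapsto\ell(x,M,\lambda,\omega)$ on compact subsets of $\mathbf{R}^+\setminus\Sp(-\Delta_\OO)$. Writing $\ell$ as a sum over inclusions $\OO_\o^k$ meeting $\square_x^M$ and using the spectral representation \eqref{26} together with the normalisation \eqref{29}, I would obtain
$$\Bigl|\int_{\OO_\o^k}b(T_y\omega,\lambda)\,dy\Bigr|\leq |\OO_\o^k|/d_\lambda,\qquad \Bigl|\partial_\lambda\int_{\OO_\o^k}b(T_y\omega,\lambda)\,dy\Bigr|\leq |\OO_\o^k|/d_\lambda^2.$$
Summing and dividing by $M^d$ yields an $M,x,\omega$-independent Lipschitz constant in $\lambda$ on every compact subinterval of the resolvent set. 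Since the Lipschitz property is preserved by $\sup_x$ and by $\liminf_M$, the same constant bounds $\beta_\infty(\cdot,\omega)$, giving continuity.

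\textbf{Strict monotonicity.} Working with $\tilde\ell$ in place of $\ell$ (allowed by Lemma \ref{remivan2}), only inclusions entirely inside $\square_x^M$ contribute, and for each of them the inequality derived in the course of \eqref{sasha102},
$$\partial_\lambda\Bigl(\lambda^2\int_{\OO_\o^k}b(T_y\omega,\lambda)\,dy\Bigr)\geq -|\OO_\o^k|,$$
applies directly. Summing, one obtains $\partial_\lambda\tilde\ell(x,M,\lambda,\omega)\geq 1-M^{-d}\sum_{\OO_\o^k\subset\square_x^M}|\OO_\o^k|$. A packing argument based on Assumption \ref{kirill100} now yields a constant $\theta^*<1$ (depending only on $\rho,\mathcal N,\gamma,d$) such that the right-hand side exceeds $1-\theta^*$ uniformly for $M$ large: the inclusions are pairwise $\rho$-separated, so their $\rho/2$-neighbourhoods are disjoint and each exceeds $|\OO_\o^k|$ by a definite amount controlled by the minimal smoothness constants. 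Consequently $\tilde\ell(x,M,\lambda_2,\omega)-\tilde\ell(x,M,\lambda_1,\omega)\geq(1-\theta^*)(\lambda_2-\lambda_1)$ for $\lambda_1<\lambda_2$ in a common component of $\mathbf{R}^+\setminus\Sp(-\Delta_\OO)$, and this pointwise estimate passes through $\sup_x$ and $\liminf_M$ via Lemma \ref{remivan2} to give $\beta_\infty(\lambda_2,\omega)-\beta_\infty(\lambda_1,\omega)\geq(1-\theta^*)(\lambda_2-\lambda_1)>0$.

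\textbf{Deterministic character.} Changing variables $y\mapsto y-z$ in the integral defining $\ell$ gives $\ell(x,M,\lambda,T_z\omega)=\ell(x+z,M,\lambda,\omega)$ for every $z\in\mathbf{R}^d$, so $\sup_x\ell(\cdot,M,\lambda,T_z\omega)=\sup_x\ell(\cdot,M,\lambda,\omega)$ and hence $\beta_\infty(\lambda,T_z\omega)=\beta_\infty(\lambda,\omega)$ for every $z$ and every $\omega$. Measurability of $\omega\mapsto\beta_\infty(\lambda,\omega)$ follows since $\ell(\cdot,M,\lambda,\omega)$ is continuous in $x$, so the supremum may be taken over $\mathbf{Q}^d$. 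By ergodicity (Definition \ref{defergodic}(i)) $\beta_\infty(\lambda,\cdot)$ is a.s.~constant for each fixed $\lambda$. Picking a countable dense set $Q\subset\mathbf{R}^+\setminus\Sp(-\Delta_\OO)$, on the full-measure intersection of the corresponding exceptional sets the function is deterministic on $Q$, and by the uniform Lipschitz continuity of the first paragraph this extends to all $\lambda\in\mathbf{R}^+\setminus\Sp(-\Delta_\OO)$.

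\textbf{Main obstacle.} The remaining steps are essentially formal once the geometric packing bound $\theta^*<1$ is in place; that uniform bound is the one genuinely new ingredient and is exactly what Assumption \ref{kirill100} is tailored to supply — the uniform separation $\rho$ together with the uniform minimal-smoothness constants gives a uniformly positive "excess volume" around each inclusion, forcing the local volume fraction to stay strictly below $1$ regardless of $x$, $M$ or the realisation $\omega$.
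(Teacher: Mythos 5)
Your proof is correct and follows essentially the same route as the paper's: the uniform-in-$(x,M,\omega)$ derivative bounds for $\tilde\ell$ obtained from the spectral representation (the paper's \eqref{sasha102}), translation invariance plus ergodicity for fixed $\lambda$, and extension to all $\lambda$ via a countable dense set and the Lipschitz continuity. The only differences are cosmetic improvements: you pass the two-sided Lipschitz and one-sided increment bounds directly through $\sup_x$ and $\liminf_M$ (where the paper argues via an extremising sequence $x_n$), and you supply the packing argument justifying the uniform volume-fraction bound $M^{-d}\sum_{\OO_\o^k\subset\square_x^M}|\OO_\o^k|\leq\theta^*<1$, which the paper asserts without proof.
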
 	
\begin{proof} 
	By Lemmata \ref{remivan2} and  \ref{lemivan13}, we have  
	$$ \beta_\infty(\lambda,\omega)= \lim_{M \to \infty} \sup_{x \in \mathbf{Q}^d}  \tilde{\ell}(x,M,\lambda,\omega). $$ 
	The measurability of $\beta_\infty(\lambda,\cdot)$ now follows directly from the measurability of $b(\cdot,\l)$.
	
	It is easy to see from  \eqref{34}, \eqref{29} and \eqref{sasha102} that almost surely the mapping $\lambda \mapsto \tilde{\ell}(x,M,\lambda,\omega)$ has locally bounded (uniformly in $x,M,\omega$)  and  positive derivative, 
	$$
	\frac{\partial}{\partial \l} \tilde{\ell}(x,M,\lambda,\omega) \geq 1 - \frac{\sum_{\OO_\o^k\subset \square_x^M} |\OO_\o^k|}{M^d}\geq C>0.
	$$  
	Hence $\tilde{\ell}(x,M,\cdot,\omega)$ is locally Lipschitz (uniformly in $x,M,\omega$) and increasing on every open subinterval of its domain. More specifically, for $\l_1<\l_2$ contained in the same open interval from  $\mathbf{R}^{+} \backslash \Sp(-\Delta_{\mathcal{O}})$ one has
	\begin{equation}\label{47}
		\tilde{\ell}(x,M,\lambda_2,\omega) - \tilde{\ell}(x,M,\lambda_1,\omega) \geq C(\l_2 - \l_1).
	\end{equation}
	The bound \eqref{47} implies a similar property for $\beta_\infty$. Indeed, let $x_n$ be a sequence such that $\b_\infty(\l_1,\o) = \lim_{n\to \infty} \tilde{\ell}(x_n, n,\lambda_1,\omega)$. Passing to the limit along $x_n$ in \eqref{47}, we arrive at 
	\begin{equation*}
		\b_\infty(\l_2,\o) - \b_\infty(\l_1,\o) \geq  \limsup_{n\to \infty} \tilde{\ell}(x_n, n,\lambda_2,\omega) -   \lim_{n\to \infty} \tilde{\ell}(x_n, n,\lambda_1,\omega)  \geq C(\l_2 - \l_1).
	\end{equation*}
	Utilising a similar argument, one can prove the reverse inequality, concluding that $\beta_\infty(\lambda,\omega)$ is locally Lipschitz.
	
	Finally,  for a fixed $\l$ the function $\beta_\infty(\lambda,\cdot)$ is  translation invariant and  is thus constant almost surely. To conclude that the function $\beta_\infty$ is deterministic, i.e. $\beta_\infty(\lambda,\omega)=\b_\infty(\l)$ almost surely, it suffices to take the set of probability one such that for every $\l \in \Q$, $\l \notin \Sp(-\Delta_{\mathcal{O}})$,  $\beta(\l,\omega)$ is deterministic and use the almost sure continuity of $\lambda \mapsto \beta_\infty(\lambda,\omega)$.   
\end{proof} 

\begin{remark}
	Note that in general $\beta_\infty(\lambda)$ is not necessarily differentiable, as can  be  seen from examples provided at the end of this section.
\end{remark}

\subsection{Proof of Theorem \ref{th4.1}}\label{ss:4.3}

Let $\l$ be a limit point of the spectra of $\AA^\e$, i.e. $\l = \lim_{\e\to 0} \l^\e$, $\l^\e\in \Sp(\AA^\e)$.
Without loss of generality we may assume that 
\begin{equation}  \label{jdba1} 
\l \notin \Sp(-\Delta_\OO).
\end{equation} 
The task at hand is to show that 
\begin{equation}\label{43}
	\beta_\infty(\l) \geq 0.
\end{equation}

Since $\l^\e\in \Sp \AA^\e$, there exists a sequence $u^\e \in {\rm Dom}(\mathcal A^\e)$, $\|u^\e\|_{L^2(\R^d)}=1$,   such that
\begin{equation}\label{delta}
(\mathcal A^\e -\l^\e)u^\e =  f^\e
\end{equation}
 with  $\|f^\e\|_{L^2(\R^d)} = :\delta^\e \to 0$. (Such a sequence can be extracted by the diagonalisation procedure from  Weyl sequences corresponding to $\l^\e$ for each $\e$.) Multiplying \eqref{delta} by $u^\e$ and integrating by parts, we obtain the estimate
\begin{equation}\label{44a}
\|\e \nabla u^\e\|_{L^2(S_0^\e)} + \| \nabla u^\e\|_{L^2(S_1^\e)} \leq C,
\end{equation}
with some $C>0$. The sequence $(u^\e)_\e$ converges weakly to zero in $L^2(\R^d)$ (otherwise its weak stochastic two-scale limit would be an eigenfunction of $\AA^\hom $, which is impossible due to Proposition \ref{p4.13}).   In what follows we devise a ``compactification'' procedure for $(u^\e)_\e$ modifying it in such a way that the new sequence has a non-vanishing limit, thus allowing us  to retrieve  information about the relation of $\l$ to the set $\mathcal G$.

We fix an arbitrary $L>0$ and cover $\R^d$ by the cubes $\square^L_\xi$, $\xi \in L\Z^d$. A key ingredient of the ``compactification'' is the following obvious assertion.
\begin{lemma}
	Let (possibly finite) sequences $a_\xi, b_\xi, c_\xi, d_\xi, \xi\in \mathbb I$, where $\mathbb I$ is a countable or finite set of indices, be such that
	\begin{equation*}
	\sum_\xi a_\xi = \sum_\xi b_\xi = \sum_\xi c_\xi  = \sum_\xi d_\xi  =  1.
		\end{equation*}
	Then there exists $\zeta\in \mathbb I$ such that $b_\zeta+c_\zeta+d_\zeta	    \leq 3 a_\zeta$.
\end{lemma}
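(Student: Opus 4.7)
The statement is essentially a pigeonhole/averaging observation, and the proof will be very short. The plan is to argue by contradiction. First I would note that from the context (the sequences arise as partitions of $L^2$-type mass across the cubes $\square_\xi^L$) the entries $a_\xi,b_\xi,c_\xi,d_\xi$ are implicitly assumed to be nonnegative, and in fact the whole lemma is trivial without nonnegativity, so this hypothesis should be read into the statement.

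Then I would suppose, for contradiction, that no such $\zeta$ exists, i.e.
\begin{equation*}
b_\zeta + c_\zeta + d_\zeta > 3\,a_\zeta \qquad \forall\,\zeta\in\mathbb I.
\end{equation*}
Summing this strict inequality termwise over $\mathbb I$ — here the key point is that each summand $b_\zeta+c_\zeta+d_\zeta-3a_\zeta$ is strictly positive, so a countable sum of strictly positive numbers cannot vanish — one obtains
\begin{equation*}
\sum_{\zeta\in\mathbb I}(b_\zeta+c_\zeta+d_\zeta) > 3\sum_{\zeta\in\mathbb I}a_\zeta,
\end{equation*}
which, using the four normalisation identities $\sum a_\xi=\sum b_\xi=\sum c_\xi=\sum d_\xi=1$, reduces to $3>3$, a contradiction. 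Hence the required index $\zeta$ must exist.

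There is no real obstacle; the only thing one has to be slightly careful about is the passage from termwise strict inequality to strict inequality of infinite sums, which is legitimate precisely because each term $b_\zeta+c_\zeta+d_\zeta-3a_\zeta$ is strictly positive and $\mathbb I$ is nonempty (and in fact one only needs a single index to violate the conclusion, so the countable case reduces to looking at any single violating term). Everything else is pure bookkeeping on the partition-of-unity identity $1+1+1=3\cdot 1$.
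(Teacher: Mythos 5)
Your argument is correct and is precisely the (unwritten) argument the paper has in mind: the lemma is stated there as an ``obvious assertion'' with no proof supplied, the intended justification being exactly this averaging/contradiction step, namely that if $b_\zeta+c_\zeta+d_\zeta-3a_\zeta>0$ for every $\zeta$ then summing over $\mathbb I$ and using the four normalisations gives $0>0$. Your remark about nonnegativity is a reasonable reading of the context (the lemma is applied to squared $L^2$-norms), though strictly speaking the contradiction argument goes through for any real sequences whose sums converge unconditionally, so no extra hypothesis is actually needed.
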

For each $\e$ we apply the above lemma to the  sequences 
\begin{equation*}
{\| u^\e\|_{L^2(\square^{L}_\xi)}^2}, \quad\frac{\|(\mathcal A^\e -\l^\e)u^\e\|_{L^2(\square^{3L}_\xi)}^2}{3^d (\delta^\e)^2}, \quad \frac{1}{3^d}\| u^\e\|_{L^2(\square^{3L}_\xi)}^2,\quad \frac{\|\chi_1^\e \nabla u^\e\|_{L^2(\square^{3L}_\xi)}^2}{3^d \| \chi_1^\e \nabla  u^\e\|_{L^2(\R^d)}^2}, 
\end{equation*} $\xi \in L\Z^d$, $\| u^\e\|_{L^2(\square^{L}_\xi)} \neq 0$. (We only consider those cubes $\square^{L}_\xi$ where at least one of the above terms  does not vanish.) Taking into account \eqref{44a},  we infer that for each $\e$ there exists $\xi^\e$ such that
\begin{equation}\label{50}
	\begin{aligned}
	(\delta^\e)^{-1} 	\|(\mathcal A^\e -\l^\e)u^\e\|_{L^2(\square^{3L}_{\xi^\e})} +	\|u^\e\|_{L^2(\square^{3L}_{\xi^\e})} +\| \chi_1^\e \nabla u^\e\|_{L^2(\square^{3L}_{\xi^\e})} \leq C  \| u^\e\|_{L^2(\square^{L}_{\xi^\e})}.
	\end{aligned}
\end{equation}

Now for every $\e$ we shift the cube $\square^{L}_{\xi^\e}$ to the origin and re-normalise $u^\e$. Namely,  we define
\begin{equation}\label{47b}
	w^\e_L(x):= \frac{u^\e(x+\xi^\e)}{\| u^\e\|_{L^2(\square^{L}_{\xi^\e})}}. 
\end{equation}
Note that 
\begin{equation}\label{42a}
	\| 	w^\e_L\|_{L^2(\square^{L})} =1.
\end{equation}
We denote by $\check{\mathcal{A}}^\e$  the operator obtained from $\mathcal A^\e$ by shifting its coefficients: $\check A^\e(x,\o) :=A^\e(x+\xi^\e,\o)$. Analogously, $\check S_0^\e, \check S_1^\e$, and $\check \chi_0^\e, \check \chi_1^\e$, denote appropriately shifted set of inclusions, its complement, and the corresponding characteristic functions. Furthermore, it is convenient to denote by $\check \OO_\o$ and $\check \OO_\o^k$ appropriately shifted unscaled set of inclusions and individual unscaled inclusions (which depend on $\e$ via $\xi_\e$, but we omit this dependence from the notation for brevity). 

From \eqref{50} we immediately have 
\begin{equation}\label{17}
	\begin{aligned}
		\|f^\e_L\|_{L^2(\square^{3L})}\leq C \delta^\e
	\end{aligned}
\end{equation}
 and 
\begin{equation}\label{45a}
	\begin{aligned}
		\|  w^\e_L\|_{L^2(\square^{3L})} + \|\check \chi_1^\e \nabla   w^\e_L\|_{L^2(\square^{3L})} \leq C,
	\end{aligned}
\end{equation}
where
\begin{equation}\label{17a}
	\begin{aligned}
	f^\e_L:=(\check{\mathcal{A}}^\e -\l^\e)w^\e_L =\frac{f^\e(\cdot + \xi^\e)}{\| u^\e\|_{L^2(\square^{L}_{\xi^\e})}}.
	\end{aligned}
\end{equation}

In order to verify \eqref{43}, we need to analyse the behaviour of the sequence $w^\e_L$ on the soft component. To this end, we consider the  decomposition
\begin{equation}\label{51}
w^\e_L = 	\widetilde w^\e_L + z^\e,
\end{equation}
where $\widetilde w^\e_L$, representing the macroscopic part, denotes the harmonic extension of $w^\e_L|_{\check S_1^\e}$ to the whole $\R^d$, as per Theorem \ref{th:extension} and a standard scaling argument, and the term
\begin{equation}\label{47a}
	z^\e:= w_L^\e - \widetilde w_L^\e \in W^{1,2}_0(\check S_0^\e)
\end{equation}
captures the micro-resonant behaviour of the inclusions. Further on we will use analogous decompositions on several occasions.

Since $\widetilde w_L^\e$ is harmonic in the inclusions, we have 
\begin{equation}\label{eq:v}
	-(\e^2 \Delta + \l^\e)  z^\e = \l^\e \widetilde w_L^\e + f_L^\e,\quad x\in \check S_0^\e.
\end{equation}
The following lemma is trivial and follows  from \eqref{20a} by a rescaling argument.
\begin{lemma}\label{l:res}
	The spectrum of the self-adjoint operator $-\e^2 \Delta$ on $\check S_0^\e$ with Dirichlet boundary conditions  coincides with the spectrum of $-\Delta_\OO$ almost surely.
\end{lemma}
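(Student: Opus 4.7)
The plan is to reduce the assertion to the identity \eqref{20a} via separation of variables (the operator decomposes as a direct sum over connected components) together with a rescaling and translation.

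First I would observe that the set $\check S_0^\e = \e\check{\mathcal O}_\o$ is a disjoint union of the open connected components $\e\check{\mathcal O}_\o^k$, $k\in\mathbf{N}$, which by Assumption \ref{kirill100} have pairwise positive distance (at least $\e\rho$). Since the self-adjoint operator $-\e^2\Delta$ on $\check S_0^\e$ with Dirichlet boundary conditions is the Friedrichs extension associated to the form $\int_{\check S_0^\e}\e^2|\nabla u|^2$ on $W_0^{1,2}(\check S_0^\e)$, it decomposes as an orthogonal direct sum $\bigoplus_k (-\e^2\Delta_{\e\check{\mathcal O}_\o^k})$ acting on $L^2(\check S_0^\e)=\bigoplus_k L^2(\e\check{\mathcal O}_\o^k)$. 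Consequently its spectrum is the closure of the union of the component spectra:
\[
\Sp\bigl(-\e^2\Delta\text{ on }\check S_0^\e\bigr) \;=\; \overline{\bigcup_{k\in\mathbf{N}}\Sp\bigl(-\e^2\Delta_{\e\check{\mathcal O}_\o^k}\bigr)}.
\]

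Next I would perform the rescaling. The unitary map $U_\e: L^2(\e\check{\mathcal O}_\o^k)\to L^2(\check{\mathcal O}_\o^k)$, $(U_\e u)(y):=\e^{d/2}u(\e y)$, intertwines $-\e^2\Delta_{\e\check{\mathcal O}_\o^k}$ with $-\Delta_{\check{\mathcal O}_\o^k}$ (Dirichlet boundary conditions are preserved under this change of variable), so the two operators are unitarily equivalent and share the same spectrum. Since $\check{\mathcal O}_\o^k$ differs from some $\mathcal O_\o^{k'}$ only by the deterministic translation $-\xi^\e$, and the Dirichlet Laplacian is translation-invariant, we get $\Sp(-\Delta_{\check{\mathcal O}_\o^k})=\Sp(-\Delta_{\mathcal O_\o^{k'}})$. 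Moreover, the reindexing $k\mapsto k'$ is a bijection of $\mathbf{N}$.

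Combining these observations and invoking \eqref{20a} yields, almost surely,
\[
\Sp\bigl(-\e^2\Delta\text{ on }\check S_0^\e\bigr) \;=\; \overline{\bigcup_{k}\Sp\bigl(-\Delta_{\mathcal O_\o^{k}}\bigr)} \;=\; \Sp(-\Delta_\OO),
\]
which is the desired equality. There is no substantive obstacle here: the one point worth being careful about is that the shift $\xi^\e$ is $\e$-dependent and random (through its dependence on $u^\e$), but since it is applied to every $\mathcal O_\o^k$ simultaneously and the Dirichlet spectrum is shift-invariant, this causes no difficulty; the full-measure set on which \eqref{20a} holds is independent of $\e$, $L$ and of the choice of $\xi^\e$.
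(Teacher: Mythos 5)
Your proof is correct and is exactly the argument the paper has in mind: the paper simply states that the lemma ``is trivial and follows by a rescaling argument from \eqref{20a}'', and your decomposition into connected components, unitary rescaling, and translation invariance is the standard way to spell that out. No issues.
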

By the above lemma, for sufficiently small $\e$ we have
\begin{equation}\label{54}
	\|z^\e\|_{L^2(\e\check\OO_\o^k)} \leq d_{\l^\e}^{-1}{\| \l^\e \widetilde w_L^\e + f_L^\e\|_{L^2(\e\check\OO_\o^k)}} \quad \forall \check\OO_\o^k \subset\check\OO_\o
\end{equation}
(recall the notation \eqref{d_l}). Furthermore,  using $z^\e$ as a test function in \eqref{eq:v} and taking into account \eqref{54}, it is not difficult to see that 
\begin{equation}\label{49}
	\e\|\nabla z^\e\|_{L^2(\e\check\OO_\o^k)} \leq C(\|\widetilde w_L^\e\|_{L^2(\e\check\OO_\o^k)} +\| f^\e\|_{L^2(\e\check\OO_\o^k)})\quad \forall \check\OO_\o^k \subset\check\OO_\o.
\end{equation}
Utilising Theorem \ref{th:extension} and the bounds \eqref{17}, \eqref{45a}, \eqref{49}, we infer that 
\begin{equation}\label{53}
	\begin{gathered}
		\| \widetilde w^\e_L\|_{L^2(\square^{5L/2})}  + \|\nabla \widetilde w^\e_L\|_{L^2(\square^{5L/2})} \leq C, \quad
\e \|\nabla  w^\e_L\|_{L^2(\square^{2L})} \leq C.
	\end{gathered}
\end{equation}
Next, by  \eqref{17} and \eqref{54} we have
\[
	\| 	w^\e_L\|_{L^2(\square^{L})} \leq 	\| 	\widetilde w^\e_L\|_{L^2(\square^{L})}  + 	\| 	z^\e\|_{L^2(\square^{L})} \leq  (1 + d_{\l^\e}^{-1} \, {\l^\e})	\| 	\widetilde w^\e_L\|_{L^2(\square^{3L/2})} + {C} d_{\l^\e}^{-1}\, \delta^\e.
\]
Combining \eqref{jdba1}, \eqref{delta} and \eqref{42a}, we infer that,  for  small enough $\e$,
\begin{equation}\label{53a}
		0<C\leq \|\widetilde w^\e_L\|_{L^2(\square^{3L/2})}.
\end{equation}
From \eqref{53a} and  the first  bound in \eqref{53}   we have that, up to a subsequence,  
 \begin{equation}\label{57}
 	\widetilde w^\e_L \to w^0_L \neq 0 \mbox{ weakly in } W^{1,2}(\square^{5L/2}) \mbox{ and strongly in }L^2(\square^{5L/2}).
 \end{equation}

 
 Let $\eta_L \in C_0^\infty(\square^{2L}), L>0$, be a family of cut-off functions satisfying  $0\leq\eta_L\leq 1$, $\eta_L\vert_{\square^{3L/2}} = 1$, $|\nabla \eta_{L}|\leq C/L$, with $C$ independent of $L$.  Using  $\widetilde w^\e_L \eta_{L}$ as a test function  in (\ref{17a}) and integrating by parts, we get
 \begin{equation}\label{59}
 	\begin{aligned}
 		I_1^\e + I_2^\e+I_3^\e: =& \int_{ \square^{2L}}\check \chi_1^\e \eta_{L}   A_1 \nabla w^\e_L \cdot \nabla  w^\e_L + \int_{ \square^{2L}} \check \chi_1^\e   w^\e_L  A_1 \nabla w^\e_L \cdot \nabla \eta_{L} 
 		\\
 		 + &\int_{ \square^{2L}} \e^2 \check \chi_0^\e \nabla w^\e_L \cdot  \nabla (\widetilde w^\e_L  \eta_{L})  
 		= \int_{\square^{2L}}\l^\e  w^\e_L  \widetilde w^\e_L \eta_{L} +  \int_{\square^{2L}} f^\e_L \widetilde w^\e_L \eta_{L}.
 	\end{aligned}
 \end{equation}  
We estimate all the terms but one via \eqref{17} and \eqref{53} as follows:
 \begin{equation}\label{60}
 	\begin{aligned}
 		\liminf_{\e\to 0} I_1^\e \geq 0, \quad |I_2^\e| \leq C/L, \quad \lim_{\e\to 0} I_3^\e = 0, \quad \lim_{\e\to 0}\int_{\square^{2L}} f^\e_L \widetilde w^\e_L \eta_{L} =0.
 	\end{aligned}
 \end{equation}

 It remains to analyse the behaviour of the first term on the right-hand side of \eqref{59}. 
We use the family of local averaging operators $P^\e$  on $L^2(\R^d)$ defined in Lemma \ref{solta10}, taking for $X^\e_k$ the inclusions $\e\check\OO^k_\omega \subset \check S_0^\e$ and considering the decomposition 
\begin{equation}	\label{54a} 
	z^\e = \hat z^\e + \mathring z^\e,
\end{equation} 
where $\hat z^\e, \mathring z^\e \in W^{1,2}_0(\check S_0^\e)$ satisfy 
 \begin{equation*}
 	- \e^2 \Delta \hat z^\e -\l \hat z^\e = \l P^\e  w^0_L  \quad \mbox{ in } \check S_0^\e,
 \end{equation*}
 \begin{equation}\label{66a}
 	- \e^2 \Delta \mathring z^\e -\l^\e \mathring z^\e = (\l^\e - \l) \hat z^\e + \l^\e \widetilde w^\e_L - \l P^\e  w^0_L  + f^\e_L \quad \mbox{ in } \check S_0^\e.
 \end{equation}
First, it is easy to see that
 \begin{equation}\label{56} 
 	\hat z^\e(x)=\l   (P^\e w^0_L)(x)\,  b (T_{x/\e+\xi^\e}\o,\l). 
 \end{equation}
 Furthermore, it follows from Lemmata  \ref{l:res} and \ref{solta10} that for sufficiently small $\e$ one has
 \begin{equation} \label{estuniformb} 
 	\|\hat z^\e\|_{L^2(\square^{2L})} \leq C d_\l^{-1} \l\|w^0_L\|_{L^2(\square^{5L/2})} \leq C. 
 \end{equation}
 From the  bound \eqref{estuniformb}, \eqref{57}, Lemma \ref{solta10}, and the bound \eqref{17} we infer  that the right-hand side of \eqref{66a} vanishes in the limit as $\e\to 0$. Then,  applying Lemma \ref{l:res} again, we infer that
 \begin{equation}\label{67}
 	\|\mathring z^\e \|_{L^2(\square^{2L})} \to 0.
 \end{equation}

 Next, denote $	g^{\varepsilon}_{\l}(x): =b (T_{x/\e+\xi^\e}\o,\l)$.
It follows from \eqref{pauza1}  that the sequence  $(g^{\varepsilon}_{\l})_\e$ is bounded in $L^2(\square^{2L})$ and thus converges, up to a subsequence, weakly in $L^2(\square^{2L})$ as $\e\to 0$ to some $g_{\l}$. 
 Integrating $\l+\l^2g_{\l}^\e$ over an arbitrary fixed cube contained in $\square^{2L}$, passing to the limit, and using the definition of $\beta_{\infty}(\l)$,  it is not difficult to see that 
 \begin{equation*}
 	\l+\l^2g_{\l} (x)\leq \beta_{\infty}(\l) \textrm{ for a.e. } x \in \square^{2L}.
 \end{equation*} 
 Using Lemma \ref{solta10}, we obtain from \eqref{54a},\eqref{56} and \eqref{67} that
 $$z^\e \rightharpoonup \l g_{\l} w^0_L, \textrm{ weakly in } L^2(\square^{2L}).$$
 Finally, the strong convergence of $\widetilde w^\e_L$, the relations  \eqref{60}, and a passage to the limit in \eqref{59} as $\e\to 0$ yield
 \begin{equation*}
 - \frac{C}{L}\leq  \lim_{\e\to 0}	\int_{\square^{2L}}\l^\e (\widetilde w^\e_L + z^\e) \widetilde w^\e_L \eta_{L}= \int_{\square^{2L}} (\l+\l^2 g_{\l}) (w^L_0)^2\eta_{L}
 \leq  \beta_{\infty} (\l) \int_{\square^{2L}}(w^L_0)^2\eta_{L}\leq C \beta_{\infty}(\l), 
 \end{equation*} 
 where in the last inequality we used  \eqref{53a} and \eqref{57}. Since $L$ is arbitrary, noting that all constants in the bounds obtained in the proof are independent of $L$, we conclude that $\beta_{\infty}(\l)\geq 0$.

 \subsection{Existence of almost periodic cubes} \label{ss:4.4}

In this subsection we present preparatory constructions and results necessary for the proof of Theorem \ref{th4.4}.
 
By $P_{\#}$ we denote the push-forward of the probability measure given by the map $\mathcal{H}_K$ on the $\sigma$-algebra $\mathcal{F}_{{\rm H},K}$ (see Section \ref{s4.1}). We  use the following notation for a closed Hausdorff ball of radius $r$ around a fixed compact set $K' \subset K$: 
\[
{B_{{\rm H},K}(K',r)}:= \{U\subset K: U \mbox{ is compact, } d_{\rm H}(K',U)< r\}.
\] 
Next lemma is the key assertion that implies the existence of ``almost periodic'' arrangements of inclusions.  
\begin{lemma} \label{l:4.11}  
	Let $K \subset \mathbf{R}^d$ be a  compact set. There exists a subset $\Omega_K \subset \Omega$ of probability one such that for every $\omega \in \Omega_K$ and every $r>0$ one has
	\begin{equation} \label{svoivan1} 
	P_{\#}\left(B_{{\rm H},K}(\mathcal{H}_{K}(\omega),r  )\right)>0. 
	\end{equation}    
\end{lemma}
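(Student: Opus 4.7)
The plan is to identify $\Omega_K$ with the $\mathcal{H}_K$-preimage of the topological support of the pushforward measure $P_{\#}$ on the hyperspace of compact subsets of $K$.

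First, I would recall that the hyperspace $\mathcal{K}(K)$ of nonempty compact subsets of $K$ equipped with the Hausdorff metric $d_{\rm H}$ is a compact, and hence complete separable, metric space by the classical Blaschke selection theorem. (The empty set, should it be attained by $\mathcal{H}_K(\omega)$ with positive probability, can be included as an isolated atom with any convention for its distance to nonempty sets; this does not affect what follows.) Consequently $P_{\#}$ is a Borel probability measure on a separable metric space, so its support
$$\mathcal{S} := \mathrm{supp}(P_{\#}),$$
i.e.\ the smallest closed set of full measure, is well defined.

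Next I would invoke the standard fact that $P_{\#}(\mathcal{S}) = 1$: indeed, $\mathcal{K}(K)\setminus\mathcal{S}$ is open, and by separability it is covered by a countable union of open $d_{\rm H}$-balls each of $P_{\#}$-measure zero (by the very definition of the support), hence itself has measure zero. Setting $\Omega_K := \mathcal{H}_K^{-1}(\mathcal{S})$, which is $\mathcal{F}$-measurable thanks to the measurability of $\mathcal{H}_K$ with respect to $\mathcal{F}_{{\rm H},K}$, we obtain
$$P(\Omega_K) = P_{\#}(\mathcal{S}) = 1.$$

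Finally, for any $\omega \in \Omega_K$ the compact set $\mathcal{H}_K(\omega)$ lies in $\mathcal{S}$, and by the defining property of the support every open $d_{\rm H}$-neighborhood of a point of $\mathcal{S}$ has strictly positive $P_{\#}$-measure. Applying this to the Hausdorff ball $B_{{\rm H},K}(\mathcal{H}_K(\omega), r)$ yields \eqref{svoivan1} for every $r>0$, completing the argument. The only mildly delicate point is the topological setup (compactness/separability of the hyperspace and the trivial convention for the empty set), but once this is in place the statement is essentially a direct application of the ``support has full measure'' lemma for Borel probability measures on separable metric spaces; no ergodic or correlation assumption is needed for this lemma.
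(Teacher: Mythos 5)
Your proof is correct and follows essentially the same route as the paper's: both arguments rest on the compactness (hence separability) of the hyperspace of compact subsets of $K$ under $d_{\rm H}$, together with the observation that the set of points admitting a $P_{\#}$-null neighbourhood is a countable union of null open balls. You simply package this as the standard ``the support of a Borel probability measure on a separable metric space has full measure'' lemma and take $\Omega_K=\mathcal H_K^{-1}(\mathrm{supp}\,P_{\#})$, whereas the paper unwinds the same covering argument explicitly via a countable dense family $\{K_m\}$ and the critical radii $r_m$.
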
 	
\begin{remark}
	One can rewrite \eqref{svoivan1} as $P\left(\{\tilde \o: d_{\rm H}(\mathcal{H}_K(\o),\mathcal{H}_K(\tilde \o))< r\}\right)>0$.
\end{remark}
\begin{proof} 
It is sufficient to prove that  the set where the inequality \eqref{svoivan1} does not hold for any $r>0$ is a set of probability zero. Recall that the Hausdorff topology on the compact subsets of $K$ is compact and thus separable.  Consider a countable  family $\{K_m\}_{m \in \mathbf{N}}$ of compact subsets of $K$ that is dense in this topology and let 
	\begin{equation}\label{80a}
		r_m:=\sup\left\{r>0: P_{\#}\left(B_{{\rm H},K} (K_m,r)\right)=0\right\}.
	\end{equation} 
	Additionally, we set $r_m = -\infty$ if $K$ is empty. Note that by the continuity of probability measure  we also have  $P_{\#}\left(B_{{\rm H},K} (K_m,r_m)\right)=0$. 
	Let $\o$ be such that there exists $r>0$ with $P_{\#}\left(B_{{\rm H},K}(\mathcal{H}_{K}(\omega),r)\right)=0$. By the density of the family $\{K_m\}_m$, there exists $K_i$ and $r'>0$  satisfying $\mathcal{H}_{K}(\omega) \in B_{{\rm H},K}(K_i,r') \subset B_{{\rm H},K}(\mathcal{H}_{K}(\omega),r)$. As $P_{\#}\left( B_{{\rm H},K}(K_i,r')\right) = 0$, it follows from \eqref{80a} that
\begin{equation*}
	\begin{gathered}
	\left\{\omega \in \Omega: \exists r>0 \textrm{ such that }   	P_{\#}\left(B_{{\rm H},K}(\mathcal{H}_{K}(\omega),r  )\right)=0\right\}\qquad\qquad
	\\
	\qquad\qquad=\bigcup_{m \in \mathbf{N},r_m>0}\left\{\omega \in \Omega:\mathcal{H}_{K}(\omega) \in B_{{\rm H},K }(K_m,r_m)\right\}.
\end{gathered}
\end{equation*}
	The right-hand side of the last equality is clearly a set of probability zero.
\end{proof} 

\begin{corollary} \label{lemivan10}  
	There exists a subset $\Omega_1 \subset \Omega$ of probability one such that every $\omega \in \Omega_1$ one has
	\begin{equation*}
		P_{\#}\left(B_{{\rm H},\square_q^n}(\mathcal{H}_{\square_q^n}(\omega),r  )\right)=	P_{\#}\left(B_{{\rm H},\square^n}(\mathcal{H}_{\square^n}(T_q\omega),r  )\right)>0 \quad \forall n \in \mathbf{N}, q \in \mathbf{Q}^d, r>0.
	\end{equation*}    
\end{corollary}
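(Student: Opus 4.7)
\emph{Proof plan.} The plan is to combine Lemma \ref{l:4.11} with the measure preservation of the dynamical system and a countable intersection argument.

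First, I would establish the displayed equality, which is essentially translation invariance. Unwinding definitions, $\OO_{T_q\omega}=\{x:T_{x+q}\omega\in\OO\}=\OO_\omega-q$, hence $\mathcal{H}_{\square^n}(T_q\omega)=\mathcal{H}_{\square_q^n}(\omega)-q$. Since the Hausdorff distance is invariant under simultaneous translation of its two arguments,
\[
d_{\rm H}\bigl(\mathcal{H}_{\square_q^n}(\omega),\mathcal{H}_{\square_q^n}(\tilde\omega)\bigr)=d_{\rm H}\bigl(\mathcal{H}_{\square^n}(T_q\omega),\mathcal{H}_{\square^n}(T_q\tilde\omega)\bigr).
\]
Therefore, the event $\{\tilde\omega:d_{\rm H}(\mathcal{H}_{\square_q^n}(\omega),\mathcal{H}_{\square_q^n}(\tilde\omega))<r\}$ is the $T_{-q}$-image of $\{\tilde\omega':d_{\rm H}(\mathcal{H}_{\square^n}(T_q\omega),\mathcal{H}_{\square^n}(\tilde\omega'))<r\}$, and the equality of the two push-forward probabilities follows from $P\circ T_{-q}=P$.

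Next, for the strict positivity, I would apply Lemma \ref{l:4.11} for each fixed $n\in\mathbf{N}$ with $K=\square^n$ to obtain a full-measure set $\widetilde\Omega_n\subset\Omega$ on which \eqref{svoivan1} holds for every $r>0$. For each $q\in\mathbf{Q}^d$, set $\Omega_{n,q}:=T_{-q}(\widetilde\Omega_n)=\{\omega:T_q\omega\in\widetilde\Omega_n\}$; by invariance of $P$ under $T_q$, each $\Omega_{n,q}$ has full measure, and for every $\omega\in\Omega_{n,q}$ the positivity of $P_\#\bigl(B_{{\rm H},\square^n}(\mathcal{H}_{\square^n}(T_q\omega),r)\bigr)$ holds for all $r>0$.

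Finally, I would define
\[
\Omega_1:=\bigcap_{n\in\mathbf{N}}\,\bigcap_{q\in\mathbf{Q}^d}\Omega_{n,q},
\]
which, being a countable intersection of full-measure sets, is itself of full measure. By construction, every $\omega\in\Omega_1$ enjoys both the claimed equality and the strict positivity for every $n\in\mathbf{N}$, $q\in\mathbf{Q}^d$, and $r>0$. There is no serious obstacle here: Lemma \ref{l:4.11} already does the conceptual work of ruling out degenerate Hausdorff neighbourhoods, and the remaining task is the careful bookkeeping of how $\mathcal{H}_K$ transforms under the shift $T_q$ and how this interacts with the measure-preserving property, which is routine but the only point requiring attention.
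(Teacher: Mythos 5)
Your proof is correct and follows essentially the same route as the paper: a translation argument (via $\mathcal{O}_{T_q\omega}=\mathcal{O}_\omega-q$ and measure preservation) for the equality, followed by Lemma \ref{l:4.11} applied to each $\square^n$ and a countable intersection over $n\in\mathbf{N}$ and $q\in\mathbf{Q}^d$. The paper's proof is just a terser statement of exactly these two steps.
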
	
\begin{proof}
	The equality follows from a straightforward translation argument. The existence of $\Omega_1$ follows from Lemma \ref{l:4.11}  and a simple observation that the intersection of a countable family of sets of probability one is a set of probability one. 
\end{proof}

The following two theorems contain the main result of the present subsection. Though the notation is somewhat involved, their meaning can be expressed as follows: for a fixed $\l$, there is an arbitrarily large cube that can be divided into smaller sub-cubes with almost periodic arrangements of inclusions (apart from a fixed size boundary layer) such that on each  sub-cube  the associated quantity $\ell(\ldots,\o,\l)$ approximates $\beta_\infty(\l)$.

\begin{theorem}\label{th:6.10}
	 Let $\hat\o$ be a {\it typical} element of $\Omega$ for which the statement of Corollary \ref{lemivan10} is satisfied. Under  Assumptions \ref{asumivan1} there exists a set $\Omega_1 \subset \Omega$ of probability one (which in general depends on the choice of $\hat\o$) such that for any $\omega \in \Omega_1$, $N,M\in \mathbf{N}$, $x\in \R^d$, $\delta >0$, and the cube $\square_x^M$ there exist $N^d$    cubes $\square_{x_i}^M,$ $i=1,\dots,N^d$, such that the cubes $\square_{x_i}^{M+\kappa}$ (discarding the boundary) are mutually disjoint, their union is a cube  $\cup_i \square_{x_i}^{M+\kappa} = :\square_{x^*}^{N(M+\kappa)}$, and the following estimate holds:
	 \begin{equation}\label{67a}
	 	d_{\rm H}\left(\mathcal{H}_{\square^M}(T_x\hat\o),\mathcal{H}_{\square^M}(T_{x_i}\omega)\right)<\delta.
	 \end{equation}
\end{theorem}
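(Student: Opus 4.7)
The plan is to combine the positive-probability statement from Corollary \ref{lemivan10} with the independence encoded in Assumption \ref{asumivan1} via a second Borel--Cantelli argument. Fix $\hat\o$ and parameters $N, M, x, \delta$ as in the statement. For a candidate super-cube centre $x^*$, I enumerate the sub-cube centres
$$x_i(x^*) := x^* + (M+\kappa)\bigl(j - \tfrac{N-1}{2}(1,\ldots,1)\bigr), \quad j \in \{0, 1, \ldots, N-1\}^d,$$
so that the cubes $\square_{x_i}^{M+\kappa}$ have mutually disjoint interiors and tile $\square_{x^*}^{N(M+\kappa)}$. For each such arrangement I introduce the events
$$E_i(\omega) := \bigl\{\omega : d_{\rm H}\bigl(\mathcal{H}_{\square^M}(T_x\hat\o),\, \mathcal{H}_{\square^M}(T_{x_i(x^*)}\omega)\bigr) < \delta\bigr\},$$
which are $\mathcal{F}_{\square_{x_i(x^*)}^M}$-measurable by the same argument behind the measurability of $\mathcal{H}_K$.

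The next step is to verify independence and positivity. By the stationarity of $P$ and Corollary \ref{lemivan10} (applied to $\hat\o$ at a rational base point close to $x$, combined with continuity of the Hausdorff metric under small translations of the probe cube, valid for typical $\hat\o$ by the minimal-smoothness hypothesis), each $E_i$ has probability at least some $p = p(\hat\o, x, M, \delta) > 0$. Since the cubes $\square_{x_i}^{M+\kappa}$ are mutually disjoint, $\dist(\square_{x_i}^M, \square_{x_j}^M) \geq \kappa$; by taking the buffer in the statement to be strictly larger than the correlation range from Assumption \ref{asumivan1} (harmless, and still denoted by $\kappa$), this distance exceeds the correlation distance, so Assumption \ref{asumivan1} yields that $E_1, \ldots, E_{N^d}$ are independent. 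Hence $P\bigl(\bigcap_{i=1}^{N^d} E_i\bigr) \geq p^{N^d} > 0$.

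With this in hand, I will tile $\mathbf{R}^d$ by super-cubes of side $\tilde R = N(M+\kappa) + 2\kappa$ centred on the lattice $\tilde R\,\mathbf{Z}^d$, producing a countable family of candidate centres $x^{*(k)}$. The corresponding super-events $F_k := \bigcap_i E_i(x^{*(k)})$ live in the $\sigma$-algebras $\mathcal{F}_{\square_{x^{*(k)}}^{N(M+\kappa)}}$ of pairwise well-separated super-cubes, so by Assumption \ref{asumivan1} they form an independent family with $P(F_k) \geq p^{N^d}$ for every $k$. The second Borel--Cantelli lemma then produces almost surely infinitely many indices $k$ with $\omega \in F_k$, and any such $x^{*} = x^{*(k)}$ furnishes the required configuration \eqref{67a}.

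Finally, to produce a single exceptional set $\Omega_1(\hat\o)$ independent of the parameters, I intersect the full-measure sets obtained above over a countable family: $N, M$ range over $\mathbf{N}$ and $\delta$ over $\mathbf{Q}^+$ directly, while $x$ is first restricted to a countable dense subset of $\mathbf{R}^d$ and then extended by approximation using the continuity of $x \mapsto \mathcal{H}_{\square_x^M}(\hat\o)$ (together with a halving of $\delta$). The main obstacle I anticipate is this last approximation step: ensuring that the positivity and the Hausdorff bound transfer cleanly from the countable dense set of $x$'s to all of $\mathbf{R}^d$ requires handling the degenerate configurations in which $\partial\square_x^M$ grazes an inclusion $\mathcal{O}_{\hat\o}^k$, where the Hausdorff distance $d_{\rm H}(\mathcal{H}_{\square_x^M}(\hat\o),\mathcal{H}_{\square_{q}^M}(\hat\o))$ can jump despite $q$ being close to $x$. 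This is expected to be controlled by the minimum separation $\rho$ of the inclusions and a generic-position argument exploiting the typicality of $\hat\o$.
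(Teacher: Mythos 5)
Your argument is correct and follows essentially the same route as the paper's: positive probability of a $\delta$-close copy in each sub-cube from Corollary \ref{lemivan10} and stationarity, independence of well-separated blocks from Assumption \ref{asumivan1}, an almost-sure success among infinitely many independent super-cube trials (your second Borel--Cantelli lemma is what the paper calls the ``law of large numbers''), and finally a countable intersection over parameters followed by density in $x$. The boundary-grazing subtlety you flag in the last step is genuine, but the paper's own proof dispatches it with the single phrase ``by the density argument'', so you are, if anything, more careful than the original.
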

\begin{proof}
	It is direct consequence of Corollary \ref{lemivan10},  Assumption \ref{asumivan1}, and the law of large numbers that for any $N,M,n\in \mathbf{N}$, $x\in \Q^d$, for almost every $\omega \in \Omega$ there exist $N^d$ cubes $\square_{x_i}^M,$ $i=1,\dots,N^d$, such that  $\cup_i \square_{x_i}^{M+\kappa} = \square_{x^*}^{N(M+\kappa)}$ for a suitable $x^*\in \R^d$, and
	\begin{equation*}
		d_{\rm H}\left(\mathcal{H}_{\square^M}(T_x\hat\o),\mathcal{H}_{\square^M}(T_{x_i}\omega)\right)<n^{-1}
	\end{equation*}
We denote the set of such $\omega$ by  $\Omega_{N,M,n,x}(\hat\o)$ and set
	\begin{equation*}
		 \Omega_1(\hat\o):=\bigcap_{N,M,n \in \mathbf{N},x\in \Q^d}\Omega_{N,M,n,x}(\hat\o).
	\end{equation*} 
 Clearly, $\O_1(\hat\o)$ has probability one, and the statement of the theorem holds for any $x\in \Q^d$, but then, by the density argument it holds for any $x\in \R^d$.
\end{proof}

\begin{theorem} \label{thmivan1} Under  Assumption \ref{asumivan1}, there exists a set $\Omega_1 \subset \Omega$ of probability one such that for any  $\lambda \in \R\setminus\Sp(-\Delta_{\mathcal{O}})$, $\omega \in \Omega_1$, $\delta>0$, $N,M_0\in \N$ there exist $N^d$  cubes $\square_{x_i}^M,$ $i=1,\dots,N^d$, $\N \ni M>M_0$, as described in Theorem \ref{th:6.10}, such that 
	\begin{equation} \label{eqivan50} 
	 \max_{i}\left|\ell(x_i,M+\kappa,\lambda,\omega)-\beta_\infty(\lambda)\right|<\delta. 
	\end{equation}   	
\end{theorem}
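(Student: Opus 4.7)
The plan is to combine the characterization of $\beta_\infty(\lambda)$ as the (deterministic) limit $\lim_{M\to\infty}\sup_x \ell(x, M+\kappa, \lambda, \cdot)$ furnished by Lemma~\ref{lemivan13} and Proposition~\ref{propivan1} with the almost-periodic tiling produced by Theorem~\ref{th:6.10}, and then to transfer the good $\ell$-value from a reference configuration $T_{\hat x}\hat\omega$ onto each $T_{x_i}\omega$ via the continuous dependence of $b(\cdot,\lambda)$ on the inclusion geometry.

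First, I fix once and for all a typical $\hat\omega$ belonging to the full-measure set on which Proposition~\ref{propivan1}, Lemma~\ref{lemivan13} (applied along a single sequence; the equality $\beta_\infty(\lambda,\hat\omega)=\beta_\infty(\lambda)$ is then extended to all $\lambda\in\R\setminus\Sp(-\Delta_\OO)$ by the continuity of $\lambda\mapsto\beta_\infty(\lambda,\cdot)$), the ergodic theorem and Theorem~\ref{th:6.10} all hold, and I set $\Omega_1:=\Omega_1(\hat\omega)$. Given $\lambda,\delta,N,M_0$, Lemma~\ref{lemivan13} applied along the sequence $M_n=n+\kappa$ lets me choose $M\in\N$, $M>M_0$, so large that
\begin{equation*}
\bigl|\sup_{x\in\R^d}\ell(x,M+\kappa,\lambda,\hat\omega)-\beta_\infty(\lambda)\bigr|<\delta/4,
\end{equation*}
and then a point $\hat x\in\R^d$ realising the supremum to within $\delta/4$, so that $|\ell(\hat x,M+\kappa,\lambda,\hat\omega)-\beta_\infty(\lambda)|<\delta/4$; $M$ is chosen additionally large enough that any error of order $\kappa/(d_\lambda M)$ is at most $\delta/8$. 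Applying Theorem~\ref{th:6.10} with this $\hat\omega,\hat x$, scale $M$, tolerance $\delta'$ (to be fixed below) and $N^d$ sub-cubes yields, for every $\omega\in\Omega_1$, the desired cubes $\square_{x_i}^M$ with $\square_{x_i}^{M+\kappa}$ mutually disjoint tiling a larger cube and satisfying
\begin{equation*}
d_{\rm H}\left(\mathcal{H}_{\square^M}(T_{\hat x}\hat\omega),\,\mathcal{H}_{\square^M}(T_{x_i}\omega)\right)<\delta'.
\end{equation*}

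The main technical step is then to show that $|\ell(x_i,M+\kappa,\lambda,\omega)-\ell(\hat x,M+\kappa,\lambda,\hat\omega)|<\delta/2$ uniformly in $i$ for sufficiently small $\delta'$. By translation invariance the two $\ell$-values rewrite as $\ell(0,M+\kappa,\lambda,T_{x_i}\omega)$ and $\ell(0,M+\kappa,\lambda,T_{\hat x}\hat\omega)$, so the comparison concerns the cube-averaged integral of $b$ against two inclusion configurations that agree on $\square^M$ to within Hausdorff distance $\delta'$. Invoking Lemma~\ref{remivan2}, I may replace each $\ell$ by the corresponding $\tilde\ell$ in which integration is restricted to the union of inclusions strictly inside the cube; inclusions intersecting $\partial\square^{M+\kappa}$ or lying within a layer of width $\kappa$ contribute $O(\kappa/(d_\lambda M))$, because $|\int_{\OO^k}b|\le C/d_\lambda$ by \eqref{pauza1} combined with $\diam(\OO^k)<1/2$, while the number of ``boundary inclusions'' is $O(M^{d-1})$. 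For $\delta'<\rho/4$, the uniform separation $\rho$ between distinct inclusions forces a bijection between the interior inclusions of $T_{\hat x}\hat\omega$ and those of $T_{x_i}\omega$ sitting strictly inside $\square^M$, each paired $(U,V)$ being $(\rho,\mathcal N,\gamma)$-minimally smooth with $d_{\rm H}(U,V)<\delta'$. For each such pair,
\begin{equation*}
\left|\int_U \tilde b(\omega_U,\lambda,\cdot)-\int_V \tilde b(\omega_V,\lambda,\cdot)\right|\le \eta(\delta'),
\end{equation*}
where $\eta(\delta')\to 0$ as $\delta'\to 0$ uniformly over all such pairs: this follows from the norm-resolvent continuity of the Dirichlet Laplacian under Hausdorff perturbations of uniformly minimally smooth domains, uniformised via the extension property of Theorem~\ref{th:extension} and the uniform Poincar\'e inequality for minimally smooth domains. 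Since there are $O(M^d)$ interior pairs and one divides by $(M+\kappa)^d$, the total interior contribution is $O(\eta(\delta'))$, which together with the boundary terms is made smaller than $\delta/2$ by choosing $\delta'$ sufficiently small; combined with the initial $\delta/4$ from the choice of $\hat x$, this yields \eqref{eqivan50}. I expect this uniform norm-resolvent continuity, rather than the combinatorics of matching inclusions, to be the principal obstacle: the modulus $\eta(\delta')$ must be controlled uniformly across the entire random family of inclusion shapes permitted by Assumption~\ref{kirill100}, and it is precisely the uniform minimal-smoothness constants $(\rho,\mathcal N,\gamma)$ that make such a uniform bound plausible.
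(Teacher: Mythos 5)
Your proposal follows the same route as the paper: fix a typical $\hat\omega$, use Lemma \ref{lemivan13} and Proposition \ref{propivan1} to pick a reference cube where $\tilde\ell$ approximates $\beta_\infty(\lambda)$, control the boundary layer as in Lemma \ref{remivan2}, tile via Theorem \ref{th:6.10}, transfer the value to the matched inclusions by continuity of the Dirichlet problem under Hausdorff perturbations (the paper's Lemma \ref{l:bconv}), and pass from rational to all $\lambda$ by the uniform Lipschitz continuity of $\ell$ and $\beta_\infty$. The only point worth noting is that the uniform modulus $\eta(\delta')$ you flag as the principal obstacle is not actually needed: the reference configuration in $\square^M_{\hat x}$ is fixed and contains finitely many inclusions, so the sequential convergence statement of Lemma \ref{l:bconv} (applied by contradiction to each of the finitely many matched pairs) already suffices, which is exactly how the paper argues.
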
 	
\begin{proof} Let us fix $\lambda \in \Q\setminus\Sp(-\Delta_{\mathcal{O}})$. Let us fix a {\it typical} $\hat\o\in \O$ for which the statements of  Lemma \ref{lemivan13}, Proposition \ref{propivan1} and Corollary \ref{lemivan10} hold. Then from these assertions and   Lemma \ref{remivan2} it follows that for all sufficiently large  $M\in \N$ there exist  cubes $\square_{\overline{x}_M}^M$ such that
	\begin{equation}\label{87b}
		|\beta_\infty(\lambda)-  \tilde \ell(\overline{x}_M,M,\lambda,\hat\o)| < {\delta}/3.
	\end{equation} 
Similarly to Lemma \ref{remivan2}, it is not difficult to show, via part 2 of  Assumption \ref{kirill100}  and the bound \eqref{pauza1}, that for for all sufficiently large  $M$ and any $x$ one has
\begin{eqnarray}\label{86a}
	 |\ell(x,M+\kappa,\lambda,\hat\o)-  \tilde \ell(x,M,\lambda,\hat\o)| < {\delta}/3.
\end{eqnarray}
By Theorem \ref{th:6.10}, there exists a set of full measure $\mathring\Omega = \mathring\Omega(\hat\o) \subset \O$ satisfying the following: for any $\omega \in \mathring\Omega$ there is a sequence of cubes $\square_{x^*_n}^{N(M+\kappa)}$, $n\in \N$, such that for each $n$ the cube $\square_{x^*_n}^{N(M+\kappa)}$ is a union of mutually disjoint cubes $\square_{x_i^n}^{M+\kappa}$, $i=1,\dots,N^d$, and 
 \begin{equation*}
\lim_{n\to \infty} \max_i\,	d_{\rm H}\left(	\mathcal{H}_{\square^M}(T_{\overline{x}_M}\hat\o),\mathcal{H}_{\square^M}(T_{x_i^n}\omega)\right) =0.
\end{equation*}
It is clear, by direct inspection, that the difference between $\tilde \ell(\overline{x}_M,M,\lambda,\hat\o)$ and $\tilde \ell(x_i^n,M,\lambda,\omega)$ is controlled (uniformly in $M$) by the maximal difference between the values of $\int_{\R^d} \tilde b(T_x, \l, \cdot)$ corresponding to the ``matching'' inclusions in $\square_{\overline{x}_M}^M$ and $\square_{x_i^n}^M$. Hence, by Lemma \ref{l:bconv}, for sufficiently large (fixed) $n$ one has
 \begin{equation}\label{72a}
 	|\tilde \ell(x_i^n,M,\lambda,\omega)-  \tilde \ell(\overline{x}_M,M,\lambda,\hat\o)| < {\delta}/3.
\end{equation} 

Combining inequalities \eqref{87b}, \eqref{86a} and \eqref{72a}, we obtain  \eqref{eqivan50}  (with $x_i:=x_i^n$) for each $\lambda \in \Q\setminus\Sp(-\Delta_{\mathcal{O}})$ on a set of probability one, which, in general, depends on $\lambda$ (as well as on $\hat\o$, although the latter is of no importance). We define $\O_1$ as the intersection of these sets for all $\lambda \in \Q\setminus\Sp(-\Delta_{\mathcal{O}})$. Then by continuity of $\beta_\infty(\l)$  we conclude that the assertion holds for all $\lambda \in \R\setminus\Sp(-\Delta_{\mathcal{O}})$.

\end{proof}

 \subsection{Proof of Theorem \ref{th4.4}}\label{ss:4.5}
 
 We begin by outlining the main idea of the proof. In  view of \eqref{12} and Theorem \ref{th:3.3}, we only need to consider the case when $\l$ is such that  $\b_\infty(\l)\geq0$. We fix $\hat\o\in\O$ as in Theorem \ref{thmivan1}. Our goal is to construct an approximate solution to the spectral problem for the operator $\AA^\e$. It is sufficient for this approximate ``eigenfunction'' to be from the operator form domain and satisfy the equation in a certain weak sense. Lifting it up to the domain of $\AA^\e$ can then be carried out via a simple abstract argument, see Lemma \ref{l:E.1}. 
 
 One of the key technical ingredients of our construction is the use of a periodic corrector on nearly periodic geometries. More specifically, one can always find a cube  $\square_{\overline{x}_M}^M$ such that the corresponding local spectral average $ \ell(\overline{x}_M,M,\lambda,\hat\o)$, cf. Remark \ref{r:spec_av}, approximates  $\b_\infty(\l)$ with a given error. According to Theorem \ref{thmivan1}, for any $\o\in \O_1(\hat\o)$ one can find an arbitrarily large cube $\square_{x^*}^{N(M+\kappa)}$ tiled by the cubes  $\square_{x_i}^{M},$ $i=1,\dots,N^d$, separated by the correlation distance $\kappa$ from Assumption \ref{asumivan1}, such that $\square_{x_i}^{M}\setminus \OO_\o$ are almost exact copies   of the set $\square_{\overline{x}_M}^M\setminus \OO_{\hat \o}$. Moreover, the local spectral average $ \ell$ on the  large cube  $\square_{x^*}^{N(M+\kappa)}$ is also close to   $\b_\infty(\l)$. This almost periodic structure of the composite in $\square_{x^*}^{N(M+\kappa)}$ is important, for one then can approximate the ``macroscopic'' part of the the operator $\AA^\e$ on  $\square_{\e x^*}^{\e N(M+\kappa)}$ by an operator with constant coefficients (obtained by homogenisation on a perfectly periodic structure) and use this as the basis for the construction of a quasimode, see  \eqref{67b} below.

 We emphasise that the set of full measure $\Omega_1$, for which the statement of Theorem \ref{th4.4} holds, depends in general on the choice of $\hat\o$ (note that $\Omega_1$, in principle, does not have to contain $\hat \o$).  	 In what follows we introduce a number objects derived from $\hat \o$, such as  the periodic correctors $\hat N_j, j=1,\dots,d,$ and  the matrix of homogenised coefficients $\hat A^\hom_1$,  while other quantities, e.g. $b^\e$, correspond to $\o \in \Omega_1$. It is important to keep account of these dependencies.

 Henceforth the parameters $L$ and $M$ are assumed sufficiently large, and $\e$ and $\delta$ are sufficiently small. The constants that appear in the subsequent bounds may depend on $\l$ but are independent of $\e$, $\delta$, $L$ and $M$. We will emphasise this through the notation in two key bounds below. We fix $\delta>0$ and $L>0$ and choose $M$ so that \eqref{86a} holds and \eqref{87b} is satisfied for some cube $\square_{\overline{x}_M}^M$ (by Lemmata \ref{remivan2} and \ref{lemivan13} ).  Let $N=N(\e)$ be the smallest integer such that $\e N(M+\kappa)\geq L$. By Theorems \ref{th:6.10} and \ref{thmivan1}  there exist   cubes $\square_{x_i}^{M+\kappa},$ $i=1,\dots,N^d$, (as described in Theorem \ref{th:6.10}) satisfying \eqref{67a} and \eqref{eqivan50}. In particular, their union is $\square_{x^*}^{N(M+\kappa)}$ and 
  	 $$\square_{\e x^*}^{\e N(M+\kappa)} \supseteq \square_{\e x^*}^L.$$
  	  Clearly, the choice of the cubes (equivalently, their centres $x_i, x^*$) depends on $L,M, \delta$ and $\e$, which we omit in the notation for brevity.  
  	
  	  Denote by $\hat\square^{M+\kappa,1}_{\overline{x}_M}$ the set obtained from $\square^{M+\kappa}_{\overline{x}_M}$ by removing all the sets $\OO_{\hat\o}^k$ whose closures are contained in $\square^M_{\overline{x}_M}$,
  	\begin{equation*}
  		  	  \hat\square^{M+\kappa,1}_{\overline{x}_M}:= \square^{M+\kappa}_{\overline{x}_M}\setminus \bigcup_{ \overline{\mathcal{O}^k_{\hat\o}} \subset \square^M_{\overline{x}_M}} \mathcal{O}^k_{\hat\o}.
  	\end{equation*}
  	   Let $\hat N_j \in W^{1,2}_{\rm per}(\hat\square^{M+\kappa,1}_{\overline{x}_M}), j=1,\dots,d,$ be the solutions to the periodic corrector problems for the perforated cube $\hat\square^{M+\kappa,1}_{\overline{x}_M}$:
  	\begin{equation}\label{88}
  		\int\limits_{\hat\square^{M+\kappa,1}_{\overline{x}_M}}  A_1  (e_j + \nabla \hat N_j) \cdot \nabla\varphi = 0  \quad \forall \varphi \in W^{1,2}_{\rm per}(\hat\square^{M+\kappa,1}_{\overline{x}_M}).
  	\end{equation}
  	We assume that each $\hat N_j$ is extended inside the inclusions in  $\square^{M+\kappa}_{\overline{x}_M}$ according to Theorem \ref{th:extension}, has zero mean over $\square^{M+\kappa}_{\overline{x}_M}$, and extended by periodicity to the whole of $\R^d$. Observe the following estimates with the constant depending only on $A_1$ and the extension constant $C_{\rm ext}$,
  	\begin{equation}\label{95}
  		\begin{gathered}
  			\|\nabla \hat N_j\|_{L^2(\square^{M+\kappa}_{\overline{x}_M})} \leq C M^{d/2}, \qquad
  			\| \hat N_j\|_{L^2(\square^{M+\kappa}_{\overline{x}_M})} \leq C M^{d/2 + 1}.
  		\end{gathered}
  	\end{equation}
  The first easily follows from the identity 
  	\[
  	\int\limits_{\hat\square^{M+\kappa,1}_{\overline{x}_M}} A_1\nabla \hat N_j \cdot \nabla \hat N_j =   - \int\limits_{\hat\square^{M+\kappa,1}_{\overline{x}_M}} A_1 e_j \cdot \nabla \hat N_j,
  	\]
  	and the second from the Poincar\'e inequality (recall that $M$ is sufficiently large, in particular $M\geq\kappa$).

  	An essential component of the construction is the higher (than $L^2$) regularity of the correctors $\hat N_j$. The proof closely follows the argument of \cite{Bensoussan} and is based on the use of special versions of two well-known results: the Poincar\'e-Sobolev inequality and the reverse H\"older's inequality. In particular, the uniform scalable version of Poincar\'e-Sobolev inequality for perforated domains is valid under the minimal smoothness assumption.
  	\begin{theorem}[Higher regularity of the periodic corrector]\label{th:6.18}
  		Under Assumption \ref{kirill100} there exist $p>2$ and $C>0$ such that for a.e. $\o$ one has 
  		\begin{equation}\label{89a}
  			\bigg(\fint\limits_{\square^{M+\kappa}_{\overline{x}_M}} |\nabla \hat N_j |^p \bigg)^{1/p} \leq C  + C  \bigg(\fint\limits_{\square^{M+\kappa}_{\overline{x}_M}}|\nabla \hat N_j |^2 \bigg)^{1/2},
  		\end{equation}	
  		uniformly in $M$. (Here $\fint$ denotes the average value.)
  	\end{theorem}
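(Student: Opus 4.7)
The proof will follow the classical Meyers higher-integrability scheme, adapted to the perforated-domain setting, along the lines of \cite{Bensoussan}. The plan is to derive, on small balls inside $\square^{M+\kappa}_{\overline{x}_M}$, a reverse H\"older inequality of the form
\[
	\bigg(\fint_{B_{r/2}(x_0)} |\nabla \hat N_j|^2\bigg)^{1/2} \leq C\bigg(\fint_{B_r(x_0)} |\nabla \hat N_j|^{q}\bigg)^{1/q} + C,
\]
valid for some $q<2$ and for radii $r$ smaller than a fixed scale depending only on the minimal-smoothness constants, with $C$ independent of $M$, $x_0$ and $r$. Gehring's self-improvement lemma then yields an exponent $p>2$ and an $L^p$ bound on $\nabla \hat N_j$ over the cube, which after averaging is exactly \eqref{89a}.

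First I would use the uniform Calder\'on--Stein extension of Theorem \ref{th:extension}, applied inclusion-by-inclusion to obtain an extension (still denoted $\hat N_j$) on all of $\square^{M+\kappa}_{\overline{x}_M}$ with $\|\nabla \hat N_j\|_{L^2(\mathcal{B}^k_{\hat\o})}\leq C\|\nabla \hat N_j\|_{L^2(\mathcal{B}^k_{\hat\o}\setminus \OO^k_{\hat\o})}$, uniformly in $k$. Next I would prove a Caccioppoli-type inequality on the perforated part: for any ball $B_r(x_0)\subset\square^{M+\kappa}_{\overline{x}_M}$ and any constant $c\in\R$, using $\eta^2(\hat N_j-c)$ as a test function in \eqref{88} with $\eta$ a standard cutoff between $B_{r/2}(x_0)$ and $B_r(x_0)$, and then re-importing the inclusion contribution via the extension, one arrives at
\[
	\int_{B_{r/2}(x_0)}|\nabla \hat N_j|^2 \,\leq\, \frac{C}{r^2}\int_{B_r(x_0)}|\hat N_j-c|^2 + C r^d.
\]
The extra $r^d$ term originates from the inhomogeneity $A_1 e_j$ on the right-hand side of the corrector equation; the uniform norm of the extension is what makes the constant independent of $M$ and of the particular perforation configuration inside $B_r(x_0)$.

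Then I would invoke a uniform Poincar\'e--Sobolev inequality on the perforated domain of sub-dual exponent $q:=2d/(d+2)$, namely
\[
	\bigg(\fint_{B_r(x_0)\cap \hat\square^{M+\kappa,1}_{\overline{x}_M}}|\hat N_j-c_{B_r}|^2\bigg)^{1/2} \leq C r\bigg(\fint_{B_r(x_0)\cap \hat\square^{M+\kappa,1}_{\overline{x}_M}}|\nabla \hat N_j|^{q}\bigg)^{1/q},
\]
with $c_{B_r}$ the mean of $\hat N_j$ over the set on the left. This is essentially \cite[Proposition 3.2]{GuillenKim} (see the discussion following Theorem \ref{th:extension}), whose constants depend only on $\rho,\mathcal N,\gamma$. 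Combined with the Caccioppoli bound and the uniform extension one gets the reverse H\"older inequality stated above; applying Gehring's lemma on $\square^{M+\kappa}_{\overline{x}_M}$, whose hypotheses are satisfied uniformly in $M$, yields the desired $p>2$ and the estimate \eqref{89a}.

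The principal technical obstacle is ensuring that the constants in all three ingredients (extension, Caccioppoli, perforated Poincar\'e--Sobolev) are genuinely uniform in $M$, in the location $x_0$ of the ball and in the random geometry, and that the reverse H\"older inequality holds down to a scale $r$ smaller than the separation $\rho$ between inclusions but independent of $M$. The minimal smoothness hypothesis is precisely what provides this uniformity; once the reverse H\"older inequality is established on balls of radii in a fixed range $(r_0, r_1)$ independent of $M$, Gehring's lemma gives higher integrability globally on $\square^{M+\kappa}_{\overline{x}_M}$ with constants independent of $M$.
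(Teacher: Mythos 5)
Your overall architecture (Caccioppoli inequality from testing \eqref{88} with $(\hat N_j-c)\eta^2$, a uniform Poincar\'e--Sobolev inequality on the perforated domain with sub-dual exponent, then a Gehring-type self-improvement) is exactly the paper's route. But there is one genuine gap, and it sits precisely at the point you flag as ``the principal technical obstacle'': you propose to establish the reverse H\"older inequality only on balls of radii in a fixed range $(r_0,r_1)$ independent of $M$ and then conclude via Gehring. That does not yield \eqref{89a} with a constant uniform in $M$. Gehring's lemma (in the form the paper uses, Lemma \ref{l:reverseHolder}) requires the reverse H\"older inequality for \emph{all} sub-cubes of $\square^{M+\kappa}_{\overline{x}_M}$, up to the scale of the cube itself; the conclusion is then applied at the largest scale. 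If the inequality is only available at scales $\le r_1$, Gehring gives local $L^p$ control on each small ball, and patching these together by a covering argument costs a factor growing like $M^{d(p/2-1)}$ (one is forced to estimate $\sum_i a_i^{p/2}$ by $(\sum_i a_i)^{p/2}$ or by $(\max_i a_i)^{p/2-1}\sum_i a_i$, and the local energies $a_i=\fint_{B_{2r_1}(x_i)}|\nabla\hat N_j|^2$ are not individually controlled by the global average). So the small-scale version is strictly weaker than what the theorem asserts.

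The missing ingredient is therefore a Poincar\'e--Sobolev inequality on $B_R(x)\setminus\OO_{\hat\o}$ valid for \emph{arbitrarily large} $R$ with $M$-independent constants (the paper's Lemma \ref{l:poincare-sobolev}). The uniform minimal smoothness gives this directly only at scales below the fixed radius $R_0$ at which the boundary is a Lipschitz graph; for $R>R_0/2$ one must pass to the Calder\'on--Stein extension $\widetilde{\hat N_j}$ of Theorem \ref{th:extension}, apply the standard Poincar\'e--Sobolev inequality on the full ball $B_R(x)$, and return to the perforated set at the cost of a slightly enlarged ball $B_{R+\sqrt d}(x)$. In other words, the uniform extension operator is needed not (only) in the Caccioppoli step, as you suggest, but chiefly to make the Poincar\'e--Sobolev inequality scale-free, which is what lets the reverse H\"older inequality hold on all cubes and Gehring deliver an $M$-uniform bound. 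Two minor further points: the constant $c$ in your Caccioppoli inequality must be matched to the (set- and scale-dependent) constant $c_R(x_0)$ produced by the perforated Poincar\'e--Sobolev inequality, which in general involves an enlargement $B_{mR}$ on the gradient side; and the cross term $R^{-1}|\hat N_j-c_R||\nabla\hat N_j|$ forces the right-hand exponent $2d/(d+1)$ rather than $2d/(d+2)$ after an extra application of H\"older's inequality — harmless, but worth recording since it determines which power $z=|\nabla\hat N_j|^{2d/(d+1)}$ enters the reverse H\"older lemma.
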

We provide the  proof of the theorem in Appendix  \ref{ap:regul}. As a corollary of the theorem and the bound \eqref{95}, we have the bound  
  	\begin{equation}\label{93}
	\begin{gathered}
		\|\nabla \hat N_j\|_{L^p(\square^{M+\kappa}_{\overline{x}_M})} \leq C M^{d/p}.
	\end{gathered}
\end{equation}

  	We denote by $\hat A^\hom_1$ the matrix of homogenised coefficients associated with $\hat N_j$, 
  	\begin{equation*}
		\hat A^\hom_1 \xi = {\left|\square^{M+\kappa}_{\overline{x}_M}\right|^{-1}}\int_{\hat\square^{M+\kappa,1}_{\overline{x}_M}} A_1(\xi+\xi_j \nabla \hat N_j) \quad \forall \xi \in \R^d.
  	\end{equation*}  
Take some $k$  such that $\hat A_1^{\rm hom}k \cdot k = \b_\infty(\l)$. Then  $u(x) := {\mathcal Re}(e^{ik\cdot x})$ satisfies 
  \begin{equation}\label{67b}
  	-\nabla \cdot \hat A_1^{\textrm{\rm hom}} \nabla u = \b_\infty(\l) u.
  \end{equation} 	
   	 We define  $N_j \in W^{1,2}_{\rm per}(\square^{M+\kappa}_{x_1})$  as 
   	\begin{equation*}
   		 N_j(x):= \hat N_j(x-x_1+\overline{x}_M),
   	\end{equation*}
   and extend it periodically to $\R^d$. Denote by $\hat \chi_1^\e$  the characteristic function of the set $\hat\square^{M+\kappa,1}_{\overline{x}_M} - \overline{x}_M + x_1$ extended periodically to $\R^d$.
   	
  	Let $\eta \in C_0^\infty(\square)$ be a  cut-off function satisfying $0\leq\eta\leq 1$, $\eta\vert_{\square^{1/2}} = 1$, and for each $L>0$ define  $\eta_L$ by setting
  	\begin{equation*}
  		\eta_L(x):= \eta\left(\frac{x-\e x^*}{L}\right).
  	\end{equation*} 
We have, in particular, that $\eta_L\vert_{\square^{L}} = 1$, $|\nabla \eta_{L}|\leq C/L$.  
 	 Multiplying $u$ by  $\eta_L$ and normalising the resulting expression, 
 	\begin{equation*}
 	u_L:= \frac{\eta_L u}{\|\eta_L u\|_{L^2(\R^d)}},
 	\end{equation*} 
 	we obtain a standard Weyl sequence for the operator $-\nabla \cdot \hat A_1^{\textrm{\rm hom}}\nabla$ (with respect to letting $L\to\infty$):
 	\begin{equation}\label{70a_6}
 	\|(-\nabla \cdot \hat A_1^{\textrm{\rm hom}}\nabla  - \b_\infty(\l))  u_L\|_{L^2(\R^d)} \leq C/L
 	\end{equation}
 	Note also that 
 	\begin{equation}
 	\label{u_L_6}
 	\begin{aligned}
 	\|u_L\|_{L^\infty(\square^L)} + 	\|\nabla u_L\|_{L^\infty(\square^L)}+\|\nabla^2 u_L\|_{L^\infty(\square^L)}\leq C {L^{-d/2}}.
 	\end{aligned}
 	\end{equation}
 	
 	Let $b$ be the solution to (\ref{66}) and denote by $b^\e$  its $\e$-realisation, $b^\e(x):= b(T_{x/\e}\o)$. We define 	$u_L^\e : = (1+\l b^\e)u_L$. It is not difficult to see that the norm of $u_L^\e$ is bounded from below uniformly in $\e$ and $L$,
 	\begin{equation*}
 	\|u_L^\e\|_{L^2(\R^d)} \geq C>0.
 	\end{equation*}
 	We conclude the construction of the approximate solution by introducing the corrector term
 	\begin{equation*}
 	u_{LC}^\e: = u_L^\e + \e \partial_j u_L N_j(\cdot/\e).
 	\end{equation*}
 We  estimate the corrector term via \eqref{95}and \eqref{u_L_6},
 \begin{equation}\label{136}
 	\|u_{LC}^\e - u_L^\e\|_{L^2(\R^d)} = \e\|\partial_j u_L N_j\|_{L^2(\R^d)} \leq C\e M.
 \end{equation}
 In what follows we will often use the bounds \eqref{95} and \eqref{u_L_6} without mentioning.
 
 \begin{remark}
 	The technical details in what follows build up  to the bound \eqref{81_6}.  The general scheme for this argument is adopted from \cite{KamSm1} and is useful in settings where one needs to establish proximity to the spectrum. We will resort to it once again in the proof of Theorem \ref{th:9.2} below.
 \end{remark}

 	Denote by $a^\e$ the bilinear form associated with the operator $\AA^\e + 1$,
 	\begin{equation*}
 	a^\e(u,v) : = \int\limits_{\R^d} (A^\e \nabla u\cdot  {\nabla v} + u  v).
 	\end{equation*}
 	We substitute $u_{LC}^\e$ into the form with an arbitrary test function $v\in W^{1,2}(\R^d)$,
 	\begin{equation}\label{a_e_6}
 	a^\e(u_{LC}^\e,v)  = \int\limits_{S_1^\e} A_1 \nabla u_{LC}^\e\cdot {\nabla v} +  \int\limits_{S_0^\e} \e^2 \nabla u_{LC}^\e\cdot {\nabla v} + \int\limits_{\R^d} u_{LC}^\e  v.
 	\end{equation}
 To proceed with the argument we   use the  decomposition
 \begin{equation}\label{89c}
 	v = \widetilde v^\e + v_0^\e,
 \end{equation}
 	where $\widetilde v^\e\in W^{1,2}(\R^d)$ is the extension of $v\vert_{S_1^\e}$ into the set of inclusions $S_0^\e$  by Theorem \ref{th:extension}, and  $v_0^\e = v - \widetilde v^\e \in W^{1,2}_0(S_0^\e)$. The next bounds are straightforward consequence of Theorem \ref{th:extension} and the Poincar\'e inequality:
 	\begin{equation}\label{135_6}
 		\begin{gathered}
 			 	\|\nabla\widetilde v^\e\|_{L^2(\R^d)} \leq C \|\nabla v\|_{L^2(S_1^\e)},
 			 	\,\,\,\,
 			 	 \| \nabla v_0^\e\|_{L^2(\R^d)} \leq C \| \nabla v\|_{L^2(\R^d)}, 
 			 	 \,\,\,\,
 			 	  \|v_0^\e\|_{L^2(\R^d)} \leq C\e \|\nabla v_0^\e\|_{L^2(\R^d)},	 	
 		\end{gathered}
 	\end{equation}
 	\begin{equation}\label{a_vv_6}
  \|v_0^\e\|_{L^2(\R^d)} +	\|\widetilde v^\e\|_{L^2(\R^d)} +	\|\nabla\widetilde v^\e\|_{L^2(\R^d)}  +\|\e \nabla v\|_{L^2(\R^d)} \leq C \sqrt{a^\e(v,v)}.
 	\end{equation}
 	We begin by analysing the first term on the right-hand side  of \eqref{a_e_6}:
 	\begin{equation*}
 	\int\limits_{S_1^\e} A_1 \nabla u_{LC}^\e\cdot {\nabla v}  = \int\limits_{\R^d} \big( \hat A_1^{\rm hom} \nabla u_{L} + [\chi_1^\e A_1 (e_j + \nabla N_j) - \hat A_1^\hom e_j] \partial_j u_L + \e \chi_1^\e N_j A_1\nabla\partial_j u_L\big) \cdot {\nabla \widetilde v^\e} 
 	\end{equation*}
 	We first estimate the last term on the right-hand side of the latter, as follows:
 	\begin{equation*}
 	\bigg|\e \int\limits_{\R^d}  \chi_1^\e N_j A_1\nabla\partial_j u_L\cdot {\nabla \widetilde v^\e}\bigg|\leq C\e M \|\nabla \widetilde v^\e\|_{L^2(\R^d)}.
 	\end{equation*}
 Next, we estimate the term containing 
 \begin{equation}\label{89b}
 \chi_1^\e A_1 (e_j + \nabla N_j) - \hat A_1^\hom e_j =  (\chi_1^\e  - \hat \chi_1^\e) A_1 (e_j + \nabla N_j)  + \hat \chi_1^\e A_1 (e_j + \nabla N_j) - \hat A_1^\hom e_j.
 \end{equation}

\begin{lemma} \label{l:5.21} For sufficiently large $M$, $\delta<\rho/2$, and any  $ p \in[1,\infty)$, one has
	\begin{equation}\label{105}
		\begin{aligned}
			\int\limits_{\square_{\e x^*}^L} 	\left|\chi_1^\e  - \hat \chi_1^\e\right|^p =	\int\limits_{\square_{\e x^*}^L} 	\left|\chi_1^\e  - \hat \chi_1^\e\right|\leq C \delta L^d.
		\end{aligned}
	\end{equation}	
\end{lemma}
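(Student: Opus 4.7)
The equality $\int|\chi_1^\e - \hat\chi_1^\e|^p = \int|\chi_1^\e - \hat\chi_1^\e|$ is immediate, since both functions take values only in $\{0,1\}$. For the inequality I would change variables $x = \e y$, reducing the estimate to
\begin{equation*}
\e^d \int_{\square_{x^*}^{L/\e}} |\chi_1(y) - \hat\chi_1(y)|\,dy \leq C\delta L^d,
\end{equation*}
where $\chi_1, \hat\chi_1$ are the unscaled characteristic functions. Since $\e N(M+\kappa)\geq L$, the rescaled domain $\square_{x^*}^{L/\e}$ lies inside $\square_{x^*}^{N(M+\kappa)} = \bigcup_{i=1}^{N^d}\square_{x_i}^{M+\kappa}$ (the tiles being almost disjoint), so it suffices to estimate the contribution from a single tile and then sum.

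On each tile I would split $\square_{x_i}^{M+\kappa} = \square_{x_i}^M \cup (\square_{x_i}^{M+\kappa}\setminus \square_{x_i}^M)$. Inside $\square_{x_i}^M$, the Hausdorff bound~\eqref{67a}, combined with $\delta < \rho/2$ and the $\rho$-separation of inclusions in Assumption~\ref{kirill100}, forces a one-to-one matching between the $\omega$-inclusions $\OO^k_\omega \cap \square^M_{x_i}$ and the periodic $\hat\omega$-inclusions placed there, each matched pair being within Hausdorff distance $\delta$. Uniform minimal smoothness, together with $\diam \OO^k_\omega < 1/2$, bounds the surface area of each individual inclusion by a constant independent of $k$ and $\omega$, so the symmetric difference of each matched pair has measure $\leq C\delta$. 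The $\rho$-separation and part~2 of Assumption~\ref{kirill100} give at most $CM^d$ inclusions per tile, so the inner contribution is $\leq C\delta M^d$.

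On the boundary layer $\square_{x_i}^{M+\kappa}\setminus\square_{x_i}^M$, by construction $\hat\chi_1 \equiv 1$ (the periodic extension places inclusions only in the inner cube), while $\chi_1$ may vanish on $\omega$-inclusions present there; this produces an error bounded by the layer volume $(M+\kappa)^d - M^d \leq C\kappa M^{d-1}$. A further $O(M^{d-1})$ error arises from $\omega$-inclusions crossing $\partial\square^M_{x_i}$ (their Hausdorff matches in $\hat\omega$ are not removed from $\hat\square^{M+\kappa,1}_{\overline{x}_M}$): these live in a $(1/2)$-neighbourhood of $\partial\square^M_{x_i}$ of volume $\leq CM^{d-1}$. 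For $M$ sufficiently large --- namely $M \geq C(1+\kappa)/\delta$, compatible with the other requirements on $M$ from~\eqref{87b} --- both $O(M^{d-1})$ boundary terms are absorbed into $O(\delta M^d)$. Summing over the $N^d$ tiles and rescaling by $\e^d$ yields
\begin{equation*}
\int_{\square_{\e x^*}^L}|\chi_1^\e - \hat\chi_1^\e| \leq C\delta\,\e^d N^d M^d \leq C\delta\bigl(\e N(M+\kappa)\bigr)^d \leq C\delta L^d,
\end{equation*}
using $\e N(M+\kappa) \leq L + \e(M+\kappa) \leq CL$ for small $\e$. I expect the main obstacle to be controlling the boundary contributions uniformly in $\delta$: the Hausdorff hypothesis naturally controls only the interior matching, producing $O(M^{d-1})$ boundary terms that are not a priori of order $\delta$; the remedy is to choose $M$ large relative to $\delta$, which is exactly what the ``sufficiently large $M$'' clause permits.
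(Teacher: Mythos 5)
Your proof is correct and follows essentially the same route as the paper's: both reduce the estimate to per-inclusion symmetric differences of order $\delta$ (via the Hausdorff bound \eqref{67a} together with the $\rho$-separation and uniform minimal smoothness, which confine each symmetric difference to a $\delta$-neighbourhood of the inclusion boundaries), and both absorb the boundary-layer contribution $\square_{x_i}^{M+\kappa}\setminus\square_{x_i}^{M}$ by taking $M$ sufficiently large. Your version is in fact somewhat more explicit than the paper's (the one-to-one matching of inclusions, the count $CM^d$ per tile, and the quantitative requirement $M\gtrsim(1+\kappa)/\delta$ are all left implicit there).
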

\begin{proof}
The proof consists in establishing the following straightforward facts for sufficiently small $\delta>0$.
\begin{enumerate}
\item For two bounded open connected $(\rho,\mathcal N,\gamma)$ minimally smooth sets $U_1$ and $U_2$, such that their complements are also connected, the relation $d_{\rm H}(U_1, U_2)\leq \delta$ implies $d_{\rm H}(\partial U_1, \partial U_2)\leq \delta$.
\item The previous statement immediately implies that the symmetric difference $(U_1\setminus U_2)\cup (U_2\setminus U_1)$ is a subset of the $\delta$-neighbourhood of the boundary of either of the sets. By covering the sets $U_1$, $U_2$ with a sufficiently fine lattice, one can easily obtain  the bound $|(U_1\setminus U_2)\cup (U_2\setminus U_1)|\leq C \delta$,  where the constant depends only on the minimal smoothness parameters and the diameter of the sets $U_1$, $U_2$. 
\end{enumerate}
These observations allow us to estimate  the difference between $\chi_1^\e$ and $\hat \chi_1^\e$ in every cube $\square_{\e x_i}^{\e M}$ via \eqref{67a}. Finally, the volume of the ``boundary layers'' $\square_{\e x_i}^{\e (M+\kappa)}\setminus \square_{\e x_i}^{\e M}$ relative to the volume of $\square_{\e x^*}^L$  can be made arbitrarily small by choosing $M$ sufficiently large.
\end{proof}
 
Applying Lemma \ref{l:5.21}, we have 
 \begin{equation}\label{106a}
 	\begin{aligned}
 		\bigg|\int\limits_{\R^d}  (\chi_1^\e  - \hat \chi_1^\e) A_1 e_j \partial_j u_L  \cdot {\nabla \widetilde v^\e}\bigg|\leq C \delta^{1/2}  \|\nabla \widetilde v^\e\|_{L^2(\R^d)}.
 	\end{aligned}
 \end{equation}
For the next estimate we invoke Theorem \ref{th:6.18}.  Applying H\"older's inequality twice and taking into account \eqref{93} and \eqref{105}, we obtain
 \begin{multline}\label{107}
		\bigg|\int\limits_{\R^d}  (\chi_1^\e  - \hat \chi_1^\e) A_1  \nabla N_j \partial_j u_L  \cdot {\nabla \widetilde v^\e}\bigg|
		\\
		\leq \frac{C}{ L^{d/2}} \|\chi_1^\e  - \hat \chi_1^\e\|_{L^{2p/(p-2)}(\square_{\e x^*}^L)} \sum_j\| \nabla N_j\|_{L^p(\square_{\e x^*}^L)}  \|\nabla \widetilde v^\e\|_{L^2(\R^d)} 
		\\
		\leq C  \delta^{(p-2)/2p}   \|\nabla \widetilde v^\e\|_{L^2(\R^d)}.
\end{multline}

 Finally, we deal with the last two terms on the right-hand side of \eqref{89b}. The vector field 
 \begin{equation}\label{96}
 	g_j(y) := \hat \chi_1^\e(\e y) A_1 (e_j + \nabla N_j(y)) - \hat A_1^\hom e_j 
 \end{equation}
 is solenoidal, i.e. $ \int_{\square^{M+\kappa}_{x_1}}  g_j \cdot \nabla\varphi = 0$ for all $\varphi \in W^{1,2}_{\rm per}(\square^{M+\kappa}_{x_1})$, and has zero mean, by the definition of the homogenised matrix $\hat A_1^\hom$. It is well known, see an explicit construction via Fourier series in \cite[Section 1.1]{zhikov2},  that such field can be represented as the ``divergence'' of a skew-symmetric zero-mean field $G^j$, $G^j_{ik} =-G^j_{ki}$, $G^j_{ik} \in W^{1,2}_{\rm per}(\square^{M+\kappa}_{x_1})$:
\begin{equation}\label{106}
	 g_j = \nabla\cdot G^j, 
\end{equation}
which is understood in the sense that $(g_j)_k = \partial_i G^j_{ik}$.
  Note that the fields $G^j$ are commonly referred to in homogenisation theory as flux correctors. It is not difficult to see (by estimating the coefficients of the said Fourier series similarly to the proof of Lemma \ref{l13.3})  that 
  \begin{equation}\label{109}
  	  \|G^j\|_{L^2(\square^{M+\kappa}_{\overline{x}_M})} \leq C M \|g_j\|_{L^2(\square^{M+\kappa}_{\overline{x}_M})} \leq C M^{d/2 +1},
  \end{equation}
where the second inequality  is obtained from \eqref{95}.
\begin{remark}
	We would like to warn the reader against confusing the flux correctors $G^j$, which are periodic, with the flux correctors $G_j^\e$ constructed in Corollary \ref{c12.3} and used in Section \ref{s:relevant}, which are stochastic by  nature.
\end{remark}
 	Applying \eqref{106} (note that $g_j(\cdot/\e) = \e \nabla\cdot G^j(\cdot/\e)$) and integrating by parts yields
 	\begin{equation}\label{99}
 	\begin{aligned}
 	&\int\limits_{\R^d}  \partial_j u_L \, g_j  \cdot {\nabla \widetilde v^\e} =  \int\limits_{\R^d} \e\, \partial_j u_L \,(\nabla\cdot G^j) \cdot {\nabla \widetilde v^\e}  = \int\limits_{\R^d} \e  \big[\partial_j u_L\, G^j: {\nabla^2 \widetilde v^\e}  -   (G^j \nabla \partial_j u_L)\cdot {\nabla \widetilde v^\e}\big],
 	\end{aligned}
 	\end{equation}
 where the colon denotes the Frobenius inner product of matrices. 	The first term in the last integral is identically zero due to the anti-symmetry of $G^j$. Therefore, combining \eqref{89b}, \eqref{106a}, \eqref{107}, \eqref{109} and the last identity, we obtain 
 	\begin{equation}\label{72c}
 		\begin{aligned}
 			 	&\bigg|\int\limits_{\R^d}  [\chi_1^\e A_1 (e_j + \nabla N_j) - \hat A_1^\hom e_j] \partial_j u_L \cdot  {\nabla \widetilde v^\e} \bigg| 
 			 	\leq C \left(\delta^{(p-2)/2p} +   M{\e}\right) \|\nabla \widetilde v^\e\|_{L^2(\R^d)}.
 		\end{aligned}
 	\end{equation}

 	We proceed by considering the second integral in \eqref{a_e_6}. Decomposing both $u_{LC}^\e$ and $v$ and integrating by parts one of the resulting terms, yields
 	\begin{equation}\label{73_6}
 	\begin{aligned}
 	\e^2 \int\limits_{S_0^\e} \nabla u_{LC}^\e\cdot  {\nabla v}
 	&= \e^2 \int\limits_{S_0^\e} \left(\l u_L \nabla b^\e\cdot    {\nabla(v^\e_0 +\widetilde v^\e)} +   \big[\nabla  u_L(1+\l b^\e) + 
 	\e\nabla( \partial_ju_L N_j ) \big]\cdot  {\nabla v}\right)
 	\\
 	&=\e^2  \int\limits_{S_0^\e} \big( - \l u_L \Delta b^\e  {  v^\e_0} -\l \nabla u_L\cdot \nabla b^\e \, {  v^\e_0} 
+ \l u_L \nabla b^\e\cdot  {\nabla \widetilde v^\e}
\\
& \qquad\qquad\qquad\qquad\qquad +  \big[\nabla  u_L(1+\l b^\e) + \e\nabla( \partial_ju_L N_j )\big]\cdot  {\nabla v}\big).
 	\end{aligned}
 	\end{equation}
 	Since $b$ is the solution to (\ref{66}), its $\e$-realisation satisfies $- \e^2 \Delta b^\e  = \l b^\e + 1$,
 	thus we have 
 	\begin{equation}\label{75_6}
 	- \e^2\int\limits_{S_0^\e} \l u_L \Delta b^\e  {  v^\e_0}= \int\limits_{S_0^\e} \l u_L (\l b^\e + 1) {  v^\e_0}.
 	\end{equation}
 	The bound for the remaining terms of \eqref{73_6} via \eqref{95}, \eqref{u_L_6}, \eqref{135_6}, \eqref{a_vv_6} and Lemma \ref{l9.9}  is straightforward:  
 	\begin{multline*}
 	\biggl| \e^2  \int\limits_{S_0^\e} \big(  -\l (\nabla u_L\cdot \nabla b^\e) {  v^\e_0} 
 	+ \l u_L \nabla b^\e\cdot  {\nabla \widetilde v^\e} +  \big[\nabla  u_L(1+\l b^\e) + \e\nabla( \partial_ju_L N_j )\big]\cdot  {\nabla v}\big)\biggr|
 	\\\leq C(\e + \e^2 M)  \sqrt{a^\e(v,v)}.
 	\end{multline*}

 	Combining (\ref{a_e_6})--(\ref{89b}), (\ref{106a}), (\ref{107}), (\ref{72c})--(\ref{75_6}) and the last bound yields
 	\begin{equation}\label{76a_6}
 	\begin{aligned}
 	a^\e(u_{LC}^\e,v)  = \int\limits_{\R^d}  \hat A_1^{\rm hom} \nabla u_{L} \cdot  {\nabla \widetilde v^\e} + \int\limits_{S_0^\e} \l u_L (1 + \l b^\e) {  v^\e_0} + \int\limits_{\R^d} u_{LC}^\e   v + \mathcal R^\e,
 	\end{aligned}
 	\end{equation}
 	where the remainder $\mathcal R^\e$ satisfies 
 	\begin{equation}\label{156_6}
 	\begin{aligned}
 	|\mathcal R^\e| \leq  C(\l) \sqrt{a^\e(v,v)} \big({\e}M  + \delta^{(p-2)/2p}\big).
 	\end{aligned}
 	\end{equation}

 	Replacing $v^\e_0$ with $v-\widetilde v^\e$ in \eqref{76a_6} and recalling the definition of $u_L^\e$, we  rewrite \eqref{76a_6} as follows:
 	\begin{multline}\label{91}
 	a^\e(u_{LC}^\e,v)  =\int\limits_{\R^d} \big( (-\nabla \cdot \hat A_1^{\textrm{\rm hom}}\nabla  - \b_\infty(\l))  u_L   { \widetilde v^\e}+ (\b_\infty(\l) - (\l+\l^2 b^\e) ) u_L  {   \widetilde v^\e}
 	\\
 	 +  \left(\l u_{L}^\e + u_{LC}^\e \right)   v \big)+ \mathcal R^\e.
 	\end{multline}
 	It remains to estimate the second term under the integral sign.
 	
 	Consider the piece-wise averaging operator $\mathcal M^\e_{M+\kappa}: L^2(\e\square_{x^*}^{N(M+\kappa)}) \to L^2(\e\square_{x^*}^{N(M+\kappa)})$ defined by 
 	\[
 \mathcal M^\e_{M+\kappa} f(x) = \fint\limits_{\e\square_{x_i}^{M+\kappa}} f, \quad x\in \e\square_{x_i}^{(M+\kappa)}, \quad i=1,\dots,N^d.
 \]
 The following lemma is proved by a straightforward application of the Poincar\'e inequality.
 \begin{lemma}\label{l6.19}
 Let $f\in W^{1,2}(\e\square_{x^*}^{N(M+\kappa)})$, then 
 	\[
 	\| f-\mathcal M^\e_{M+\kappa} f\|_{L^2(\e\square_{x^*}^{N(M+\kappa)})} \leq C \e M \|\nabla f\|_{L^2(\e\square_{x^*}^{N(M+\kappa)})}.
 	\]
 \end{lemma}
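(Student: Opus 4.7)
The plan is to exploit the fact that by construction the large cube $\e\square_{x^*}^{N(M+\kappa)}$ is the disjoint union (up to a measure-zero set) of the smaller cubes $\e\square_{x_i}^{M+\kappa}$, $i=1,\ldots,N^d$, and that on each of these the averaging operator $\mathcal M^\e_{M+\kappa}$ simply replaces $f$ by its mean value over that cube. Hence one can split the $L^2$ norm as a sum of $L^2$ norms over the small cubes and reduce the statement to a local estimate of Poincar\'e type on each $\e\square_{x_i}^{M+\kappa}$.

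On each cube $\e\square_{x_i}^{M+\kappa}$, the standard Poincar\'e--Wirtinger inequality for cubes gives
\begin{equation*}
\bigl\|f-\langle f\rangle_{\e\square_{x_i}^{M+\kappa}}\bigr\|_{L^2(\e\square_{x_i}^{M+\kappa})}
\le C\,\e(M+\kappa)\,\|\nabla f\|_{L^2(\e\square_{x_i}^{M+\kappa})},
\end{equation*}
with $C$ the universal Poincar\'e constant for the unit cube (obtained by the usual rescaling $y=x/(\e(M+\kappa))$). Since $M\ge \kappa$ in the regime of interest (recall that $M$ has been chosen sufficiently large), one has $\e(M+\kappa)\le 2\e M$, so the prefactor can be absorbed into $C\e M$.

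Squaring, summing over $i=1,\ldots,N^d$, and using disjointness of the small cubes to recombine both sides into norms over the full large cube, yields
\begin{equation*}
\|f-\mathcal M^\e_{M+\kappa}f\|_{L^2(\e\square_{x^*}^{N(M+\kappa)})}^2
=\sum_{i=1}^{N^d}\bigl\|f-\langle f\rangle_{\e\square_{x_i}^{M+\kappa}}\bigr\|_{L^2(\e\square_{x_i}^{M+\kappa})}^2
\le C^2\e^2 M^2\,\|\nabla f\|_{L^2(\e\square_{x^*}^{N(M+\kappa)})}^2,
\end{equation*}
and taking square roots gives the claim. There is no substantive obstacle: the only mild subtlety is the dependence of the Poincar\'e constant on the geometry, which is trivial here because all the small cubes are congruent and the constant for a cube is a dimensional constant.
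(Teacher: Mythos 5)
Your argument is correct and is exactly the "straightforward application of the Poincar\'e inequality" that the paper invokes: decompose the big cube into the congruent subcubes, apply the scaled Poincar\'e--Wirtinger inequality on each, use $M\geq\kappa$ to absorb the $(M+\kappa)$ factor, and sum the squares. Nothing further is needed.
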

 Now note that by \eqref{eqivan50} on every cube $\e\square_{x_i}^{M+\kappa}$ we have 
 \begin{equation*}
 	\biggl|\, \int\limits_{\e\square_{x_i}^{M+\kappa}} (\b_\infty(\l) - (\l+\l^2 b^\e))\biggr| = \left|\e\square_{x_i}^{M+\kappa}\right| |\b_\infty(\l) - \ell(x_i,M+\kappa,\l,\o)| \leq \left|\e\square_{x_i}^{M+\kappa}\right| \delta.
 \end{equation*}
Therefore,
 \begin{equation*}
 	\begin{aligned}
 			\biggl|\, \int\limits_{\R^d} (\b_\infty(\l) - (\l+\l^2 b^\e))\mathcal M^\e_{M+\kappa}\left(u_L  {   \widetilde v^\e}\right) \biggr| &
 			\\
 			\leq \delta \sum_{i=1}^{N^d}\left|\e\square_{x_i}^{M+\kappa}\right| \left(\mathcal M^\e_{M+\kappa}\left|u_L  {   \widetilde v^\e}\right|\right)&(x_i) 
 		=	\delta \int\limits_{\R^d} \left|u_L  {   \widetilde v^\e}\right| \leq C \delta \|  \widetilde v^\e \|_{L^2(\R^d)}.
 	\end{aligned}
\end{equation*}
Decomposing $u_L  {   \widetilde v^\e} = \left( u_L  {   \widetilde v^\e}-\mathcal M^\e_{M+\kappa} (u_L  {   \widetilde v^\e})\right) + \mathcal M^\e_{M+\kappa} (u_L  {   \widetilde v^\e})$, applying Lemma \ref{l6.19} and observing that $\|\l+\l^2 b^\e \|_{L^2(\square_{\e x^*}^L)} \leq C L^{d/2}$, cf. \eqref{pauza1}, we obtain
\begin{equation}\label{117a}
	\biggl|\int\limits_{\R^d} (\b_\infty(\l) - (\l+\l^2 b^\e)) u_L  {   \widetilde v^\e}\biggr| \leq C ({\e M} + \delta)\|  \widetilde v^\e \|_{L^2(\R^d)}  + C {\e M}\|\nabla  \widetilde v^\e \|_{L^2(\R^d)} .
\end{equation}

Combining   \eqref{70a_6}, \eqref{136}, \eqref{a_vv_6}, \eqref{156_6}--\eqref{117a} yields
 	\begin{equation}\label{81_6}
 	\begin{aligned}
 	|a^\e(u_{LC}^\e ,v) - (\l+1) (u_{LC}^\e ,v)|   
 	\leq  \widehat{\mathcal R}(\e,L,\l) \sqrt{a^\e(v,v)},
 	\end{aligned}
 	\end{equation}
 	where  
 	\begin{equation*}
 	\begin{aligned}
 	\widehat{\mathcal R}(\e,L,\l) :=   C(\l) \big({\e}M  +\delta^{(p-2)/2p} + L^{-1}\big).
 	\end{aligned}
 	\end{equation*}

Now, resorting to Lemma \ref{l:E.1} we see that for sufficiently small $\delta$ and large $L$ one has
 \begin{equation*}
 	\limsup_{\e\to 0} \dist(\l, \Sp(\AA^\e)) \leq C(\l)  \big(\delta^{(p-2)/2p} + L^{-1}\big),
 \end{equation*}
Since $\delta$ and $L$ are arbitrary, we conclude that 
  \begin{equation*}
 	\lim_{\e\to 0} \dist(\l, \Sp(\AA^\e)) = 0.
 \end{equation*}	
 
\begin{remark}
	In the present,  in order to simplify the exposition of the main ideas, work we assume that the coefficients of $\AA^\e$ are constant in the stiff component and are a multiple of the identity matrix in the inclusions. The coefficients  $A^\e(\cdot,\omega)$ can be described as follows. Consider a sequence of random variables of the form 
	\begin{equation*}
		A^{\varepsilon} (\omega)= A_1 {\mathbf 1}_{\Omega \backslash \mathcal{O}}+\varepsilon^2 I {\mathbf 1}_{\mathcal{O}},  
	\end{equation*} 
	where $A_1$ is a positive definite matrix. Then the coefficients defined by \eqref{defAe} are simply the $\e$-realisation of $A^{\varepsilon} (\omega)$:
	$$ A^{\varepsilon} (x,\omega)= A^{\varepsilon} (T_{x/\e} \omega). $$ 
	(Note that the operator $\mathcal{A}^{\eps}$ may be formally considered as the $\varepsilon$-realization of the operator $-\nabla_{\omega}\cdot  A^\e(\o) \nabla_{\omega}$.) Similarly, we could analyse a more general  problem where the coefficients in the constitutive parts of the composite are not assumed to be constant, namely, by setting
	$$A^{\varepsilon} (\omega)= A_1(\omega) {\mathbf 1}_{\Omega \backslash \mathcal{O}}+\varepsilon^2 A_0(\omega) {\mathbf 1}_{\mathcal{O}},  $$
	where $A_0, A_1 \in L^{\infty}(\Omega;\mathbf{R}^{d \times d} )$ are uniformly coercive. (One can look at an interesting simpler case where one does not change the geometry of the inclusions, but only the coefficients on the soft component.) We believe that our analysis can be easily adapted to this more general setting. It will manifest, in particular, in the need to analyse operator $-\nabla_{\o} \cdot  A_0 \nabla_{\o}$ with the domain being a suitable subspace of $W^{1,2}_0(\OO)$ instead of the operator $-\Delta_{\mathcal{O}}$. In this case one will get  analogous results; in particular, the spectrum of $\AA^\hom$ will be characterised by a suitable version of the $\beta$-function, and the limit set $\mathcal G$ by a suitable version of $\beta_{\infty}(\l)$ (cf. \eqref{d:beta1} and \eqref{38}). Moreover, the notions introduced and the results presented in Section \ref{s:relevant} can also be easily adapted to this more general setting.
\end{remark}

\subsection{Examples} \label{cubesfilled}

In general, constructing explicitly a probability space that would provide a ``truly random'' distribution of inclusions is a challenging task. We consider two general setups: inclusions randomly placed at the nodes of a periodic lattice and the random parking model. We begin with the former.

\subsubsection{Probability space setup} \label{s:5.6.1}

Let $\{\widetilde{\omega}_j\}_{j\in \mathbf{Z}^d}$ be a sequence of  independent and identically distributed random vectors taking values in $\mathbf{N}_0^l \times [r_1, r_2],$ where $\mathbf{N}_0^l := \{0,1,\ldots, l\}$ and $0<r_1\le r_2\le1$. Let $(\widetilde{\Omega},\widetilde{\mathcal{F}},\widetilde{P} )$ be the canonical probability space associated with $\{\widetilde{\omega}_j\}_{j\in \mathbf{Z}^d}$ (obtained by the Kolmogorov construction). 
Let $Y_k\subset [0,1)^d,$ $k \in \mathbf{N}_0^l,$ be open, connected sets satisfying minimal smoothness assumption and not touching the boundary of $[0,1)^d$. These sets model the shapes of the inclusions, namely, for every $j\in \Z^d$ (which determines the location of the inclusion) the first component of $\widetilde \o_j = (k_j, r_j)$ describes the shape of the inclusion and the second component describes its size. We also set $Y_0= \emptyset$. 
On $\widetilde{\Omega}$, there is a natural shift $\widetilde{T}_z (\widetilde{\omega}_j)=(\widetilde{\omega}_{j-z})$, which is ergodic.

We treat $[0,1)^d$ as a probability space with Lebesgue measure $dy$ and the standard algebra $\mathcal{L}$ of Lebesgue measurable sets, and define 
$$
\Omega=\widetilde{\Omega} \times [0,1)^d,\quad \mathcal{F}= \widetilde{\mathcal{F}} \times \mathcal{L}, \quad P= \widetilde{P} \times dy. 
$$
On $\Omega$ we introduce a dynamical system $T_x (\widetilde \omega, y)=(\widetilde T_{[x+y]} \widetilde \omega, x+y-[x+y])$, and define  
$$\mathcal{O}:=\{(\widetilde \omega, y):\, y\in r_0 Y_{k_0}  \}.$$
It is easy to see that $\mathcal{O}$ is measurable. For a fixed $\o = (\widetilde{\o},y)$ the realisation $\mathcal O_\o$ consists of the inclusions $r_j Y_{k_j} + j -y,\,j\in\Z^d$.

Next we consider three special cases of this general setup.

\subsubsection{One shape randomly placed at a periodic lattice nodes} 
\label{simple_example}

In this example we set $l=1,$ $r_1=r_2=1$. The second component of $\widetilde{\o}$ is redundant in this example, so we disregard it in the notation. The value $0$ or $1$ of  $\widetilde\o_z,$ $z\in\mathbf{Z}^d,$ corresponds to the absence or the presence of the inclusion at the lattice node $z$, respectively. We have
$$
\mathcal{O} =\bigl\{\o=(\widetilde{\o},y)\,:\, \widetilde \o_0 = 1,\, y\in Y_1\bigr\}\subseteq \O,
$$ 
so $\Sp(-\Delta_{\OO})=\Sp(-\Delta_{Y_1})$. For a given  $\omega=(\widetilde{\omega}, y)\in\Omega$ the realisation $\mathcal{O}_\o = \{ x\,:\, T_x\o \in \mathcal{O}\}$ is the union of the sets $Y_1+z-y$ for all $z\in\mathbf{Z}^d$ such that $\widetilde \o_z=1$.  
By the law of large numbers, for a.e. $\o$ and there exist arbitrary large cubes contain no inclusions and arbitrary large cubes containing an inclusion at every lattice node.  
Thus, we have  
\begin{equation}\label{razgovor20}  
	\beta_{\infty} (\lambda) \geq \max\{\lambda, \beta_{1\per,Y_1}(\lambda)\},
\end{equation} 
where
$$ \beta_{t\per,Y} (\lambda) = \lambda+ \frac{\lambda^2}{t^d} \sum_{j=1}^\infty
\frac{\langle \varphi_j \rangle^2}{\nu_j-\lambda}$$
is  Zhikov's $\b$-function corresponding to the  $t$-periodic distribution of an inclusion $Y$. Here  $\nu_j$ and $\varphi_j$ denote the eigenvalues (repeated according to their multiplicity) and orthonormalised eigenfunctions of $-\Delta_{Y}$ (extended by zero outside of $Y$), and we put for short $\langle f \rangle := \int_{\R^d} f$ until the end of this section, keeping in mind that the eigenfunctions $\varphi_j$ are extended by zero outside the inclusion.
(In fact, it is not difficult to see that the equality holds in \eqref{razgovor20}. It follows from an observation that on an arbitrary sequence converging to $\b_\infty(\l)$, the limit $\lim_{M \to \infty} \ell(\overline{x}_M,M,\lambda,\omega)$ is a convex combination of $\lambda$ and $\beta_{1\per,Y_1}(\lambda)$.)
Thus we have $\mathcal{G}=\mathbf{R}^+_0$.  
\begin{remark} 
	It was shown in \cite{ChChV} that 
	\begin{equation*}
		\beta(\lambda)= \lambda+\lambda^2P\bigl(\{\widetilde\o:\,\widetilde\o_0=1\}\bigr) \sum_{j=1}^\infty
		\frac{\langle \varphi_j \rangle^2}{\nu_j-\lambda}.
	\end{equation*}	
Note also that the eigenvalues $\nu_j$ such that    $\langle \varphi_j\rangle=0$  may be excluded from the above formulae.
\end{remark}

\subsubsection{Finite number of shapes at the lattice nodes}
Now we take finite $l>1$, $r_1=r_2=1$ (again, second component of $\widetilde{\o}$ being redundant) and assume that $P\{\widetilde{\o}_0 = (0,1)\} = 0$, which is equivalent to saying that $\{\widetilde{\omega}_j\}_{j\in \mathbf{Z}^d}$ take  values in $\mathbf{N}^l$.    By the law of large numbers, for each $k\in\N^l$ there exist  arbitrarily large cubes containing only the shapes $Y_k$ at the lattice nodes. 
We have  
$$ \Sp(-\Delta_{\OO})=\cup_{k=1}^l \Sp(-\Delta_{Y_k}), \  \beta_{\infty} (\lambda)=\max_{k=1,\dots,l} \beta_{1\per,Y_k}(\lambda). $$
In particular,
$$ \mathcal{G}=\bigcup_{k=1}^l \Sp(-\Delta_{Y_k})\, \cup\, \bigcup_{k=1}^l \{\lambda\geq 0:\beta_{1\per,Y_k}(\lambda)\geq 0 \}.$$

\begin{remark} 
	It was shown in \cite{ChChV} that	
	\begin{equation*}
		\beta(\lambda)= \lambda+\lambda^2\sum_{k=1}^l P\bigl(\{\widetilde\o:\,\widetilde\o_0=k\}\bigr)  \sum_{j=1}^\infty
		\frac{\langle \varphi_j^k \rangle^2}{\nu_j^k-\lambda}.
	\end{equation*}
	Here, $\nu_j^k$ and $\varphi_j^k$ denote the eigenvalues and orthonormalised eigenfunctions of $-\Delta_{Y_k}$. 
\end{remark}	

\subsubsection{Randomly scaled inclusions}\label{sss:randomscaled}
Take $l=1$ and $0<r_1<r_2\leq 1$ and assume that $P\{\widetilde{\o}_0 \in 0\times[r_1,r_2]\} = 0$. Note that the first component of $\widetilde{\omega}_0$ is redundant and so we drop it from the notation.  We denote  by $\mathcal S \subset [r_1,r_2]$ the support of the random variable $\widetilde{\omega}_0$.  Thus, for every value of the scaling parameter $r \in \mathcal S$ and every   $\delta>0$, the set $\{\widetilde \o_0 \in [r-\delta,r+\delta]\}$ has positive probability. 
It follows that for any  $\delta>0$  there exist arbitrarily large cubes containing only inclusions whose  scaling parameter belongs to the interval $[ r-\delta,r+\delta]$.  
Therefore, 
$$ \Sp(-\Delta_\OO)=\bigcup_{r \in \mathcal S} \bigcup_{j \in \mathbf{N}}\{r^{-2} \nu_j\},\qquad \beta_{\infty}(\lambda)=\max_{r\in \mathcal S} \beta_{1\per,rY_1} (\lambda), $$
where   $\nu_j$ are the eigenvalues of  $-\Delta_{Y_1}$. It follows that
$$ \mathcal{G}= \bigcup_{r \in \mathcal S} \bigcup_{j \in \mathbf{N}}\{r^{-2} \nu_j\}\, \cup  \{\lambda>0: \max_{r\in \mathcal S}\beta_{1\per,rY_1}(\lambda)\geq 0 \}.$$

\begin{remark} 
	It was shown in \cite{ChChV} that 
	\begin{equation}\label{betascaling}
		\beta(\lambda)= \,\lambda\,+\, \lambda^2 \int\limits_{\{\widetilde{\o}_0\in[r_1,r_2]\}} \sum_{j=1}^\infty
		\frac{\langle \varphi_{j, \widetilde{\o}_0}\rangle^2}{\nu_{j,\widetilde{\o}_0}-\lambda}.
	\end{equation}
	Here $\nu_{j,r}$  and  $\varphi_{j,r}$ are the eigenvalues and orthonormalised eigenfunctions of $-\Delta_{rY_1}$. They can be obtained from those of $-\Delta_{Y_1}$  by scaling; in particular, $\nu_{j,r} = r^{-2} \nu_j$.
\end{remark}

\subsubsection{Modifications of the above examples}
It is not difficult to construct  explicit modifications of the above examples in which the inclusions are randomly rotated and shifted within the lattice cells (still respecting the minimal distance between them in order for the extension property to hold). As can be seen from the definition of $\beta_\infty(\l)$ and  the formula \eqref{sasha103} for $\b(\l)$, this additional degree of freedom does not affect these functions (but only $A_1^\hom$), hence both the limit set $\mathcal{G}$ and $\Sp(\mathcal A^\hom)$ (in the whole space setting) remain the same. 

Another possible modification is to assume that random variables $\{\widetilde{\omega}_j\}_{j\in \mathbf{Z}^d}$ have finite correlation distance. In this case, in order to be able to recover the above formulae for $\beta_\infty(\l)$, arbitrarily large cubes containing only the inclusions of a specific type (as in the above examples)  should have positive probability. If this is not the case, then only the general formula \eqref{38} is available.

\subsubsection{Random parking model} \label{randomparking} 
In this example we consider the random parking model described in  \cite{penrose} (see also \cite{gloria}). The intuitive description of the model is as follows: copies of a set $V$ arrive sequentially at random without overlapping until jamming occurs. 

We start by briefly recalling the graph construction of the random parking measure (RPM) in $\R^d$. Let $\mathcal{P}$ be a homogeneous Poisson process with intensity one in $\R^d \times \mathbf{R}^{+}$, and let $V \subset \R^d$ be an open bounded set that contains the origin. 
An oriented graph is a special kind of directed graph in which there is no pair of vertices $\{x,y\}$ for which both $(x,y)$ and $(y,x)$ are included as directed edges. We shall say that $x$ is a parent of $y$ and $y$ is an offspring of $x$ if there is an oriented edge from $x$ to $y$. By a root of an oriented graph we mean a vertex with no parent.
The construction of RPM goes as follows.  Make the points of the Poisson process $\mathcal{P}$ on $\R^d \times  \mathbf{R}^+$ into the vertices of an infinite oriented graph, denoted by $\Gamma$, by putting in an oriented edge $(X,T) \to (X',T')$ whenever $(X' +V)\cap(X +V) \neq \emptyset$ and $T < T'$. For completeness, we also put an edge $(X,T) \to (X',T')$ whenever $(X' + V) \cap (X + V) \neq \emptyset$, $T = T'$, and $X$ precedes $X'$ in the lexicographical order, although in practice the probability that $\mathcal{P}$ generates such an edge is zero. 
For $(X,T) \in \mathcal{P}$, let $C(X,T)$ (the ``cluster at $(X,T)$'') be the (random) set of ancestors of $(X,T)$, that is, the set of $(Y,U) \in \mathcal{P}$  such that there is an oriented path in $\Gamma$ from $(Y,U)$ to $(X,T)$. As shown in \cite[Corollary 3.1]{penrose}, the ``cluster'' $C(X,T)$ is finite for $(X,T) \in \mathcal{P}$ with probability one.  Recursively define subsets $F_i$, $G_i$, $H_i$ of $\Gamma$ as follows. Let $F_1$ be the set of roots of the oriented graph $\Gamma$, which is non-empty due to the finiteness of clusters, and let $G_1$ be the set of offspring of the roots. Set $H_1 = F_1 \cup G_1$. For the next step, remove the set $H_1$ from the vertex set, and define $F_2$ and $G_2$ in the same way:  $F_2$ is the set of roots of the restriction of $\Gamma$ to the vertices in $\mathcal{P} \backslash H_1$, and $G_2$ is the set of vertices in $\mathcal{P}\backslash H_1$ that are offspring of those in $F_2$. Set $H_2 =F_2 \cup G_2$, remove the set $H_2$ from $ \mathcal{P} \backslash H_1$, and repeat the process to obtain $F_3, G_3, H_3$. Continuing ad infimum gives us subsets $F_i, G_i$ of $\mathcal{P}$ defined for $i = 1,2,3,\dots$. These sets are disjoint by construction.  As proved in \cite[Lemma 3.2]{penrose}, the sets $F_1$, $G_1$, $F_2$, $G_2 \dots$ form a partition of $\mathcal{P}$ with probability one. Clearly, the points of $F_1\vert_{\R^d}$ (the projection of $F_1$ onto $\R^d$) are ``well separated'' in the sense that for every pair $X,X'\in F_1\vert_{\R^d}$ one has  $(X' +V)\cap(X +V) = \emptyset$. Removing the set $G_1$ we ensure that we discard all points that do not satisfy the said separation condition with respect to the points of $F_1\vert_{\R^d}$. The points of  $F_2\vert_{\R^d}$ are well separated from each other and from the points of $F_1\vert_{\R^d}$, and so forth.

\begin{definition}  The random parking measure in $\R^d$ is given by the counting measure $N(A)$, $A\subset \R^d$, generated by the projection of the union $\cup_{i=1}^{\infty} F_i$  onto $\R^d$. 
\end{definition} 	

It is proven in, e.g., \cite{penrose,gloria} that there exists a probability space  supporting RPM and that RPM is an ergodic process with respect to translations. Furthermore,  there exists $\kappa = \kappa(d,V)$ such that 
\begin{equation}\label{112a}
	\lim_{M\to\infty} \frac{N(\square^M)}{|\square^M|} = \kappa \mbox{ almost surely}.
\end{equation}

In our example we set $V=\square$, so we have a collection of randomly parked non-overlapping cubes $\square_{X_j}^1, j\in \N, \cup_j X_j = \cup_i F_i\vert_{\R^d}$, such that no more unit cubes can be fitted without overlapping. At each cube we place an inclusion $X_j + Y_1$, where $Y_1 \subset \square$ is a reference inclusion observing a positive distance from the boundary of the cube. Note that $\Sp(-\Delta_\OO)=\Sp(-\Delta_{Y_1}).$
From \eqref{sasha103} and \eqref{112a} we easily infer that
	\begin{equation*}
	\beta(\lambda)= \lambda+\lambda^2\, \kappa \sum_{j=1}^\infty \frac{\langle \varphi_j \rangle^2}{\nu_j-\lambda}.
\end{equation*}

In order to derive a formula for $\beta_\infty$, we  analyse  the areas with the smallest and the greatest density of the distribution of inclusions (hence the choice of the set $V$, for which this can be done explicitly). For an arbitrary (small) $\delta>0$, consider the periodic lattice $(2-4\delta)\Z^d$, denoting its points by $\xi_i, i\in \N$, and consider the balls of radius $\delta$ centred at $\xi_i,  i\in \N$. For an arbitrary $T>0$ there is a positive probability of  $\mathcal P$ having exactly one point $(X_i,T_i)$ in each of the sets $B_\delta(\xi_i)\times(0,T)\subset \square^{M+4}\times(0,T)$ and no other points in the set $\square^{M+4}\times(0,T)$. Then, by the above construction, the random parking measure has exactly one point $X_i$ inside each ball $B_\delta(\xi_i)$ with $\xi_i \in \square^{M}$ and no other points inside $\square^{M}$, as no more unit cubes could fit between the cubes $\square_{X_i}^1$. By the law of large numbers, with probability one there exist arbitrarily large cubes $\square_x^{M}$ containing only almost periodically positioned inclusions $Y_1+ X_i$, $X_i \in B_\delta(\xi_i)$, $\xi_i\in(2-4\delta)\Z^d$. Since $M$ and $\delta$ are arbitrary, we conclude that 
\begin{equation}\label{114}
	 \beta_{\infty}(\lambda)\geq \beta_{2\per,Y_1} (\lambda).	
\end{equation}

Making an analogous construction  with the periodic lattice $(1+4\delta)\Z^d$, $\delta>0$, we can see that 
\begin{equation}\label{115a}
	\beta_{\infty}(\lambda)\geq \beta_{1\per,Y_1} (\lambda).
\end{equation}

On the other hand, since $1$-periodic and $2$-periodic distribution  provide the greatest and the smallest possible inclusion density, it follows that equalities in \eqref{114} and \eqref{115a} are attained when $\int_{Y_1} b(\cdot,\lambda) <0$ and $\int_{Y_1} b(\cdot,\lambda) \geq 0$, respectively. Thus  
$$
\beta_{\infty}(\lambda) = \max\{ \beta_{1\per,Y_1} (\lambda), \beta_{2\per,Y_1} (\lambda)\}.
$$

\begin{remark}
	The random parking model does not satisfy  Assumption \ref{asumivan1} (concerning finite range of dependence), and therefore we cannot apply   Theorem \ref{th4.4} to it straight away. Nevertheless, the preceding argument shows that one can ``recover'' the values of $\beta_\infty(\l)$ on arbitrarily large non-typical spacial regions (with almost periodic positioning of inclusions), and then follow the  argument of  the proof of Theorem \ref{th4.4} in order to show that 	$\lim \Sp(\AA^\e) \supset \mathcal{G}$.
\end{remark}

\begin{remark}
	As in the previous examples, one can elaborate the random parking model example further by allowing inclusions of different shapes, sizes, randomly rotated, through the use of the  marked point processes framework, see \cite{heid1}.
\end{remark}

\begin{remark} 
	The random parking model is a more realistic (from the  point of view of applications) model of random distribution of inclusions. On one hand, it does not allow arbitrarily large inclusion-free regions.  (More precisely, there exists a radius $r$ such that any ball $B_r(x), x\in \R^d$, contains at least one inclusion.) On the other hand, the inclusions are not too close to each other, thus satisfying Assumption \ref{kirill100}.  If one  only uses the Poisson point process in the construction, then one  ends up with arbitrarily large inclusion-free regions and at the same time having arbitrarily many overlapping inclusions. 
	
	The problem of overlapping inclusions can also be  dealt with by using a Mat\'ern modification of Poisson process (see e.g. \cite{heid1}) which is constructed
	from a given point process  by simultaneously erasing all points with the distance to the nearest
	neighbour smaller than a given constant. It can be shown  that if the original process is  stationary (ergodic),
	the resulting hardcore process is stationary (ergodic).
	If one starts from a Poisson process and makes  a Mat\'ern modification, this  gives a point process which can be further used to construct random inclusions satisfying  Assumption \ref{kirill100}. However, the limit spectrum in this case would coincide with  $\R^+_0$, due to the existence of arbitrarily large areas inclusion-free regions. (More precisely, in this case  $\beta_{\infty}(\lambda) \geq \lambda$ for all $\lambda\geq 0$.) 
\end{remark} 

\subsubsection{Periodic inclusions randomly shifted and rotated within their cells}
Let $Y_1\subset [0,1)^d$ be a reference inclusion,  as described above. In each cell of the lattice $\Z^d$ we put a randomly rotated and randomly positioned (within the cell) copy of $Y_1$. We do this in such a way that the minimal distance between the inclusions is positive. It is not difficult to describe the corresponding probability space explicitly similarly to how it was done in Section  \ref{s:5.6.1}. Clearly, in this case $\b = \b_\infty,$ and thus $\mathcal G = \lim \Sp(\AA^\e) = \Sp (\AA^\hom)$.

 \section{Relevant and irrelevant limiting spectrum}\label{s:relevant}

In this section we characterise the limiting spectrum of $\AA^\e$  in the gaps of $\Sp(\AA^\hom)$. As we have seen in the previous section, in general the limit set $\mathcal G$ of $\Sp(\AA^\e)$ is larger than $\Sp(\AA^\hom)$. On the other hand, in the bounded domain setting we have the Hausdorff convergence of the spectra, i.e. the gaps of $\Sp(\AA^\hom)$ are free from the spectrum of $\AA^\e$ in the limit. These facts indicate that the elements of the Weyl sequences for $\AA^\e$ corresponding to $\l \notin \Sp(\AA^\hom)$ are supported further and further away from the origin (as $\e\to 0$) and could not be contained in a bounded domain. Another observation that can be drawn from Section \ref{s:conv of spectrum} is that the elements of such Weyl sequences are concentrated in the areas of space with non-typical distribution of inclusions. Both observations may be interpreted in the following way: in  applications, where one deals with finite distances (finite-size medium),  manufactured composites with relatively uniform distribution of inclusions, or only the bulk waves (characterised by $\beta(\lambda)$ in the homogenisation limit) matter, the aforementioned part of the spectrum may not be relevant.
In this lies our motivation to distinguish the two  types of the spectrum and refer to them as  relevant and irrelevant. However, it does not seem feasible to precisely demarcate the border between these two parts for finite $\e$: indeed, the notion of Weyl sequences ``supported further and further away from the origin'' is not easily quantifiable. Instead, we provide such characterisation in the limit as $\e\to 0$. In Definition \ref{d:srel} below we introduce the notions of {\it relevant and irrelevant limiting spectra}. Remarkably, but not surprisingly given the above discussion, ${\mathcal R}\mbox{-}\lim \Sp(\AA^\e)$  coincides  with $\Sp(\AA^\hom)$. This constitutes the main result of this section, formulated in Theorem \ref{th:9.2}.
 A number of  auxiliary constructions and results required in the proof of the theorem \ref{th:9.2} are presented in  Appendix  \ref{ss:9.1}.

Recall that the notation $E^\e_{(-\infty,\l]}$ is used for the spectral projections of the operators $\mathcal{A}^{\varepsilon}$. 
Obviously, one has
$$ \lambda \in \textrm{Sp} (\mathcal{A}^{\varepsilon}) \Leftrightarrow \forall \delta>0, E^\e_{[\lambda-\delta,\lambda+\delta]} \neq 0$$ 
and
\begin{equation*} 
	\lambda \in \lim_{\varepsilon\to 0} \textrm{Sp} (\mathcal{A}^{\varepsilon}) \Leftrightarrow \forall \delta>0 \ \exists \varepsilon_0>0\ \forall \varepsilon< \varepsilon_0 \ 
	E^\e_{[\lambda-\delta,\lambda+\delta]} \neq 0. 
\end{equation*} 

By $P_{\square^L}$ we denote the operator that to each $u \in L^2(\R^d)$ assigns its restriction  to the cube $\square^L$ (extended by zero outside of $\square^L$ whenever necessary).

\begin{definition}\label{d:srel}
We say that $\l$ belongs to the {\bf relevant limiting spectrum} of $\AA^\e$ if
\begin{equation}\label{111}
	\limsup_{\delta \to 0} \limsup_{L\to \infty} \limsup_{\varepsilon \to 0} \|P_{\square^L} E^\e_{[\lambda-\delta,\lambda+\delta]}\|_{L^2 \to L^2}>0
\end{equation}
 and denote the set of such values of $\l$ by
$${\mathcal R}\mbox{-}\lim \Sp(\AA^\e).$$

	We define the {\bf irrelevant limiting spectrum} as 
$$ {\mathcal Irr}\mbox{-}\lim \textrm{Sp}(\mathcal{A}^\varepsilon): = \lim \Sp(\AA^\e) \setminus {\mathcal R}\mbox{-}\lim \Sp(\AA^\e).$$
\end{definition}

\begin{remark}\label{r6.2}${}$	
	\begin{enumerate}
		\item The definition of the relevant/irrelevant limiting spectra can be generalised and applied in other settings, not necessarily stochastic.
		\item In the stochastic case the  relevant and irrelevant limiting spectra are deterministic almost surely due to translation invariance. 
		\item Clearly, ${\mathcal R}\mbox{-}\lim \Sp(\AA^\e)\subset \lim \Sp(\AA^\e)$.
		\item Note that the sequence $\|P_{\square^L} E^\e_{[\lambda-\delta,\lambda+\delta]}\|_{L^2 \to L^2}>0$ is increasing in $L$ and $\delta$. Therefore, we can replace $\limsup_{\delta \to 0}$ and  $\limsup_{L\to \infty}$  in \eqref{111} with $\lim_{\delta \to 0}$ and  $\lim_{L\to \infty}$, respectively. From this it is easy to see that the relevant limiting spectrum ${\mathcal R}\mbox{-}\lim \Sp(\AA^\e)$ is a closed set.
	\end{enumerate}
\end{remark}

\begin{remark}\label{r6.3}
It is not difficult to see that $\l\in {\mathcal R}\mbox{-}\lim \Sp(\AA^\e)$ if and only if	there exists a family of functions  $\psi_L^\varepsilon\in{\rm Dom}({\mathcal A}^\varepsilon)$, $\|  \psi^\e_L\|_{L^2(\R^d)}=1$, $L\in \N$, $\e>0$,   satisfying
	\begin{equation}\label{154}
		\begin{aligned}
			\lim_{L \to \infty} \limsup_{\e  \to 0} {\|(\AA^\e-\l)  \psi^\e_L\|_{L^2(\R^d)}}=0,
		\end{aligned}
	\end{equation}
	and 
	\begin{equation}\label{155}
		\begin{aligned}
			\limsup_{L \to \infty} \liminf_{\e \to 0}{\| \psi^\e_L\|_{L^2(\square^{L})}}> 0.
		\end{aligned}
	\end{equation}
	We will use this observation as an equivalent definition of  relevant limiting spectrum in order to prove the  main result of the section.
\end{remark}

We next discuss the relation of the relevant and irrelevant limiting spectra to the spectrum of the limit operator.  Consider a family of self-adjoint operators $T^\e$ converging to $T$ in the sense of resolvent convergence in variable spaces (such as in Theorem \ref{misha10}), cf. \cite{Past}. Assume that Definition \ref{d:srel} makes sense for $T^\e$, i.e. the projection $P_{\square^L}$ is well defined. One may naturally expect that the inclusion $	{\mathcal R}\mbox{-}\lim \Sp(T^\e) \supset \Sp(T)$ takes place. Indeed,  let $u$, $\|u\|=1$,  be an approximate eigenfunction of the limit operator, i.e. $\|(T-\l)u\|\leq \delta$, where $\delta$ can be made arbitrary small. Then if we can guarantee the existence of a sequence of approximate eigenfunctions  $u^\e$ of  $T^\e$ (for the same $\l$) converging to $u$, then we automatically get that $\l$ is in the relevant limiting spectrum of $T^\e$.  The converse statement, i.e. that ${\mathcal R}\mbox{-}\lim \Sp(T^\e) \subset \Sp(T)$ is not always true, and requires some additional compactness properties of the family $T^\e$, cf. Proposition \ref{propperhc} and Remark \ref{r:6.6} below. In the next subsection we show that in our setting the	 relevant limiting spectrum   coincides with the spectrum of the limit operator $\AA^\hom$.

In the first four examples we provided in Section \ref{cubesfilled},   there exists a non-empty set of $\l$ such that  $\b_\infty(\l) \geq 0 > \beta(\l)$, implying that the limit spectrum is strictly larger than the spectrum of $\AA^\hom$. Together with Theorem \ref{th:9.2} below, this implies that the irrelevant limiting spectrum is non-empty. This (rather unsurprising) observation is in  stark contrast with the periodic case, as the following assertion suggests.

\begin{proposition} \label{propperhc} 
	Let $Y_0 \subset \square$ be a reference soft inclusion and $Y_1:=\square \backslash Y_0$. Denote by $Y_0^{\#}$ and $Y_1^{\#}$ the sets obtained by  extending  $Y_0$ and $Y_1$ periodically to the whole of $\R^d$. Consider the self-adjoint operator in $L^2(\R^d)$  defined by the differential expression
	$$ \mathcal{A}^{\eps}_{\#}= -\nabla \cdot A_1 {\mathbf 1}_{\eps Y_1^{\#}}  \nabla-\varepsilon^2 \nabla \cdot A_0 {\mathbf 1}_{\eps Y_0^{\#}}  \nabla, 	$$
	where  $A_0, A_1 \in L^{\infty}(\R^d;\mathbf{R}^{d \times d} )$ are periodic uniformly positive matrix valued functions.
	 Then   
	\begin{equation*} 
		\mathcal R\mbox{-}\lim  \Sp (\mathcal{A}^{\varepsilon}_{\#})=\lim  \Sp (\mathcal{A}^{\varepsilon}_{\#}). 
		\end{equation*} 
\end{proposition} 
\begin{proof} To prove the result, one can follow the argument of Theorem \ref{th4.1}, leading to the formula \eqref{47b}, and then use the obtained family of approximate eigenfunctions to construct a sequence as in Remark \ref{r6.3}. We, however, will use a more direct approach  utilising directly the periodicity. Consider a sequence $\l^\e \in \Sp(\AA^\e)$ converging to some $\l$. As is well know from the Floquet-Bloch  theory, for every $\l^\e$ there exists a quasi-periodic function $u^\e \in H^1_{\rm loc}(\R^d)$ solving the equation 
	\begin{equation}\label{115c}
		\AA^\e_{\#} u^\e - \l^\e u^\e = 0
	\end{equation}
in the distributional sense. Let us fix some (large) $L>0$. We normalise $u^\e$ by requiring that 
\begin{equation}\label{116a}
	\|u^\e\|_{L^2(\square^L)} = 1.
\end{equation}
  One also has  (again, by the Floquet-Bloch theory) the energy bound
\begin{equation}\label{117}
	\left\|\left(A_1 {\mathbf 1}_{\eps Y_1^{\#}}  + \e^2 A_0 {\mathbf 1}_{\eps Y_0^{\#}}\right)^{1/2} \nabla u^\e\right \|_{L^2(\square^L)} \leq C,
\end{equation}                           
where the constant depends only on $\l$.
	
Let us test the equation \eqref{115c} with $\eta_L v$, where $v\in H^1(\R^d)$ and $\eta_L$ is a standard cut-off functions satisfying  $\eta_L \in C_0^\infty(\square^{2L})$, $0\leq\eta_L\leq 1$, $\eta_L\vert_{\square^{L}} = 1$, $|\nabla \eta_{L}|\leq C/L$. Denoting by $a^\e_{\#}(\cdot,\cdot)$ the bilinear associated with the operator $\AA^\e_{\#} + 1$, we have 
\begin{multline*}
0 =	a^\e_{\#}(u^\e,\eta_{L}  v) -(\l^\e+1) (u^\e,\eta_{L}  v) = 	a^\e_{\#}(\eta_{L}  u^\e, v) -(\l^\e+1) (\eta_{L}  u^\e, v) 
	\\
	+ \int_{\square^{2L}} \left(A_1 {\mathbf 1}_{\eps Y_1^{\#}}  +  \e^2  A_0 {\mathbf 1}_{\eps Y_0^{\#}}\right) (v \nabla u^\e - u^\e\nabla v) \cdot \nabla \eta_{L} .
\end{multline*}
Applying  \eqref{116a} and \eqref{117} (taking into account quasi-periodicity of $u^\e$) to the last term in the above we arrive at
\begin{equation*}
|		a^\e_{\#}(\eta_{L}  u^\e, v) -(\l^\e+1) (\eta_{L}  u^\e, v) |\leq \frac{C}{L} \sqrt{ a^\e_{\#}(v, v)}.
\end{equation*}
Applying Lemma \ref{l:E.1}, we infer the existence of a family of functions $\psi^\e_L$ (namely,  $\psi^\e_L : = (\l^\e+1)(\AA^\e_{\#} + 1)^{-1} \eta_{L}  u^\e $) satisfying the conditions in Remark \ref{r6.3}. Thus, we conclude that $\l\in \mathcal R\mbox{-}\lim  \Sp (\mathcal{A}^{\varepsilon}_{\#})$.
 
\end{proof} 	
\begin{remark}\label{r:6.6}
	Note that in the above theorem we did not assume that the soft component $Y_0^{\#}$ is a collection of disconnected inclusions. In the case of connected $Y_0^{\#}$, or $Y_0^{\#}$ representing a collection of infinite fibres, the limit spectrum of 	$\AA^\e_{\#} $ is larger that the spectrum of the corresponding homogenised operator $\AA^\hom_{\#}$. The difference  $\lim \Sp(\AA^\e_{\#}) \setminus \Sp(\AA^\hom_{\#})$ is attributed to all possible quasi-periodic modes supported on   $Y_0^{\#}$, see e.g. \cite{Cooper2018}.
\end{remark}

\subsection{Relevant limiting spectrum}

\begin{theorem}\label{th:9.2}
	The	 relevant limiting spectrum  of $\AA^\e$ coincides with the spectrum of $\AA^\hom$:
	\begin{equation*}
		{\mathcal R}\mbox{-}\lim \Sp(\AA^\e) = \Sp(\AA^\hom).
	\end{equation*}
\end{theorem}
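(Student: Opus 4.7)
The theorem is a double inclusion and I would prove each direction separately, with the mass concentration condition \eqref{155} used quite differently in each.

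\smallskip
\noindent\textbf{Inclusion ${\mathcal SR}\mbox{-}\lim \Sp(\AA^\e) \subset \Sp(\AA^\hom)$.}
Argue by contradiction: assume $\lambda \notin \Sp(\AA^\hom)$ and a family $\{\psi^\e_L\}$ satisfies Definition \ref{d:srel}; normalise $\|\psi^\e_L\|_{L^2(\R^d)}=1$. By Proposition \ref{p:3.1}, $\lambda \notin \Sp(-\Delta_\OO)$, hence $d_\lambda>0$. Combining \eqref{112} and \eqref{155}, pick $L^*$ arbitrarily large such that $\delta^* := \limsup_{\e\to0}{\mathcal Err}(\e,L^*)$ is small while $\liminf_{\e\to0} \|\psi^\e_{L^*}\|_{L^2(\square^{L^*})} \geq c > 0$, and extract $\e_n\to 0$ giving $\|f^n\|_{L^2} \leq 2\delta^*$ with $f^n := (\AA^{\e_n}-\lambda)\psi^n$ and $\|\psi^n\|_{L^2(\square^{L^*})} \geq c/2$. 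The energy identity uniformly bounds $\|\chi_1^{\e_n}\nabla\psi^n\|_{L^2}$ and $\|\e_n \chi_0^{\e_n}\nabla\psi^n\|_{L^2}$. The harmonic extension of Theorem \ref{th:extension} then yields $\tilde\psi^n$ uniformly bounded in $H^1(\R^d)$, and Rellich compactness produces $\tilde\psi^n \to u_0$ strongly in $L^2_\loc(\R^d)$, weakly in $H^1(\R^d)$. Decomposing the soft-part correction $z^n := \psi^n - \tilde\psi^n$ exactly as in the proof of Theorem \ref{th4.1} (via Lemma \ref{solta10} and the resolvent bound of Lemma \ref{l:res}) shows $z^n \xrightharpoonup{2} u_1(x,\omega) = \lambda u_0(x)b(\omega,\lambda)\chi_\OO(\omega)$. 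Passing to the two-scale limit in the weak form of $(\AA^{\e_n}-\lambda)\psi^n = f^n$ and inserting $u_1 = \lambda u_0 b$ into \eqref{ivan100} gives
\begin{equation*}
-\nabla \cdot A_1^\hom \nabla u_0 - \beta(\lambda) u_0 = \tilde f_0, \qquad \|\tilde f_0\|_{L^2(\R^d)} \leq C\delta^*.
\end{equation*}
By Theorem \ref{th:3.3}, the assumption $\lambda \notin \Sp(\AA^\hom)$ forces $\beta(\lambda)$ into the resolvent set of the constant-coefficient operator $-\nabla \cdot A_1^\hom \nabla$ on $\R^d$, so $\|u_0\|_{L^2(\R^d)} \leq C' \delta^*$. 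The strong $L^2_\loc$ convergence of $\tilde\psi^n$ together with the explicit form of $z^n$ then gives $\|\psi^n\|_{L^2(\square^{L^*})} \leq C'' \delta^*(1+o(1))$, which contradicts $\|\psi^n\|_{L^2(\square^{L^*})}\geq c/2$ once $L^*$ is taken large enough that $\delta^* < c/(2C'')$, an admissible choice by \eqref{112} and the arbitrariness in \eqref{155}.

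\smallskip
\noindent\textbf{Inclusion $\Sp(\AA^\hom) \subset {\mathcal SR}\mbox{-}\lim \Sp(\AA^\e)$.}
Fix $\lambda \in \Sp(\AA^\hom)$ and construct quasimodes in each of the two cases afforded by Theorem \ref{th:3.3}. If $\lambda \in \Sp(-\Delta_\OO)$, then by \eqref{20a} and a measurability-plus-ergodicity argument in the spirit of Lemma \ref{l:4.11}, for almost every $\omega$ and every $\delta>0$ there exists an inclusion $\OO_\omega^{k_0} \subset B_{R_\delta(\omega)}$ carrying a Dirichlet-Laplace eigenvalue $\nu$ with $|\nu-\lambda|<\delta$; rescaling its eigenfunction $\phi$ as $\psi^\e_L(x) = \e^{-d/2}\phi(x/\e)$ yields a quasimode supported in $\e B_{R_\delta(\omega)} \subset \square^L$ for $\e$ small, with ${\mathcal Err}(\e,L) = |\nu-\lambda|$, so \eqref{155} holds trivially with ratio $1$. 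If $\beta(\lambda)\geq 0$, pick $k\in \R^d$ with $A_1^\hom k \cdot k = \beta(\lambda)$, take $u(x)={\mathcal Re}(e^{ik\cdot x})$, cut off by $\eta_L \in C_0^\infty(\square^L)$ equal to $1$ on $\square^{L/2}$ with $|\nabla\eta_L|\leq C/L$, and normalise $u_L := \eta_L u/\|\eta_L u\|_{L^2}$. Form the stochastic counterpart of the periodic construction in the proof of Theorem \ref{th4.4}: set $u_L^\e := (1 + \lambda b^\e) u_L$ with $b^\e(x) = b(T_{x/\e}\omega,\lambda)$, define the corrected field $u_{LC}^\e := u_L^\e + \e(\partial_j u_L) N_j^\e$ using the stochastic homogenisation correctors $N_j^\e$ introduced in Appendix~\ref{ss:9.1}, and take $\psi^\e_L$ to be the solution of $(\AA^\e + 1)\psi^\e_L = (\lambda+1) u_L^\e$. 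Running the very same test-function computation as after \eqref{80b} in the proof of Theorem \ref{th4.4} -- replacing the periodic corrector of \eqref{88} by $N_j^\e$ and the periodic flux corrector $G^j$ of \eqref{106} by its stochastic analogue $G_j^\e$ of Corollary \ref{c12.3} -- and using Proposition \ref{sasha100} together with the Ergodic Theorem in place of Lemma \ref{l6.19} to replace local averages of $\lambda + \lambda^2 b^\e$ by $\beta(\lambda)$, one obtains the error bound $\|(\AA^\e - \lambda)\psi^\e_L\|_{L^2}/\|\psi^\e_L\|_{L^2} \leq \widehat{\mathcal R}(\e,L,\lambda)$ with $\widehat{\mathcal R}(\e,L,\lambda)\to 0$ as $\e\to 0$ and $L\to\infty$, giving \eqref{112}. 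Since $u_L$ is supported in $\square^L$ and the corrector contributions are negligible off $\square^L$ (being built from $b^\e$, $N_j^\e$ with $L^2_\loc$ norms controlled by the Ergodic Theorem), the mass of $\psi^\e_L$ remains essentially inside $\square^L$ for each fixed $L$ as $\e\to 0$, which yields \eqref{155}.

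\smallskip
\noindent\textbf{Main obstacle.}
The hard step is the quasimode construction in the case $\beta(\lambda)\geq 0$. Unlike the periodic flux corrector $G^j$ of \eqref{106}, whose $L^2$-norm on a single period cell is controlled directly by \eqref{109}, the stochastic flux corrector $G_j^\e$ must be constructed via the extension property for potential vector fields on the probability space (Proposition \ref{p:extensionm} and Lemma \ref{l7.6}) and is not pointwise bounded. Its control on cubes of size $L$ is therefore only qualitative, of $o(1)$ type as $\e\to 0$, and is the source of the term $\widetilde{\mathcal R}(\e,1/L,\lambda)$ appearing alongside $\widehat{\mathcal R}(\e,L,\lambda)$ in \eqref{148} and \eqref{176}. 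Reconciling these two rates -- so that both error and ``spillover'' stay small while $L\to\infty$ -- is the technical core, and the necessary auxiliary statements are collected in Appendix \ref{ss:9.1}.
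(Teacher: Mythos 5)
Your first inclusion and the case $\b(\l)\geq 0$ of the second are sound and follow the paper's route in substance: the former is the contrapositive of the paper's direct argument (stochastic two-scale compactness, identification of the limit problem via Theorem \ref{misha10}, non-vanishing of the limit through \eqref{155} and the strong $L^2_{\loc}$ convergence of the harmonic extensions), and the latter reproduces the paper's construction with the stochastic correctors $N_j^\e$, the flux correctors $G_j^\e$ and the field $B^\e$ handling $\langle b\rangle-b^\e$.

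The genuine gap is in the case $\l\in\Sp(-\Delta_\OO)$. The claim that $\psi^\e_L(x)=\e^{-d/2}\phi(x/\e)$, extended by zero, is a quasimode with ${\mathcal Err}(\e,L)=|\nu-\l|$ fails on two counts. First, this function is not in ${\rm Dom}(\AA^\e)$: the conormal derivative of a nontrivial Dirichlet eigenfunction does not vanish identically on $\partial\OO^{k_0}_\o$, so $\nabla\cdot(A^\e\nabla\psi^\e_L)$ contains a surface distribution on $\partial(\e\OO^{k_0}_\o)$ and is not in $L^2(\R^d)$. Second, and more seriously, once this is repaired by passing through the resolvent, $\widehat u^\e:=(\AA^\e+1)^{-1}(\nu+1)\psi^\e_L$, one finds $a^\e(\psi^\e_L-\widehat u^\e,v)=-\nu\int\psi^\e_L\,\widetilde v^\e+\e^2\int\nabla\psi^\e_L\cdot\nabla\widetilde v^\e$, so the quasimode error is governed by the coupling term $\nu\int\psi^\e_L\,\widetilde v^\e$, which must be shown to be $o(1)\sqrt{a^\e(v,v)}$ uniformly in $v$. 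This is exactly the term (the left-hand side of \eqref{172}) whose control is the whole point of the paper's construction in this case, and it is the sole reason for the random $\pm 1$ marking function of Proposition \ref{p:+-1} and the divergence potential $U^\e_\delta$ of \eqref{U_d}: the paper's quasimode occupies all inclusions inside $\square$, a set of measure $O(1)$, and only the weak convergence to zero enforced by the marking makes this term small. Your proposal neither identifies the term nor supplies any mechanism for it, and your ``main obstacle'' paragraph confirms the omission by misattributing $\widetilde{\mathcal R}(\e,1/L,\l)$ to the flux corrector $G^\e_j$, whereas in the paper $\widetilde{\mathcal R}$ arises entirely from this $\Sp(-\Delta_\OO)$ case (the term $\l\|U^\e_\delta\|_{L^2(\square^2)}$ in \eqref{176}), while $G^\e_j$ enters $\widehat{\mathcal R}$ in \eqref{148}.

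That said, your single-inclusion variant is salvageable, and by a route genuinely different from (and simpler than) the paper's: since the support has measure at most $\e^d$, one has for $d\geq 3$ the bound $|\int\psi^\e_L\,\widetilde v^\e|\leq\|\widetilde v^\e\|_{L^2(\e\OO^{k_0}_\o)}\leq|\e\OO^{k_0}_\o|^{1/d}\,\|\widetilde v^\e\|_{L^{2d/(d-2)}(\R^d)}\leq C\e\sqrt{a^\e(v,v)}$ by H\"older, the Sobolev inequality and \eqref{a_vv_6} (with the obvious modification for $d=2$), so no marking is needed and ${\mathcal Err}=O(\e^\theta)+|\nu-\l|$ while \eqref{155} holds with ratio tending to $1$. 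But this estimate, or something equivalent, has to be stated and proved; as written, the step is missing and the asserted error $|\nu-\l|$ is false.
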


In order to illustrate the basic idea behind Definition  \ref{d:srel}, we compare it to the argument presented in Theorem \ref{th4.1} (assuming that $\l\notin \Sp(-\Delta_\OO)$) via Remark \ref{r6.3}. The bound \eqref{50} is sufficient for constructing from the restrictions $u^\e|_{\square_{\xi^\e}^L}$  a family of approximate eigenfunctions satisfying the condition \eqref{154}, by using cut-off functions and a resolvent argument (cf. Lemma \eqref{l:E.1}). Most of the energy of these approximate eigenfunctions will be localised in the vicinity of the cubes $\square_{\xi^\e}^L$, whose location in space is random (i.e. depends on $\o$ as well as on $\e$), hence the corresponding local spectral averages $\ell(\e^{-1}\xi^\e, \e^{-1} L, \l,\o)$ (cf. Remark \ref{r:spec_av})) will be controlled from above only by $\beta_\infty(\l)$. (Note that here the  local spectral averages are calculated on the rescaled cubes $\e^{-1}\square_{\xi^\e}^L$.)   However, if we assume that for every $L\in\N$ the cubes are centred at the origin,  i.e. $\xi^\e = 0$, as one has in Definition  \ref{d:srel}, then by the ergodic theorem the local spectral averages will converge to $\beta(\l)$ (cf. Remark \ref{r:3.6}).
	
We continue with the proof of Theorem \ref{th:9.2}.
	
\subsubsection{Proof of the inclusion ${\mathcal R}\mbox{-}\lim \Sp(\AA^\e) \subset \Sp(\AA^\hom)$}\label{ss:5.1}
	Let $\l \in {\mathcal R}\mbox{-}\lim \Sp(\AA^\e)$, and let  $ \psi^\e_L$ be as in Remark \ref{r6.3}. We denote 
	\begin{equation}\label{113}
	\begin{aligned}
    f_L^\e:=	(\AA^\e-\l)  \psi^\e_L.
	\end{aligned}
	\end{equation}
	 For each $L$ the sequences $f_L^\e$ and $\psi^\e_L$ are bounded and, hence, have subsequences that converge weakly stochastically two-scale as $\e\to 0$ to some $f_{L}, \psi_L \in L^2(\R^d\times \Omega)$, respectively. In particular,
	\begin{equation}\label{115b}
	\begin{aligned}
	\| f_{L}\|_{L^2(\R^d\times \O)} \leq \limsup_{\e  \to 0} \| f_{L}^\e\|_{L^2(\R^d)}.
	\end{aligned}
	\end{equation}
	By Theorem \ref{misha10} applied to the equation $(\AA^\e+1)  \psi^\e_L = (\l+1)  \psi^\e_L + f_L^\e,$
	we infer that $ \psi_L = \psi_{L,0} + \psi_{L,1} \in V$   is the solution to
	\begin{equation}\label{116}
	(\AA^\hom - \l)  \psi_L = f_L.
	\end{equation}

	It remains to show that $\psi_L$ does not vanish as $L\to \infty$. Multiplying \eqref{113} by ${\psi^\e_L}$  and integrating by parts, we easily obtain the bound
	\begin{equation*}
	\|\e \nabla \psi^\e_L\|_{L^2(S_0^\e)} + \| \nabla \widetilde \psi^\e_L\|_{L^2(\R^d)} \leq C,
	\end{equation*}
	where $\widetilde \psi^\e_L$ is the extension of $\psi^\e_L|_{S_1^\e}$ onto $\R^d$ by Theorem \ref{th:extension}. Moreover, by the same theorem we have
	\begin{equation*}
 \|  \widetilde \psi^\e_L\|_{L^2(\R^d)} \leq C.
	\end{equation*}
It follows from the last two bounds we have that, up to a subsequence,  $\widetilde \psi^\e_L$ converges  strongly in $L^2(K)$, for any bounded domain $K$, to some function $\widetilde \psi_L\in W^{1,2}(\R^d)$ (which a priori could be zero). Then, by the properties of stochastic two-scale convergence, cf. Proposition \ref{properties}, the convergence  $ \chi_1^\e\,\widetilde \psi^\e_L \stwoscale {\mathbf 1}_{\O\setminus\OO} \,\widetilde \psi_L $ holds. On the other hand, $\chi_1^\e \,\widetilde \psi^\e_L \wtwoscale  {\mathbf 1}_{\O\setminus\OO}\, \psi_L  = {\mathbf 1}_{\O\setminus\OO}\,\psi_{L,0} $. Thus,
	\begin{equation}\label{115}
	\widetilde \psi^\e_L \to \psi_{L,0} \mbox{ in }L^2(K) \mbox{ for any bounded domain } K.
	\end{equation} 

If $\l \in \Sp(-\Delta_\OO)$ then $\l \in \Sp(\AA^\hom)$. Assume  that $\l \notin \Sp(-\Delta_\OO)$. Then, arguing in a similar way as in Section \ref{ss:4.3}, cf. \eqref{47a}, Lemma \ref{l:res} and \eqref{54}, we infer that for small enough $\e$
	\begin{equation*}
\| \psi^\e_L\|_{L^2(\square^L)} \leq C \|\widetilde \psi^\e_L\|_{L^2(\square^{L+1})} + C \|f_L^\e\|_{L^2(\square^{L+1})}.
\end{equation*}
Combining this with \eqref{155}, passing to the limit via \eqref{115}, and taking into account \eqref{154}, we infer that for large enough $L$, one has $	 \| \psi_{L,0}\|_{L^2(\square^{L+1})} \geq C >0.$
It follows that  $\| \psi_L\|_{L^2(\R^d)} \geq C > 0$ and, therefore, since $f_L$ in \eqref{116} vanishes as $L\to \infty$ (cf. \eqref{115b}), we conclude that  $\l\in \Sp(\AA^\hom)$.

\subsubsection{Proof of the inclusion ${\mathcal R}\mbox{-}\lim \Sp(\AA^\e) \supset \Sp(\AA^\hom)$}\label{ss:9.2}

	Starting from a standard prototype $u_L$  of a Weyl sequence for the homogenised operator $\AA^\hom$,  we will construct for each element of the sequence and each $\e$ an approximate solution $u_{LC}^\e$ to the spectral problem for the operator $\AA^\e$ while maintaining control of the error. In this we will follow, to a certain extent, a general scheme developed in \cite{KamSm1}.

\medskip
	  
\noindent\textbf{Case $\l \in  \Sp(\AA^\hom)\setminus\Sp(-\Delta_\OO)$.}

\noindent A part of the argument and the initial construction we present below are similar to those contained in the proof of Theorem \ref{th4.4}. Therefore, we will often refer to the relevant places in Section  \ref{ss:4.5} to fill in some detail and reuse a number of formulae therein so as not be repetitive.  At the same time, since we are no longer in the periodic setting of Section \ref{ss:4.5}, we have no asymptotic bounds on the homogenisation correctors $N^\e_j$ and the terms $G^\e_j$ and $B^\e$ carrying the information about the microscopic structure of the composite. Therefore, we will need to keep track of these quantities in the error bounds explicitly.   

By the assumption we have  $\b(\l)\geq 0$. Similarly to the proof of Theorem  \ref{th4.4}, we consider $u(x) := {\mathcal Re}(e^{ik\cdot x})$ with $k$ such that $A_1^{\rm hom}k \cdot k = \b(\l)$ and denote  
\begin{equation*}
u_L:= \frac{\eta_L u}{\|\eta_L u\|_{L^2(\R^d)}},
\end{equation*} 
where $	\eta_L(\cdot):= \eta(\cdot/L),$
with the  cut-off function  $\eta \in C_0^\infty(\square)$   satisfying $0\leq\eta\leq 1$, $\eta\vert_{\square^{1/2}} = 1$.
 Then $u_L$ satisfies 
\begin{equation}\label{70a}
\|(-\nabla \cdot A_1^{\textrm{\rm hom}}\nabla  - \b(\l))  u_L\|_{L^2(\R^d)} \leq \frac{C}{L}
\end{equation}
and the bound \eqref{u_L_6}. We define $u_L^\e$  by setting
	\begin{equation*}
	u_L^\e : = (1+\l b^\e)u_L.
\end{equation*} 

Further, let  $p_j\in \mathcal X,\,j=1,\ldots,d$, be the solution to the problem \eqref{10} with $\xi=e_j$. By Lemma \ref{l7.6} we can assume that $p_j\in \vpot$. Then for any cube $\square^L$ and any $\e$ there exists a function $N_j^\e \in W^{1,2}(\square^L)$ such that 
\begin{equation*}
	\nabla N_j^\e = p_j^\e  \mbox{ in } \square^L, \quad \int_{\square^L} N_j^\e  = 0,
\end{equation*}
where $p_j^\e$ in the $\e$-realisation of $p_j$. By the ergodic theorem  
\begin{equation}\label{87}
	\begin{aligned}
			\lim_{\e\to 0} \| N_j^\e\|_{L^2( \square^L)}=0, \qquad \qquad
		\lim_{\e\to 0} \|\nabla N_j^\e\|_{L^2(\square^L)} = L^{d/2} \|p_j\|_{L^2(\Omega)}.
	\end{aligned}
\end{equation}
 We define $u_{LC}^\e: = u_L^\e + \partial_j u_L N_j^\e.$
In what follows we will often use the bound \eqref{u_L_6} without referring to it every time. Similarly to  \eqref{136}, we  estimate the corrector:
\begin{equation}\label{136a}
\|u_{LC}^\e - u_L^\e\|_{L^2(\R^d)} = \|\partial_j u_L N_j^\e\|_{L^2(\R^d)} \leq \frac{C}{L^{d/2}} \sum_j \| N_j^\e\big\|_{L^2(\square^L)}.
\end{equation}

As in the proof of Theorem \ref{th4.4}, we substitute $u_{LC}^\e$ and a test function $v\in W^{1,2}(\R^d)$ into the bilinear form associated with the operator $\AA^\e + 1$, namely
\begin{equation}\label{a_e_7}
	a^\e(u_{LC}^\e,v)  = \int\limits_{S_1^\e} A_1 \nabla u_{LC}^\e\cdot {\nabla v} +  \int\limits_{S_0^\e} \e^2 \nabla u_{LC}^\e\cdot {\nabla v} + \int\limits_{\R^d} u_{LC}^\e  v,
\end{equation}
 and begin by analysing the first term on the right-hand side of \eqref{a_e_7} (recall the decomposition \eqref{89c} and the bounds \eqref{135_6}, \eqref{a_vv_6}): 
\begin{equation}\label{70}
\int\limits_{S_1^\e} A_1 \nabla u_{LC}^\e\cdot  {\nabla v}  = \int\limits_{\R^d} \big( A_1^{\rm hom} \nabla u_{L} + [\chi_1^\e A_1 (e_j + \nabla N_j^\e) - A_1^\hom e_j] \partial_j u_L + \chi_1^\e N_j^\e A_1\nabla\partial_j u_L\big) \cdot  {\nabla \widetilde v^\e}. 
\end{equation}
The following bound is straightforward:
\begin{equation}\label{71}
\left|\int\limits_{\R^d}  \chi_1^\e N_j^\e A_1\nabla\partial_j u_L\cdot  {\nabla \widetilde v^\e}\right| \leq \frac{C}{L^{d/2}}  \|\nabla \widetilde v^\e\|_{L^2(\R^d)} \sum_j \| N_j^\e\big\|_{L^2(\square^L)}.
\end{equation}
In order to estimate the second term on the right-hand side of \eqref{70}, we employ a similar approach as in \eqref{96}--\eqref{72c}.  Denote by
\begin{equation*}
	g_j:={\mathbf 1}_{\O\setminus \OO} A_1(e_j+p_j) - A_1^\hom e_j\in \vsol, \quad j=1,\ldots,d,
\end{equation*}
the difference of fluxes, and consider its $\e$-realisations 
\begin{equation}\label{128a}
	g_j^\e(x):= g_j(T_{x/\e}\o)= \chi_1^\e A_1 (e_j + \nabla N_j^\e) - A_1^\hom e_j.
\end{equation} 
By  Corollary \ref{c12.3}, there exist skew-symmetric tensor fields $G^\e_j\in W^{1,2}(\square^L; \R^{d\times d})$ (often referred to as  flux correctors)  such that $g^\e_j = \nabla\cdot G_j^\e$ and 
\begin{equation}\label{130}
	\lim_{\e\to 0} \|G^\e_j\|_{L^2(\square^L)} = 0.
\end{equation} 
Proceeding as in \eqref{99}, we obtain
\begin{equation}\label{72}
\left|\int\limits_{\R^d}  [\chi_1^\e A_1 (e_j + \nabla N_j^\e) - A_1^\hom e_j] \partial_j u_L \cdot  {\nabla \widetilde v^\e} \right| \leq\frac{C}{L^{d/2}}\|\nabla \widetilde v^\e\|_{L^2(\R^d)}  \sum_j\|G_j^\e\|_{L^2(\square^L)}.
\end{equation}

Now we address the second integral on the right-hand side of \eqref{a_e_7}. Analogously to \eqref{73_6} and by utilising the identity \eqref{75_6}, we have
\begin{multline}\label{73}
	\e^2 \int\limits_{S_0^\e} \nabla u_{LC}^\e\cdot  {\nabla v}
	= \int\limits_{S_0^\e} \l u_L (\l b^\e + 1) {  v^\e_0}
	\\
	+ \e^2  \int\limits_{S_0^\e} \big(  -\l \nabla u_L\cdot \nabla b^\e \, {  v^\e_0} 
+ \l u_L \nabla b^\e\cdot  {\nabla \widetilde v^\e} +  \big[\nabla  u_L(1+\l b^\e) + \nabla( \partial_ju_L N_j^\e )\big]\cdot  {\nabla v}\big).
\end{multline}
We estimate the second integral on the right-hand side of (\ref{73}) by employing Lemma \ref{l9.9} and the bounds \eqref{135_6}, as follows:  
\begin{equation}\label{132}
	\begin{aligned}
&\Bigg|\e^2  \int\limits_{S_0^\e} \big(  -\l \nabla u_L\cdot \nabla b^\e \, {  v^\e_0} 
			+ \l u_L \nabla b^\e\cdot  {\nabla \widetilde v^\e} +  \big[\nabla  u_L(1+\l b^\e) + \nabla( \partial_ju_L N_j^\e )\big]\cdot  {\nabla v}\big)\Bigg|
		\\
&\quad		\leq C\e \bigg( \|\nabla \widetilde v^\e\|_{L^2(\R^d)} + \|\e \nabla v\|_{L^2(\R^d)} \bigg(1 + \frac{1}{L^{d/2}} \sum_j  \left(  \|N_j^\e\|_{L^2( \square^L)} + \|\nabla N_j^\e\|_{L^2( \square^L)} \right)\bigg) \bigg).
	\end{aligned}
\end{equation}

Combining (\ref{70}), (\ref{71}), (\ref{72})--(\ref{132}), and (\ref{a_vv_6}), we can rewrite the left-hand side of \eqref{a_e_7} as
\begin{equation*}
\begin{aligned}
a^\e(u_{LC}^\e,v)  = \int\limits_{\R^d}  A_1^{\rm hom} \nabla u_{L} \cdot  {\nabla \widetilde v^\e} + \int\limits_{S_0^\e} \l u_L (\l b^\e + 1) {  v^\e_0} + \int\limits_{\R^d} u_{LC}^\e   v + \mathcal R(\e,L,\lambda)\sqrt{a^\e(v,v)} ,
\end{aligned}
\end{equation*}
where the remainder term $\mathcal R(\e,L,\lambda)$ satisfies  (cf. \eqref{87}, \eqref{130})
\begin{equation}\label{156}
\begin{aligned}
\lim_{\e\to 0}\mathcal R(\e,L,\lambda)= 0 \qquad \forall L,\l.
\end{aligned}
\end{equation}

Proceeding as in the proof of Theorem \ref{th4.4}, cf. \eqref{91}, and recalling the definition of $\beta(\l)$, see \eqref{d:beta1},  yields
\begin{multline}\label{76}
a^\e(u_{LC}^\e ,v)  - (\l+1) (u_{LC}^\e ,v) 
\\
= \int\limits_{\R^d}  \left((-\nabla \cdot A_1^{\textrm{\rm hom}}\nabla  - \b(\l))  u_L  { \widetilde v^\e} +  \l^2 u_L  (\langle b \rangle - b^\e)  {   \widetilde v^\e} -\l \,\partial_j u_L N_j^\e   v\right) 
+  \mathcal R(\e,L,\lambda)\sqrt{a^\e(v,v)}.
\end{multline}
It remains to estimate the second term under the integral on the right-hand side of \eqref{76}. Since $\langle b \rangle - b^\e$ converges weakly to zero in $L^2_\loc(\R^d)$, by Lemma \ref{l13.3} there exists a sequence of vector fields $B^\e\in W^{1,2}(\square^L;\R^d)$ such that $ \nabla\cdot B^\e = \langle b \rangle - b^\e$ and 
\begin{equation}\label{137}
	\lim_{\e\to 0}\|B^\e\|_{L^2(\square^L)} \to 0.
\end{equation} 
Integrating by parts, we obtain
\begin{equation}\label{80}
\begin{aligned}
\bigg| \int\limits_{\R^d}  u_L  (\langle b \rangle - b^\e)  {   \widetilde v^\e}  \bigg| = \bigg|\int\limits_{\R^d}  \nabla(u_L   {   \widetilde v^\e})  \cdot B^\e\bigg| \leq \frac{C}{L^{d/2}} \left( \| \widetilde v^\e\|_{L^2(\R^d)}+\|\nabla \widetilde v^\e\|_{L^2(\R^d)}\right) \| B^\e\|_{L^2(\square^L)}.
\end{aligned}
\end{equation}
Combining  \eqref{70a}, \eqref{136a},  \eqref{156},  \eqref{137}, and \eqref{80} yields
\begin{equation*}
\begin{aligned}
|a^\e(u_{LC}^\e ,v)  - (\l+1) (u_{LC}^\e ,v) |   
\leq  \widehat{\mathcal R}(\e,L,\l)  \sqrt{a^\e(v,v)},
\end{aligned}
\end{equation*}
where (for fixed $L$ and $\lambda$)
\begin{equation*}
\begin{aligned}
\limsup_{\e\to 0} \widehat{\mathcal R}(\e,L,\l) \leq {C(\l)}/{L}.
\end{aligned}
\end{equation*}
Now we invoke Lemma \ref{l:E.1}: for all large enough $L$ and small enough $\e$ there exists  $\psi_L^\e \in \dom(\AA^\e)$ such that
\begin{equation*}
	\begin{aligned}
		\|\psi_{L}^\e -  u_{LC}^\e\|_{L^2(\R^d)} \leq   \widehat{\mathcal R}(\e,L,\l),
	\end{aligned}
\end{equation*}
\begin{equation*}
	\begin{aligned}
		\frac{\|(\AA^\e-\l) \psi_{L}^\e\|_{L^2(\R^d)}}{\| \psi_{L}^\e\|_{L^2(\R^d)}}\leq |\l+1| \frac{\widehat{\mathcal R}(\e,L,\l)}{1- \widehat{\mathcal R}(\e,L,\l)} .
	\end{aligned}
\end{equation*}
It follows that $\l \in {\mathcal R}\mbox{-}\lim \Sp(\AA^\e)$  as per Remark \ref{r6.3}.

\medskip

\noindent\textbf{Case $\l \in \Sp(-\Delta_\OO)$.}

\noindent The argument we utilise here is similar to the one we used above but is more straightforward technically. Since ${\mathcal R}\mbox{-}\lim \Sp(\AA^\e)$ is closed, cf. Remark \ref{r6.2} d., and due to the characterisation   \eqref{20a} of the spectrum of $-\Delta_\OO$, we can assume without loss of generality that $\l \in \Sp(-\Delta_{\OO_\o^k})$ for some $k$. Let $\varphi\in W^{1,2}_0(\OO_\o^k), \|\varphi\|_{L^2(\OO_\o^k)} =1$, be the corresponding eigenfunction, 
\begin{equation}\label{148c}
- \Delta_{\OO_\o^k}\varphi = \l \varphi.
\end{equation}
We define functions $\varphi^\e \in W^{1,2}_0(\e\OO_\o^k)$ by the formula
\begin{equation*}
	\varphi^\e(x) : =\e^{-d/2} \varphi(x/\e).
\end{equation*} 

Testing $\varphi^\e$ in the bilinear form $a^\e(\cdot,\cdot)$ with $v = \widetilde v^\e + v_0^\e \in W^{1,2}(\R^d)$ (recall the decomposition \eqref{89c}), and taking into account  \eqref{148c}, we have
\begin{multline}\label{143}
	a^\e(\varphi^\e, v) = \int_{\e\OO_\o^k}\e^2 \nabla \varphi^\e \nabla v_0^\e   + \int_{\e\OO_\o^k} \varphi^\e v_0^\e + \int_{\e\OO_\o^k}\e^2 \nabla \varphi^\e \nabla \widetilde v^\e   +  \int_{\e\OO_\o^k}\varphi^\e  \widetilde v^\e 
	\\
	= (\l+1)\int_{\e\OO_\o^k} \varphi^\e v_0^\e + \int_{\e\OO_\o^k}\e^2 \nabla \varphi^\e \nabla \widetilde v^\e   +  \int_{\e\OO_\o^k}\varphi^\e  \widetilde v^\e 
	\\
	= (\l+1)\int_{\e\OO_\o^k} \varphi^\e v + \int_{\e\OO_\o^k}\e^2 \nabla \varphi^\e \nabla \widetilde v^\e   - \l \int_{\e\OO_\o^k}\varphi^\e  \widetilde v^\e.
\end{multline}
Using the obvious identity $\|\e\nabla \varphi^\e\|_{L^2(\e\OO_\o^k)}^2 = \l$ we  estimate, cf. also \eqref{a_vv_6}, the second term in \eqref{143}:
\begin{equation*}
	\left| \int_{\e\OO_\o^k}\e^2 \nabla \varphi^\e \nabla \widetilde v^\e\right| \leq \e \|\e\nabla \varphi^\e\|_{L^2(\e\OO_\o^k)} \|\nabla \widetilde v^\e\|_{L^2(\e\OO_\o^k)} \leq C \e \sqrt{a^\e(v, v) }.
\end{equation*}
In order to estimate the last term in \eqref{143}, we use the Sobolev embedding for a suitable $p>2$:
\begin{multline}\label{145}
		\left|\int_{\e\OO_\o^k}\varphi^\e  \widetilde v^\e \right| \leq \| \varphi^\e\|_{L^2(\e\OO_\o^k)} \| \widetilde v^\e\|_{L^2(\e\OO_\o^k)} \leq \| \widetilde v^\e\|_{L^p(\e\OO_\o^k)} \left|\e\OO_\o^k\right|^{(p-2)/2p}  
		\\
		\leq C \e^{d(p-2)/2p}\| \widetilde v^\e\|_{W^{1,2}(\e\OO_\o^k)} \leq C \e^{d(p-2)/2p} \sqrt{a^\e(v, v) }.
\end{multline}

Combining \eqref{143}--\eqref{145} we obtain
\begin{equation*}
	|a^\e(\varphi^\e, v) - (\l+1) (\varphi^\e, v) | \leq C \e^{d(p-2)/2p} \sqrt{a^\e(v, v) }.
\end{equation*}
Arguing as at the end of the previous case, we conclude that $\l \in {\mathcal R}\mbox{-}\lim \Sp(\AA^\e)$.

\subsection{Irrelevant limiting spectrum and semi-group convergence}

The next theorem has direct implications for the parabolic and hyperbolic evolution problems, see Corollary \ref{cor: 6.6} below. It follows from  Definition \ref{d:srel} that $\l\in {\mathcal Irr}\mbox{-}\lim \Sp(\AA^\e)$ if and only if  $\l\in \lim \Sp(\AA^\e)$ and the quantity in \eqref{111} vanishes --- this property is local in the frequency domain. The property \eqref{equi13} below, on the other hand, is non-local in the frequency domain and cannot be obtained directly from the definition.

\begin{theorem} \label{ponmain} For any compact $K\subset{\R}\setminus{\rm Sp}({\mathcal A}^{\rm hom})$ one has almost surely
	\begin{equation} \label{equi13} 
		  \lim_{\eps \to 0}	 \|P_{\square^{L}} E^\e_{K}(\omega)\|_{L^2 \to L^2}=0 \quad  \forall L>0.
	\end{equation} 
\end{theorem} 	
\begin{proof} 
We assume that 
\begin{equation*}
	\limsup_{\eps \to 0}	  \|E^\e_{K}(\omega)\|_{L^2 \to L^2}>0, 
\end{equation*}
otherwise there is nothing to prove. Arguing by contradiction, suppose that \eqref{equi13} does not hold. Then 	there exists $L$ and a positive constant $M$ such that 
\begin{equation}\label{148b}
	\limsup_{\eps \to 0}	  \|P_{\square^{L}} E^\e_{K}(\omega)\|_{L^2 \to L^2} =M >0. 
\end{equation}

Without loss of generality, we assume that $K\subset (a,b] \subset [a,b] \subset (\Sp(\AA^\hom))^c$. Let us fix an sufficiently small positive value
\begin{equation*}
	\delta < \dist((a,b], \Sp(\AA^\hom)),
\end{equation*} 
and consider a partition $a_0=a<a_1<\dots<a_n=b$ of the interval $(a,b]$ with $\max_i |a_{i+1} - a_i|<\delta$. Then \eqref{148b} implies that (up to extracting a subsequence) there exist $i \in \{0,\ldots, n-1\}$ and a sequence $u^\e$ such that  $\|u^\e \|_{L^2(\R^d)} = 1$,    $E^\e_{(a_i, a_{i+1}]}(\omega) u^\e = u^\e$, i.e. for $\l\in(a_i, a_{i+1}]$ we have 
\begin{equation}\label{151}
\|f^\e \|_{L^2(\R^d)} < \delta,	\quad  f^\e:= (\AA^\e - \l) u^\e, 
\end{equation}
 and 
\begin{equation*}
\liminf_{\eps \to 0}	  \| u^\e\|_{L^2(\square^{L})}=	\liminf_{\eps \to 0}	  \|P_{\square^{L}} E^\e_{(a_i, a_{i+1}]}(\omega) u^\e\|_{L^2(\R^d)}\geq C(K) \delta^{1/2} M.
\end{equation*}
The last bound easily follows from the orthogonality of the spectral subspaces of $\AA^\e$ corresponding to the intervals $(a_i, a_{i+1}], i = 0,\ldots,n-1.$

By the standard energy estimate, the inequality \eqref{151} implies that 
\begin{equation*}
	\|\e \nabla u^\e\|_{L^2(S_0^\e)} + \| \nabla u^\e\|_{L^2(S_1^\e)} \leq C.
\end{equation*}
Consider the decomposition $u^\e = \widetilde u^\e + v^\e$, cf. \eqref{51}, where  $\widetilde u^\e$ is the harmonic extension of $u^\e|_{ S_1^\e}$ to the whole $\R^d$ according to Theorem \ref{th:extension} and $v^\e\in W^{1,2}_0( S_0^\e)$.  Note that  $\| \nabla \widetilde u^\e\|_{W^{1,2}(\R^d)} \leq C$. Arguing as in the proof of Theorem \ref{th4.1}, cf. \eqref{54}, we obtain
\begin{equation*}
	\|v^\e\|_{L^2(\square^L)} \leq \frac{\| \l \widetilde u^\e + f^\e\|_{L^2(\square^{L+1})}}{\dist \{\l,\Sp(-\Delta_{\OO})\}}.
\end{equation*}
Clearly, the latter implies that, for sufficiently small $\delta>0$,
\begin{equation*}
	\liminf_{\eps \to 0}	  \| \widetilde u^\e\|_{L^2(\square^{L+1})} \geq C(\l) \liminf_{\eps \to 0}	  \| u^\e\|_{L^2(\square^{L})}\geq C(K,\l) \delta^{1/2} M,
\end{equation*}
where the constant $C(\l)$ only depends on $\l$ and $\dist \{\l,\Sp(-\Delta_{\OO})\}$. We conclude that up to a subsequence $\widetilde u^\e$ converges weakly in  $W^{1,2}(\R^d)$ and strongly in $L^2(\square^{L+1})$ to some $u_0 \in W^{1,2}(\R^d)$ satisfying
\begin{equation}\label{157}
 \| u_0\|_{L^2(\R^d)}  \geq \| u_0\|_{L^2(\square^{L+1})} \geq C(K,\l) \delta^{1/2} M.
\end{equation}

Since $v^\e, f^\e$ are bounded in $L^2(\R^d)$ they converge up to a subsequence weakly stochastically two-scale (see Proposition \ref{properties}) to $v, f \in L^2(\R^d\times \Omega) $  respectively. Note that, since $v$ vanishes outside $\R^d \times\OO$, one has
\begin{equation}\label{158}
		\|u_0 + v\|_{L^2(\R^d\times \Omega)}^2  \geq 	\|u_0 \|_{L^2(\R^d\times \Omega\setminus \OO)}^2 \geq (1-P(\OO)){\| u_0\|_{L^2(\R^d)}^2} 
\end{equation}
  It remains to pass to the limit in the equation \eqref{151} via Theorem \ref{misha10}:
\begin{equation*}
	(\AA^\hom - \l) (u_0+v) = \mathcal P f,
\end{equation*}
where the right-hand side satisfies the bound  
\begin{equation*}
	\|\mathcal P f\|_{L^2(\R^d\times \Omega)}  < {\delta}.
\end{equation*}
Then \eqref{157}, \eqref{158} imply that 
\begin{equation*}
	\dist((a,b], \Sp(\AA^\hom)) \leq	\dist(\l, \Sp(\AA^\hom)) \leq \delta^{1/2} \frac{1}{C(K,\l) M (1-P(\OO))^{1/2}},
\end{equation*}
which is a contradiction, since we can choose $\delta$ to be arbitrary small.
\end{proof}

Next we will discuss the consequences of Theorem \ref{ponmain} on parabolic and hyperbolic evolution. The first claim gives the estimate on $L^2$-norm of the solution of parabolic problem for every time $t$, while the second one gives the estimate of $L^2$-norm of hyperbolic problem in arbitrary time $t$ with given initial velocity or initial position with additional regularity. Both of the claims imply that part of the initial condition supported in the irrelevant spectrum can be neglected on any finite domain. 
\begin{corollary} \label{cor: 6.6}
	Suppose that $U \subset \left(\Sp (\mathcal{A}^\hom)\right)^c$ is closed. Then for all $L, t>0,$ one has almost surely 
	\begin{enumerate} 
		\item 
		\begin{eqnarray*} 
 \limsup_{\eps \to 0}\left\|P_{\square^L}\int_{U}e^{-t\lambda} dE^\e_{(-\infty,\lambda]}(\omega)  \right\|_{L^2 \to L^2}&=& 0;
		\end{eqnarray*} 
		\item
		\begin{eqnarray*} 
		 \limsup_{\eps \to 0}\left\|P_{\square^L}\int_{U} \frac{\sin(t\sqrt{\lambda})}{\sqrt{\lambda}} dE^\e_{(-\infty,\lambda]}(\omega)  \right\|_{L^2 \to L^2}&=&0;
		\end{eqnarray*} 	
		\item If a sequence $(f^{\eps})_{\eps>0} \subset L^2(\R^d)$, $\|f^\e\|_{L^2(\R^d)}=1$, is such that 
		$$\lim_{\Lambda\to \infty} \sup_{\eps>0} \int_{\lambda>\Lambda}  \langle dE^\e_{(-\infty,\lambda]}(\omega)f^{\eps}, f^{\eps}\rangle =0, $$
		then 
		\begin{equation*} 
		\limsup_{\eps \to 0}\left\|P_{\square^L}\int_{U} \cos(t\sqrt{\lambda}) dE^\e_{(-\infty,\lambda]}(\omega)f^{\eps} \right\|_{L^2(\R^d)}
			=0.
		\end{equation*} 
	\end{enumerate} 	
\end{corollary} 	
\begin{proof} 
	The proof follows directly from Theorem \ref{ponmain} and from the fact that the functions  $e^{-t \lambda}$, ${\sin(t\sqrt{\lambda})}/{\sqrt{\lambda}} $, $\cos(t\sqrt{\lambda})$ are bounded on $[0,+\infty)$ and the first two vanish at infinity. 
\end{proof} 	

\appendix
\appendixpage

\section{Probability framework and  stochastic two-scale convergence}\label{ap:probability}
	
Let $U(x):  L^2(\Omega)\to  L^2(\Omega), x\in \R^d,$ be the unitary group defined by
\[
(U(x)f)(\o) =f(T_x \o),\quad f\in  L^2(\Omega).
\]
The unitarity follows from the measure preserving property of the dynamical system.  For each $j=1,\dots,d,$ we denote by  ${\mathcal D}_j$ the infinitesimal generator of the unitary group 
\begin{equation*}
	U(0,\dots,0, x_j, 0, \dots, 0), \quad j=1,\dots,d.
\end{equation*}
Its domain ${\rm dom}(\mathcal{D}_j)$ 
is a dense linear subset of $ L^2(\Omega)$ and consists of $f\in L^2(\Omega)$ for which the limit
\begin{equation*}
	\mathcal{D}_j f(\omega):=\lim_{x_j \to 0}  \frac{f(T_{(0, \dots, 0, x_j, 0, \dots, 0)}\omega)-f(\omega) }{x_j}
\end{equation*}
exists in $L^2(\Omega).$  Note that ${\rm i}\mathcal{D}_j,$ $j=1,\dots,d,$ are self-adjoint, pairwise commuting linear operators on  $L^2 (\Omega).$ We denote	$\nabla_{\omega}:=(\mathcal D_{1},\dots, \mathcal D_d).$
Furthermore, we define 
\begin{equation*}
	\begin{gathered}
		W^{1,2}(\Omega):=\bigcap_{j=1}^d{\rm dom}(\mathcal{D}_j),
		\\
		W^{k,2} (\Omega):=\bigl\{f \in L^2(\Omega):\mathcal D_1^{\alpha_1}\dots \mathcal D_d^{\alpha_d} f \in L^2(\Omega),\; \alpha_1+\cdots +\alpha_d=k\bigr\},\qquad
		\\[0.5em]
		W^{\infty,2} (\Omega):= \bigcap_{k \in \mathbf{N}} W^{k,2}(\Omega),
		\\
		\mathcal{C}^{\infty} (\Omega)= \set[\big] {f \in W^{\infty,2} (\Omega): \forall (\alpha_1,\dots, \alpha_d) \in \mathbf{N}_0^d \quad \mathcal D_1^{\alpha_1} \dots \mathcal D_d^{\alpha_d} f \in L^{\infty} (\Omega) }.
	\end{gathered}
\end{equation*}
It is known that $W^{\infty,2}(\Omega)$ is dense in $L^2(\Omega)$, the set $\mathcal{C}^{\infty} (\Omega)$ is dense in $L^p(\Omega)$ for all $p \in [1,\infty)$ as well as in $W^{k,2}(\Omega)$ for all $k$,  and $W^{1,2} (\Omega)$ is separable (using the fact that $\mathcal F$ is countably generated). 

The following subspace of $W^{1,2}(\Omega)$  plays an essential role in our analysis:
$$  W_{0}^{1,2}(\mathcal{O}):=\bigl\{v \in W^{1,2}(\Omega): 
v(T_x \omega)=0 \textrm{ on }  \R^d \backslash \mathcal{O}_{\omega}\ \ \forall\omega\in\O\bigr\}. $$
Note that as a consequence of the ergodic theorem (Theorem \ref{thmergodic}) one has
$$  W_0^{1,2}(\mathcal{O})=\bigl\{ v \in W^{1,2}(\Omega): {\mathbf 1}_{\mathcal{O}} v=v\bigr\},$$
i.e. $W_0^{1,2}(\mathcal{O})$ consists of $W^{1,2}$-functions that vanish on $\Omega \backslash \mathcal{O}.$ We also introduce the space 
$$ \mathcal{C}_0^{\infty} (\mathcal{O}):=\bigl\{v \in \mathcal{C}^{\infty} (\Omega): v= 0 \textrm{ on } \Omega \backslash \mathcal{O}\bigr\}.$$
It is not difficult to see that Assumption \ref{kirill100} implies that the assumptions of Lemmata 3.1 and 3.2 in \cite{ChChV} are satisfied, and hence the following statement holds.
\begin{lemma} \label{lemmagloria} The space $\mathcal{C}_0^{\infty} (\mathcal{O})$ is dense in  $L^2(\mathcal{O})$ and in $W_0^{1,2} (\mathcal{O})$.
\end{lemma}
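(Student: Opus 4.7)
The plan is to prove density by a two-step truncate-and-mollify procedure, where the truncation pushes the support strictly into $\mathcal{O}$ using the distance-to-boundary function, and the mollification is done along the action of the dynamical system $\{T_x\}_{x\in\R^d}$ via convolution with smooth compactly supported kernels on $\R^d$. The uniform minimal smoothness built into Assumption \ref{kirill100} (together with the uniform lower bound $\dist(\mathcal{O}_\omega^k,\R^d\setminus \mathcal{B}_\omega^k)\geq \rho$) is what makes both steps quantitative and independent of $\omega$.

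\emph{Step 1 (Mollification on $\Omega$).} For $v\in L^2(\Omega)$ and a standard nonnegative mollifier $\phi_\eta\in C_c^\infty(B_\eta)$ with $\int \phi_\eta=1$, define
\[
 (v\ast \phi_\eta)(\omega):=\int_{\R^d} v(T_y\omega)\,\phi_\eta(y)\,dy.
\]
By measurability of $(x,\omega)\mapsto T_x\omega$ and Fubini's theorem, $v\ast\phi_\eta\in \mathcal{C}^\infty(\Omega)$, and $v\ast\phi_\eta\to v$ in $L^2(\Omega)$ as $\eta\to 0$ (a standard Young-type inequality combined with continuity of translations in $L^2(\Omega)$). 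The same mollification commutes with $\nabla_\omega$ and is continuous on $W^{1,2}(\Omega)$, yielding the analogous approximation for $v\in W^{1,2}(\Omega)$.

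\emph{Step 2 (Truncation to stay inside $\mathcal{O}$).} For $v\in W^{1,2}_0(\mathcal{O})$, its realisation $x\mapsto v(T_x\omega)$ lies in $W^{1,2}_\loc(\R^d)$ and vanishes outside $\mathcal{O}_\omega$. Choose a Lipschitz cutoff $\chi_\delta:[0,\infty)\to[0,1]$ with $\chi_\delta(t)=0$ for $t\leq\delta/2$, $\chi_\delta(t)=1$ for $t\geq\delta$, and $|\chi'_\delta|\leq C/\delta$, and set
\[
 v_\delta(\omega):=v(\omega)\,\chi_\delta\bigl(\dist_\omega(\omega)\bigr),
\]
where $\dist_\omega(\omega)$ is the (measurable) distance from $0$ to $\partial\mathcal{O}_\omega$ inside the inclusion containing the origin (and $0$ if $\omega\notin\mathcal{O}$); equivalently, $v_\delta$ is obtained by removing a $\delta$-neighbourhood of $\partial\mathcal{O}$ in the realisations. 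Then $v_\delta$ vanishes on $\{\dist_\omega<\delta/2\}$, and for sufficiently small $\eta<\delta/4$ the mollification $v_\delta\ast\phi_\eta$ belongs to $\mathcal{C}^\infty_0(\mathcal{O})$. As $\delta\to 0$ we obviously have $v_\delta\to v$ in $L^2(\Omega)$ by dominated convergence, giving density in $L^2(\mathcal{O})$.

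\emph{Step 3 (Gradient control near $\partial\mathcal{O}$).} For density in $W^{1,2}_0(\mathcal{O})$, the only nontrivial contribution is from the boundary strip: one needs
\[
 \int_{\{0<\dist_\omega<\delta\}} \frac{|v|^2}{\delta^2}\,dP\;\xrightarrow[\delta\to 0]{}\;0,
\]
which is a Hardy-type inequality. This is where the $(\rho,\mathcal{N},\gamma)$ minimal smoothness of each $\mathcal{O}_\omega^k$ together with the extension property of Theorem \ref{th:extension} enters: applied inclusion-by-inclusion, it gives a uniform Hardy inequality on each $\mathcal{O}_\omega^k$, and integrating over $\Omega$ via Fubini (exactly as in the proof of Proposition \ref{sasha100}) transfers this bound into the probability space with a constant depending only on $\rho,\mathcal{N},\gamma$. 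The main obstacle is precisely this uniform Hardy-type bound; without the minimal smoothness it could fail, but under Assumption \ref{kirill100} it holds uniformly in $\omega$ and $k$, which is exactly what \cite{ChChV} had to postulate and what is now a consequence of our simplified hypothesis. Combining Steps 1--3, $v_\delta\ast\phi_{\eta(\delta)}\to v$ in $W^{1,2}(\Omega)$ along a suitable $\eta(\delta)\to 0$, concluding the proof.
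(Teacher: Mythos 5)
Your truncate--and--mollify scheme is the natural direct argument, and it is essentially sound. Note, however, that the paper itself does not prove this lemma from scratch: it only observes that Assumption \ref{kirill100} implies the hypotheses of Lemmata 3.1 and 3.2 of \cite{ChChV} and cites those. So your proposal supplies the argument the paper outsources; the key ingredients (support shrinking via the distance function, smoothing along the group $T_x$, and a uniform Hardy-type bound to kill the gradient of the cut-off on the boundary strip) are exactly the right ones, and the uniform minimal smoothness is indeed what makes the Hardy constant independent of $\o$ and $k$.

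Three points need tightening. First, with the paper's definition of $\mathcal{C}^{\infty}(\Omega)$ every derivative $\mathcal D_1^{\alpha_1}\cdots\mathcal D_d^{\alpha_d}f$ must lie in $L^{\infty}(\Omega)$; mollifying an unbounded $L^2$ function only gives derivatives in $L^2(\Omega)$. You should therefore insert an amplitude truncation $v\mapsto (v\wedge M)\vee(-M)$ before Steps 1--2 (this is harmless for the $W^{1,2}$ approximation, as truncation is continuous there). Second, before invoking Hardy on an inclusion you must justify that the realisation $v(T_\cdot\omega)\vert_{\mathcal{O}^k_{\omega}}$ belongs to the \emph{classical} $W^{1,2}_0(\mathcal{O}^k_{\omega})$: the definition of $W^{1,2}_0(\mathcal{O})$ only says the realisation vanishes a.e.\ outside $\mathcal{O}_{\omega}$, and the identification of $\{u\in W^{1,2}:u=0 \mbox{ a.e.\ outside } U\}$ with $W^{1,2}_0(U)$ is itself a (classical) consequence of the Lipschitz character of $U$ --- it is this fact, rather than the extension Theorem \ref{th:extension}, that licenses the Hardy inequality, whose uniform constant comes from the uniform cone/fatness property of $(\rho,\mathcal N,\gamma)$ minimally smooth boundaries. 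Third, the measurability and stationarity of $\omega\mapsto\chi_\delta(\dist_\omega)$ (so that $v_\delta\in W^{1,2}(\Omega)$ and the product rule applies) deserves a sentence; an argument along the lines of Lemma \ref{ante1} covers it. With these repairs the proof goes through, and the passage from the per-inclusion bound to the probability space by integrating realisations over $B_\rho$ and applying Fubini, as in Proposition \ref{sasha100}, is exactly right.
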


One can equivalently define $\mathcal{D}_jf$ as the function with the property 
\begin{equation*}
	\int_{\Omega} g\mathcal{D}_jf=-\int_{\Omega } f\mathcal{D}_jg\qquad \forall g \in \mathcal{C}^{\infty}(\Omega).
\end{equation*}
Note that for \ae $\omega \in  \Omega$ one has 
\[
\mathcal{D}_j f (T_x\omega) = \frac{\partial}{\partial x_j}f(x,\o),
\]
 where the expression on the right-hand side  is the distributional derivative of $f(\cdot,\omega ) \in L^2_{\rm loc} (\R^d)$. For a random variable $f \in L^2(\Omega)$, its realisation $f \in L^2_{\rm loc}(\R^d, L^2(\Omega))$ is a $T$-stationary random field, i.e.  $f(x+y,\omega)= f(x, T_y \omega)$. Moreover, there is a bijection between random variables from $L^2(\Omega)$ and $T$-stationary random fields from $L^2_{\rm loc}(\R^d, L^2(\Omega))$. This can be extended to Sobolev spaces, where one has a higher regularity in $x$ for realisations. Namely, the following identity holds (see \cite{gloria1} for details):
\begin{equation*} 
	\begin{aligned}
		W^{1,2} (\Omega) &=\bigl\{ f \in W^{1,2}_{\rm loc}\bigl(\R^d, L^2(\Omega)\bigr): f(x+y,\omega)= f(x, T_y \omega) \quad \forall x,y,\ \text{\ae}  \omega \bigr\}
		\\[0.4em]
		&=\bigl \{ f \in C^1\bigl(\R^d, L^2(\Omega)\bigr): f(x+y,\omega)= f(x, T_y \omega) \quad  \forall x,y,\ \text{\ae}  \omega\bigr\}. 
\end{aligned}\end{equation*}

Following \cite{zhikov2}, we define the spaces $\lpot$ and $\lsol$ of potential and solenoidal vector fields. Namely, a vector field $f\in L^2_\loc(\R^d)$ is called potential if it admits a representation $f= \nabla u, \, u \in W^{1,2}_\loc(\R^d)$. A vector field $f\in L^2_\loc(\R^d)$ is called solenoidal if 
\[
\int_{\R^d} f_j \frac{\partial \varphi}{\partial x_j} = 0 \qquad \forall\varphi\in C^\infty_0(\R^d).
\]
A vector field $f\in L^2(\Omega)$ is called potential (respectively, solenoidal), if almost all its realisations $f(T_x \o)$ are potential (respectively, solenoidal) in $\R^d$. The spaces $\lpot$ and $\lsol$ are closed in $L^2(\O)$. Setting 
\begin{equation*}
	\begin{aligned}
		\vpot:= &\{f\in \lpot, \langle f \rangle = 0\},\qquad		\vsol:= &\{f\in \lsol, \langle f \rangle = 0\},
	\end{aligned}
\end{equation*}
we have the following orthogonal decomposition (``Weyl's decomposition'')
\begin{equation*}
	L^2(\O) = \vpot \oplus \vsol \oplus \R^d.
\end{equation*} 
Note that $\vpot$ is the closure of the space $\{\nabla_\o u,\, u\in W^{1,2}(\O)\}$ in $L^2(\O)$.

We define the following notion of stochastic two-scale convergence, which is a slight variation of the definition given in \cite{zhikov1}. We shall stay in the Hilbert setting ($p=2$), as it suffices for our analysis. 

Let $S$ be an open Lipschitz set in $\R^d.$
Suppose that $\mathcal{C} \subset \C^\infty(\Omega)$ is a countable dense family of vector-functions in $L^2(\Omega)$ (recall that the latter is separable)  and  $\Omega_t=\Omega_t(\mathcal{C})\subset \Omega$ is a set of probability one such that the claim of Theorem~\ref{thmergodic} holds for all $\omega \in \Omega_t$ and $g\in \mathcal{C}$. Elements of $\Omega_t$ are often referred to as \emph{typical}, hence the subscript ``t''.

\begin{definition}\label{definicija1}
	Let $\{T_{x}\randomelement\}_{x \in \R^d}$ be a typical trajectory and $(u^\e)$ a bounded sequence
	in $L^2(S)$. We say that $(u^\e)$ {weakly stochastically two-scale converges} to 
	$u \in L^2(\pspace \times \randomspace )$ 
	and write
	$u^\e \wtwoscale u,$ if 
	\begin{equation*}
		\lim_{\e \downarrow 0} \int_{\pspace} u^\e(x) \varphi(x) g\bigl(T_{\e^{-1} x} \randomelement\bigr) \td x
		= \int_{\randomspace} \int_\pspace u(x,\randomelement) \varphi(x) g(\randomelement) \td x\, \drandommeasure\quad\quad \forall \varphi\in C^\infty_0(S),\, g\in \mathcal{C}.
	\end{equation*}
	
	If additionally 
	$ \norm  { u^\e}_{L^2(S)} \to \norm { u}_{L^2(S \times \Omega)},$
	we say that $(u^\e)$ strongly stochastically two-scale converges to $u$ and write $u^\e \stwoscale u$.
\end{definition}

In the next proposition we collect some properties of stochastic two-scale convergence, see \cite{ChChV} for the proof. 

\begin{proposition} \label{properties}  Stochastic two-scale convergence has the following properties.
	\begin{enumerate}
		\item
		Let $(u^\e)$ be a bounded sequence in $L^2(S)$.
		Then there exists a subsequence (not relabelled) and $u \in L^2(S\times\Omega)$ 
		such that $u^{\e} \wtwoscale u$.
		\item If $u^{\e} \wtwoscale u$ then 
		$ \|u\|_{L^2(S \times \Omega)} \leq \liminf_{\e \to 0} \|u^{\e}\|_{L^2 (S)}.$
		\item If $(u^\e) \subseteq L^2(S)$ is a bounded sequence with $u^\e \to u$ in $L^2(S)$ for some $u \in L^2(S)$, then $u^\e \stwoscale u$.
		\item If $(v^\e) \subseteq L^{\infty} (S)$ is uniformly bounded by a constant and $v^\e \to v$ strongly in $L^1(S)$ for some $v \in L^\infty(S)$, and  $(u^\e )$ is bounded in $L^2(S)$ with  $u^\e \wtwoscale u$ for some $u \in L^2(S \times \Omega),$ then $v^\e u^\e \wtwoscale vu$.  
		\item Let $(u^\e)$ be a bounded sequence in $W^{1,2}(S)$. Then on a subsequence (not relabelled) 
		$u^\e \weakly u^0$ in $W^{1,2}(S),$ and there exists $w \in L^2( S, \vpot)$ 
		such that $\nabla u^\e \wtwoscale \nabla u^0 + w(\cdot, \omega)\,.$
		\item Let  $(u^\e)$ be a bounded sequence in $L^2(S)$ such that $\e \nabla u^\e$ is bounded in $L^2(S, \R^d)$. Then there exists $u \in L^2( S, W^{1,2}(\randomspace))$ such that on a subsequence $			u^{\e}  \wtwoscale u$ and  $
			\e \nabla u^{\e}  \wtwoscale \nabla_{\omega} u(\cdot,\omega). $
	\end{enumerate} 
\end{proposition}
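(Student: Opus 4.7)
The plan is to handle the six properties in order, leaning on the separability of $L^2(S\times\Omega)$ (which follows from the countable generation of $\mathcal F$) and on the Ergodic Theorem \ref{thmergodic} as the two recurring ingredients. First I would establish an auxiliary observation: for any $g \in C_0^\infty(S)\otimes \stochsmooth(\Omega)$, the realisations $g^\e(x):=g(x,T_{x/\e}\omega)$ are uniformly bounded in $L^\infty(S)$ and, by the Ergodic Theorem applied componentwise (after noting $|g^\e|^2$ is the realisation of $|g|^2\in L^1(\Omega)$ at frozen $x$), one has $g^\e \to g$ strongly in $L^2(S)$ in the sense that $\|g^\e\|_{L^2(S)} \to \|g\|_{L^2(S\times\Omega)}$; moreover $\int_S g^\e \phi \to \int_{S\times\Omega} g\,\phi$ for $\phi \in L^2(S)$. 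This auxiliary step underpins everything else.

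For (1), I would fix a countable dense family $\{g_n\}\subset C_0^\infty(S)\otimes \stochsmooth(\Omega)$, observe that $|\int_S u^\e g_n^\e\,dx|\le \|u^\e\|_{L^2(S)}\|g_n^\e\|_{L^2(S)}$ is uniformly bounded, and extract by a diagonal procedure a subsequence along which every $\int_S u^\e g_n^\e\,dx$ converges. The limit defines a bounded linear functional on the dense subspace, which extends to $L^2(S\times\Omega)$; its Riesz representative is the desired $u$. Property (2) is then obtained by testing the two-scale convergence against smooth $g$ approximating $u$ and invoking Cauchy--Schwarz together with the auxiliary $L^2$-norm convergence of $g^\e$. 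Property (3) is a direct verification: for $u^\e\to u$ strongly in $L^2(S)$ and $g\in C_0^\infty(S)\otimes\stochsmooth(\Omega)$, one splits $\int_S u^\e g^\e = \int_S(u^\e-u)g^\e + \int_S u\,g^\e$ and passes to the limit using boundedness of $g^\e$ and the auxiliary step, yielding weak two-scale limit equal to $u$; the norm convergence is given.

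For (4), the key is $v^\e u^\e g^\e = u^\e(v^\e g^\e)$; since $v^\e g^\e \to v\,g$ strongly in $L^1(S)$ (by dominated convergence combined with the $L^\infty$ bound on $g^\e$ and a density-plus-truncation argument to upgrade the $L^1$ convergence of $v^\e\to v$), and one can approximate $v g$ in an appropriate sense by smooth two-scale test functions, the weak two-scale convergence of $u^\e$ passes through. For (5), I would use (1) to extract $\nabla u^\e \wtwoscale \Phi\in L^2(S\times\Omega;\R^d)$ and $u^\e\weakly u^0$ in $W^{1,2}(S)$. Testing against $g = \varphi(x)\psi(\omega)$ with $\psi\in\stochsmooth(\Omega)$ arbitrary and $\varphi\in C_0^\infty(S)$, and integrating by parts to move derivatives off $u^\e$, one identifies the $\omega$-mean of $\Phi(x,\cdot)$ as $\nabla u^0(x)$; testing against solenoidal (in $\omega$) fields shows that $\Phi-\nabla u^0$ lies in $L^2(S;\vpot)$ by Weyl's decomposition. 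Property (6) is analogous: apply (1) to $u^\e$ (and separately to $\e\nabla u^\e$) to get two-scale limits $u$ and $\Psi$; testing against $\e$-scaled gradients of $g = \varphi(x)\psi(\omega)$ yields after integration by parts $\int \Psi\cdot\nabla_\omega\psi\,\varphi = -\int u\,\nabla_\omega\psi\,\varphi$ (the physical-space derivatives carry $\e$ and vanish in the limit), so $\Psi=\nabla_\omega u$ in the sense of Section \ref{ap:probability}, placing $u(x,\cdot)\in W^{1,2}(\Omega)$ for a.e.\ $x$.

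The main technical nuisance throughout is the passage from test functions in the tensor class $C_0^\infty(S)\otimes\stochsmooth(\Omega)$ to general elements of $L^2(S\times\Omega)$; this is where countable generation of $\mathcal F$ enters, via separability and the choice of a single $P$-null set outside of which the Ergodic Theorem holds simultaneously for a countable dense family---exactly the ``typical $\omega$'' caveat embedded in Definition \ref{definicija1}. Once this is handled uniformly, (5) and (6) are the only places where a genuinely new argument (Weyl decomposition in $L^2(\Omega)$ and integration by parts in the probability space) is required.
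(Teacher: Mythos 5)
Your plan is correct and follows the standard route for stochastic two-scale convergence (diagonal extraction over a countable dense family of tensor-product test functions, the ergodic theorem for the norm convergence of realisations, Weyl's decomposition plus integration by parts in the probability space for properties 5 and 6); note that the paper itself does not prove this proposition but defers to \cite{ChChV}, whose argument is essentially the one you outline. The only point to tighten in a full write-up is in property 4: the $L^1$ convergence of $v^\e$ must be upgraded to $L^2$ on the support of the test function via the uniform $L^\infty$ bound before pairing with $u^\e$, which you flag but should carry out explicitly.
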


\section{Measurability properties}\label{s:auxiliary}

In this part of the appendix we collect a number of technical results on the measurability of various quantities being used throughout the paper.

For $q=(q_1,\dots,q_d) \in \mathbf{Q}^d$ we define the set 
$$ \mathcal{O}_q:=\bigl\{\omega \in \mathcal{O}:\textrm{there exists } k_0 \in \mathbf{N} \textrm{ such that } \{0,q\} \subset \mathcal{O}^{k_0}_{\omega}\bigr\}. $$
Recall that we reserve the index $k_0$ for the inclusion $\OO_\o^{k_0}$ containing the origin (assuming that $\o\in\OO$).
Note that for a fixed $\o\in\OO$ the set of points $q\in \Q^d$ such that $\o\in\OO_q$ is exactly  $\OO_\o^{k_0} \cap \Q^d$.
\begin{lemma} \label{lucia1} 
	For every $q \in \mathbf{Q}^d$, the set $\mathcal{O}_{q} \subset \Omega$ is measurable. 
\end{lemma}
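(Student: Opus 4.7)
The plan is to characterize membership in $\mathcal{O}_q$ by a countable condition involving only the measurable sets $T_{q_i}^{-1}(\mathcal{O})$, $q_i\in \Q^d$. Specifically, I will show that
\[
\mathcal{O}_q \;=\; \bigcup_{N\in \mathbf{N}}\;\bigcup_{\substack{q_1,\ldots,q_{N-1}\in \Q^d \\ |q_{i+1}-q_i|<\rho\ \forall i=0,\dots,N-1}}\;\bigcap_{i=0}^{N} T_{q_i}^{-1}(\mathcal{O}),
\]
where $q_0:=0$ and $q_N:=q$. Since each $T_{q_i}:\Omega\to\Omega$ is measurable (Definition \ref{defgroup}) and $\mathcal{O}\in\mathcal{F}$, each intersection is measurable. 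The outer union is countable (over $N$ and finite tuples of rationals), so $\mathcal{O}_q$ will be measurable once the identity is verified.

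The reverse inclusion will be the easy direction: if the chain condition holds, then $q_0,q_1\in \mathcal{O}_\omega$ belong to inclusions $\mathcal{O}^{k}_\omega,\mathcal{O}^{k'}_\omega$, but $|q_0-q_1|<\rho$ while distinct inclusions lie at distance $\geq\rho$ apart (by the Remark following Assumption \ref{kirill100}). Hence $k=k'$, and by induction all $q_i$ lie in a common inclusion $\mathcal{O}^{k_0}_\omega$, yielding $\{0,q\}\subset \mathcal{O}^{k_0}_\omega$.

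For the forward inclusion, suppose $\omega\in \mathcal{O}_q$, so $0,q\in\mathcal{O}^{k_0}_\omega$. Since $\mathcal{O}^{k_0}_\omega$ is open and connected in $\R^d$, it is path-connected; choose a continuous path $\gamma:[0,1]\to \mathcal{O}^{k_0}_\omega$ from $0$ to $q$. Its image is compact and contained in the open set $\mathcal{O}^{k_0}_\omega$, so there exists $\delta>0$ with the $\delta$-neighbourhood of $\gamma([0,1])$ still inside $\mathcal{O}^{k_0}_\omega$. Partitioning $[0,1]$ finely enough that $\gamma$ oscillates by less than $\min(\rho/2,\delta/2)$ on each subinterval, and selecting a rational point within $\delta/2$ of each $\gamma(i/N)$, produces the desired chain $(q_i)$ with $|q_{i+1}-q_i|<\rho$ and $q_i\in \mathcal{O}^{k_0}_\omega\subset \mathcal{O}_\omega$.

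The main obstacle is exactly the forward direction, where the naive reformulation ``the whole segment $[0,q]$ lies in $\mathcal{O}_\omega$'' is an uncountable condition that is delicate to test measurably (the segment could touch the boundary of an inclusion at a single point without entering it, so density of rationals alone is insufficient). The key insight that unlocks the argument is the uniform lower bound $\rho$ on the distance between distinct inclusions, which converts the topological property ``$0$ and $q$ in the same component'' into the strictly combinatorial statement ``connected by a rational $\rho$-chain in $\mathcal{O}_\omega$''; this is inherently countable and therefore manifestly preserves measurability.
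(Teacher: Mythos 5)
Your proof is correct and follows essentially the same route as the paper: both reduce the condition ``$0$ and $q$ lie in the same inclusion'' to a countable union of measurable conditions indexed by finite tuples of rational points, using path-connectedness of the open inclusion for one inclusion and the uniform separation of distinct inclusions for the other. The only cosmetic difference is that you test a finite $\rho$-chain of rational points (making the role of the separation constant $\rho$ explicit), whereas the paper tests all rational points on a polygonal line with rational vertices; both are manifestly countable and rest on the same two facts.
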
 
\begin{proof}  
	Note that 
	\begin{eqnarray}\label{connection1}
		\omega \in  \mathcal{O}_q & \iff &\textrm{There exists a polygonal line $L$ that connects $0$ and $q$ and } \\ \nonumber & & \textrm{consists of a finite set of straight segments with rational endpoints } \\  \nonumber & & \textrm{such that for all 
			$l \in \mathbf{Q}^d$ on this line one has }
		\textrm{$T_l \omega \in \mathcal{O}$.} 
	\end{eqnarray} 
	Since for each fixed $q \in \mathbf{Q}^d$ there is a countable number of lines satisfying the property \eqref{connection1}, the set $\mathcal{O}_q$ is measurable. 	
\end{proof} 	
 
We define the random variables
$$ 
\tilde D_i(\omega):=\inf\{q_i: \ \omega \in \mathcal{O}_q\},\quad \omega\in\Omega,\quad i=1, \dots,d. 
$$
Note that  $\tilde D_i=+\infty$ whenever $\omega \notin \mathcal{O},$ and for $\o\in\OO$ one has $\tilde D_i = (D_\o^{k_0})_i$. Furthermore, we denote by $D = D(\o)$ the random vector $D:=-(\tilde D_1, \dots, \tilde D_d)^T-d_{1/4}.$
Finally, for $\o\in\OO$ we define  
$P_{\omega}:= \OO_\o^{k_0} + D,$
which coincides with the definition in Section \ref{lim_eq}. Note that by Assumption \ref{kirill100} we have $P_\o \subset \square$.

\begin{lemma} 
	\label{ante1}
	The mapping $(x,\omega) \mapsto \dist(x, P_{\omega})$ is measurable from $\R^d \times \mathcal{O}$ to $\mathbf{R}$ (on $\R^d \times \mathcal{O}$ we take the product of Borel $\sigma$-algebra with $\mathcal{F}$). 
\end{lemma}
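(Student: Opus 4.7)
The plan is to reduce the distance function to a countable infimum of jointly measurable functions, exploiting the density of $\Q^d$ in the open inclusion $\OO^{k_0}_\omega$.

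First, I will observe that, by construction, $P_\omega$ is simply a translate: $P_\omega = \OO^{k_0}_\omega + D(\omega)$, and therefore $\dist(x, P_\omega) = \dist(x - D(\omega), \OO^{k_0}_\omega)$. Since the components $\tilde D_i(\omega) = \inf\{q_i : \omega \in \OO_q\}$ are countable infima of rational numbers over the measurable sets $\OO_q$ (one checks that $\{\tilde D_i > t\} = \Omega \setminus \bigcup_{q \in \Q^d,\ q_i \leq t} \OO_q$ is measurable for every $t$, using Lemma \ref{lucia1}), the random vector $D(\omega)$ is measurable. Hence $(x,\omega) \mapsto x - D(\omega)$ is a measurable map from $\R^d \times \OO$ to $\R^d$, and it suffices to prove measurability of $(y,\omega) \mapsto \dist(y, \OO^{k_0}_\omega)$.

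Next, I will exploit the fact that $\OO^{k_0}_\omega$ is open (by part 1) of Assumption \ref{kirill100}) and contains the origin for every $\omega \in \OO$. Since $\Q^d$ is dense in $\R^d$, the set $\OO^{k_0}_\omega \cap \Q^d$ is dense in $\OO^{k_0}_\omega$, hence also in its closure, so
\[
\dist(y, \OO^{k_0}_\omega) = \inf\bigl\{|y - q| : q \in \OO^{k_0}_\omega \cap \Q^d \bigr\}.
\]
Crucially, by the very definition of $\OO_q$ and the connectedness of $\OO^{k_0}_\omega$, for $\omega \in \OO$ one has $q \in \OO^{k_0}_\omega \cap \Q^d$ if and only if $\omega \in \OO_q$ (the polygonal-line characterisation from the proof of Lemma \ref{lucia1} furnishes both directions, using connectedness to produce a rational polygonal path from $0$ to $q$ inside $\OO^{k_0}_\omega$). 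This gives the countable representation
\[
\dist(x, P_\omega) = \inf_{q \in \Q^d} g_q(x,\omega), \qquad g_q(x,\omega) := \begin{cases} |x - D(\omega) - q|, & \omega \in \OO_q,\\ +\infty, & \omega \in \OO \setminus \OO_q. \end{cases}
\]

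Finally, I will check that each $g_q$ is measurable on $\R^d \times \OO$: the map $(x,\omega) \mapsto |x - D(\omega) - q|$ is a Carath\'eodory function (continuous in $x$, measurable in $\omega$), hence jointly measurable with respect to $\mathcal B(\R^d) \times \mathcal F$; multiplying by the indicator of $\OO_q$ (which is measurable by Lemma \ref{lucia1}) and adding $+\infty$ on the complement preserves measurability in the extended sense. A countable infimum of measurable functions is measurable, and the conclusion follows. I do not expect any real obstacle here: the only subtle point is the identification of $\OO^{k_0}_\omega \cap \Q^d$ with $\{q \in \Q^d : \omega \in \OO_q\}$, which rests on the openness and connectedness of the inclusions already built into Assumption \ref{kirill100}.
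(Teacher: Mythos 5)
Your argument is correct and essentially coincides with the paper's proof, which establishes measurability via the (equivalent, by the triangle inequality) double-infimum representation $\dist(x,P_\omega)=\inf_{q_1\in\Q^d}\bigl(|x-D(\o)-q_1|+\inf\{|q_1-q_2|:q_2\in\Q^d,\ \o\in\OO_{q_2}\}\bigr)$; both arguments rest on Lemma \ref{lucia1}, the density of $\Q^d$ in the open inclusion, and the identification of $\OO^{k_0}_\o\cap\Q^d$ with $\{q:\o\in\OO_q\}$ already noted in the paper. One cosmetic slip: the identity $\{\tilde D_i>t\}=\Omega\setminus\bigcup_{q\in\Q^d,\,q_i\le t}\OO_q$ can fail when the infimum equals $t$ without any rational $q$ with $q_i\le t$ lying in the inclusion; use instead $\{\tilde D_i<t\}=\bigcup_{q\in\Q^d,\,q_i<t}\OO_q$, which yields the measurability of $D$ just as well.
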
 
\begin{proof}
	The statement follows from the representation 	
	$$ \dist(x, P_{\omega})= \inf_{q_1 \in \mathbf{Q}^d } \Big( |x-D(\o)-q_1|+\inf_{q_2 \in \mathbf{Q}^d}\bigl\{\dist(q_1,q_2): \omega \in \mathcal{O}_{q_2}\bigr\} \Big), \, (x,\o) \in \R^d\times \OO.$$
\end{proof}

\begin{lemma}\label{measp}
	The set-valued mapping $\mathcal{H}:\omega \mapsto \overline{P_\omega}$ is measurable, where on the closed subsets of $\square$ we take the $\sigma$-algebra generated by the Hausdorff distance (topology) $d_{\rm H}$.  	
\end{lemma}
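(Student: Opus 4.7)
The plan is to reduce measurability with respect to the Hausdorff Borel $\sigma$-algebra to the pointwise measurability of distance functions that is already available from Lemma \ref{ante1}.

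First I would observe that $P_\omega\subset\overline{\square}$ for every $\omega\in\mathcal{O}$ by part 2 of Assumption \ref{kirill100}, so each $\overline{P_\omega}$ is a non-empty compact subset of the compact metric space $\overline{\square}$; the collection of all such sets, equipped with the Hausdorff distance $d_{\rm H}$, is itself a compact metric space.

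The key standard fact I would invoke is that on this hyperspace the Borel $\sigma$-algebra associated with $d_{\rm H}$ coincides with the smallest $\sigma$-algebra that makes every evaluation map $K\mapsto\dist(x,K)$, $x\in\overline{\square}$, measurable (equivalently, with the Effros $\sigma$-algebra). A clean way to see this is to pick a countable dense subset $\{x_n\}\subset\overline{\square}$ and to express open $d_{\rm H}$-balls around a given compact set as countable combinations of the sublevel and superlevel sets $\{K:\dist(x_n,K)<r\}$ and $\{K:\dist(x_n,K)>r\}$ with rational parameters, using that for compact sets $K_1,K_2$ one has $d_{\rm H}(K_1,K_2)<r$ iff each set is contained in the open $r$-neighbourhood of the other, and that containment in an open neighbourhood can be tested along any dense subset.

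Granting this, it suffices to verify that for every fixed $x\in\overline{\square}$ the function $\omega\mapsto\dist(x,\overline{P_\omega})=\dist(x,P_\omega)$ is measurable on $\mathcal{O}$. But this is precisely a slice of the jointly measurable function produced by Lemma \ref{ante1}, so the conclusion follows at once. The only non-routine point is the equivalence between $d_{\rm H}$-Borel measurability and measurability of the distance evaluations; this is a classical hyperspace fact and is the place where one invests a small amount of care, while the rest of the argument is immediate.
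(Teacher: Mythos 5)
Your proposal is correct and follows essentially the same route as the paper: both arguments exploit the separability (indeed compactness) of the hyperspace of compact subsets of $\square$ under $d_{\rm H}$ and reduce Borel measurability of $\mathcal H$ to the measurability of the distance evaluations $\omega\mapsto\dist(q,P_\omega)$ over a countable dense set of points $q$, which is supplied by Lemma \ref{ante1}. The paper simply makes the reduction concrete by writing out the preimage of the complement of a closed Hausdorff ball as a countable union over $q\in\Q^d$, whereas you phrase the same step abstractly as the coincidence of the $d_{\rm H}$-Borel $\sigma$-algebra with the Effros $\sigma$-algebra.
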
	
\begin{proof} 
	The Hausdorff topology on the closed subsets of a compact set is compact and thus separable. 
	We recall that, in line with the notation introduced in Section \ref{ss:4.4}, 
	\[
	\overline{B_{H,\square}(K,r)}= \{U\subset\square: U \mbox{ is compact}, d_{\rm H}(K,U)\leq r\}.
	\] 
	It is sufficient to prove that the set
	$$\mathcal{H}^{-1} (\overline{B_{H,\square}(K,r)}^{\,\rm c})=\{\omega \in \Omega: d_{\rm H}(\overline{P_\omega},K)>r\},$$
	is measurable. But this can be easily seen by using Lemma \ref{ante1} and the representation
	\begin{equation*}
		\begin{aligned}
			\mathcal{H}^{-1} (\overline{B_{H,\square}(K,r)}^{\,\rm c})=\{\omega \in \Omega: \exists q \in \Q^d  \textrm{ such that } \dist(q,P_\o)=0 \mbox{ and }& \dist(q,K)>r, 
			\\
			\textrm{ or } & q \in  K \textrm{ and } \dist(q,P_{\omega}) >r  \}.     
		\end{aligned}
	\end{equation*}    
\end{proof}

Let  $\{\widetilde \varphi^l\}_{l \in \mathbf{N}} \subset C_0^{\infty} (\square) $ be a family of functions dense in $W_0^{1,2}  (\square)$. Further, let $\rho \in C_0^{\infty} (\mathbf{R}^d)$ be non-negative  with $\int_{\mathbf{R}^d} \rho=1,$ $\supp\rho \subset B_1$, and denote $\rho_{\delta}(x)=\delta^{-d} \rho(x/\delta)$. We denote the characteristic function  of the set $\OO_\o^{k_0,m}+D$ by 
\begin{equation*}
	\begin{gathered}
			\chi^{m}(x,\omega ):={\mathbf 1}_{\OO_\o^{k_0,m}}(x -D),
			\\
			\OO_\o^{k_0,m}:=  \{x: \, x\in \OO_\o^{k_0}, \, \dist(x, \partial\OO_\o^{k_0})>{1}/{m}\},
	\end{gathered}
\end{equation*}
 and define a mapping from $\Omega$ taking values in $W_0^{1,2}(\square)$:
\begin{equation*}
	\varphi^{l,m}(x,\omega):=\rho_{1/2m} *\big(\chi^m(x,\omega )\widetilde \varphi^l(x)\big),
\end{equation*}
where $*$ stands for the usual convolution of functions. Note that for \ae $\omega \in \mathcal{O}$ one has $\supp \varphi^{l,m}(\cdot,\omega) \subset P_{\omega}$.

\begin{lemma} \label{ante10}
	For every $l,m \in \mathbf{N}$, the mapping $\omega \mapsto \varphi^{l,m}(\cdot,\omega)$ taking values in $W_0^{1,2}(\square)$ is measurable with respect to the Borel $\sigma$-algebra on $W_0^{1,2}(\square)$. 
\end{lemma}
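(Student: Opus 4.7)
I would factor the map as the composition of two pieces: (i) the assignment $\omega\mapsto \chi^m(\cdot,\omega)\widetilde\varphi^l(\cdot)\in L^2(\square)$, and (ii) convolution with the fixed mollifier $\rho_{1/2m}$, viewed as a bounded linear operator from $L^2(\square)$ (with functions extended by zero to $\R^d$) into $W_0^{1,2}(\square)$. Piece (ii) is continuous, hence Borel; the lemma therefore reduces to Borel measurability of (i). Assuming the scalar function $(x,\omega)\mapsto \chi^m(x,\omega)$ is jointly Borel measurable on $\square\times\Omega$, Fubini's theorem makes $\omega\mapsto \int_\square \chi^m(x,\omega)\widetilde\varphi^l(x)g(x)\,dx$ measurable for every $g\in L^2(\square)$, and because $L^2(\square)$ is separable Pettis's theorem promotes this weak measurability to Borel measurability of (i). I also need to verify that (ii) indeed lands in $W_0^{1,2}(\square)$: since $\supp \chi^m(\cdot,\omega)\subset\{x:\dist(x,\partial P_\omega)\ge 1/m\}$, convolution with $\rho_{1/2m}$ produces a smooth function supported in $\overline{P_\omega}$, and Assumption \ref{kirill100} gives $\overline{P_\omega}\subset \mathcal B_\omega^{k_0}-D_\omega^{k_0}-d_{1/4}\subset\square$ at positive distance from $\partial\square$, so the result lies in $C_0^\infty(\square)\subset W_0^{1,2}(\square)$.

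\textbf{The heart of the argument} is the joint measurability of $\chi^m$. First, the vector $D:\Omega\to\R^d$ is $\mathcal F$-measurable because each $\tilde D_i(\omega)=\inf\{q_i:\omega\in\OO_q\}$ is a countable infimum of indicator functions of the measurable sets produced by Lemma \ref{lucia1}. Next, since $\OO_\omega^{k_0}$ is open with Lipschitz boundary, one has the equivalence
\[
x-D(\omega)\in \OO_\omega^{k_0,m}\iff \dist\bigl(x,\R^d\setminus P_\omega\bigr)>1/m,
\]
so $\chi^m(x,\omega)={\mathbf 1}\bigl\{\dist(x,\R^d\setminus P_\omega)>1/m\bigr\}$ (and $\chi^m\equiv 0$ off $\OO$). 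It remains to show that $(x,\omega)\mapsto \dist(x,\R^d\setminus P_\omega)$ is jointly measurable. Mimicking Lemma \ref{ante1}, I would use the countable representation
\[
\dist\bigl(x,\R^d\setminus P_\omega\bigr)=\inf_{\substack{q\in\Q^d\\ q\notin\overline{P_\omega}}}|x-q|,
\]
which is valid because the minimal smoothness of $\mathcal B_\omega^{k_0}\setminus\overline{\OO_\omega^{k_0}}$ in Assumption \ref{kirill100} guarantees that $\R^d\setminus\overline{P_\omega}$ is dense in $\R^d\setminus P_\omega$. For each fixed $q\in\Q^d$ the condition $q\notin\overline{P_\omega}$ is equivalent to $\dist(q,P_\omega)>0$, which is $\omega$-measurable by Lemma \ref{ante1}; so the right-hand side is a countable infimum of jointly measurable functions on $\square\times\Omega$, hence measurable.

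\textbf{Main obstacle.} The delicate step is the rational-infimum representation of $\dist(x,\R^d\setminus P_\omega)$: it hinges on the geometric fact that every boundary point of $P_\omega$ can be approached by rational points of the open exterior $\R^d\setminus\overline{P_\omega}$. This is precisely where the minimal smoothness hypothesis on the extension set $\mathcal B_\omega^{k_0}\setminus\overline{\OO_\omega^{k_0}}$ (and hence on $\partial\OO_\omega^{k_0}$) is used; without it, one would have to route measurability through a measurable selection theorem applied to the set-valued map of Lemma \ref{measp}, which is substantially more technical. Once the rational representation is justified, the remaining pieces (measurability of $D$, Fubini, Pettis, continuity of convolution) assemble routinely.
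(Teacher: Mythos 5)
Your proof is correct and follows the paper's decomposition exactly: first the Borel measurability of $\omega\mapsto\chi^m(\cdot,\omega)\widetilde\varphi^l$ as an $L^2(\square)$-valued map, then the continuity (hence measurability) of convolution from $L^2$ into $W^{1,2}$, together with the support observation that places the result in $W_0^{1,2}(\square)$. Where you genuinely diverge is in how the first step is carried out. The paper never establishes joint measurability of $\chi^m$: it shows instead that for each $\psi\in C_0^\infty(\R^d)$ the scalar map $\omega\mapsto\|\psi-\chi^m\widetilde\varphi^l\|_{L^2}$ is measurable, by writing the norm as a limit of Riemann sums built from the measurable sets $B_q=\{\omega:\,q\in\OO_\omega^{k_0,m}\}$, $q\in\Q^d$, and then uses that preimages of the balls $B(\psi,r)$, $r\in\Q$, generate the Borel $\sigma$-algebra of $L^2$. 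You instead prove joint $(x,\omega)$-measurability of $\chi^m$ through the identity $\chi^m(x,\omega)={\mathbf 1}\{\dist(x,\R^d\setminus P_\omega)>1/m\}$ and a rational-infimum representation of the distance to the complement, and then upgrade weak measurability of the $L^2$-valued map to Borel measurability via Fubini and Pettis. Both reductions rest on the same two ingredients — a countable rational approximation and the boundary regularity supplied by Assumption \ref{kirill100} (you need the exterior rational points to accumulate on all of $\partial P_\omega$, which holds for minimally smooth, i.e.\ locally Lipschitz, boundaries; the paper needs the Riemann sums of the $L^2$ norm to converge). Your route is marginally more abstract in invoking Pettis's theorem, but it produces the joint measurability of $\chi^m$ on $\square\times\Omega$ as a reusable byproduct, which the paper's ball-preimage argument does not; the paper's route avoids any vector-valued measurability machinery. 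Either argument is complete.
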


\begin{proof}
	First note that 
	\begin{equation} \label{ante2} 
		\omega \mapsto \chi^{m}(\cdot, \omega)\widetilde \varphi^l(\cdot), 
	\end{equation} 
	is a measurable mapping taking values in the set $L^2(\square)$, with Borel $\sigma$-algebra. To check this note that for each $q \in \mathbf{Q}^d$ the set $ B_q:=\{\omega \in \Omega: q \in \OO_\o^{k_0,m}\}$ 	
	is measurable (the proof is similar to that of Lemma \ref{lucia1}). 
	Further, for $\psi \in C_0^{\infty} (\R^d)$ we have that 
	$\| \psi- \chi^{m} \widetilde \varphi^l\|_{L ^2}$ can be written as a limit of Riemann sums and each Riemann sum can be written in terms of finite number of ${\mathbf 1}_{B_q}$ and values of  function $\widetilde \varphi^l$. Thus $\omega \mapsto\| \psi- \chi^{m} \widetilde \varphi^l\|_{L ^2}$ is measurable.  
	Since the topology in $L^2(\R^d)$ is generated by the balls $B(\psi,r)$, where $\psi \in C_0^{\infty} (\R^d)$ and $r \in \mathbf{Q}$, we have that the mapping given by \eqref{ante2} is measurable. The final claim follows by using the fact that the convolution is a continuous (and thus measurable) operator from $L^2$ to $W^{1,2}$. 
\end{proof}
Note that by the construction we have that for \ae $\omega \in \Omega$ the family  $\{\varphi^{l,m}(\cdot,\omega)\}_{l,m \in\mathbf{N}}$ $\subset  C_0^{\infty} (P_{\omega})$ is a dense subset of $W_0^{1,2} (P_{\omega})$  (cf. Lemma \ref{lemmagloria}). Let's introduce the sets $Y_{[L,U]} \subset\mathcal{O}$  for some random variables $L,U: \Omega \to \mathbf{R}^+_{0}$:
\begin{equation*}
	Y_{[L,U]}:=\bigl\{\omega \in\mathcal{O}: 	-\Delta_{\mathcal{O}^{k_0}_{\omega}}\ \textrm{has an eigenvalue in the random interval\ }  [ L, U ] \bigr\}.
\end{equation*}
We also define the set $S_{[L,U]} \subset W_0^{1,2} (P_{\omega})$ as
\begin{equation*}
	S_{[L,U]}:=\bigl\{\psi \in W_0^{1,2}(P_{\omega}):  \psi\ \textrm{is an eigenfunction of } -\Delta_{P_{\omega}} \textrm{ whose eigenvalue is in } 
	[ L, U ]  \bigr\}. 
\end{equation*} 	

For every $r \in \mathbf{R}$ and $l,m \in \mathbf{N}$ we define the random variable 
$$ X_r^{l,m}:= \left\{\begin{array}{lr}
	\dfrac{\|-\Delta \varphi^{l,m}(\cdot,\omega)-r\varphi^{l,m}(\cdot,\omega)\|_{W^{-1,2 }(P_{\omega})}}{\|\varphi^{l,m}(\cdot,\omega)\|_{L^2(P_{\omega})}} & \textrm{if } \varphi^{l,m}(\cdot,\omega)\neq 0, \\[0.9em] +\infty & \textrm{otherwise.} \end{array} \right. $$
\begin{lemma}
	$X_r^{l,m}$ is a measurable function for every $r \in \mathbf{R}$ and $l,m \in \mathbf{N}$.  
\end{lemma}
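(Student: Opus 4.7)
The plan is to exploit Lemma \ref{ante10}, which tells us that $\omega \mapsto \varphi^{l,m}(\cdot,\omega)$ is measurable as a map into $W_0^{1,2}(\square)$, and to express both numerator and denominator in $X_r^{l,m}$ as measurable functions of this map. First I would handle the denominator: since $\supp \varphi^{l,m}(\cdot,\omega) \subset P_\omega \subset \square$, the norm $\|\varphi^{l,m}(\cdot,\omega)\|_{L^2(P_\omega)}$ coincides with $\|\varphi^{l,m}(\cdot,\omega)\|_{L^2(\square)}$, which is the composition of the measurable map $\omega\mapsto\varphi^{l,m}(\cdot,\omega)\in W_0^{1,2}(\square)$ with the continuous embedding into $L^2(\square)$ followed by the continuous norm. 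The set $\{\omega:\varphi^{l,m}(\cdot,\omega)=0\}$ is therefore measurable, $X_r^{l,m}$ equals $+\infty$ there, and it remains to show measurability on the complement.

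For the numerator, I would use a duality-plus-countable-supremum argument. Since $\varphi^{l,m}(\cdot,\omega)\in W_0^{1,2}(P_\omega)$, the functional $-\Delta\varphi^{l,m}(\cdot,\omega)-r\varphi^{l,m}(\cdot,\omega)\in W^{-1,2}(P_\omega)$ acts on $\psi\in W_0^{1,2}(P_\omega)$ via
\begin{equation*}
\langle -\Delta\varphi^{l,m}-r\varphi^{l,m},\psi\rangle=\int_{P_\omega}\nabla\varphi^{l,m}\cdot\nabla\psi - r\int_{P_\omega}\varphi^{l,m}\psi.
\end{equation*}
Because $\{\varphi^{l',m'}(\cdot,\omega)\}_{l',m'\in\mathbf{N}}$ is dense in $W_0^{1,2}(P_\omega)$ for a.e.\ $\omega$ (as noted right after Lemma \ref{ante10}), we have
\begin{equation*}
\|-\Delta\varphi^{l,m}-r\varphi^{l,m}\|_{W^{-1,2}(P_\omega)}=\sup_{\substack{l',m'\in\mathbf{N}\\ \varphi^{l',m'}(\cdot,\omega)\neq 0}}\frac{\bigl|\int_{\square}\nabla\varphi^{l,m}\cdot\nabla\varphi^{l',m'}-r\int_{\square}\varphi^{l,m}\varphi^{l',m'}\bigr|}{\|\nabla\varphi^{l',m'}(\cdot,\omega)\|_{L^2(\square)}},
\end{equation*}
where I have replaced integrals over $P_\omega$ by integrals over $\square$, again using that each $\varphi^{l,m}(\cdot,\omega)$ is supported in $P_\omega$.

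Each integral on the right-hand side is a bilinear continuous function of the pair $(\varphi^{l,m}(\cdot,\omega),\varphi^{l',m'}(\cdot,\omega))\in W_0^{1,2}(\square)\times W_0^{1,2}(\square)$ and is therefore measurable in $\omega$ by Lemma \ref{ante10}. Likewise the denominator $\|\nabla\varphi^{l',m'}(\cdot,\omega)\|_{L^2(\square)}$ is measurable. Hence the numerator of $X_r^{l,m}$ is the countable supremum of measurable functions, and so is measurable. Finally, $X_r^{l,m}$ is the quotient of a measurable function by a measurable, positive (on the relevant set) function, extended by $+\infty$ on the measurable exceptional set, which yields the claimed measurability.

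The only potential obstacle I foresee is making sure that the supremum representation of the dual norm is taken over a genuinely measurable (in fact countable) family and that I can ignore the $\omega$-dependence of the ambient space $P_\omega$; both issues are resolved by transferring everything to $W_0^{1,2}(\square)$ via the zero-extension, together with the density statement for $\{\varphi^{l,m}(\cdot,\omega)\}$ in $W_0^{1,2}(P_\omega)$.
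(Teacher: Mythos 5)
Your proposal is correct and follows essentially the same route as the paper: both rely on Lemma \ref{ante10} together with the representation of the $W^{-1,2}(P_\omega)$ norm as a countable supremum of (measurable) duality pairings against the dense family $\{\varphi^{l',m'}(\cdot,\omega)\}$. The only cosmetic difference is that you write the pairing out explicitly as the bilinear form on $W_0^{1,2}(\square)\times W_0^{1,2}(\square)$ (and normalise by $\|\nabla\varphi^{l',m'}\|_{L^2}$ rather than the full $W^{1,2}$ norm, an equivalent choice that is immaterial for measurability), whereas the paper phrases it via continuity of $-\Delta: W^{1,2}\to W^{-1,2}$ followed by measurability of the dual norm.
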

\begin{proof}
	We use Lemma \ref{ante10} and the fact that
	$-\Delta$ is a continuous map from $W^{1,2}(\R^d)$ to $W^{-1,2}(\R^d)$ (we make use of the natural embedding of $W_0^{1,2}(P_{\omega})$ into  $W^{1,2}(\R^d)$)  and $\|\cdot\|_{W^{-1,2}(P_{\omega})}$ is a measurable mapping  from $W^{-1,2}(\R^d)$ to $\R$, since 
	$$\|\psi(\cdot,\omega)\|_{W^{-1,2}(P_{\omega})}=\sup_{l,m\in \mathbf{N}}\left\{\frac{_{W^{-1,2}(P_{\omega})}\big\langle \psi(\cdot,\omega), \varphi^{l,m}(\cdot,\omega) \big\rangle_{W^{1,2}_0(P_{\omega})} }{\|\varphi^{l,m}(\cdot,\omega)\|_{W^{1,2}(P_\omega)}}: \varphi^{l,m}(\cdot,\omega) \neq 0 \right\},$$	
	where $_{W^{-1,2}(P_{\omega})}\langle\cdot, \cdot\rangle_{W^{1,2}_0(P_{\omega})}$  is the duality between $W^{-1,2}(P_{\omega})$ and $W^{1,2}_0(P_{\omega})$.
\end{proof}
\begin{lemma}
	For measurable $L,U$, the set $Y_{[L,U]}$ is measurable. 
\end{lemma}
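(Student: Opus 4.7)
The plan is to introduce the function $F(r, \omega) := \inf_{l, m \in \mathbf{N}} X_r^{l,m}(\omega)$ for $r \in \mathbf{R}$ and $\omega \in \mathcal{O}$, and to reduce membership in $Y_{[L, U]}$ to the vanishing of the infimum of $F(\cdot, \omega)$ over the random interval $[L(\omega), U(\omega)]$. First, $F(r, \cdot)$ is measurable for each fixed $r$ as a countable infimum of the measurable functions $X_r^{l,m}(\cdot)$ supplied by the preceding lemma. For each fixed $\omega$, the map $r \mapsto X_r^{l,m}(\omega)$ is Lipschitz with constant $\|\varphi^{l,m}\|_{W^{-1,2}(P_\omega)}/\|\varphi^{l,m}\|_{L^2(P_\omega)}$, which is bounded by $1$ via the embedding $L^2(P_\omega) \hookrightarrow W^{-1,2}(P_\omega)$; consequently $F(\cdot, \omega)$ is $1$-Lipschitz, and $F$ is a Carath\'eodory function, hence jointly measurable on $\mathbf{R} \times \mathcal{O}$.

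Second, I will show $F(r, \omega) = 0$ if and only if $r \in \Sp(-\Delta_{P_\omega})$. By the density of $\{\varphi^{l,m}(\cdot, \omega)\}_{l,m}$ in $W^{1,2}_0(P_\omega)$, one has $F(r, \omega) = \inf_{\psi \in W^{1,2}_0(P_\omega) \setminus \{0\}} \|(-\Delta - r)\psi\|_{W^{-1,2}(P_\omega)} / \|\psi\|_{L^2(P_\omega)}$. If $r \in \Sp(-\Delta_{P_\omega})$, an eigenfunction makes this zero. If $r \notin \Sp(-\Delta_{P_\omega})$, then since $-\Delta_{P_\omega}$ has compact resolvent, $-\Delta_{P_\omega} - r$ is boundedly invertible from $W^{1,2}_0(P_\omega)$ to $W^{-1,2}(P_\omega)$, giving a constant $c > 0$ with $\|(-\Delta - r)\psi\|_{W^{-1,2}} \geq c \|\psi\|_{L^2}$, so $F(r, \omega) \geq c > 0$.

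Third, I claim $\omega \in Y_{[L, U]}$ iff $G(\omega) := \inf_{r \in [L(\omega), U(\omega)]} F(r, \omega) = 0$. The forward implication is immediate, since an eigenvalue in the interval makes $F$ vanish there. For the converse, given $r_n \in [L(\omega), U(\omega)]$ with $F(r_n, \omega) \to 0$, compactness yields a subsequence with $r_n \to r^* \in [L(\omega), U(\omega)]$. Choose $\psi_n \in W^{1,2}_0(P_\omega)$ with $\|\psi_n\|_{L^2} = 1$ and $\|(-\Delta - r_n)\psi_n\|_{W^{-1,2}} < F(r_n, \omega) + 1/n$. Then the triangle inequality and $\|\psi_n\|_{W^{-1,2}} \leq 1$ yield $\|(-\Delta - r^*)\psi_n\|_{W^{-1,2}} \to 0$, whence $F(r^*, \omega) = 0$, i.e., $r^* \in \Sp(-\Delta_{P_\omega})$.

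Finally, the measurability of $G$ follows via the continuity of $F(\cdot, \omega)$ and the decomposition
\[
\{G(\omega) < c\} = \{F(L(\omega), \omega) < c\} \cup \bigcup_{q \in \mathbf{Q}} \bigl(\{L(\omega) < q < U(\omega)\} \cap \{F(q, \omega) < c\}\bigr),
\]
which is straightforward to verify using that a continuous function on $[L(\omega), U(\omega)]$ is dropped below $c$ either at an interior rational point (possible when $L < U$) or at the endpoint $L = U$. Each set on the right is measurable ($F(q, \cdot)$ is measurable for each rational $q$, the composition $\omega \mapsto F(L(\omega), \omega)$ is measurable by joint measurability of $F$, and $\{L < q < U\}$ is measurable), so $G$ is measurable, and hence $Y_{[L, U]} = \bigcap_{n} \{G(\omega) < 1/n\}$ is measurable as well. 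The main obstacle will be step three: combining compactness of the random interval with the Lipschitz continuity of $F$ to promote a sequence of near-approximate eigenvalues to a genuine eigenvalue, thereby ensuring that the infimum is attained.
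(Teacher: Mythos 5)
Your proof is correct and reaches the same conclusion, but it is organized differently from, and is considerably more explicit than, the paper's argument. The paper's entire proof is the single identity
$Y_{[L,U]}=\bigcap_{n}\bigl\{\o:\inf_{l,m\in\N,\,r\in\Q\cap[L-\frac1n,U+\frac1n]}X_r^{l,m}=0\bigr\}$:
the countable reduction is achieved by restricting $r$ to rationals in slightly \emph{fattened} intervals, the fattening being exactly the device that catches the degenerate case $L=U$ (where $[L,U]$ need contain no rational), and the intersection over $n$ shrinks back to $[L,U]$ using the closedness of the spectrum. You instead keep the exact interval and handle the endpoint by establishing joint (Carath\'eodory) measurability of $F(r,\o)=\inf_{l,m}X_r^{l,m}$ and composing with the random variable $L(\o)$; in exchange you prove explicitly the two characterisations ($F(r,\o)=0$ iff $r\in\Sp(-\Delta_{P_\o})$, and $G(\o)=0$ iff $\o\in Y_{[L,U]}$) that the paper leaves implicit. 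Your version is the more self-contained of the two: the attainment argument in your third step is precisely the verification that the paper's ``the claim follows by observing'' omits. One small patch: your decomposition of $\{G<c\}$ silently assumes $L(\o)\le U(\o)$. If $L>U$ the interval is empty and $G=+\infty$, yet $\{F(L(\o),\o)<c\}$ may still occur, so the first set in your union should be intersected with the (measurable) event $\{L\le U\}$; this does not affect any application in the paper, where $L\le U$ always holds.
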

\begin{proof}
	The claim follows by observing that
	$$ Y_{[L,U]}=\cap_{n \in \mathbb{N}}\Bigl\{\o: \inf_{l,m \in \mathbf{N}, r \in \mathbf{Q} \cap [ L-\nicefrac{1}{n},U+\nicefrac{1}{n}]}X^{l,m}_r=0\Bigr\}.$$
	Note that we use the intervals $[ L-\nicefrac{1}{n},U+\nicefrac{1}{n}]$ in the above in order to address the case $L=U$.
\end{proof}

\begin{lemma}
	Let $\Phi: \Omega \to L^2(\R^d)$ and $L,U:\Omega \to \mathbf{R}^{+}_0$ be random variables.  Then the mapping
	\begin{equation*}
		\o \mapsto \left\{\begin{array}{ll}
			\dist_{L^2(\R^d)}\bigl(\Phi, S_{[L,U]}\bigr) & \mbox{ \rm if }\o\in\OO,
			\\
			+\infty & \mbox{ \rm otherwise}.
		\end{array} \right.
	\end{equation*}
 	is  measurable. 
\end{lemma}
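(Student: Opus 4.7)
The plan is to reduce the problem to the measurability of the distance from $\Phi$ to each individual eigenspace of $-\Delta_{P_\omega}$, and then to take a countable infimum. The key observation is that since every eigenfunction with eigenvalue in $[L(\omega),U(\omega)]$ lies in one of the eigenspaces $E_s(\omega):=\ker(-\Delta_{P_\omega}-\Lambda_s(\omega))$ with $s$ in the random index set $A(\omega):=\{s\in\mathbf{N}:\Lambda_s(\omega)\in[L(\omega),U(\omega)]\}$, and any non-zero scalar multiple of an eigenfunction is again an eigenfunction, the closure of $S_{[L,U]}$ coincides with $\bigcup_{s\in A(\omega)} E_s(\omega)$ (with $0$ adjoined). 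Consequently, for $\omega\in\mathcal O$,
\[
\dist_{L^2(\R^d)}\bigl(\Phi(\omega), S_{[L,U]}\bigr)=\inf_{s\in A(\omega)}\dist_{L^2(\R^d)}\bigl(\Phi(\omega),E_s(\omega)\bigr),
\]
with the convention $\inf\emptyset=+\infty$. This reduces the problem to two tasks: verifying measurability of the distance to each individual $E_s$, and accounting for the random indexing.

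For fixed $s$, the family $\{\Psi_s^p(\cdot,\omega)\}_{p=1}^{N_s(\omega)}$ (extended by zero outside $P_\omega$, and, by convention, set to $0$ for $p>N_s(\omega)$) furnishes an orthonormal basis of $E_s(\omega)\subset L^2(\R^d)$, so
\[
\dist_{L^2(\R^d)}\bigl(\Phi(\omega),E_s(\omega)\bigr)^2=\|\Phi(\omega)\|_{L^2(\R^d)}^2-\sum_{p=1}^{\infty}\bigl|\langle\Phi(\omega),\Psi_s^p(\cdot,\omega)\rangle_{L^2(\R^d)}\bigr|^2.
\]
By the joint continuity of the $L^2(\R^d)$-inner product, each pairing $\omega\mapsto\langle\Phi(\omega),\Psi_s^p(\cdot,\omega)\rangle$ is measurable once $\Phi$ and $\Psi_s^p$ are measurable as $L^2(\R^d)$-valued random variables, which holds by hypothesis on $\Phi$ and by Lemmata \ref{l:B.9} and \ref{l:b.11} on measurability of eigenvalues and eigenfunctions. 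Hence the right-hand side of the displayed identity is a measurable $[0,+\infty)$-valued function of $\omega$, and so is its non-negative square root.

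Define now, for each $s\in\mathbf{N}$,
\[
f_s(\omega):=\begin{cases}\dist_{L^2(\R^d)}(\Phi(\omega),E_s(\omega)),&\omega\in\mathcal O\text{ and }\Lambda_s(\omega)\in[L(\omega),U(\omega)],\\+\infty,&\text{otherwise.}\end{cases}
\]
The set $\{\omega\in\mathcal O:\Lambda_s(\omega)\in[L(\omega),U(\omega)]\}$ is measurable since $\Lambda_s$, $L$, $U$ are measurable random variables and $\mathcal O$ is measurable; combined with the measurability established in the previous paragraph, each $f_s$ is a measurable $[0,+\infty]$-valued function. The mapping in the statement equals $\inf_{s\in\mathbf{N}} f_s$ on $\mathcal O$ and $+\infty$ on $\Omega\setminus\mathcal O$, and as the countable infimum of measurable $[0,+\infty]$-valued functions it is measurable. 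The only step that warrants real care is the first---identifying the closure of the somewhat awkward set $S_{[L,U]}$ (cones of eigenfunctions with $0$ removed) with a union of closed linear subspaces---because this is what makes the orthonormal-expansion formula for the distance to a subspace applicable and channels all the measurability into the already-established spectral lemmata.
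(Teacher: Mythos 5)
Your reduction of $\dist_{L^2(\R^d)}(\Phi,S_{[L,U]})$ to $\inf_{s\in A(\omega)}\dist(\Phi,E_s(\omega))$ and the Parseval computation of the distance to each eigenspace are mathematically sound (and the slight imprecision about the closure of $S_{[L,U]}$ is harmless, since one only needs the two inclusions $S_{[L,U]}\subset\bigcup_s E_s\subset\overline{S_{[L,U]}}\cup\{0\}$ to equate the distances). The problem is logical, not analytic: your argument is circular within the paper's framework. You invoke Lemma \ref{l:b.11} (measurability of the orthonormal eigenfunctions $\Psi_s^p$ as $L^2(\R^d)$-valued maps) to get measurability of the pairings $\langle\Phi(\omega),\Psi_s^p(\cdot,\omega)\rangle$. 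But Lemma \ref{l:b.11} is proved in the paper by Gram--Schmidt applied to the measurable projections $P_{S_{[\Lambda_s,\Lambda_s]}}\tilde\varphi^k$, and the measurability of those projections is itself derived from the very lemma you are proving, applied with $\Phi=\varphi^{l,m}$ and $L=U=\Lambda_s$ (the quantity $\dist_{L^2}(\varphi^{l,m}(\cdot,\omega),S_{[\Lambda_s,\Lambda_s]})$ appears explicitly in the definition of $H_s^n$ there). Only Lemma \ref{l:B.9} (measurability of $\Lambda_s$, which rests solely on the sets $Y_{[L,U]}$) is safely available to you at this stage.

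The paper avoids this by never touching eigenfunctions at all: it expresses the distance as an iterated $\liminf_k\limsup_n\inf_{l,m}$ over the countable dense family $\varphi^{l,m}$ of quasimodes, filtered by the condition $X_r^{l,m}<1/n$ for some rational $r$ near $[L,U]$ --- every ingredient of which was shown measurable in the preceding lemmata. That is precisely the bootstrap step that later makes measurable spectral projections and measurable eigenfunctions possible. To salvage your route you would need an independent proof that $\omega\mapsto\Psi_s^p(\cdot,\omega)$ is measurable (e.g.\ via resolvent-based spectral projections or a measurable selection theorem), which is a nontrivial addition and not something you can simply cite here.
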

\begin{proof}
	The claim follows from the formula
	\begin{equation*}
		\begin{aligned}
&	\dist_{L^2(\R^d)}\bigl(\Phi, S_{[L,U]}\bigr) 
	\\
& \quad=\liminf_{k\to \infty}\limsup_{n\to \infty} \inf_{l,m \in \mathbf{N}}\biggl\{\bigl\|\varphi^{l,m}(\cdot,\omega)-\Phi\bigr\|_{L^2(\R^d)}:X_r^{l,m} < \nicefrac{1}{n}
\textrm{ for some } r \in \mathbf{Q} \cap [L-\nicefrac{1}{k},U+\nicefrac{1}{k}]\biggr\}. 
		\end{aligned}	
	\end{equation*}
\end{proof}

	Recall that for $\omega \in \mathcal{O}$ we denote by $\Lambda_1(\omega)<\Lambda_2(\omega)<\dots<\Lambda_s(\omega)< \dots $ the eigenvalues of $-\Delta_{P_\o} $.
\begin{lemma} \label{l:B.9}
 For every $s \in \mathbf{N}$ the mapping $\Lambda_s$ is measurable.  
\end{lemma}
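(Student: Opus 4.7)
The plan is to proceed by induction on $s$, reducing measurability of $\Lambda_s$ to measurability of the sets $Y_{[L,U]}$ for suitable measurable random variables $L$ and $U$, a fact already established in the excerpt. The guiding observation is that $\Lambda_s$ is characterised purely by the intersection pattern of $\Sp(-\Delta_{P_\omega})$ with random intervals, and the enumeration of distinct eigenvalues means $\Lambda_{s-1}(\omega) < \Lambda_s(\omega)$ strictly.

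For the base case $s=1$, I would use that $\Sp(-\Delta_{P_\omega})$ is closed in $[0,\infty)$ to write
\[
\{\omega\in\mathcal{O} : \Lambda_1(\omega) \leq t\} = \bigcap_{q \in \mathbf{Q},\, q > t} Y_{[0,q]},
\]
which is a countable intersection of measurable sets. For the inductive step, assuming $\Lambda_{s-1}$ measurable, the key identity I would establish is
\[
\{\omega\in\mathcal{O} : \Lambda_s(\omega) \leq t\} = \bigcap_{q \in \mathbf{Q},\, q > t}\ \bigcup_{n \in \mathbf{N}} Y_{[\Lambda_{s-1} + 1/n,\, q]}.
\]
The inclusion ``$\subset$'' uses that if $\Lambda_s(\omega) \leq t < q$, one can pick $n$ with $1/n < \Lambda_s(\omega) - \Lambda_{s-1}(\omega)$, placing the eigenvalue $\Lambda_s(\omega)$ inside $[\Lambda_{s-1}+1/n,\,q]$. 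Conversely, any eigenvalue lying strictly above $\Lambda_{s-1}(\omega)$ is at least $\Lambda_s(\omega)$, and if such an eigenvalue lies in $[\Lambda_{s-1}+1/n,\,q]$ for some $n$, then $\Lambda_s(\omega) \leq q$; intersecting over all rational $q>t$ and invoking closedness of the spectrum gives $\Lambda_s(\omega) \leq t$. Applying the preceding lemma with the measurable random variables $L = \Lambda_{s-1} + 1/n$ and $U = q$ shows each $Y_{[\Lambda_{s-1}+1/n,\,q]}$ is measurable, hence so is the right-hand side.

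The main (minor) obstacle is verifying the set identity in the inductive step with complete care, in particular handling boundary cases where $\Lambda_s(\omega) = t$ or where $t$ is irrational. Both are absorbed by the countable intersection over rationals $q > t$ combined with closedness of $\Sp(-\Delta_{P_\omega})$, exactly as in the base case. Everything else is a bookkeeping exercise on measurable operations, so no further analytic input beyond the already-proved measurability of $Y_{[L,U]}$ and the strict ordering $\Lambda_{s-1} < \Lambda_s$ built into the enumeration is required.
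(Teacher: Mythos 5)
Your proposal is correct and follows essentially the same route as the paper: induction on $s$, reducing everything to the already-established measurability of $Y_{[L,U]}$ for measurable $L,U$, with the union over $n$ of $Y_{[\Lambda_{s-1}+1/n,\,\cdot\,]}$ encoding the strict inequality $\Lambda_{s-1}<\Lambda_s$. The only difference is your extra intersection over rationals $q>t$, which is harmless but unnecessary, since $\{\Lambda_1\leq x\}=Y_{[0,x]}$ and $\{\Lambda_s\leq x\}=\bigcup_{n}Y_{[\Lambda_{s-1}+1/n,\,x]}$ hold directly for each fixed level $x$.
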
 	
\begin{proof}
	For $s=1$	we use the equality $ \{\o:\Lambda_1\leq x\}=Y_{[0,x]}, \, x \in \mathbf{R},   $
	while for $s>1$ one has
	$$ \{\Lambda_s\leq x\}=\bigcup_{n \in \mathbf{N}}Y_{[\Lambda_{s-1}+\nicefrac{1}{n},x]}, \, x \in \mathbf{R}.     $$ 
	In each case, the claim follows immediately.
\end{proof}
\begin{lemma} 
	For each $\varphi \in L^2(\mathbf{R}^d)$ and $s\in \mathbf{N}$  the mapping $\omega \mapsto P_{S_{[\Lambda_s,\Lambda_s]}} \varphi$, taking values in  $L^2(\mathbf{R}^d)$, where $P_{S_{[\Lambda_s,\Lambda_s]}}$ is the $L^2$-orthogonal projection on $S_{[\Lambda_s,\Lambda_s]}$,  is measurable. 	
\end{lemma}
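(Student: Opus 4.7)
The strategy is to reduce the statement to measurability of real-valued functions via Pettis's theorem and the polarisation identity, and then to invoke the distance lemma just established.

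First, since $L^2(\mathbf{R}^d)$ is separable, a map $\omega \mapsto F(\omega) \in L^2(\mathbf{R}^d)$ is (strongly) measurable with respect to the Borel $\sigma$-algebra on the target if and only if $\omega \mapsto \langle F(\omega), g\rangle_{L^2(\mathbf{R}^d)}$ is measurable for every $g$ in a fixed countable dense subset of $L^2(\mathbf{R}^d)$ (Pettis's theorem). Applying this with $F(\omega) := P_{S_{[\Lambda_s,\Lambda_s]}}\varphi$, it suffices to prove that
\[
\omega \mapsto \bigl\langle P_{S_{[\Lambda_s,\Lambda_s]}}\varphi,\,g\bigr\rangle_{L^2(\mathbf{R}^d)}
\]
is measurable for each fixed $g \in L^2(\mathbf{R}^d)$.

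Next I would exploit the orthogonal projection property. Abbreviating $P:=P_{S_{[\Lambda_s,\Lambda_s]}}$, the element $P\varphi$ is supported in $P_\omega$ and belongs to $S_{[\Lambda_s,\Lambda_s]}$, whereas $g|_{P_\omega}-Pg$ is orthogonal to $S_{[\Lambda_s,\Lambda_s]}$ in $L^2(P_\omega)$. Hence
\[
\bigl\langle P\varphi, g\bigr\rangle_{L^2(\mathbf{R}^d)} \;=\; \bigl\langle P\varphi, g|_{P_\omega}\bigr\rangle_{L^2(P_\omega)} \;=\; \bigl\langle P\varphi, Pg\bigr\rangle_{L^2(P_\omega)}.
\]
By the polarisation identity and linearity of $P$,
\[
\bigl\langle P\varphi, Pg\bigr\rangle \;=\; \tfrac{1}{4}\Bigl(\bigl\|P(\varphi+g)\bigr\|_{L^2(P_\omega)}^2 - \bigl\|P(\varphi-g)\bigr\|_{L^2(P_\omega)}^2\Bigr),
\]
so the task is further reduced to proving that $\omega \mapsto \|P\psi\|_{L^2(P_\omega)}^2$ is measurable for each fixed $\psi \in L^2(\mathbf{R}^d)$.

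This last step follows directly from the preceding lemma. Splitting the integral over $\mathbf{R}^d$ into the parts over $P_\omega$ and its complement, and applying Pythagoras in $L^2(P_\omega)$ to the minimising element $P\psi$, one checks the identity
\[
\dist_{L^2(\mathbf{R}^d)}\bigl(\psi, S_{[\Lambda_s,\Lambda_s]}\bigr)^2 \;=\; \|\psi\|_{L^2(\mathbf{R}^d)}^2 - \|P\psi\|_{L^2(P_\omega)}^2,
\]
so that $\|P\psi\|_{L^2(P_\omega)}^2 = \|\psi\|_{L^2(\mathbf{R}^d)}^2 - \dist_{L^2(\mathbf{R}^d)}(\psi, S_{[\Lambda_s,\Lambda_s]})^2$. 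The first term is a constant in $\omega$, and the second is measurable by the previous lemma applied with the constant random variable $\Phi \equiv \psi$ and $L = U = \Lambda_s$ (measurable by Lemma \ref{l:B.9}).

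\textbf{Main obstacle.} There is no serious obstacle: once the projection identity $\langle P\varphi, g\rangle = \langle P\varphi, Pg\rangle$ is noted, everything reduces by polarisation to the already established measurability of the distance to the eigenspace. The only conceptual point is the use of separability of $L^2(\mathbf{R}^d)$ to promote weak-type measurability (in a dense set of test functions) to strong measurability of the $L^2$-valued map $\omega \mapsto P\varphi$.
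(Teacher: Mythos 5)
Your proof is correct, but it takes a genuinely different route from the paper. The paper argues by explicit measurable selection: using the countable family $\varphi^{l,m}(\cdot,\omega)$ that is dense in $W_0^{1,2}(P_\omega)$, it defines near-minimisers $P^n_s=\varphi^{l_n(\omega),m_n(\omega)}$ of the distance to $\varphi$ among elements lying within $1/n$ of $S_{[\Lambda_s,\Lambda_s]}$ (the constraint being measurable by the preceding distance lemma), and then identifies $P_{S_{[\Lambda_s,\Lambda_s]}}\varphi$ as the $L^2$-limit of the measurable maps $P^n_s$, so measurability follows from pointwise convergence. You instead bypass any selection by invoking Pettis's theorem (legitimate, since $L^2(\mathbf{R}^d)$ is separable), reducing to scalar measurability of $\omega\mapsto\langle P\varphi,g\rangle$, and then using self-adjointness of the projection, polarisation, and the Pythagorean identity $\|P\psi\|^2=\|\psi\|^2-\dist(\psi,S_{[\Lambda_s,\Lambda_s]})^2$ to land exactly on the quantity controlled by the preceding lemma (applied with constant $\Phi\equiv\psi$ and $L=U=\Lambda_s$, the latter measurable by the eigenvalue lemma). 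Your route is shorter and avoids having to verify that the index functions $l_n(\omega),m_n(\omega)$ are measurable (a routine but unstated point in the paper's argument); the paper's route stays entirely elementary, producing the projection as a limit of explicitly constructed measurable maps without appealing to the equivalence of weak and strong measurability. Both arguments rest on the same two ingredients already established: measurability of $\Lambda_s$ and measurability of the distance to $S_{[L,U]}$. One shared caveat, implicit in both treatments: the statement is understood for $\omega\in\mathcal{O}$ (off $\mathcal{O}$ the distance is set to $+\infty$ and the projection is taken to be zero), and $S_{[\Lambda_s,\Lambda_s]}$ is to be read as the closed eigenspace so that the nearest-point and orthogonal-projection characterisations agree.
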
 
\begin{proof} 
	For every $n \in \mathbf{N}$ we define the random variable
	$$ H_s^n:=  \inf_{l,m \in \mathbf{N}}\Big\{\bigl\|\varphi^{l,m}(\cdot,\omega)-\varphi\bigr\|_{L^2(\mathbf{R^d})}: \dist_{L^2} (\varphi^{l,m}(\cdot,\omega),S_{[\Lambda_s,\Lambda_s]} )<\nicefrac{1}{n} \B	ig\}.   $$
	We also define the  random variables $P^n_s$ as follows: 
	\begin{eqnarray*}
		P^n_s(\omega) = \varphi^{l_n(\omega),m_n(\omega)} (\omega), \qquad
		l_n(\omega) &:=& \min_{l \in \mathbb{N}} \left\{ \exists m \in\mathbf{N}: \bigl\|\varphi^{l,m}(\cdot,\omega)-\varphi\bigr\|_{L^2(\mathbf{R^d})} < H^n_s+\nicefrac{1}{n}   \right\},\\
		m_n(\omega)&:=& \min_{m \in \mathbb{N}} \left\{\bigl\|\varphi^{l_n(\o),m}(\cdot,\omega)-\varphi\bigr\|_{L^2(\mathbf{R^d})}< H^n_s+\nicefrac{1}{n}  \right\}.
	\end{eqnarray*}
	It is easy to see that for a fixed $\o\in \OO$ one has $  P_{S_{[\Lambda_s,\Lambda_s]}} \varphi=\lim_{n \to \infty} P^n_s,$
	where the convergence on the right-hand side is in $L^2(\mathbf{R^d})$.
\end{proof} 		
\begin{lemma}\label{l:b.11}
	For every $\omega \in \mathcal{O}$ and $s \in \mathbf{N}$ we define $N_s(\omega)$ as the dimension of $S_{[\Lambda_s, \Lambda_s]}$. Then $N_s$ is measurable. Moreover, there exist $\Psi^1_s(\omega), \Psi^2_s(\omega),\dots, \Psi^n_s(\omega), \dots$, measurable, taking values in $L^2(\R^d)$, such that $\Psi^1_s,\dots, \Psi_s^{N_s}$ is an orthonormal basis in $S_{[\Lambda_s, \Lambda_s]}$  (in the sense of $L^2(\R^d)$), and $\Psi_s^{N_s+1}=\Psi_s^{N_s+2}=\cdots=0$.   
\end{lemma}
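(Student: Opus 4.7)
The plan is to build on the preceding lemma, which already gives the measurability of the orthogonal projection $\omega \mapsto P_{S_{[\Lambda_s, \Lambda_s]}}\varphi$ (as an $L^2(\mathbf{R}^d)$-valued random variable) for each fixed $\varphi \in L^2(\mathbf{R}^d)$. First, fix a countable family $\{\varphi_j\}_{j \in \mathbf{N}} \subset L^2(\mathbf{R}^d)$ that is dense in $L^2(\mathbf{R}^d)$ (e.g.\ the $\varphi^{l,m}$ constructed above augmented by any countable dense family in $L^2(\mathbf{R}^d)$). Then $\{P_{S_{[\Lambda_s,\Lambda_s]}}\varphi_j\}_{j\in\mathbf{N}}$ has dense linear span in the finite-dimensional eigenspace $S_{[\Lambda_s,\Lambda_s]}$, so $N_s(\omega)$ equals the largest $k$ such that some $k$-subfamily of $\{P_{S_{[\Lambda_s,\Lambda_s]}}\varphi_j\}$ is linearly independent.

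For the measurability of $N_s$, linear independence of a finite subfamily is detected by non-vanishing of the associated Gram determinant. For any tuple $(j_1,\ldots,j_k)\in \mathbf{N}^k$, the Gram matrix
\[
G^{(j_1,\ldots,j_k)}_{ab}(\omega) := \bigl(P_{S_{[\Lambda_s,\Lambda_s]}}\varphi_{j_a},\, P_{S_{[\Lambda_s,\Lambda_s]}}\varphi_{j_b}\bigr)_{L^2(\mathbf{R}^d)}
\]
has measurable entries, since inner products of measurable Hilbert-space valued maps are measurable; hence $\det G^{(j_1,\ldots,j_k)}(\omega)$ is measurable as a polynomial in these entries. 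Therefore
\[
\{N_s \geq k\} = \bigcup_{(j_1,\ldots,j_k)\in \mathbf{N}^k} \bigl\{\omega\in\mathcal{O}:\det G^{(j_1,\ldots,j_k)}(\omega) > 0\bigr\}
\]
is a countable union of measurable sets, giving the measurability of $N_s$.

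For the basis, I will run a measurable Gram--Schmidt procedure on the sequence $\{P_{S_{[\Lambda_s,\Lambda_s]}}\varphi_j\}_{j\in\mathbf{N}}$. Recursively define random indices
\[
j_1(\omega) := \min\bigl\{j\in\mathbf{N}: P_{S_{[\Lambda_s,\Lambda_s]}}\varphi_j \neq 0\bigr\}
\]
(with $j_1(\omega) := +\infty$ if the set is empty, which happens exactly when $N_s(\omega)=0$), and set $\Psi^1_s(\omega) := P_{S_{[\Lambda_s,\Lambda_s]}}\varphi_{j_1(\omega)}/\|P_{S_{[\Lambda_s,\Lambda_s]}}\varphi_{j_1(\omega)}\|$ on $\{j_1 < \infty\}$ and zero otherwise. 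Measurability follows from writing
\[
\Psi^1_s(\omega) = \sum_{j_0 \in \mathbf{N}} \mathbf{1}_{\{j_1 = j_0\}}(\omega) \, \frac{P_{S_{[\Lambda_s,\Lambda_s]}}\varphi_{j_0}}{\|P_{S_{[\Lambda_s,\Lambda_s]}}\varphi_{j_0}\|},
\]
where each event $\{j_1 = j_0\}$ is measurable. Having constructed $\Psi^1_s,\ldots,\Psi^i_s$, let
\[
R^{i}_j(\omega) := P_{S_{[\Lambda_s,\Lambda_s]}}\varphi_j - \sum_{a=1}^{i} \bigl(P_{S_{[\Lambda_s,\Lambda_s]}}\varphi_j,\Psi^a_s\bigr)\Psi^a_s,
\]
set $j_{i+1}(\omega) := \min\{j: R^{i}_j(\omega) \neq 0\}$ (with $j_{i+1} := +\infty$ on the complement), and put $\Psi^{i+1}_s(\omega) := R^{i}_{j_{i+1}(\omega)}/\|R^{i}_{j_{i+1}(\omega)}\|$, interpreted as $0$ when $j_{i+1}=\infty$. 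The same sum-over-indicators representation gives measurability, and by density of $\{\varphi_j\}$ the procedure exhausts $S_{[\Lambda_s,\Lambda_s]}$ exactly after $N_s(\omega)$ steps, so $\Psi^{N_s+1}_s = \Psi^{N_s+2}_s = \cdots = 0$ as required.

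I do not foresee a serious obstacle here: once one uses the previous lemma to treat $\omega \mapsto P_{S_{[\Lambda_s,\Lambda_s]}}\varphi$ as a measurable $L^2$-valued map, everything reduces to a countable bookkeeping of Gram determinants and a measurable version of Gram--Schmidt. The only point that requires minor care is keeping the definitions honest on the set where the various indices $j_i$ become infinite (i.e.\ once the eigenspace is exhausted), which is handled by extending all maps by $0$ on those events.
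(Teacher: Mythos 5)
Your proof is correct and follows essentially the same route as the paper: project a fixed countable dense family onto $S_{[\Lambda_s,\Lambda_s]}$ using the preceding lemma and then run a measurable Gram--Schmidt, discarding the zero remainders. Your version is somewhat more explicit than the paper's (the Gram-determinant characterisation of $N_s$ and the indicator-sum bookkeeping for the indices $j_i$ replace the paper's brief ``rearrange the sequence'' step), but the underlying argument is the same.
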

\begin{proof} 
	We take the sequence $\tilde \varphi^k$ and define the measurable functions $ C^k_s= P_{S_{[\Lambda_s, \Lambda_s]}} \tilde \varphi^k$ taking values in $L^2(\R^d)$. Then we construct the sequence $\tilde \Psi^k_s$ by applying the Gramm-Schmidt orthogonalization process to the sequence $C^k_s$.  Note that for every $s \in \mathbf{N}$ and $\omega \in \mathcal{O}$ we have that there is at most finite number of $\tilde \Psi^k_s$ that are different from zero and they form the orthonormal basis in $S_{[\Lambda_s, \Lambda_s]}$. The claim follows by rearranging the sequence $\tilde \Psi^k_s$, taking into account that the set  of all finite subsets of $\mathbf{N}$  is countable. 
\end{proof} 	

Clearly, the sequence $\{\Psi_s^p\}_{s\in \N, p = 1,\ldots, N_s}$ in an orthonormal basis in $L^2(P_\o)$.

	We conclude this part of the Appendix by providing a streamlined  proof of \cite[Corollary 4.9 and Theorem 5.6]{ChChV}
	\begin{proposition} \label{pppppp1}
		Under  Assumption \ref{kirill100} we have 
		\begin{equation*} 
			\Sp(-\Delta_\OO)= \overline{ \bigcup_{x \in \R^d} \bigcup_{s \in \mathbf{N}} \{ \Lambda_s (T_x \omega)\}}.
		\end{equation*} 	
	\end{proposition}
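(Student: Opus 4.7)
My plan is to derive the stated identity directly from the identity \eqref{20a}, namely
\[
\Sp(-\Delta_\OO) = \overline{\bigcup_{k \in \mathbf{N}} \Sp(-\Delta_{\mathcal{O}^k_\omega})} \quad \text{for a.e. } \omega,
\]
by relating, for a typical $\omega$, each individual inclusion spectrum $\Sp(-\Delta_{\mathcal{O}^k_\omega})$ to the countable family $\{\Lambda_s(T_x\omega)\}_{s \in \mathbf{N}}$ as $x$ ranges over that inclusion. The basic observation is that $\Lambda_s$ is, by definition, well-defined only on $\mathcal{O}$, and for $x \in \mathcal{O}^k_\omega$ the point $T_x\omega$ lies in $\mathcal{O}$ with the inclusion containing the origin being $\mathcal{O}^k_\omega - x$.

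First I would fix a typical $\omega$ (in the sense of the ergodic theorem) and pick an arbitrary $x \in \mathcal{O}_\omega$. Then $x$ lies in a unique component $\mathcal{O}^k_\omega$, and by the construction in Section \ref{s:4.1},
\[
P_{T_x\omega} = \mathcal{O}^k_\omega - x - D(T_x\omega) - d_{1/4}.
\]
This is merely a rigid translation of $\mathcal{O}^k_\omega$, so the Dirichlet Laplacian $-\Delta_{P_{T_x\omega}}$ is unitarily equivalent to $-\Delta_{\mathcal{O}^k_\omega}$, and in particular
\[
\bigcup_{s \in \mathbf{N}} \{\Lambda_s(T_x\omega)\} = \Sp(-\Delta_{\mathcal{O}^k_\omega}).
\]
Taking the union over $x \in \mathcal{O}_\omega = \bigcup_k \mathcal{O}^k_\omega$ (and noting that $\Lambda_s$ is not defined, or gives nothing new, for $x \notin \mathcal{O}_\omega$), I obtain
\[
\bigcup_{x \in \R^d} \bigcup_{s \in \mathbf{N}} \{\Lambda_s(T_x\omega)\} = \bigcup_{k \in \mathbf{N}} \Sp(-\Delta_{\mathcal{O}^k_\omega}).
\]

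Taking closures and invoking \eqref{20a} yields the desired equality. Since the left-hand side $\Sp(-\Delta_\OO)$ is, by construction, an intrinsic (deterministic) object attached to the operator on $L^2(\mathcal{O})$, the almost sure coincidence with the right-hand side automatically certifies that the right-hand side is deterministic almost surely.

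The only minor subtlety to check is that the points $x \notin \mathcal{O}_\omega$ do not contribute anything that could destroy one of the inclusions: by the convention in Section \ref{s:4.1} the family $\{\Lambda_s(T_x\omega)\}$ is only indexed over $x$ for which $T_x\omega \in \mathcal{O}$ (equivalently $x \in \mathcal{O}_\omega$), so the union on the right is unchanged if we restrict to such $x$. Consequently the argument reduces to an identification of indexing, and no real obstacle arises — the heavy lifting is already contained in \eqref{20a}, whose proof is recalled in the preceding remark.
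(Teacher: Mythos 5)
Your argument is correct, and it takes a genuinely different route from the paper. You reduce the proposition to the cited identity \eqref{20a} by observing that for $x\in\mathcal{O}^k_\omega$ the set $P_{T_x\omega}$ is a rigid translate of $\mathcal{O}^k_\omega$, so that $\bigcup_{s}\{\Lambda_s(T_x\omega)\}=\Sp(-\Delta_{\mathcal{O}^k_\omega})$ and the double union over $x$ and $s$ is exactly $\bigcup_{k}\Sp(-\Delta_{\mathcal{O}^k_\omega})$; the determinism of the right-hand side then comes for free from the determinism of $\Sp(-\Delta_\OO)$. The paper does not take this shortcut: its proof of Proposition \ref{pppppp1} is self-contained and does not invoke \eqref{20a}. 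It first shows directly that $\mathrm{Eig}(\omega)$ is almost surely deterministic (translation invariance of the sets $A_{q,n}$ combined with ergodicity, which is needed later to guarantee that the sets $I_\delta$ have positive probability), then obtains $\Sp(-\Delta_\OO)\subset\mathrm{Eig}$ from the resolution of identity of Proposition \ref{josip3}, and proves the reverse inclusion by building approximate eigenfunctions in $L^2(\OO)$ out of the spectral projections $P^{l}_{[\lambda-\delta,\lambda+\delta]}$ on a set of positive probability, via a Fubini argument on the ball $B_\rho$. The trade-off is clear: your route is shorter but places the entire burden on \eqref{20a}, which is imported from \cite{ChChV} (and whose original proof required an extra hypothesis, as discussed in the remark following \eqref{20a}), whereas the paper's route delivers the advertised streamlined, self-contained proof --- indeed, combined with your translation identification it yields an independent derivation of \eqref{20a} itself. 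Since \eqref{20a} is stated in the paper as an established fact, your argument is formally valid, but you should be aware that it establishes an equivalence of two formulations rather than re-proving the underlying spectral identity.
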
 	
	\begin{proof} 
		We denote 
		$$ \textrm{Eig}(\omega):=\overline{ \bigcup_{x \in \R^d} \bigcup_{s \in \mathbf{N}} \{ \Lambda_s (T_x \omega)\}}.$$
		It is easy to see that the set $\textrm{Eig}(\omega)$ is almost surely independent of $\omega \in \Omega$, i.e. deterministic. Indeed, it is clear that for $q \in \mathbf{Q}$, $n \in \mathbf{N}$ the set $ A_{q,n}:=\left\{\omega \in \Omega: (q-\nicefrac{1}{n},q+\nicefrac{1}{n}) \subset    (\textrm{Eig}(\omega))^c \right\} $
		is translation invariant and thus is of probability zero or one. Then the set 
		$$
		\Omega_1:= \Bigg(\bigcap_{q \in \mathbf{Q}, n \in \mathbf{N}: P(A_{q,n})=1} A_{q,n}\Bigg) \bigcap \Bigg(\bigcap_{q \in \mathbf{Q}, n \in \mathbf{N}: P(A_{q,n})=0} (A_{q,n})^c \Bigg)
		$$
		has probability one. First, we have a trivial identity valid for every $\o\in \O$,
		$$
     	\bigcup_{q \in \mathbf{Q}, n \in \mathbf{N}: \o\in A_{q,n}} (q-\nicefrac{1}{n},q+\nicefrac{1}{n}) = (\textrm{Eig}(\omega))^c.
    	$$
    	Next, it is not difficult to see that the set on the left-hand side is constant for all $\o\in \O_1$, which proves the claim. In the rest of the proof we omit the dependence of the set $\textrm{Eig}(\omega)$ on $\omega$.
    	
		The inclusion  $(\textrm{Eig})^c \subset (\Sp(-\Delta_\OO))^c$ is a direct consequence of \eqref{sasha110} and Proposition \ref{josip3}. From this we conclude that $\Sp(-\Delta_\OO) \subset \textrm{Eig}$. 
		
		To prove the opposite inclusion, take $\lambda \in \textrm{Eig}$. 
		It is easy to see that for all $\delta>0$ the set  $I_{\delta}:=\{\omega \in \mathcal{O}: [\lambda-\delta,\lambda+\delta] \cap \cup_{s \in \mathbf{N}} \{\Lambda_s (\omega)\} \} $ 
		has  positive probability, since $\rm Eig$ is deterministic.  
		For each $l \in \mathbf{N}$ we use the notation
		$$P^{l}_{[\lambda-\delta,\lambda+\delta]}(\omega):=
		\begin{cases}P_{S_{[\lambda-\delta,\lambda+\delta]}}(\tilde \varphi^l)(D(\omega)) & \textrm{if } \omega \in \mathcal{O}, \\
			0 & \textrm{otherwise}. 
		\end{cases} 
		$$
		Clearly,  for each $\delta>0$  there exists
		$l(\delta)\in \mathbf{N}$ such that $P^{l(\delta)}_{[\lambda-\delta,\lambda+\delta]} \neq 0$, since
		
		$$ I_{\delta} \subset \bigcup_{l \in \mathbf{N}} \{P^{l}_{[\lambda-\delta,\lambda+\delta]} \neq 0\}. $$ Also from the definition it follows that almost surely we have
		\begin{equation} \label{1208} 
			\int_{B_{\rho}} |(-\Delta_{\mathcal{O}}P^{l(\delta)}_{[\lambda-\delta,\lambda+\delta]})(T_x \omega)-\lambda P^{l(\delta)}_{[\lambda-\delta,\lambda+\delta]}(T_x \omega) |^2 dx \leq \delta \int_{\square^4} |P^{l(\delta)}_{[\lambda-\delta,\lambda+\delta]}(T_x \omega)|^2 dx, 
		\end{equation} 
		where we used the fact that $B_{\rho}$ can intersect at most one inclusion in $\mathcal{O}_{\omega}$, which is then necessarily contained in $\square^4$. Integrating  \eqref{1208} over $\Omega$ and using Fubini's theorem, we infer that there exists $C>0$ such that 
		$$ \|-\Delta_{\mathcal{O}}P^{l(\delta)}_{[\lambda-\delta,\lambda+\delta]}-   \lambda P^{l(\delta)}_{[\lambda-\delta,\lambda+\delta]}       \|_{L^2(\O)}\leq C \delta \|P^{l(\delta)}_{[\lambda-\delta,\lambda+\delta]}\|_{L^2(\O)}. $$
		Using the fact that $\delta>0$ is arbitrary, we conclude that $\lambda \in \Sp(-\Delta_\OO)$.
	\end{proof}

 \section{Higher regularity of the corrector}\label{ap:regul}

In order to prove higher regularity of the corrector we need to recall special versions of two well-known results, namely the Poincar\'e-Sobolev inequality for perforated domains and the reverse H\"older's inequality. We begin with the former. 

The Poincar\'e-Sobolev inequality provided in Lemma \ref{l:poincare-sobolev} is valid for a more general family of perforated domains $\R^d\setminus \OO_\o$ than those considered in this paper. Namely, in Lemma \ref{l:poincare-sobolev} we assume that the set $\OO_\o$ satisfies Assumption \ref{kirill100}, but we do not make the assumption that it is generated by a dynamical system on a probability space. We still keep the notation $\OO_\o$ and $\OO_\o^k, k\in \N,$ for the set of inclusions and the individual inclusions, respectively.  In addition, we admit sets ${\mathcal O}_\omega$ comprising a finite number of inclusions.

\begin{lemma}\label{l:poincare-sobolev}
	There exist $C>0$ and $m>1$ such that for any  $f\in W^{1,p}_{loc}(\R^d\setminus \OO_\o)$, $R>0$,  one has
	\begin{equation}\label{74}
		\|f - c_R(x) \|_{L^q(B_{R}(x) \setminus  \OO_\o)} \leq C R^{d({1}/{q} - {1}/{p})+1} \|\nabla f\|_{L^p(B_{mR}(x) 	\setminus \OO_\o)}, \quad x\in \R^d,
	\end{equation}
	where $p\leq q\leq dp/(d-p)$ and $c_R(x)$ is (for a given $x$) a constant that only depends on the values of $f$ in $B_{mR}(x) 	\setminus \OO_\o$.
\end{lemma}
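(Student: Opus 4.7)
The strategy is to reduce the claim to the classical scale-invariant Poincaré--Sobolev inequality on balls by means of a global extension obtained from Theorem \ref{th:extension}.

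First, I would construct a function $\widetilde f \in W^{1,p}_{\rm loc}(\R^d)$ by gluing: set $\widetilde f = f$ on $\R^d \setminus \OO_\o$ and $\widetilde f = E_k f$ on each inclusion $\OO_\o^k$, where $E_k$ is the bounded extension operator of Theorem \ref{th:extension} applied to $f\vert_{\mathcal{B}_\o^k \setminus \OO_\o^k}$. The pieces match up in $W^{1,p}$ across $\partial \OO_\o^k$ because $E_k f$ and $f$ coincide on $\mathcal{B}_\o^k \setminus \OO_\o^k$, which contains a neighbourhood of $\partial \OO_\o^k$.

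Next, I would apply the standard Poincaré--Sobolev inequality on $B_R(x)$ to $\widetilde f$, choosing $c_R(x)$ to be its average on $B_R(x)$, to obtain
\begin{equation*}
\|\widetilde f - c_R(x)\|_{L^q(B_R(x))} \leq C R^{d(\frac{1}{q}-\frac{1}{p})+1} \|\nabla \widetilde f\|_{L^p(B_R(x))}.
\end{equation*}
Since $\widetilde f = f$ on $B_R(x) \setminus \OO_\o$, the left-hand side dominates $\|f - c_R(x)\|_{L^q(B_R(x) \setminus \OO_\o)}$. To estimate the right-hand side I would split
\begin{equation*}
\|\nabla \widetilde f\|_{L^p(B_R(x))}^p \leq \|\nabla f\|_{L^p(B_R(x) \setminus \OO_\o)}^p + \sum_{k:\,\OO_\o^k \cap B_R(x) \neq \emptyset} \|\nabla E_k f\|_{L^p(\OO_\o^k)}^p,
\end{equation*}
then apply \eqref{3a} to each term of the sum and invoke the bounded overlap of the extension sets $\mathcal{B}_\o^k$, which follows from Assumption \ref{kirill100} together with the facts that every $\mathcal{B}_\o^k$ fits in a unit cube and $\dist(\OO_\o^k, \OO_\o^j) \geq \rho$ for $k \neq j$. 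Because every $\mathcal{B}_\o^k$ that meets $B_R(x)$ lies in $B_{R+\sqrt d}(x)$, this produces
\begin{equation*}
\|\nabla \widetilde f\|_{L^p(B_R(x))} \leq C\|\nabla f\|_{L^p(B_{R+\sqrt d}(x) \setminus \OO_\o)},
\end{equation*}
which for $R \geq 1$ yields the desired bound with $m := 1 + \sqrt d$.

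The main obstacle is obtaining a single $m > 1$ valid for all $R > 0$, since the extension sets are of unit scale and the naive bound $R+\sqrt d \leq mR$ fails as $R \to 0$. I would handle the remaining range $R < 1$ by case analysis: if $B_R(x) \cap \OO_\o = \emptyset$, the classical Poincaré--Sobolev inequality on $B_R(x)$ applied directly to $f$ gives the bound trivially; otherwise, by the separation $\dist(\OO_\o^k, \OO_\o^j) \geq \rho$ together with $\diam \OO_\o^k < 1/2$, the ball $B_R(x)$ meets only boundedly many inclusions (uniformly in $\o$, $x$, and $R \in (0,1)$), and since each $\OO_\o^k$ is $(\rho,\mathcal N,\gamma)$ minimally smooth, the uniform Poincaré--Sobolev inequality for minimally smooth perforated domains (as in \cite[Proposition 3.2]{GuillenKim} in the form reviewed in the proof of Theorem \ref{th:extension}) applies directly to $f$ on $B_R(x) \setminus \OO_\o$ with constants depending only on $\rho$, $\mathcal N$, $\gamma$, $p$ and $d$. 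Combining the two cases fixes a single $m$ depending only on these parameters.
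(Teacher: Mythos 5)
Your global-extension argument correctly reproduces the paper's treatment of the two easy regimes: balls disjoint from $\OO_\o$ (classical inequality) and large balls, say $R>R_0/2$ for a fixed $R_0$ determined by $\rho$ (extend $f$ harmonically into the inclusions via Theorem \ref{th:extension}, apply the classical inequality to $\widetilde f$ on $B_R(x)$, and absorb the fixed enlargement $B_{R+\sqrt d}(x)\supset\bigcup\{\mathcal B_\o^k:\OO_\o^k\cap B_R(x)\neq\emptyset\}$ into $B_{mR}(x)$ because $R$ is bounded below). This is essentially what the paper does for large $R$.

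The gap is in your small-$R$ case when $B_R(x)$ meets $\partial\OO_\o$. You assert that the uniform Poincar\'e--Sobolev inequality of \cite[Proposition 3.2]{GuillenKim} ``applies directly to $f$ on $B_R(x)\setminus\OO_\o$'' with the correct $R$-scaling. It does not: that result concerns the fixed unit-scale annuli $\mathcal B_\o^k\setminus\overline{\OO_\o^k}$, not the intersection of an arbitrarily small ball with the perforated domain. More fundamentally, $B_R(x)\setminus\OO_\o$ need not be connected (near a reentrant Lipschitz corner of an inclusion a small ball can be cut into two components of the complement), and for a disconnected set no inequality of the form $\|f-c\|_{L^q(B_R(x)\setminus\OO_\o)}\le CR^{\cdot}\|\nabla f\|_{L^p(B_R(x)\setminus\OO_\o)}$ with a single constant $c$ can hold --- test it on a function that is locally constant with different values on the two components. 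This is precisely why the enlargement factor $m>1$ is needed in the small-$R$ regime as well, and your case analysis never supplies it there. The paper's fix, which is the actual content of the lemma, is to replace $B_R(x)$ by a box $Q_{R,\gamma}$ centred at a nearby boundary point, inside which $\R^d\setminus\OO_\o$ is the subgraph of a Lipschitz function with constant $\gamma$; such subgraphs are connected, satisfy a cone condition with parameters depending only on $\gamma$, and hence admit a scale-invariant Poincar\'e--Sobolev inequality with a uniform constant; the sandwich $B_R\subset Q_{R,\gamma}\subset B_{mR}$ (with $m=\sqrt{d-1+\gamma^2}$, suitably adjusted) then yields \eqref{74}. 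Without some version of this local graph argument your proof does not close.
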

\begin{proof}
	For $R>0$ we introduce a rectangular box $Q_{R,\gamma}:= (-R,R)^{d-1}\times (-\gamma R, \gamma R)$. 	 Part 1 of Assumption \ref{kirill100} implies that there exists a number $R_0>0$ (the largest $R$ such that the box $Q_{R,\gamma}$ is contained in the ball $B_\rho$) such that for any inclusion $\OO_\o^k$ and any $x_0 \in \partial \OO_\o^k$ there exist local coordinates $y$ congruent to $x$ with the origin at $x_0$ such that the intersection of the set  $\R^d\setminus \OO_\o$ with  $Q_{R_0,\gamma}$ is the subgraph of a Lipschitz continuous function  $g$ with $g(0)=0$ and Lipschitz constant $\gamma$:
	\begin{equation}\label{84a}
		Q_{R_0,\gamma}\setminus \overline{\OO_\o} = \{y\in Q_{R_0,\gamma} \,|\, y_d< g(y_1,\dots, y_{d-1})\}.
	\end{equation}	
	\begin{remark}
		Obviously, the above property  also holds for  boxes $Q_{R,\gamma}$ with $0<R<R_0$. 
	\end{remark}
	
	Consider the class of sets of the form (representing a scaled version of \eqref{84a})
	\begin{equation*}
		U:= \{y\in Q_{1,\gamma} |\, y_d< g(y_1,\dots, y_{d-1})\},
	\end{equation*}
	where $g$ is as above. Clearly, the sets $U$ satisfy the  cone condition uniformly (i.e. with the cone parameters depending only on $\gamma$). It is well known (see \cite{BC} and e.g. \cite{Adams}) that in this case the Poincar\'e inequality 
	\begin{equation}\label{85}
	\Big\|f - \fint_U f \Big\|_{L^p( U)}\leq C  \|\nabla f\|_{L^p( U)},
	\end{equation}
	and the Sobolev inequality
	\begin{equation}\label{86}
		\|f \|_{L^q( U)}\leq C  \| f\|_{W^{1,p}( U)}, \,\, p\leq q\leq dp/(d-p),
	\end{equation}
	hold for any $f\in W^{1,p}(U)$ with a constant $C$ that depends only on the cone parameters (i.e. on $\gamma$) and $p$, but not on the domain $U$. 
	\begin{remark} 	
		It is not difficult to prove the uniform Poincar\'e inequality \eqref{85} directly. It follows by making a simple change of variables transforming $U$ into the fixed box $(-1,1)^{d-1}\times (-\gamma, 0)$, applying the Poincar\'e inequality on the box and transforming back to $U$. 
	\end{remark}  
	Combining \eqref{85} and \eqref{86}, we obtain the Sobolev-Poincar\'e inequality 
	\begin{equation*}
		\Big\|f - \fint_U f \Big\|_{L^q(U)} \leq C  \|\nabla f\|_{L^p(U)},\qquad p\leq q\leq dp/(d-p),
	\end{equation*}
	with $C$ independent of $U$. Observing that $B_1 \subset Q_{1,\gamma} \subset B_m$, $m=m_0:=\sqrt{d-1+\gamma^2}$, we obtain 
	\begin{equation*}
		\Big\|f - \fint_U f \Big\|_{L^q(B_1)} \leq \Big\|f - \fint_U f \Big\|_{L^q(U)} \leq C  \|\nabla f\|_{L^p(B_m)},\qquad p\leq q\leq dp/(d-p).
	\end{equation*}
	Now, taking $x\in \partial \OO_\o^k$ and $R\leq R_0$, and using a scaling argument yields \eqref{74}  with $m=m_0$, $c_R(x) = \fint_{Q_{R,\gamma}\setminus {\OO_\o}}f $, where $Q_{R,\gamma}$ is now understood in the global coordinates, i.e. is an appropriately oriented box centred at $x$.
	
	Next we argue that \eqref{74} holds (with different $m$ and $c_R(x)$) for a ball $B_R(x)$ with arbitrary $R$ and $x$. Assume first that  $B_R(x) \subset \R^d\setminus \OO_\o$, then \eqref{74}  holds with $m=1$ and $c_R(x) = \fint_{B_R(x)} f$  by the standard Poincar\'e-Sobolev inequality for a ball. For $R\leq R_0/2$ and $\emptyset \neq B_R\cap \OO_\o \neq B_R$  there exists   $x_0 \in \partial \OO_\o^k$ such that $B_R(x)\subset B_{2R}(x_0)$. We conclude that \eqref{74} holds with $m=2 m_0 +1$ and $c_R(x) = \fint_{Q_{2R,\gamma}\setminus {\OO_\o}} f$, where $Q_{2R,\gamma}$ is centred at $x_0$ and appropriately oriented. Finally, consider the case $R>R_0/2$ and, again, $\emptyset \neq B_R(x)\cap \OO_\o \neq B_R(x)$. Let $\widetilde f\in W^{1,p}_{loc}(\R^d)$ be the extension of $f$ as per Theorem \ref{th:extension}. We have 
	\begin{equation*}
		\begin{aligned}
				\Big\|f - \fint_{B_{R}(x)}\widetilde f  \Big\|_{L^q(B_{R}\setminus  \OO_\o)} \leq &\Big\|\widetilde f - \fint_{B_{R}(x)}\widetilde f  \Big\|_{L^q(B_{R}(x) )} 
				\\
				\leq& C R^{d(1/q-1/p)+1} \|\nabla \widetilde f\|_{L^p(B_{R}(x))} \leq C R^{d(1/q-1/p)+1} \|\nabla f\|_{L^p(B_{R+\sqrt d}(x)\setminus \OO_\o)},
		\end{aligned}
	\end{equation*}
	where we use the fact that for any $\OO_\o^k$ with $\OO_\o^k\cap B_R(x)\neq \emptyset$ the corresponding ``extension set'' $\mathcal{B}_\o^k$ is contained in  $B_{R+\sqrt d}(x)$ (so $c_R(x) = \fint_{B_{R}(x)} \widetilde f $). Thus \eqref{74} holds for any ball $B_R(x)$ with $m = \max\{2 m_0+1, 1 + \sqrt d/R_0\}$ and   $c_R(x)$ as specified above for each case.
\end{proof}

We will need the following simplified version of the reverse H\"older's inequality, see e.g. Theorem 1.10 and Remark 1.14 in \cite{Bensoussan}.
\begin{lemma}\label{l:reverseHolder}
	Let $k\geq 2$,  $f\in L^q(\square_{x_0}^L), q>1$, for some cube $\square_{x_0}^L$ and assume that 
	\begin{equation*}
		\fint_{\square_x^{a/k}} |f|^q   \leq c\Big( 1 +   \Big(\fint_{\square_x^{a}} |f|\Big)^q \Big)
	\end{equation*}
	for some $c>0$ and every cube $\square_x^{a}$  contained in $\square_{x_0}^L$. Then there exists a constant $\epsilon = \epsilon(q,d,c,k)>0$ such that $\forall p$ with $q\leq p < q+\epsilon$ one has
	\begin{equation*}
			\Big(\fint_{\square_x^{a/k}} |f|^p \Big)^{1/p} \leq c_p\Big( 1 +   \Big(\fint_{\square_x^{a}} |f|^q\Big)^{1/q} \Big)
	\end{equation*}
	for $\square_x^{a}\subset\square_{x_0}^L$, where $c_p$ depends on $p,q,d,c$ and $k$.
\end{lemma}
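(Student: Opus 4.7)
My approach follows the classical Gehring--Giaquinta self-improvement scheme, with the added nuisance that the hypothesis contains an inhomogeneous ``$1+$'' term and compares cubes of side $a/k$ and $a$ rather than concentric cubes of equal size. The starting point is to pass from the reverse H\"older inequality to a good-$\lambda$ (distribution-function) inequality for $F:=|f|^q$, and then to integrate it against $t^{p/q-1}\,dt$ to extract integrability above $q$.

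First, I would fix $\square_x^{a/k}\subset\square_x^{a}\subset\square_{x_0}^L$ and set a truncation level $t_0$ slightly larger than $\fint_{\square_x^{a}}F$ times a factor depending only on $c,d,k$. For each $t>t_0$ I would apply a Calder\'on--Zygmund stopping-time argument to $F$ on $\square_x^{a/k}$: subdivide dyadically and select the maximal dyadic subcubes $Q_j$ on which $\fint_{Q_j}F>t$. By maximality one has $\fint_{\widetilde Q_j}F\leq 2^d t$ on the dyadic parent $\widetilde Q_j$, and by the Lebesgue differentiation theorem $F\leq t$ a.e.\ on $\square_x^{a/k}\setminus\bigcup_jQ_j$. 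The hypothesis, applied to each $Q_j$ (with its concentric cube $kQ_j$ of side $k\cdot\mathrm{side}(Q_j)$, which is still inside $\square_x^{a}$ after a harmless initial enlargement of the ambient cube), gives
\begin{equation*}
    t<\fint_{Q_j}F\leq c\Bigl(1+\Bigl(\fint_{kQ_j}|f|\Bigr)^q\Bigr).
\end{equation*}
For $t>2c$ this forces $\fint_{kQ_j}|f|\geq (t/(2c))^{1/q}$, which in turn yields the crucial estimate
\begin{equation*}
    |kQ_j|\cdot (t/(2c))^{1/q}\leq C\!\!\int\limits_{kQ_j\cap\{|f|>\eta t^{1/q}\}}|f|,
\end{equation*}
with $\eta=\eta(c)$ chosen so that the part of $kQ_j$ where $|f|\leq\eta t^{1/q}$ contributes at most half of the lower bound.

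Second, using a Vitali-type covering (or a bounded-overlap argument, since the cubes $kQ_j$ have bounded overlap by a constant depending only on $k$ and $d$), and summing in $j$, I would obtain the good-$\lambda$ inequality
\begin{equation*}
    \int_{\{F>t\}\cap\square_x^{a/k}}F\,\leq\,C\, t^{1-1/q}\!\!\int\limits_{\{|f|>\eta t^{1/q}\}\cap\square_x^{a}}|f|,
\end{equation*}
valid for all $t>t_0$. Multiplying by $t^{p/q-1-1}$, integrating $t$ from $t_0$ to $\infty$, and switching the order of integration via Fubini converts this into
\begin{equation*}
    \int_{\square_x^{a/k}}F^{p/q}\leq\, C_1 t_0^{p/q}|\square_x^{a}|+ C_2\frac{q}{p-q}\int_{\square_x^{a}}|f|\cdot|f|^{p-1},
\end{equation*}
and the last integral is precisely $\int|f|^{p}=\int F^{p/q}$. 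Hence, choosing $p-q$ so small that $C_2\,q/(p-q)^{-1}$ (I mean the prefactor in front of the right-hand side) is less than $1/2$ provides the absorption, and rearranging yields the claimed inequality with $\epsilon=\epsilon(q,d,c,k)$.

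The main obstacle is the absorption step: the constant $C_2$ in front of $\int F^{p/q}$ must be explicitly tracked through the Calder\'on--Zygmund selection, the overlap bound for the enlarged cubes $kQ_j$, and Fubini, in order to see that it stays bounded as $p\downarrow q$, so that the factor $q/(p-q)$ can indeed be made small by taking $p-q$ small. A secondary technicality is the passage from the hypothesis, which is stated for cubes $\square_x^a\subset\square_{x_0}^L$, to the covering by possibly non-dyadic cubes $kQ_j$: this is handled by starting the argument on a slightly shrunken cube (with side $a/k-\kappa$ for some small buffer $\kappa$) and noting that the estimate on this shrunken cube, together with a standard telescoping/covering of $\square_x^{a/k}$, delivers the statement as written.
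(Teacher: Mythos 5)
First, a remark on context: the paper does not prove this lemma at all --- it is quoted from Bensoussan--Frehse (Theorem 1.10 and Remark 1.14) --- so your sketch is being measured against the standard Gehring--Giaquinta--Modica argument rather than against anything in the text. Your overall route (Calder\'on--Zygmund stopping time for $F=|f|^q$, a good-$\lambda$ inequality, integration against $t^{p/q-2}\,dt$) is the right one, but the decisive step fails as written. After Fubini, the left-hand side $\int_{\square_x^{a/k}}F^{p/q}$ carries the large factor $q/(p-q)$, while the right-hand side carries only the bounded factor $q/(p-1)$; your displayed inequality instead puts $q/(p-q)$ in front of $\int_{\square_x^{a}}|f|^p$, with which absorption is impossible, and the subsequent sentence (``$C_2\,q/(p-q)^{-1}$'') garbles the exponent further. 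More seriously, even with the correct small prefactor of order $p-q$, the term you propose to absorb is $\int_{\square_x^{a}}|f|^p$ --- an integral over the \emph{large} cube --- whereas the left-hand side only controls $\int_{\square_x^{a/k}}|f|^p$. The dilated cubes $kQ_j$ genuinely reach into the annulus $\square_x^{a}\setminus\square_x^{a/k}$, so this mismatch is not removed by shrinking the starting cube by a buffer $\kappa$. This is precisely where Gehring-type lemmas need an extra ingredient: either run the argument for a family of intermediate cubes $\square_x^{r}$, $a/k\le r\le a$, obtain $\phi(r)\le\theta\phi(r')+\dots$ with $\theta<1$ for $r<r'$ and invoke the standard iteration (``hole-filling'') lemma, or use a localised/weighted Calder\'on--Zygmund decomposition that keeps all cubes inside one reference cube. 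Your sketch contains neither, and identifies the obstacle only as constant-tracking.

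Two further points. The claim that the dilates $\{kQ_j\}$ of the disjoint dyadic stopping cubes have bounded overlap is false: disjoint dyadic cubes of many different generations can accumulate at a point so that all of their $k$-dilates contain it. Your first alternative --- Vitali --- does work, because the $Q_j$ themselves are disjoint, hence $\sum_j|Q_j|=\bigl|\bigcup_jQ_j\bigr|\le\bigl|\bigcup_jkQ_j\bigr|$, and the $5r$-covering lemma applied to $\{kQ_j\}$ extracts a disjoint subfamily on which the lower bound $\int_{kQ_{j_i}\cap\{|f|>\eta t^{1/q}\}}|f|\ge\tfrac12\tau|kQ_{j_i}|$ can be summed; so keep Vitali and drop the bounded-overlap justification. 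Finally, the absorption presupposes $\int|f|^p<\infty$, which is not known a priori; the standard remedy is to run the argument for the truncations $\min(F,m)$ (or to integrate $t$ only up to a finite level) with constants independent of the truncation and pass to the limit. None of these repairs is exotic, but together they are the actual content of the lemma, and the proposal as it stands does not close them.
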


We are ready to proceed with the proof of Theorem \ref{th:6.18}. Recall that $\hat N_j \in W^{1,2}_{\rm per}(\hat\square^{M+\kappa,1}_{\overline{x}_M}), j=1,\dots,d,$ are the solutions to \eqref{88}. Without loss of generality, we may assume $\overline{x}_M =0$. To simplify the notation, in what follows we extend $\hat N_j$ and $\nabla \hat N_j$  by zero on the inclusions, and extended by periodicity to the whole of $\R^d$.   (Note that the following argument does not involve integration by parts and is therefore not affected by the possible discontinuity of the gradient.) Note that \eqref{88} implies  the  identity 
	\begin{equation*}
	\int\limits_{\R^d}  A_1  (e_j + \nabla \hat N_j) \cdot \nabla\varphi = 0  
\end{equation*}
 for all compactly supported test functions  $\varphi \in W^{1,2}(\R^d)$. 

We redefine the cut-off function as follows: let $\eta \in C^\infty_0(B_2(0))$ be  such that $0\leq \eta\leq 1$, $\eta = 1$ in $B_1$, and  set $\eta_R(x):= \eta ((x-x_0)/{R})$, for some $x_0\in \R^d$ and $R>0$, which are assumed to be fixed for the time being. We apply Lemma \ref{l:poincare-sobolev} with the perforated domain being given by the periodic extension of the set $\square^{M+\kappa,1}$ to the whole of $\R^d$.  Let  $c_R(x_0)$ be as in \eqref{74} with $f$ replaced by $\hat N_j$. 

Using  $(\hat N_j - c_R(x_0)) \eta_R^2$ as  the test function in \eqref{88} and rearranging the terms, we obtain 
	\begin{equation*}
		\int\limits_{B_{2R}(x_0)} A_1 \nabla \hat N_j\cdot \nabla \hat N_j \eta_R^2 = - \int\limits_{B_{2R}(x_0)} (A_1 e_j \cdot \nabla \hat N_j \eta_R^2 +  2 A_1  (e_j + \nabla \hat N_j) \cdot \nabla \eta_R (\hat N_j - c_R(x_0)) \eta_R),
	\end{equation*}
and hence
	\begin{equation}\label{90a}
		\int\limits_{B_{2R}(x_0)} |\nabla \hat N_j \eta_R|^2 \leq C \int\limits_{B_{2R}(x_0)}\left(|\nabla \hat N_j|\eta_R^2 + R^{-1}|\hat N_j - c_R(x_0)|(1 + |\nabla \hat N_j|)\right).
	\end{equation}
	We proceed by estimating the right-hand side of \eqref{90a} term by term. For the first term, we use the Cauchy inequality:
	\begin{equation*}
		\int\limits_{B_{2R}(x_0)}|\nabla \hat N_j|\eta_R^2 \leq \int\limits_{B_{2R}(x_0)}\left(\alpha^{-1}|\nabla \hat N_j \eta_R|^2+ \alpha \eta_R^2\right) \leq \int\limits_{B_{2R}(x_0)} \alpha^{-1}|\nabla \hat N_j \eta_R|^2 +  \alpha  |B_{2R}|,
	\end{equation*}
	for any $\alpha>0$. For the second term, applying the Cauchy inequality, \eqref{74}, and then H\"older's inequality, we obtain
	\begin{equation*}
		\begin{aligned}
			&\int\limits_{B_{2R}(x_0)} R^{-1}|\hat N_j - c_R(x_0)| \leq \int\limits_{B_{2R}(x_0)} \left( 1+R^{-2}|\hat N_j - c_R(x_0)|^2\right)
			\\
			&\leq |B_{2R}| + C R^{-2}\bigg(\int\limits_{B_{2mR}(x_0)}|\nabla \hat N_j |^{\frac{2d}{d+2}}\bigg)^{\frac{d+2}{d}}
			\leq |B_{2R}| + C R^{-1}\bigg(\int\limits_{B_{2mR}(x_0) }|\nabla \hat N_j |^{\frac{2d}{d+1}}\bigg)^{\frac{d+1}{d}}.
		\end{aligned}
	\end{equation*}
	For the final term in \eqref{90a}, we first apply H\"older's inequality with the exponents  $q= {2d}/{(d-1)}$ and $p = {2d}/{(d+1)}$ to the integral of the product of $|\hat N_j - c_R(x_0)|$ and $|\nabla \hat N_j|$ and then the inequality \eqref{74} to the term containing $|\hat N_j - c_R(x_0)|$ with the same exponents $p, q,$ wich yields
	\begin{equation*}
		\int\limits_{B_{2R}(x_0) }R^{-1}|\hat N_j - c_R(x_0)| |\nabla \hat N_j| \leq C R^{-1} \bigg(\int\limits_{B_{2mR}(x_0) }|\nabla \hat N_j |^{\frac{2d}{d+1}}\bigg)^{\frac{d+1}{d}}.
	\end{equation*}
	Combining the above estimates with \eqref{90a}, moving the term containing $\alpha^{-1}$ to the left-hand side,  choosing $\alpha$ large enough, and observing that $\square_{x_0}^{R/\sqrt{d}} \subset B_{R}(x_0)$ and $B_{2mR}(x_0) \subset \square_{x_0}^{2mR} $, we obtain
	\begin{equation}\label{197}
	\int\limits_{\square_{x_0}^{R/\sqrt{d}} } |\nabla \hat N_j|^2 \leq C |B_{2R}| + C R^{-1} \bigg(\int\limits_{\square_{x_0}^{2mR} }|\nabla \hat N_j |^{\frac{2d}{d+1}}\bigg)^{\frac{d+1}{d}}.
	\end{equation}
Denoting $z:= |\nabla \hat N_j |^{{2d}/(d+1)},$
	and dividing both sides of  \eqref{197} by $|\square^{R/\sqrt{d}}|$, we obtain
	\begin{equation*}
		\fint\limits_{\square_{x_0}^{R/\sqrt{d}}} z^\frac{d+1}{d} \leq C  + C  \bigg(\fint\limits_{\square_{x_0}^{2mR}}z \bigg)^{\frac{d+1}{d}},
	\end{equation*}
where $x_0$ and $R>0$ are arbitrary.	We can now apply the reverse H\"older inequality (Lemma \ref{l:reverseHolder}) to infer that there exists $\epsilon >0$ depending only on $C$ and $d$ such that for any $\mu$ satisfying $ (d+1)/d \leq \mu < (d+1)/d + \epsilon$ one has
	\begin{equation*}
		\bigg(\fint\limits_{\square_{x_0}^{R/\sqrt{d}}} z^\mu\bigg)^{1/\mu} \leq C  + C  \bigg(\fint\limits_{\square_{x_0}^{2mR}}z^\frac{d+1}{d} \bigg)^{\frac{d}{d+1}}.
	\end{equation*}
	Rewriting this for $\nabla \hat N_j$ yields  	
	\begin{equation*}
		\bigg(\fint\limits_{\square_{x_0}^{R/\sqrt{d}}} |\nabla \hat N_j |^p \bigg)^{1/p} \leq C  + C  \bigg(\fint\limits_{\square_{x_0}^{2mR}}|\nabla \hat N_j |^2 \bigg)^{1/2},
	\end{equation*}
	for all $2\leq p < 2 + 2d\epsilon /(d+1)$. Choosing $\square_{x_0}^{R/\sqrt{d}} = \square^{M+\kappa}$,  observing that the corresponding $\square^{2mR}$ can be covered by finitely many periodically shifted copies of $\square^{M+\kappa}$, and utilising the periodicity of the corrector, we finally obtain \eqref{89a}.
	
\section{Auxiliary results for Theorem \ref{th:9.2}}\label{ss:9.1}

Denote by $\mathcal{H}_{\rho, \mathcal N, \gamma}(\square)$ the family of all $(\rho, \mathcal N, \gamma)$ minimally smooth closed sets $P \subset \square^{1/2}$ whose interiors are connected and $\square^{1/2}\setminus {P}$  also connected. Furthermore, let $L^2_{\pot}(\square)$ be the space of potential vector fields on $\square$. The direct product $L^2_{\pot}(\square)\times \mathcal{H}_{\rho, \mathcal N, \gamma}(\square)$ is a measurable space with the product $\sigma$-algebra generated by the standard distance on $L^2(\square;\R^d)$  and the Hausdorff  distance on $\mathcal{H}_{\rho, \mathcal N, \gamma}(\square)$. For a pair $(g, P)\in L^2_{\pot}(\square)\times \mathcal{H}_{\rho, \mathcal N, \gamma}(\square)$, let $\varphi\in W^{1,2}(\square)$ be a potential of $g$, i.e. $g = \nabla \varphi$, let $\widetilde \varphi  \in W^{1,2}(\square)$ be the harmonic extension of $\varphi|_{\square\setminus P}$ to the whole of $\square$, and denote $\widetilde{g}:=\nabla\widetilde\varphi$. This construction defines a mapping $\widetilde{E}:L^2_{\pot}(\square)\times \mathcal{H}_{\rho, \mathcal N, \gamma}(\square)\to L^2_{\pot}(\square)$ by setting $\widetilde{E}(g, P)= \widetilde g$.

\begin{lemma} \label{lemmameasm}
	The mapping $\widetilde{E}$ is measurable.
\end{lemma}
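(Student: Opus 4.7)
The plan is to show that $\widetilde{E}$ is sequentially continuous from the product metric topology on $L^2_{\rm pot}(\square) \times \mathcal{H}_{\rho,\mathcal N,\gamma}(\square)$ to (at worst the weak topology of) $L^2_{\rm pot}(\square)$. Since both factors are separable metric spaces, the product carries a Polish (hence separable metric) structure and sequential continuity coincides with continuity, so the conclusion is Borel measurability. Moreover, because $L^2_{\rm pot}(\square)$ is a separable Hilbert space, weak and strong Borel $\sigma$-algebras agree (Pettis), so even weak sequential continuity already yields the claim. Thus I take $(g_n,P_n) \to (g,P)$, let $\varphi_n,\varphi \in W^{1,2}(\square)$ denote the potentials of $g_n,g$ normalized to zero mean (so $\varphi_n \to \varphi$ in $W^{1,2}(\square)$), and let $\widetilde\varphi_n,\widetilde\varphi$ be the harmonic extensions of $\varphi_n\!\mid_{\square\setminus P_n},\varphi\!\mid_{\square\setminus P}$ into $P_n,P$, respectively, so that $\widetilde{E}(g_n,P_n) = \nabla \widetilde\varphi_n$.

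The second step is weak convergence. By Theorem \ref{th:extension}, applied to $\varphi_n$ on $\square \setminus P_n$ with constants depending only on $\rho,\mathcal N,\gamma$, the sequence $\widetilde\varphi_n$ is uniformly bounded in $W^{1,2}(\square)$; extract a subsequence with $\widetilde\varphi_n \rightharpoonup \widetilde\varphi^{*}$ in $W^{1,2}(\square)$ and strongly in $L^2(\square)$. To identify $\widetilde\varphi^{*}$, I use that $d_{\rm H}(P_n,P)\to 0$ together with the uniform minimal smoothness forces $d_{\rm H}(\partial P_n, \partial P) \to 0$ (exactly the observation behind Lemma \ref{l:5.21}). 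Consequently every compact $K \subset \mathrm{int}(\square\setminus P)$ is contained in $\square\setminus P_n$ for large $n$, giving $\widetilde\varphi_n\mid_K = \varphi_n\mid_K \to \varphi\mid_K$ and hence $\widetilde\varphi^{*} = \varphi$ on $\square\setminus P$. Dually, for any $\psi \in C_c^\infty(\mathrm{int}\, P)$ the support lies in $P_n$ for large $n$, so $\int_{P_n} \nabla \widetilde\varphi_n \cdot \nabla \psi = 0$; passing to the weak limit yields $\int_P \nabla \widetilde\varphi^{*} \cdot \nabla \psi = 0$, so $\widetilde\varphi^{*}$ is harmonic in $P$. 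By uniqueness of the harmonic extension $\widetilde\varphi^{*} = \widetilde\varphi$; since every subsequence has a further subsequence with this limit, the whole sequence converges weakly and $\nabla \widetilde\varphi_n \rightharpoonup \nabla \widetilde\varphi$ in $L^2$. This already suffices for measurability as explained above.

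If strong continuity is desired, I upgrade as follows. The function $\widetilde\varphi_n$ is the Dirichlet minimizer of $\int_\square |\nabla \psi|^2$ among $\psi \in W^{1,2}(\square)$ with $\psi - \varphi_n \in W^{1,2}_0(P_n)$; weak lower semicontinuity yields $\liminf_n \|\nabla \widetilde\varphi_n\|_{L^2}^2 \geq \|\nabla \widetilde\varphi\|_{L^2}^2$. For the matching upper bound I build a competitor by using the uniform minimal-smoothness charts: for $n$ large, since $P_n$ and $P$ are uniformly $(\rho,\mathcal N,\gamma)$-minimally smooth and Hausdorff-close, one can construct bi-Lipschitz maps $\Phi_n:\square\to\square$ equal to the identity outside a small neighbourhood of $\partial P$, sending $P$ onto $P_n$, with bi-Lipschitz constants tending to $1$. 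Setting $\hat\psi_n := \widetilde\varphi \circ \Phi_n^{-1} + (\varphi_n - \varphi\circ\Phi_n^{-1})\,\chi_n$, where $\chi_n$ is a smooth cutoff enforcing the boundary condition $\hat\psi_n = \varphi_n$ on $\square\setminus P_n$, one checks by change of variables that $\int_\square |\nabla \hat\psi_n|^2 \to \int_\square |\nabla \widetilde\varphi|^2$. Combined with the minimality of $\widetilde\varphi_n$, this gives $\|\nabla \widetilde\varphi_n\|_{L^2} \to \|\nabla \widetilde\varphi\|_{L^2}$; together with weak convergence this yields strong $L^2$ convergence of the gradients, proving $\widetilde{E}$ is continuous (hence measurable).

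The main technical obstacle is the construction of the bi-Lipschitz maps $\Phi_n$ in the last paragraph; it requires a careful localization via the minimal-smoothness atlases and a partition of unity to patch local Lipschitz graph diffeomorphisms. For this reason the practical route to the stated measurability claim is the cleaner one: prove only \emph{weak} sequential continuity (Paragraph 2), which is unconditional once one has the uniform extension bound and the Hausdorff convergence of boundaries, and invoke weak-equals-strong Borel structure on the separable Hilbert space $L^2_{\rm pot}(\square)$.
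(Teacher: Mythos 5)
Your argument is correct, and its first half (uniform extension bound, weak $W^{1,2}$-compactness, identification of the limit as $\varphi$ on $\square\setminus P$ and as a harmonic function on $P$ via test functions supported in compacta eventually contained in $P_n$) is exactly the paper's argument. Where you diverge is the final step. The paper upgrades weak to strong convergence of the gradients by a one-line energy identity: since $\widetilde\varphi_n-\varphi_n\in W^{1,2}_0(P_n)$ is an admissible test function for $\Delta\widetilde\varphi_n=0$ in $P_n$, one gets $\int_{P_n}|\nabla\widetilde\varphi_n|^2=\int_{P_n}\nabla\widetilde\varphi_n\cdot\nabla\varphi_n$, and the right-hand side converges because $\nabla\varphi_n\,{\mathbf 1}_{P_n}\to\nabla\varphi\,{\mathbf 1}_{P}$ strongly (using $|(P_n\setminus P)\cup(P\setminus P_n)|\to0$); comparing with the same identity on $P$ gives norm convergence and hence strong convergence. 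This is much lighter than your proposed recovery-sequence construction via bi-Lipschitz maps $\Phi_n$, which you rightly flag as the technical bottleneck and do not carry out — if you wanted strong continuity, the variational/energy-identity route is the one to take. That said, your fallback is legitimate for the lemma as stated: weak sequential continuity alone gives, for each fixed $h$, continuity of $(g,P)\mapsto\int_\square \widetilde E(g,P)\cdot h$ from a metric space, hence weak measurability, and Pettis' theorem on the separable Hilbert space $L^2(\square;\R^d)$ then yields Borel measurability, which is all that Proposition \ref{p:extensionm} needs. So either close the strong-convergence step with the energy identity \eqref{177}, or drop it and rely on the Pettis argument; as written, the bi-Lipschitz construction is an unnecessary detour that is not actually completed.
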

\begin{proof} 
Consider a convergent sequence $(g_n,P_n) \to (g,P)$ in  $ L^2_{\pot}(\square) \times \mathcal{H}_{\rho, \mathcal N, \gamma}(\square)$, i.e.  $g_n \to g$  in $L^2(\square; \R^d)$ and $P_n \to P$ in the Hausdorff distance. The minimal smoothness assumption implies that (see the proof of Lemma \ref{l:5.21}))
\begin{equation}\label{174}
	\begin{aligned}
		|(P_n\backslash P) \cup (P\backslash P_n)|\to 0, \qquad
            d_{\rm H}(\partial P,\partial P_n) \to 0			\mbox{ as } n\to\infty.
	\end{aligned}
\end{equation} 
 We take $\varphi, \varphi_n \subset W^{1,2}(\square), n \in \mathbf{N}$, such that $g = \nabla \varphi$, $g_n= \nabla \varphi_n$ and $\int_{\square} \varphi =  \int_{\square} \varphi_n =0$. Clearly, $\varphi_n \to \varphi$  in $W^{1,2}(\square)$. By Theorem \ref{th:extension} and Remark \ref{r:extension}, the harmonic extensions $\widetilde \varphi_n$ of $\varphi_n|_{\square\setminus P_n}$ to the whole of $\square$ satisfy the uniform bound 
		$$ \|\widetilde{\varphi}_n\|_{W^{1,2}(\square)} \leq C\| \varphi_n\|_{W^{1,2}(\square)} \quad \forall n \in \N. $$ 
		It follows that $\widetilde \varphi_n$ converges to some $\hat{\varphi}\in W^{1,2}(\square)$ strongly in $L^2(\square)$ and weakly in $ W^{1,2}(\square)$. It is not difficult to see that $\hat \varphi|_{\square\setminus P} = \varphi|_{\square\setminus P}$. Multiplying the equation 
		\begin{equation}\label{176b}
			\Delta \widetilde{\varphi}_n = 0 \mbox{ on } P_n
		\end{equation}
	 by an arbitrary $f\in C_0^\infty(P')$, where $P'$ is  contained in the interior of $P$ (and thus, by \eqref{174}, $P'$ is  contained in the interior of $P_n$ for large enough $n$), integrating by parts and passing to the limit, we see that 
		$$
		 0= \lim_{n\to\infty} \int_{P'} \nabla \widetilde{\varphi}_n\cdot\nabla f = \int_{P'} \nabla \hat{\varphi}\cdot\nabla f.
		$$
	We infer that $\hat \varphi$ is harmonic in $P$, and, hence, $\hat \varphi = \widetilde \varphi$, where the latter denotes the harmonic extensions of $\varphi|_{\square\setminus P}$ onto $\square$. 
	
	It remains to prove the strong convergence of $\nabla \widetilde{\varphi}_n$ to $\nabla \widetilde{\varphi}$.  To this end, we multiply \eqref{176b} by the test function $\widetilde \varphi_n - \varphi_n$ and integrate by parts:
\begin{equation}\label{177}
		\int_{P_n} \nabla \widetilde{\varphi}_n\cdot \nabla(\widetilde \varphi_n - \varphi_n )=0.
\end{equation}
Passing to the limit as $n\to\infty$ yields
	$$
	\lim_{n\to\infty} \int_{P_n}| \nabla \widetilde{\varphi}_n|^2 = \lim_{n\to\infty} \int_{P_n} \nabla \widetilde{\varphi}_n\cdot \nabla\varphi_n = \int_{P} \nabla \widetilde{\varphi}\cdot \nabla\varphi.
$$
	The last equality follows from a simple observation that $\nabla\varphi_n {\mathbf 1}_{P_n}$ converges to $\nabla\varphi {\mathbf 1}_{P}$ in $L^2(\square)$ (utilising \eqref{174}). Similarly to  \eqref{177}, we have
	$$
	\int_{P}| \nabla \widetilde{\varphi}|^2 =  \int_{P} \nabla \widetilde{\varphi}\cdot \nabla\varphi,
	$$
	and thus $\nabla \widetilde{\varphi}_n\to\nabla \widetilde{\varphi}$ in $L^2(\square)$. Therefore, the mapping $\widetilde E$ is continuous and hence measurable.
\end{proof} 

\begin{proposition}\label{p:extensionm}
	For every $f \in \vpot\, (\lpot)$ there exists an extension $\widetilde f \in \vpot\, (\lpot)$ of $f|_{\Omega \backslash \mathcal{O}}$ onto $\Omega$ such that for a.e.  realisation $\widetilde f(T_x\o)$ its potential $\widetilde \varphi \in W^{1,2}_\loc(\R^d)$ (so $\nabla \widetilde\varphi(x) =  \widetilde f(T_{x} \omega)$) is harmonic on the set of inclusions $\OO_\o$ and
	\begin{equation}\label{118a}
		\| \widetilde f\|_{L^2( \Omega)} \leq C \|  f\|_{L^2( \O\setminus\OO)}.
	\end{equation}
\end{proposition}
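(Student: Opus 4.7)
\medskip
\noindent\textbf{Proof plan for Proposition \ref{p:extensionm}.}

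The plan is to construct the extension first on a dense subspace of $\vpot$, where it has a transparent realisation-based meaning, and then extend by continuity. Specifically, since $\{\nabla_\omega u : u\in W^{1,2}(\Omega)\}$ is dense in $\vpot$, it suffices to define and bound the extension operator on such gradients. Given $u \in W^{1,2}(\Omega)$, for a.e.~$\omega$ the realisation $x\mapsto u(T_x\omega)$ lies in $W^{1,2}_{\loc}(\R^d)$. Using part 1 of Assumption \ref{kirill100}, the inclusions $\OO_\omega^k$ are well-separated: their extension sets $\mathcal{B}_\omega^k$ are pairwise disjoint. One may therefore apply Theorem \ref{th:extension} with $p=2$ \emph{independently} on each inclusion to the restriction of $u(T_\cdot\omega)$ to $\mathcal{B}_\omega^k \setminus \OO_\omega^k$, obtaining a harmonic extension inside $\OO_\omega^k$. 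Gluing these together with $u(T_\cdot \omega)$ on $\R^d\setminus\OO_\omega$ produces $\widetilde u_\omega \in W^{1,2}_{\loc}(\R^d)$ satisfying $\widetilde u_\omega = u(T_\cdot\omega)$ on $\R^d\setminus\OO_\omega$ and $\Delta\widetilde u_\omega = 0$ on each inclusion.

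Next I would establish translation equivariance. By uniqueness of the harmonic extension (solution of an elliptic Dirichlet problem), one has $\widetilde u_{T_y\omega}(x) = \widetilde u_\omega(x+y)$ for a.e.~$(x,y,\omega)$, since the configuration $\OO_{T_y\omega}$ is the translate $\OO_\omega - y$ and the boundary data transport in the same way. This allows defining a random variable
\[
\widetilde u(\omega) := (\widetilde u_\omega)_{\reg}(0)
\]
by a Lebesgue-point regularisation exactly as in the construction of $\breve b$ in \eqref{limit_cases}, so that $\widetilde u(T_x\omega) = \widetilde u_\omega(x)$ for a.e.~$(x,\omega)$. Setting $\widetilde f := \nabla_\omega \widetilde u$ gives a random vector field whose realisations are $\nabla \widetilde u_\omega$.

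The norm bound follows by summing the pointwise bound \eqref{3a} over all inclusions inside a large ball and invoking the ergodic theorem. Explicitly, on any ball $B_R$,
\[
\int_{B_R} |\nabla \widetilde u_\omega|^2 \,dx \le \int_{B_R\setminus \OO_\omega}|\nabla u(T_x\omega)|^2 dx + \sum_{k:\,\mathcal{B}_\omega^k\cap B_R\neq\emptyset} \int_{\OO_\omega^k}|\nabla\widetilde u_\omega|^2 dx \le C\int_{B_{R+1}\setminus\OO_\omega} |\nabla u(T_x\omega)|^2 dx,
\]
where the last inequality uses \eqref{3a} on each inclusion, with $C=C_{\rm ext}^2$. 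Dividing by $|B_R|$ and letting $R\to\infty$ via Theorem \ref{thmergodic} gives $\|\widetilde f\|_{L^2(\Omega)}^2 \le C\|f\|_{L^2(\Omega\setminus\OO)}^2$, establishing \eqref{118a} on the dense subspace. The operator then extends by continuity to all of $\vpot$. For $f\in\lpot$, one writes $f = f_0 + c$ with $f_0\in\vpot$ and $c\in\R^d$ (via Weyl's decomposition), and sets $\widetilde f := Ef_0 + c$; the potential $\widetilde u_\omega + c\cdot x$ is still harmonic on each inclusion since $c\cdot x$ is harmonic. That limits of gradients remain potential (hence $Ef\in\lpot$) follows from the $L^2_{\loc}$-closedness of $\lpot$; the zero-mean condition for $Ef\in\vpot$ is preserved because each approximant $\nabla_\omega \widetilde u_n$ has zero mean.

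The main obstacle is establishing \emph{measurability} of $\omega\mapsto \widetilde u(\omega)$. For $\omega\notin\OO$ the identity $\widetilde u(\omega)=u(\omega)$ reduces to measurability of $u$, so the issue is on $\OO$. Here I would use the localisation to a single inclusion enabled by Assumption \ref{kirill100}: for $\omega\in\OO$, the origin lies in $\OO_\omega^{k_0}$, and after translating by $-D(\omega)-d_{1/4}$ the inclusion fits in $\square$ as $P_\omega$. By Lemma \ref{measp} the mapping $\omega\mapsto\overline{P_\omega}\in\mathcal{H}_{\rho,\mathcal N,\gamma}(\square)$ is measurable, and the realisation $f(T_{\cdot + D(\omega)+d_{1/4}}\omega)$ restricted to a fixed cube containing $\mathcal{B}_\omega^{k_0}$ is a measurable $L^2$-valued function of $\omega$ (analogous to the measurability arguments in Appendix \ref{s:auxiliary}). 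Applying Lemma \ref{lemmameasm} to the pair (realisation, shifted inclusion) yields the measurability of the extended potential as a $W^{1,2}(\square)$-valued function of $\omega$; evaluating its gradient at the appropriate translated point via the Lebesgue regularisation (justified by a Fubini argument as in the proof of Proposition \ref{sasha100}) then gives measurability of $\widetilde u$ and hence of $\widetilde f$. The somewhat delicate bookkeeping of the shifts and of the harmonic extension \emph{construction} (as opposed to merely its bounds) requires Lemma \ref{lemmameasm} rather than Theorem \ref{th:extension} alone, which is exactly why that auxiliary result was established.
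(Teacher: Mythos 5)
Your construction is essentially the paper's: harmonic extension inclusion-by-inclusion via Theorem \ref{th:extension}, measurability obtained by shifting the inclusion containing the origin into $\square$ and invoking Lemma \ref{measp} together with the continuity of the extension map (Lemma \ref{lemmameasm}), and a Fubini/ergodic argument for the bound \eqref{118a}. Two remarks on where you deviate. First, the detour through the dense subspace $\{\nabla_\o u:\,u\in W^{1,2}(\O)\}$ is avoidable: by the very definition of $\lpot$, every realisation $f(T_x\o)$ already admits a potential $\varphi\in W^{1,2}_\loc(\R^d)$, and the paper applies the harmonic extension directly to this local potential on each inclusion; your route works but forces you to additionally verify that harmonicity of the extended potential and membership in $\lpot$ survive the $L^2(\O)$-limit (this is precisely the content the paper defers to Lemma \ref{l7.6}, proved via the continuity statement inside Lemma \ref{lemmameasm}). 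Second, your norm bound as written is mildly circular: applying Theorem \ref{thmergodic} to $|\widetilde f|^2$ on the left-hand side presupposes $|\widetilde f|^2\in L^1(\O)$, which is what \eqref{118a} is meant to establish. The paper sidesteps this by integrating the single-inclusion bound \eqref{3a} over the small ball $B_\rho$ (which meets at most one inclusion), then integrating over $\O$ and using Tonelli, which requires no a priori integrability; alternatively you could truncate $|\widetilde f|^2$ and use monotone convergence. Your observation that the zero-mean property is automatic for $\nabla_\o\widetilde u$ with $\widetilde u\in W^{1,2}(\O)$ is a legitimate simplification on the dense subspace of the paper's Gauss--Ostrogradsky argument, provided you do check $\widetilde u\in L^2(\O)$ via \eqref{3b}.
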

\begin{proof} 
	Let $f \in \lpot$. 	The mapping from $\OO$ to $L^2_{\pot}(\square)$ that to every $\o\in\OO$ assigns the vector field $\varphi_\o(x) := f(T_{x-D} \omega) \in L^2_{\pot}(\square)$
  is measurable. (This can be easily checked for an arbitrary $f\in L^2(\Omega;\R^d)$  first by checking it for simple functions and then  using the density argument.)  using Lemmata \ref{measp} and  \ref{lemmameasm}, we conclude that the mapping $\omega \mapsto  \widetilde E(\varphi_\o,P_\o) \in L^2_{\pot}(\square)$ is measurable.
	We define
	$$ \widetilde f(\omega)=\begin{cases}   \widetilde E(\varphi_\o, P_\o)(D) & \mbox{ if } \omega \in \mathcal{O},
		\\ f(\o) & \textrm{ otherwise}.    \end{cases}. $$
	Note that thus defined $\widetilde f$ is measurable, since $\widetilde E(\varphi_\o, P_\o)$ is harmonic and hence infinitely smooth in $P_\o$ as well as an element of $\lpot$ by construction.
	
	We next prove \eqref{118a}. By  Assumption \ref{kirill100}, for a.e. $\o$ the ball $B_\rho$ intersects at most one inclusion $\OO_\o^k$. At the same time, the cube $\square^2$ contains this inclusion together with its extension domain $\mathcal B_\o^k$. By Theorem \ref{th:extension}, one has
	\begin{equation*} 
		\int_{B_\rho} |\widetilde f(T_x \omega)|^2 \leq C_{\rm ext}\int_{{\square}^2 \backslash \mathcal{O}_{\omega}} |f(T_x \omega)|^2 =C_{\rm ext}\int_{{\square}^2} |(f {\mathbf 1}_{\Omega \backslash \mathcal{O}})(T_x \omega)|^2. 
	\end{equation*} 
	Integrating over $\Omega$ and applying  the Fubini theorem yields
	$$ |B_\rho| \int_{\Omega}  | \widetilde f|^2 \leq  2^d C_{\rm ext}\int_{\Omega\setminus \OO} |f|^2. $$

	Note that, by the construction of $\widetilde{f},$ if  $\varphi \in W^{1,2}_\loc(\R^d)$ is such that $\nabla \varphi(x) =  f(T_{x} \omega)$, then  $\widetilde f(T_x \o) = \nabla\widetilde \varphi(x)$, where $\widetilde \varphi$ is the harmonic  extension of $\varphi|_{\R^d\setminus \OO_\o}$ into the set of inclusions.  
	
	It remains to show that  $\widetilde f \in \vpot$ if $ f \in \vpot$. Let $\varphi^\e, \widetilde\varphi^\e  \in W^{1,2}(\square)$ be  sequences such that $\nabla \varphi^\e(x) = f(T_{x/\e}\omega)$ and $\nabla \widetilde \varphi^\e(x) = \widetilde f(T_{x/\e}\omega)$ in $\square$. By the ergodic theorem, one has
	\begin{equation}\label{119a}
		\int_\square \nabla \varphi^\e \to \int_\O f = 0  \mbox{ as } \e \to 0.
	\end{equation}
	Using the fact that by construction $\widetilde \varphi^\e = \varphi^\e$ on $\square\setminus S_0^\e$ and integrating by parts, we obtain
	\begin{equation*}
		\int_{\OO_\o^k} \nabla \widetilde  \varphi^\e= \int_{\partial \OO_\o^k} \widetilde \varphi^\e {\bf n} = \int_{\partial \OO_\o^k}  \varphi^\e {\bf n} = \int_{\OO_\o^k} \nabla \varphi^\e \qquad \forall \OO_\o^k \subset \square,
	\end{equation*}
	where ${\bf n}$ is the outward unit  normal vector. Therefore,
	\begin{equation}\label{161}
		\int_{\square\setminus K^\e} \nabla \widetilde  \varphi^\e = \int_{\square\setminus K^\e} \nabla  \varphi^\e,
	\end{equation}
	where $K^\e : = \bigcup\limits_{\OO_\o^k \cap \partial \square \neq \emptyset} \OO_\o^k$. It is then not difficult to see that  
	\[
 \lim_{\e\to 0}	\int_{\square\cap K^\e} \nabla \varphi^\e = \lim_{\e\to 0}	\int_{\square\cap K^\e}\nabla \widetilde  \varphi^\e = \lim_{\e\to 0}	\int_{\square\cap K^\e} (\nabla \varphi^\e -\nabla \widetilde  \varphi^\e)=0.
	\]  
	Combining this with \eqref{161} and \eqref{119a}, we conclude that
	\begin{equation*}
		\int_\O \widetilde f  = \lim_{\e\to 0} \int_\square \nabla \widetilde  \varphi^\e =0.
	\end{equation*}
\end{proof}

\begin{lemma}\label{l7.6}
	For every $f\in {\mathcal X}$ there exists the extension $\widetilde{f} \in \vpot$, $\widetilde f=f$ on $\Omega \backslash \mathcal{O}$, such that for a.e. realisation $\widetilde f(T_x\o)$ its potential $\widetilde \varphi \in W^{1,2}_\loc(\R^d)$, $\nabla \widetilde\varphi(x) =  \widetilde f(T_{x} \omega)$, is harmonic on the set of inclusions $\OO_\o$, and
	\begin{equation*}
		\| \widetilde f\|_{L^2( \Omega)} \leq C \|  f\|_{L^2( \O\setminus\OO)}.
	\end{equation*}
\end{lemma}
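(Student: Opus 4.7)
The strategy is to use Proposition \ref{p:extensionm} together with the density definition of $\mathcal{X}$ and the linearity of the extension operator constructed there. Given $f\in\mathcal X$, by definition there exists a sequence $f_n\in\vpot$ such that $f_n\to f$ in $L^2(\Omega\setminus\OO)$. I will extend each $f_n$ using Proposition \ref{p:extensionm}, show that the extensions form a Cauchy sequence in $L^2(\Omega)$, and verify that the limit $\widetilde f$ satisfies all the required properties. The harmonic property of the realisation's potential will be the main point requiring care.

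First I will apply Proposition \ref{p:extensionm} to each $f_n\in\vpot$ to obtain $\widetilde f_n\in\vpot$ with $\widetilde f_n=f_n$ on $\Omega\setminus\OO$, potentials harmonic on the inclusions, and
\begin{equation*}
\|\widetilde f_n\|_{L^2(\Omega)}\leq C\|f_n\|_{L^2(\Omega\setminus\OO)}.
\end{equation*}
The construction of $\widetilde{E}$ in Proposition \ref{p:extensionm} is pointwise in $\omega$ and proceeds via a linear (harmonic-extension) operator on each realisation, so $\widetilde E$ is linear on $\lpot$. Consequently $\widetilde f_n-\widetilde f_m=\widetilde{E}(f_n-f_m)$ satisfies
\begin{equation*}
\|\widetilde f_n-\widetilde f_m\|_{L^2(\Omega)}\leq C\|f_n-f_m\|_{L^2(\Omega\setminus\OO)}\to 0,
\end{equation*}
so $(\widetilde f_n)$ is Cauchy in $L^2(\Omega;\R^d)$ and thus converges to some $\widetilde f\in L^2(\Omega;\R^d)$. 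Since $\vpot$ is closed in $L^2(\Omega;\R^d)$, we have $\widetilde f\in\vpot$, and passing to the limit in $\widetilde f_n|_{\Omega\setminus\OO}=f_n$ gives $\widetilde f|_{\Omega\setminus\OO}=f$. The bound $\|\widetilde f\|_{L^2(\Omega)}\leq C\|f\|_{L^2(\Omega\setminus\OO)}$ follows by passing to the limit in the corresponding bound for $\widetilde f_n$.

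The remaining (and most delicate) step is to check that for a.e.\ $\omega$ the potential $\widetilde\varphi$ of the realisation $\widetilde f(T_\cdot\omega)$ is harmonic on each inclusion $\OO_\omega^k$. The plan is to use Fubini to transfer the $L^2(\Omega)$-convergence of $\widetilde f_n$ to $L^2_{\rm loc}(\R^d)$-convergence of realisations: for every $R>0$,
\begin{equation*}
\int_\Omega\int_{B_R}|\widetilde f_n(T_x\omega)-\widetilde f(T_x\omega)|^2\,dx\,dP=|B_R|\,\|\widetilde f_n-\widetilde f\|^2_{L^2(\Omega)}\to 0,
\end{equation*}
so passing to a subsequence (indexed over a countable family of $R$'s and using diagonalisation) and exploiting a.e.\ convergence, we may assume that for a.e.\ $\omega$ one has $\widetilde f_{n_j}(T_\cdot\omega)\to\widetilde f(T_\cdot\omega)$ in $L^2_{\rm loc}(\R^d)$. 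For such $\omega$, fix an inclusion $\OO_\omega^k$; for any test function $\psi\in C_0^\infty(\OO_\omega^k)$, harmonicity of $\widetilde\varphi_{n_j}$ (the potential of $\widetilde f_{n_j}(T_\cdot\omega)$) gives
\begin{equation*}
\int_{\OO_\omega^k}\widetilde f_{n_j}(T_x\omega)\cdot\nabla\psi(x)\,dx=0,
\end{equation*}
and the $L^2_{\rm loc}$-convergence allows one to pass to the limit to obtain $\int_{\OO_\omega^k}\widetilde f(T_x\omega)\cdot\nabla\psi=0$, which is precisely the statement that $\widetilde\varphi$ is harmonic in $\OO_\omega^k$. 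The principal obstacle here is this measure-theoretic synchronisation of the a.e.-convergence with the countable family of inclusions; the standard diagonal subsequence argument handles it, provided we work on a common full-measure set obtained as a countable intersection.
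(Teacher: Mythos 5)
Your proposal is correct and follows essentially the same route as the paper: approximate $f$ by a sequence $f_n\in\vpot$, extend each via Proposition \ref{p:extensionm}, use the uniform bound \eqref{118a} (together with linearity of the extension) to get a Cauchy sequence whose limit lies in the closed subspace $\vpot$, and then pass the weak harmonicity condition to the limit on realisations. The paper handles the last step by citing the limiting argument from Lemma \ref{lemmameasm}; your explicit Fubini/diagonal-subsequence justification of the $L^2_{\rm loc}$ convergence of realisations is a valid (and in fact slightly simpler, since here the inclusions are fixed) way of carrying out that same argument.
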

\begin{proof}
	Let $(f_k)$ be a sequence in $\vpot$ such that $\|f - f_k\|_{L^2( \O\setminus\OO)}$ converges to zero. For each $k$ let  $\widetilde f_k \in \vpot$ be the extension of $f_k$ as in Proposition \ref{p:extensionm}. By virtue of \eqref{118a},  $(\widetilde f_k)$ is a Cauchy sequence and therefore it  converges in $L^2(\O)$ to some $\widetilde f \in\vpot$. Clearly, one then has  $\widetilde f=f$ on $\Omega \backslash \mathcal{O}$. 
	
	The remaining part of the statement can be proven similarly to the argument used to show that $\hat \varphi$ is harmonic in $P$ in the proof of Lemma \ref{lemmameasm}.
\end{proof}

In the next lemma  we  work with spaces of  complex valued functions.

\begin{lemma}\label{l12.2}
	Let $w$ be a zero-mean solenoidal random vector field, $w \in \mathcal V^2_{\rm sol}(\Omega)$. Then there exists a random tensor field $W_{ikl},\,i,k,l=1,\dots,d,$ such that for all $i$ and $k$ the random field $W_{ik\cdot}:=(W_{ik1}, \dots,W_{ikn})^T$ is an element of $ \mathcal V^2_{\rm pot}$, for fixed $l$ the field   $W_{ikl}$ is antisymmetric, i.e. $W_{ikl} = - W_{kil}$, and $	w_k =  W_{iki},\, k=1,\ldots,d.$ (We use the usual summation convention.)
\end{lemma}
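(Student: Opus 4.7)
The plan is to construct $W$ via the spectral representation of the commuting family $\{i\mathcal{D}_j\}_{j=1}^d$ of self-adjoint operators on $L^2(\Omega)$, provided by Stone's theorem applied to the unitary group $\{U(x)\}_{x\in\R^d}$. This yields a joint projection-valued measure $E$ on $\R^d$; for any $f \in L^2(\Omega)$ orthogonal to constants, the scalar spectral measure $\mu_f = \langle E(\cdot)f, f\rangle$ gives no mass at the origin, $\mathcal{D}_j$ acts as multiplication by $i\xi_j$, and (componentwise) the orthogonal decomposition $L^2(\Omega;\R^d) = \R^d \oplus \vpot \oplus \vsol$ corresponds to splitting a vector symbol $\hat f(\xi) \in \mathbb{C}^d$ into its component parallel to $\xi$ (for $\vpot$) and its component orthogonal to $\xi$ (for $\vsol$), cf. \cite{zhikov2}.

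Within this framework I would define, via functional calculus,
\begin{equation*}
W_{ikl} := \mathcal{D}_l\bigl[(-\Delta_\omega)^{-1}(\mathcal{D}_i w_k - \mathcal{D}_k w_i)\bigr],
\end{equation*}
which in spectral terms is the bounded Fourier multiplier $\xi \mapsto \xi_l(\xi_i \widehat{w}_k(\xi) - \xi_k \widehat{w}_i(\xi))/|\xi|^2$. Since $\langle w \rangle = 0$, the spectral measure of $w$ charges no mass at $\xi = 0$, so the singularity of $(-\Delta_\omega)^{-1}$ is not actually met; to make this rigorous I would first define the regularisation $W^{\tau}_{ikl} := \mathcal{D}_l(-\Delta_\omega + \tau)^{-1}(\mathcal{D}_i w_k - \mathcal{D}_k w_i)$ for $\tau > 0$, observe from the symbol bound $|\widehat{W^\tau_{ikl}}(\xi)| \le 2|\widehat w(\xi)|$ the uniform estimate $\|W^{\tau}_{ikl}\|_{L^2(\Omega)} \le 2\|w\|_{L^2(\Omega)}$, and then pass to the limit $\tau \downarrow 0$ by dominated convergence on the spectral side. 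The resulting $W_{ikl}$ lies in $L^2(\Omega)$ and has zero mean because its spectral symbol vanishes at $\xi = 0$.

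Verification of the three required properties then proceeds directly from the symbol formula. Antisymmetry $W_{ikl} = -W_{kil}$ is immediate from the antisymmetry of $\mathcal{D}_i w_k - \mathcal{D}_k w_i$ in $(i,k)$. The contraction identity $w_k = W_{iki}$ is the algebraic computation
\begin{equation*}
\sum_i \frac{\xi_i(\xi_i \widehat{w}_k - \xi_k \widehat{w}_i)}{|\xi|^2} = \widehat{w}_k - \frac{\xi_k}{|\xi|^2}\sum_i \xi_i \widehat{w}_i = \widehat{w}_k,
\end{equation*}
where the last equality uses the solenoidal condition $\sum_i \xi_i \widehat{w}_i(\xi) = 0$ (which is the Fourier characterisation of $\vsol$). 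Finally, for fixed $(i,k)$ the symbol of $W_{ik\cdot}$ is of the form $\xi \cdot \widehat{\psi}_{ik}(\xi)$ with $\widehat{\psi}_{ik}(\xi) = (\xi_i \widehat{w}_k - \xi_k \widehat{w}_i)/|\xi|^2$, i.e.\ it is parallel to $\xi$, which is precisely the spectral characterisation of membership in $\vpot$ (the zero-mean property having already been established).

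The main technical obstacle is the rigorous handling of the spectral calculus for the generators $\mathcal{D}_j$, in particular the justification of the regularisation limit $\tau \downarrow 0$ on $\vsol$. An alternative route, which avoids invoking the full joint spectral theorem, is to first establish the construction for smooth fields $w \in \mathcal{C}^\infty(\Omega)^d \cap \vsol$, obtained by mollifying a general $w \in \vsol$ via $(T^\delta w)(\omega) = \int_{\R^d} \rho_\delta(x) w(T_x\omega)\,dx$; for such $w$ one can solve the elliptic problem $-\Delta_\omega \psi_{ik} = \mathcal{D}_i w_k - \mathcal{D}_k w_i$ in $W^{2,2}(\Omega)$ by standard Hilbert space methods on $(\{1\}^\perp \subset L^2(\Omega))$, set $W_{ikl} = \mathcal{D}_l \psi_{ik}$, derive the uniform $L^2$-bound $\|W_{ikl}\| \le C\|w\|$ directly from testing against $\psi_{ik}$, and then extend to general $w \in \vsol$ by density. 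Either route leads to the same object, which is the probability-space analog of the classical Biot-Savart vector potential for a divergence-free field.
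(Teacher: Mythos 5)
Your construction is essentially identical to the paper's: the paper also applies the spectral theorem to the commuting generators $\mathrm{i}\mathcal D_k$ (realised as multiplication by real functions $a_k$ on a measure space) and defines $W_{ikl}$ by exactly the multiplier $a_l(a_iUw_k-a_kUw_i)/|a|^2$, using the zero-mean condition to see that $Uw$ vanishes on $\{a=0\}$ and the solenoidal condition $Uw\cdot a=0$ to verify the contraction identity and potentiality. The proposal is correct; the $\tau$-regularisation you add is unnecessary in the paper's formulation (the multiplier is bounded off $\{a=0\}$, where $Uw$ already vanishes), but harmless.
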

\begin{proof}
	 Since the differentiation operators ${\rm i} D_k,\, k = 1,\dots,d,$ are self-adjoint and commuting, by the spectral theorem there exists a measure space $(M,\mu)$ with  a finite measure $\mu$, a unitary operator $U: L^2(\Omega) \to L^2(M)$, and a.e. finite real-valued functions $a_k,$ $k=1,\dots,d,$ on $M$ such that $U{\rm i} D_k \varphi = a_k U \varphi$ for all $\varphi\in L^2(\Omega)$. 
	
	It is easy to see that $\ker \nabla_\o$ consists of constants, so $w \perp \ker \nabla_\o$ since $w$ has mean zero. Note that $f\in \ker \nabla_\o$ if and only if $a_k Uf=0,\,k=1,\ldots,d$. Therefore, the linear set $U(\ker \nabla_\o) = \cap_k \ker a_k$ consists of all $L^2(M)$ functions that vanish on $M\setminus  \{a = 0\}$, where $a:=(a_1,\dots,a_d)^T$. Since $U w \perp  U(\ker \nabla_\o)$, we conclude that 
	\begin{equation}\label{122}
		Uw = 0 \mbox{ on }  \{a = 0\}.
	\end{equation}	
	Furthermore, since $w$ is solenoidal, one has 
	\begin{equation*}
		\int_{\Omega} w \cdot {\nabla_{\omega} \varphi} = 0 \quad \forall\varphi\in W^{1,2}(\Omega),
	\end{equation*}
	and hence
	\begin{equation*}
		\int_{M} U w \cdot \psi a=0 \quad\forall\psi\in L^2(M) \mbox{ such that } \psi a\in (L^2(M))^d.
	\end{equation*}
	Since $W^{1,2}(\Omega)$ is dense in $L^2(\Omega)$, the set $\{U\varphi:\, \varphi\in W^{1,2}(\Omega)\}=\{\psi\in L^2(M): \psi a\in (L^2(M))^d\}$ is dense in $L^2(M)$, and hence 
	\begin{equation}\label{118}
		U w\cdot a = 0 \mbox{ \ae}
	\end{equation} 
	
	Define an antisymmetric in $ik$ tensor field $	\tilde W_{ikl}$ with entries in $ L^2(M)$ as follows ($i,k,l=1,\dots,d$):
	\begin{equation*}
		\tilde W_{ikl} := \left\{\begin{array}{cc}
			a_l\,\dfrac{a_i U w_k - a_k U w_i}{|a|^2} & \quad \mbox{ if }\ |a|\neq0,
			\\
			\\
			0 & \quad \mbox{ otherwise. }
		\end{array} \right.
	\end{equation*}
	Note that $\tilde W_{iki} = U w_k$ by \eqref{122}, \eqref{118}. We set $W_{ikl} : = U^{-1} \tilde W_{ikl}$. It only remains to show that  $W_{ik\cdot}\in \mathcal V^2_{\rm pot}$, which follows from 	the fact that for all $\varphi\in L^2_{\rm sol}(\Omega)$ one has $a\cdot U\varphi=0$ a.e. (cf. \eqref{118} with $w$ replaced by $\varphi$): 
	\begin{equation*}
		\int_{\Omega}  W_{ikl} \varphi_l =  \int_M   a_l\,	 \frac{a_i U w_k - a_k U w_i}{|a|^2}\,U\varphi_l = \int_M  \frac{a_i U w_k - a_k U w_i}{|a|^2}\, a\cdot U\varphi =0\quad\forall i,k=1,\dots,d.
	\end{equation*}
\end{proof}

\begin{corollary}\label{c12.3}
	Let $g_j^\e$, $j=1,\dots,d,$ be as in  \eqref{128a}. There exist skew-symmetric tensor fields $G^\e_j\in W^{1,2}(\square^L;\R^{d\times d})$, $(G^\e_j)_{ik} = -(G^\e_j)_{ki}$, such that 
	\begin{equation}\label{119}
		g^\e_j = \nabla\cdot G_j^\e ,\quad j=1,\dots,d,
	\end{equation}
	i.e. $(g_j^\varepsilon)_k=\partial_i(G_j^\varepsilon)_{ik}$ for all $k=1,\dots, d,$ and  
	\begin{equation}\label{89}
		\|G^\e_j\|_{L^2(\square^L)} \to 0 \quad \mbox{as } \e\to 0.
	\end{equation}
\end{corollary}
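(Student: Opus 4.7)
The plan is to apply Lemma \ref{l12.2} to each $g_j$, construct antisymmetric potentials of the resulting potential random vector fields, rescale them to produce $G_j^\e$, and deduce the vanishing of $\|G_j^\e\|_{L^2(\square^L)}$ from the ergodic theorem together with a Rellich-type compactness argument.

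First, for each $j=1,\ldots,d$, since $g_j\in\vsol$ has zero mean, Lemma \ref{l12.2} provides a random tensor $W^j_{ikl}$ such that $W^j_{ik\cdot}\in\vpot$ for every fixed $i,k$, $W^j_{ikl}=-W^j_{kil}$ for every fixed $l$, and $(g_j)_k=W^j_{ik i}$ (summation in $i$). For each typical $\o$ the realisation $y\mapsto W^j_{ikl}(T_y\o)$ is, for fixed $i,k$, a potential field on $\R^d$, so there exist $\psi^j_{ik}(\cdot,\o)\in W^{1,2}_{\rm loc}(\R^d)$ with $\partial_l\psi^j_{ik}(y,\o)=W^j_{ikl}(T_y\o)$. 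Replacing $\psi^j_{ik}$ by $(\psi^j_{ik}-\psi^j_{ki})/2$ if necessary, we may arrange $\psi^j_{ik}=-\psi^j_{ki}$ while preserving the identity $\partial_l\psi^j_{ik}=W^j_{ikl}(T_\cdot\o)$ (this uses antisymmetry of $W^j_{ikl}$ in $i,k$).

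Second, for a fixed cube $\square^L$ define
\begin{equation*}
(G^\e_j)_{ik}(x):=\e\,\psi^j_{ik}(x/\e,\o)-\e\fint_{\square^L}\psi^j_{ik}(y/\e,\o)\,dy,\qquad x\in\square^L.
\end{equation*}
Then $(G^\e_j)_{ik}=-(G^\e_j)_{ki}$ by construction, $(G^\e_j)_{ik}$ has zero mean over $\square^L$, and a direct computation yields
\begin{equation*}
\partial_i(G^\e_j)_{ik}(x)=\partial_i\psi^j_{ik}(x/\e,\o)=W^j_{iki}(T_{x/\e}\o)=(g_j)_k(T_{x/\e}\o)=(g^\e_j)_k,
\end{equation*}
which is exactly \eqref{119}. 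Moreover $\nabla(G^\e_j)_{ik}(x)=W^j_{ik\cdot}(T_{x/\e}\o)$, so by the ergodic theorem $\nabla(G^\e_j)_{ik}$ is bounded in $L^2(\square^L)$ uniformly in $\e$, and the Poincar\'e inequality together with the zero-mean property gives a uniform $W^{1,2}(\square^L)$ bound.

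Third, to obtain \eqref{89}, observe that by the ergodic theorem $\nabla(G^\e_j)_{ik}\rightharpoonup \langle W^j_{ik\cdot}\rangle=0$ weakly in $L^2(\square^L)$. Combined with the uniform $W^{1,2}$ bound, Rellich--Kondrachov compactness yields strong $L^2(\square^L)$ convergence of $(G^\e_j)_{ik}$ (up to subsequences) to some limit, which must be constant since its gradient vanishes, and must be zero since $(G^\e_j)_{ik}$ has zero mean. A standard subsequence argument promotes this to convergence of the whole family, giving $\|G^\e_j\|_{L^2(\square^L)}\to 0$. The mildly subtle step is the antisymmetrisation of $\psi^j_{ik}$, which is crucial for producing a skew-symmetric $G^\e_j$ while retaining the correct divergence identity; everything else follows from Lemma \ref{l12.2}, existence of potentials on $\R^d$, and standard compactness.
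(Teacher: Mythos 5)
Your proposal is correct and follows essentially the same route as the paper: apply Lemma \ref{l12.2}, take zero-mean potentials of the rescaled realisations of $W^j_{ik\cdot}$ (which gives skew-symmetry automatically, since $\nabla\big((G^\e_j)_{ik}+(G^\e_j)_{ki}\big)=0$ and both summands have zero mean), and deduce \eqref{89} from the weak vanishing of the gradients via the ergodic theorem together with the zero-mean normalisation. The Rellich-compactness step you spell out is exactly the detail the paper leaves implicit.
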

\begin{proof}
	Applying Lemma \ref{l12.2} to $g_j$ yields $g_j = W_{iki}$, where we drop the index $j$ on the right-hand side for simplicity. Since   $W_{ik\cdot}\in \mathcal V^2_{\rm pot}$ for all $i,k,$ there exists a zero-mean function $(G^\e_j)_{ik}\in W^{1,2}(\square^L)$ such that $W_{ik\cdot}(T_{x/\e} \o) = \nabla (G^\e_j)_{ik}(x)$ (i.e. $W_{ikl}(T_{x/\e} \o) = \partial_l (G^\e_j)_{ik}$). Obviously, $G^\e_j$ is skew-symmetric and \eqref{119} holds. The convergence \eqref{89} follows from two observations: a) since $\langle W_{ikl}\rangle = 0$, the sequence $\partial_l (G^\e_j)_{ik}$ converges to zero weakly in $L^2(\square^L)$ by the ergodic theorem; b) functions  $(G^\e_j)_{ik}$ have zero mean over $\square^L$.
\end{proof}

\begin{lemma}\label{l13.3}Let $K$ be a bounded domain, and suppose that a sequence $(f^\varepsilon)\subset L^2(K)$ is bounded and converges weakly to zero as $\varepsilon\to0.$ Then there exists a sequence of zero-mean solutions $B^\e \in W^{1,2}(K;\R^d)$ of 
	\begin{equation}\label{128}
		\nabla\cdot B^\e = f^\e \mbox{ in } K
	\end{equation}
	such that 
	\begin{equation}\label{101}
		\|B^\e\|_{L^2(K)} \to 0.
	\end{equation}
	Moreover, here exists $C>0$ such that
	\begin{equation}\label{134}
		\|B^\e\|_{W^{1,2}(K)} \leq C\diam(K)	\|f^\e\|_{L^2(K)}.
	\end{equation}
\end{lemma}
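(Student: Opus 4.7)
The strategy is to reduce the construction of $B^\e$ to a Neumann problem for the Laplacian on a regular domain, and then exploit elliptic regularity together with Rellich compactness to pass to the zero limit. First, embed $K$ in a ball $Q=B_R(x_0)$ with $R$ comparable to $\diam(K)$, and let $\tilde f^\e\in L^2(Q)$ denote the extension of $f^\e$ by zero. Set $c^\e:=\fint_Q \tilde f^\e$; since $f^\e \rightharpoonup 0$ in $L^2(K)$, we have $c^\e\to 0$, and moreover $|c^\e|\,|Q|^{1/2}\leq \|f^\e\|_{L^2(K)}$.

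Next, solve the Neumann problem
\[
-\Delta u^\e = \tilde f^\e - c^\e \text{ in } Q,\qquad \partial_n u^\e = 0 \text{ on } \partial Q,\qquad \int_Q u^\e = 0,
\]
whose right-hand side has zero mean. Elliptic regularity on the ball, combined with a scaling argument, gives $\|u^\e\|_{W^{2,2}(Q)} \leq C\|\tilde f^\e - c^\e\|_{L^2(Q)}$ with the $W^{1,2}$ bound carrying an additional factor of $R$, i.e.\ $\|\nabla u^\e\|_{L^2(Q)}\leq CR\,\|f^\e\|_{L^2(K)}$ and $\|\nabla^2 u^\e\|_{L^2(Q)}\leq C\,\|f^\e\|_{L^2(K)}$. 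Define $V^\e(x):=\tfrac{c^\e}{d}(x-x_0)$, so that $\nabla\cdot V^\e\equiv c^\e$, and finally set
\[
B^\e := -\nabla u^\e + V^\e - \fint_K\bigl(-\nabla u^\e + V^\e\bigr) \quad\text{on } K.
\]
Then $\nabla\cdot B^\e = (\tilde f^\e - c^\e)+ c^\e = f^\e$ on $K$, and $B^\e$ has zero mean by construction. The $W^{1,2}(K)$ estimate \eqref{134} follows by combining the Neumann estimate above with $\|V^\e\|_{W^{1,2}(K)}\leq CR\,\|f^\e\|_{L^2(K)}$ (using the bound on $|c^\e|$), noting that subtracting a constant cannot increase the $L^2$ norm.

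For the convergence \eqref{101}, observe that $u^\e$ is bounded in $W^{2,2}(Q)$, so up to a subsequence $u^\e\rightharpoonup u$ in $W^{2,2}(Q)$ and $u^\e\to u$ strongly in $W^{1,2}(Q)$ by Rellich. Passing to the limit in the Neumann problem against any $\varphi\in C^\infty(\bar Q)$ with the help of $\tilde f^\e - c^\e\rightharpoonup 0$ in $L^2(Q)$ yields $-\Delta u = 0$, $\partial_n u = 0$, $\int_Q u = 0$, so $u\equiv 0$ and the convergence holds along the full sequence. Hence $\nabla u^\e\to 0$ strongly in $L^2(Q)$, and since $V^\e\to 0$ uniformly on $K$, we obtain $B^\e\to 0$ in $L^2(K)$.

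The main obstacle is really only bookkeeping: one must track the scaling with $R=\diam(K)$ in the Neumann regularity estimate carefully (the gradient estimate scales linearly in $R$, the Hessian estimate is scale invariant), and handle the fact that $K$ itself need not be smooth by working on the enclosing ball $Q$ throughout and using only $\|\cdot\|_{L^2(K)}\leq\|\cdot\|_{L^2(Q)}$ on the target side.
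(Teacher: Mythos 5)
Your proof is correct, but it takes a genuinely different route from the paper's. The paper extends $f^\e$ by zero to a cube, rescales to $\square^{2\pi}$, and writes down $B^\e$ \emph{explicitly} through the Fourier series of $f^\e$: the zero mode is handled by the linear field $f_0^\e x/d$ (the analogue of your $V^\e$), the remaining modes by $-\sum_{k\neq 0} ik\,f^\e_k|k|^{-2}e^{ik\cdot x}$, and \eqref{101} follows from the convergence of each Fourier coefficient to zero together with a uniform tail estimate. You instead solve a Neumann problem $-\Delta u^\e=\tilde f^\e-c^\e$ on an enclosing ball, set $B^\e=-\nabla u^\e+V^\e$ (minus its mean over $K$), and obtain \eqref{101} from $H^2$ regularity, Rellich compactness and uniqueness of the limit. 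Both constructions are sound; the decomposition into a mean part and a mean-free part, and the device of working on a regular enclosing domain, are structurally the same. What the paper's version buys is elementariness (no $H^2$ theory) and explicitness, which is actually exploited later: in the proof of Theorem \ref{th:9.14} the authors use that the construction of Lemma \ref{l13.3} applied to $\langle b_{\l^\e}\rangle-b^\e_{\l^\e}$ produces \emph{the same} $B^\e$ as in \eqref{148} with $\l$ replaced by $\l^\e$; your Neumann solution is likewise canonical (unique up to the normalisations you impose), so this identification survives, but it is worth being aware that the lemma is used as a concrete construction and not merely as an existence statement. One small remark on \eqref{134}: your argument, like the paper's own, really yields $\|B^\e\|_{W^{1,2}(K)}\leq C(1+\diam(K))\|f^\e\|_{L^2(K)}$, since the Hessian/gradient contribution is scale-invariant; this coincides with \eqref{134} as soon as $\diam(K)$ is bounded below, which is the case in every application in the paper, so it is not a defect of your proof relative to the intended statement.
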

\begin{proof}
	By extending $f^\e$ by zero into a cube containing $K$ and using  appropriate scaling and translation arguments, it is sufficient to prove the statement of the lemma for the case $K=\square^{2\pi}$.	To that end, consider the Fourier series for $f^\e$:
	\begin{equation*}
		f^\e = \sum_{k\in \mathbb Z^d} f^\e_k \exp(ik\cdot x).
	\end{equation*}
	Since  $ f^\e $ converges weakly to zero, its Fourier coefficients   also converge to zero:
	\begin{equation}\label{90}
		f^\e_k \to 0 \mbox{ as } \e\to 0, \quad k\in \mathbb Z^d.
	\end{equation} 
	Define $B^\e$ as follows:
	\begin{equation*}
		B^\e (x): = \frac{f^\e_0 \,x}{d} - \sum_{k\in \mathbb Z^d, k\neq 0}  \frac{ik \, f^\e_k}{|k|^2} \,\exp(ik\cdot x) \qquad x\in\square^{2\pi}.
	\end{equation*}
	By direct inspection, $B^\e$ has zero mean, satisfies the equation \eqref{128} and  bound \eqref{134}. Since $\sum_{k} |f^\e_k|^2$ is  bounded uniformly in $\e$,  for all $\delta>0$ there exists $k_0\in \N$ such that 
	\begin{equation*}
		\sum_{|k|\geq k_0} \frac{|f^\e_k|^2}{|k|^2} \leq \frac{1}{k_0^2} \sum_{|k|\geq k_0} {|f^\e_k|^2}< \frac{\delta^2}{4}
	\end{equation*} 
	for all $\e$. Furthermore,   by virtue of \eqref{90} one has, for sufficiently small $\varepsilon,$
	\begin{equation*}
		\frac{2^d \pi^{d+2}}{3d}|f_0^\e|^2 + \sum_{|k|< k_0,k\neq 0} \frac{|f^\e_k|^2}{|k|^2} < \frac{\delta^2}{4}.
	\end{equation*} 
	It follows that, for sufficiently small $\varepsilon,$
	\begin{equation*}
		\|B^\e\|_{L^2(K)} \leq \sqrt{2} \bigg( \frac{2^d \pi^{d+2}}{3d}|f_0^\e|^2 + \sum_{k\in \mathbb Z^d\setminus\{0\}} \frac{|f^\e_k|^2}{|k|^2} \bigg)^{1/2} < \delta,	
	\end{equation*}
	which implies  \eqref{101}. 
\end{proof}

\begin{remark}
		In the case when $f^\e$ is the $\e$-realisation of a zero-mean function from $L^2(\Omega)$, one can prove the first part of the statement of Lemma \ref{l13.3} (without the bound \eqref{134})  via an argument similar to that used in Lemma \ref{l12.2} and Corollary \ref{c12.3}. Namely, for a zero-mean $f\in L^2(\Omega)$, the potential fields $F_{i\cdot}:=(F_{i1}, \dots,F_{in})^T\in \mathcal V^2_{\rm pot}, i = 1,\dots,d$, defined via 
	\begin{equation*}
		U F_{ik} := \left\{\begin{array}{cc}
			\dfrac{a_i a_k U f }{|a|^2} \quad &   {\rm if}\  |a|\neq0,
			\\
			\\
			0 & \mbox{ otherwise. }
		\end{array} \right.
	\end{equation*}
	satisfies $f =  F_{ii}$. Then the zero-mean field $B^\e = (B^\e_1,\dots,B^\e_n)^T \in W^{1,2}(K;\R^d)$ is defined by  $\nabla B^\e_i = F^\e_{i\cdot}$. Note that thus defined $B^\e$ does not necessary coincide with the one in the above lemma. We prefer the method used in the lemma since, first, it applies to a sequence which is not necessary the $\e$-realisation of a zero-mean function from $L^2(\Omega)$, and, second, it provides the bound \eqref{134}. 
\end{remark}

\begin{lemma}\label{l9.9}
	The following bounds hold for the $\e$-realisations of $b$:
	\begin{equation*}
		\|b^\e\|_{L^2(\square^L)}\leq \frac{C L^{d/2}}{d_\l},\qquad
		\|\e \nabla b^\e\|_{L^2(\square^L)}\leq C L^{d/2} \left(\frac{ |\l| }{(d_\l)^2} + \frac{ 1}{d_\l}\right)^{1/2}.
	\end{equation*} 
\end{lemma}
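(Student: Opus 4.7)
The plan is to reduce the two global estimates on $\square^L$ to estimates on a single rescaled inclusion and then to sum over all inclusions meeting $\square^L$. The two ingredients are (i) the single-inclusion boundary value problem satisfied by the realisation $b^\e$ on each $\e\OO_\o^k$ together with the resolvent bound coming from Lemma \ref{l:res}/\eqref{20a}, and (ii) the fact that every inclusion intersecting $\square^L$ is contained in $\square^{L+\e}$ (each has diameter at most $\e/2$ by part 2 of Assumption \ref{kirill100}), so the total volume of such inclusions is at most $(L+\e)^d\leq C L^d$ for $\e\leq 1$ and $L\geq 1$.

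For the first ingredient I would invoke Proposition \ref{sasha100} and the translation identity \eqref{27}: on every rescaled inclusion $\e\OO_\o^k$ the function $b^\e(x)=b(T_{x/\e}\o)$ coincides, up to a translation of the argument, with the $\e$-rescaling of $\tilde b(T_{y/\e}\o,\l;\cdot)$ for any fixed $y\in \e\OO_\o^k$. Combining this with the Dirichlet problem \eqref{18a} satisfied by $\tilde b$ on a single inclusion and the scaling relation $\Delta_x=\e^{-2}\Delta_{x/\e}$ yields
\begin{equation*}
(-\e^2\Delta-\l)b^\e={\mathbf 1}_{\e\OO_\o^k},\qquad b^\e\big|_{\partial(\e\OO_\o^k)}=0.
\end{equation*}
Since $\Sp(-\Delta_{\OO_\o^k})\subset\Sp(-\Delta_\OO)$ by \eqref{20a}, the Dirichlet resolvent bound gives $\|b^\e\|_{L^2(\e\OO_\o^k)}\leq |\e\OO_\o^k|^{1/2}/d_\l$. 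Because $b^\e$ vanishes outside $S_0^\e=\cup_k\e\OO_\o^k$, summing the squared bounds over $k$ with $\e\OO_\o^k\cap\square^L\neq\emptyset$ and estimating the total volume by $(L+\e)^d\leq CL^d$ delivers the first claim.

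For the second estimate I would test the single-inclusion equation against $b^\e$ and integrate by parts (the boundary terms vanish) to obtain
\begin{equation*}
\e^2\|\nabla b^\e\|_{L^2(\e\OO_\o^k)}^2=\l\,\|b^\e\|_{L^2(\e\OO_\o^k)}^2+\int_{\e\OO_\o^k}b^\e.
\end{equation*}
Plugging in the single-inclusion $L^2$-bound from the previous step and applying Cauchy--Schwarz to the last term gives
\begin{equation*}
\e^2\|\nabla b^\e\|_{L^2(\e\OO_\o^k)}^2\leq |\e\OO_\o^k|\Bigl(\tfrac{|\l|}{d_\l^2}+\tfrac{1}{d_\l}\Bigr),
\end{equation*}
and summing over inclusions meeting $\square^L$ together with the same volume bound as above yields the second claim upon taking square roots.

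The only delicate point I anticipate is the identification of $b^\e$ on each $\e\OO_\o^k$ as the rescaled Dirichlet resolvent for the problem \eqref{18a}: this is where Proposition \ref{sasha100} and the covariance formula \eqref{27} have to be used with some care (by fixing a reference point $y$ in $\e\OO_\o^k$ and letting $x$ range over the same inclusion, and noting that by \eqref{20a} the spectrum of each $-\Delta_{\OO_\o^k}$ is contained in $\Sp(-\Delta_\OO)$ so the resolvent constant $1/d_\l$ is uniform in $k$). Once that reduction is in place, the remainder is a routine integration by parts and a counting of inclusions within $\square^{L+\e}$.
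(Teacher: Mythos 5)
Your proof is correct and follows essentially the same route as the paper's: the paper also reduces to the single-inclusion Dirichlet problem, applies the resolvent bound coming from \eqref{20a}, obtains the gradient estimate by testing the equation against the solution, and concludes by rescaling and summing over the cube. The only cosmetic difference is that you work on the rescaled inclusions $\e\OO_\o^k$ throughout while the paper works on the unscaled inclusions and rescales at the end.
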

\begin{proof}
	The rescaled function $b_{\l}^\e(\e y)$  is the solution to
	\begin{equation}\label{b_e}
		(-\Delta_{\mathcal{O}^k_{\omega}} - \l) b_{\l}^\e(\e\cdot) = {\mathbf 1}_{\OO^k_\o}(\cdot)
	\end{equation}
	on every $\OO^k_\o$. By virtue of   \eqref{20a}, we have  
	\begin{equation*}
		\|b_{\l}^\e(\e\cdot) \|_{L^2(\OO^k_\o)} \leq d_\l^{-1}{ |\OO^k_\o|^{1/2}}.
	\end{equation*}
	Multiplying \eqref{b_e} by $b_{\l}^\e(\e\cdot)$ and integrating by parts yields
	\begin{equation*}
		\|\nabla b_{\l}^\e(\e\cdot) \|^2_{L^2(\OO^k_\o)} = \int_{\OO^k_\o} (\l |b_{\l}^\e(\e\cdot)|^2 + b_{\l}^\e(\e\cdot))  \leq d_\l^{-2}{ |\l| |\OO^k_\o|} + d_\l^{-1}{ |\OO^k_\o|}.
	\end{equation*}
	The statement now follows by a rescaling argument and taking the norm over $\square^L$.
\end{proof}

 \section{Other auxiliary results}\label{ap:bconv}
 
 \begin{lemma}\label{l:E.1}
 Let $\AA$ be a   non-negative self-adjoint operator in a Hilbert space $H$. Let $a(\cdot,\cdot)$ be the sesquilinear form associated with the operator $\AA+1$. Assume that for some $u\in {\rm Dom}(a)$, $\|u\|=1$,  $\l\in \R$, and $0<\epsilon<1$ we have
 \[
  |a(u,v) - (\l +1) (u,v)| \leq \epsilon \sqrt{a(v,v)} \quad \forall v\in {\rm Dom}(a).
 \] 
 Then the exists $\hat u \in {\rm Dom}(\AA)$ such that 
 \begin{equation}\label{195}
 	\|\hat u -u\|\leq \epsilon,
 \qquad
 \frac{	\| (\AA - \l)\hat  u\|}{\|\hat u\|} \leq |\l +1| \frac{\epsilon}{1-\epsilon}. 
 \end{equation}
In particular, one has 
 \begin{equation*}
	\dist(\l,\Sp(\AA)) \leq |\l +1| (1-\epsilon)^{-1} \epsilon. 
\end{equation*}
 \end{lemma}
  \begin{proof}
 	Defining $\hat u: =  (\l+1)(\AA + 1)^{-1} u$, we have $a(u - \hat u,v) = a(u,v) - (\l +1) (u,v).$
Substituting $v= u - \hat u$, we obtain $a(u - \hat u, u - \hat u)  \leq \epsilon \sqrt{a(u - \hat u, u - \hat u)}$, and hence
 \begin{equation*}
	\|\hat u -u\| \leq \sqrt{a(u - \hat u, u - \hat u)} \leq \epsilon.
\end{equation*}
Finally, the triangle inequality yields $	\|\hat u \| \geq 1 - \epsilon,$
which, together with the obvious equality $(\AA - \l)\hat u = (\l +1)(u - \hat u)$
implies the second inequality in \eqref{195}.
 \end{proof}

 \begin{lemma}\label{solta10} 
 	Let $1 \leq p < \infty$ and $X \subset \R^d$ be such that continuous functions with compact support are dense in $L^p(X)$. Let $\{X^\e_k\}_{k \in \mathbf{N},\e>0}$ be a family of subsets of $X$ that are mutually disjoint for every $\varepsilon$ and are such that  $\lim_{\varepsilon \to 0} \sup_{k \in \mathbf{N}} \diam X^\e_k=0$. Consider the ``local averaging'' operators $P^\varepsilon$ mapping $f\in L^p(X)$ to the function 
 	$$ 
 	P^{\varepsilon}f(x):=\begin{cases}|X^\e_k|^{-1}\int_{X^\e_k} f &\ \  \textrm{ if } x \in X^\e_k, \\[0.35em] 
 		f &\ \  \textrm{ otherwise}.   \end{cases}  
 	$$
 	Then $P^{\varepsilon}f\to f$ strongly in $L^p(X)$ as $\varepsilon\to0$ for every $f \in L^p(X).$
 \end{lemma}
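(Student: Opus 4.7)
The plan is to prove the result by a standard three-step argument: establish contractivity of $P^\e$ on $L^p(X)$, verify convergence on the dense class of compactly supported continuous functions, and conclude by a density/approximation estimate.

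First I would show that $P^\e$ is a contraction on $L^p(X)$, i.e.\ $\|P^\e f\|_{L^p(X)}\leq \|f\|_{L^p(X)}$ for every $f\in L^p(X)$. On each set $X_k^\e$, Jensen's inequality applied to the convex function $t\mapsto |t|^p$ yields
\begin{equation*}
\left|\,|X_k^\e|^{-1}\int_{X_k^\e} f\right|^p \leq |X_k^\e|^{-1}\int_{X_k^\e} |f|^p,
\end{equation*}
and integrating over $X_k^\e$ gives $\int_{X_k^\e}|P^\e f|^p \leq \int_{X_k^\e}|f|^p$. Summing over $k$ and adding the trivial contribution from $X\setminus \bigcup_k X_k^\e$ (where $P^\e f=f$) produces the claimed bound.

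Next I would treat the case of a continuous function $g$ with compact support $K\subset X$. Since $g$ is uniformly continuous, for any $\delta>0$ there exists $\eta>0$ such that $|g(x)-g(y)|<\delta$ whenever $|x-y|<\eta$. By the assumption $\lim_{\e\to 0}\sup_k \diam X_k^\e=0$, for all sufficiently small $\e$ one has $\diam X_k^\e<\eta$ for every $k$, whence $|P^\e g(x)-g(x)|<\delta$ for every $x\in\bigcup_k X_k^\e$ (and $P^\e g(x)-g(x)=0$ elsewhere). The set where $P^\e g - g$ is nonzero is contained in the union of $K$ with the sets $X_k^\e$ that meet $K$; for $\e$ small this union lies in a fixed bounded neighbourhood $K'$ of $K$. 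Hence $\|P^\e g-g\|_{L^p(X)}^p\leq \delta^p|K'|$, so $P^\e g\to g$ in $L^p(X)$.

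Finally, for an arbitrary $f\in L^p(X)$ and $\eta>0$, by the density assumption choose a continuous $g$ with compact support such that $\|f-g\|_{L^p(X)}<\eta$. Using the contraction property, the triangle inequality gives
\begin{equation*}
\|P^\e f-f\|_{L^p(X)} \leq \|P^\e(f-g)\|_{L^p(X)} + \|P^\e g-g\|_{L^p(X)} + \|g-f\|_{L^p(X)} \leq 2\eta + \|P^\e g-g\|_{L^p(X)}.
\end{equation*}
By the previous step the last term tends to $0$ as $\e\to 0$, so $\limsup_{\e\to 0}\|P^\e f-f\|_{L^p(X)}\leq 2\eta$, and since $\eta$ is arbitrary the conclusion follows. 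The only mildly delicate point is keeping track of the support of $P^\e g-g$ in the second step; apart from that the argument is routine.
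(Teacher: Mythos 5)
Your proof is correct and follows essentially the same route as the paper's: approximate $f$ by a compactly supported continuous function, use uniform continuity together with the vanishing diameters to handle the continuous approximant, and control $P^\e(f-g)$ by an $L^p$ contraction bound (your Jensen step is the same estimate the paper obtains via H\"older on each $X_k^\e$). The extra care you take with the support of $P^\e g - g$ is a point the paper glosses over, but the arguments are the same in substance.
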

 \begin{proof}
 	For $f\in L^p(X)$ we  take a sequence $f_n \in C_c (\R^d)$ such that $f_n \to f$ strongly in $L^p(X).$ 
 	Because of the uniform continuity of $f_n$ we obviously have $P^{\varepsilon} f_n \to f_n $ strongly in $L^p(X)$ as $\varepsilon \to 0$ for a fixed $n$. Fix $\delta>0$ and $n \in \mathbf{N}$ such that 
 	$
 	\|f-f_n\|_{L^p(X)} \leq \delta/3. $
 	Take also $\varepsilon_0>0$ such that
 	$ \|f_n-P^{\varepsilon} f_n\|_{L^p(X)} <\delta/3, \quad 0<\varepsilon< \varepsilon_0. $
 	Using H\"older's inequality on each $X^\e_k$, we obtain
 	\begin{equation*}
\begin{aligned}
	 		\|P^{\varepsilon} f_n-P^{\varepsilon}f \|^p_{L^p(X^\e_k)}& =\frac{1}{|X^\e_k|^{p-1}}\bigg|\int_{X^\e_k}f_n- \int_{X^\e_k} f \bigg|^p
 		\\
 	&	\leq  \frac{1}{|X^\e_k|^{p-1}}|X^\e_k|^{p-1} \int_{X^\e_k}|f_n-f|^p=	\|f_n-f \|^p_{L^p(X^\e_k)},
\end{aligned}
 	\end{equation*} 
 	and hence
 	$
 	\|P^{\varepsilon} f_n-P^{\varepsilon}f \|_{L^p(X)} \leq \delta/3.
 	$
Now, the triangle inequality yields
 	$$\| f-P^{\varepsilon} f\|_{L^p(X)} \leq \| f-f_n \|_{L^p(X)}+\| P^{\varepsilon} f_n-P^{\varepsilon} f \|_{L^p(X)}+\| f_n-P^{\varepsilon} f_n\|_{L^p(X)} \leq \delta, \quad \forall \varepsilon< \varepsilon_0.       
 	$$  
 	The claim follows by the fact that $\delta$ is arbitrary.
 \end{proof}

\begin{lemma}\label{l:bconv}
	Let $U$ be a bounded open set in $\R^d$ and let $U_n, \, n\in \N$ be a sequence of open sets converging to $U$ as $n\to \infty$ in the Hausdorff metric, such that $U$ and $U_n, \, n\in \N$, are $(\rho, \mathcal N, \gamma)$ minimally smooth, and $U,U_n,\R^d\setminus \overline{U}, \R^d\setminus \overline{U_n}, n\in \N,$ are connected.  Assume that $\l$ is uniformly away from the spectra of  the Dirichlet Laplacian operators on each of the sets $-\Delta_U$, $U_n, \, n\in \N$. Let $f\in L^2(V)$, where the open set $V$ is large enough to contain $U$ and $U_n, \, n\in \N$. Then the solutions $\phi_n\in W_0^{1,2}(U_n)$ to
	\begin{equation*}
		- \Delta \phi_n - \l\phi_n = f \mbox{ in } U_n
	\end{equation*}
converges to the solution $\phi\in W_0^{1,2}(U)$ to
	\begin{equation}\label{225}
	- \Delta \phi - \l\phi = f \mbox{ in } U
\end{equation}
weakly in $W^{1,2}(V)$ and strongly in $L^2(V)$, after extending $\phi_n$, $\phi$ by zero outside $U_n$, $U$, respectively.
\end{lemma}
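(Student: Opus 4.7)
\smallskip

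\noindent\textbf{Proof proposal for Lemma \ref{l:bconv}.}

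The plan is a standard compactness--uniqueness argument, with the main delicate step being the verification that the weak limit lies in $W^{1,2}_0(U)$ rather than merely in $W^{1,2}(V)$. First I would derive uniform bounds on $\phi_n$. Since $\lambda$ is at distance at least some $d_0 > 0$ from each spectrum, one has $\|\phi_n\|_{L^2(U_n)} \le d_0^{-1}\|f\|_{L^2(V)}$, and then testing the resolvent equation with $\phi_n$ itself and applying Cauchy--Schwarz yields a uniform bound on $\|\nabla \phi_n\|_{L^2(U_n)}$. Extending by zero, the sequence $\phi_n$ is bounded in $W^{1,2}(V)$. By weak compactness and Rellich's theorem, a subsequence converges to some $\psi$ weakly in $W^{1,2}(V)$ and strongly in $L^2(V)$.

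Next I would show that $\psi$ vanishes a.e.\ outside $U$. Using Hausdorff convergence together with the $(\rho,\mathcal N,\gamma)$-minimal smoothness of $U$ and $U_n$ (and the connectedness of the complements), the argument sketched in Lemma \ref{l:5.21} gives $d_{\rm H}(\partial U_n,\partial U) \to 0$, so for any point $x_0 \in \R^d \setminus \overline U$ and sufficiently large $n$ one has $B_r(x_0) \subset \R^d \setminus \overline{U_n}$ for some $r > 0$, whence $\phi_n|_{B_r(x_0)} \equiv 0$ and therefore $\psi|_{B_r(x_0)} = 0$. Covering $\R^d \setminus \overline U$ by such balls (and noting $|\partial U| = 0$ since $U$ is Lipschitz) gives $\psi = 0$ a.e.\ on $V \setminus U$. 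Combined with $\psi \in W^{1,2}(V)$ and the Lipschitz property of $U$, this yields $\psi|_U \in W^{1,2}_0(U)$.

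Then I would pass to the limit in the weak formulation. For any $\varphi \in C^\infty_0(U)$, Hausdorff convergence ensures $\supp\varphi \subset U_n$ for all sufficiently large $n$, so $\varphi$ is an admissible test function for the problem on $U_n$, and
\[
\int_V \nabla\phi_n \cdot \nabla\varphi - \lambda\int_V \phi_n\varphi = \int_V f\varphi.
\]
Weak convergence of $\nabla\phi_n$ and strong convergence of $\phi_n$ in $L^2(V)$ permit passage to the limit, giving the weak form of \eqref{225} for $\psi|_U$. Since $\lambda \notin \Sp(-\Delta_U)$, the solution $\phi$ of \eqref{225} is unique, so $\psi = \phi$. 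As every subsequence has a further subsequence converging to the same limit, the whole sequence $\phi_n$ converges as claimed.

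The main obstacle is the second step: ensuring the weak limit really satisfies the Dirichlet condition on $\partial U$. Hausdorff convergence of open sets alone is not sufficient in general (it can fail to exclude ``phantom'' mass concentrating on $\partial U$), but the uniform minimal smoothness both of $U_n$ and of the complements $\R^d \setminus \overline{U_n}$ rules this out by controlling the symmetric differences in a neighbourhood of $\partial U$ of vanishing thickness, as already exploited in Lemma \ref{l:5.21}. Once this is in place, the rest is routine.
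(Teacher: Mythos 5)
Your proposal is correct and follows essentially the same compactness--uniqueness scheme as the paper's proof: uniform $W^{1,2}$ bounds, extraction of a weak limit, passage to the limit against test functions compactly supported in $U$ (which lie in $U_n$ for large $n$ thanks to the boundary Hausdorff convergence supplied by minimal smoothness), and uniqueness of the limit problem to upgrade subsequential to full convergence. In fact you give more detail than the paper on the one step it dismisses as ``not difficult to see'' --- namely that the weak limit belongs to $W^{1,2}_0(U)$, which you justify by showing the limit vanishes a.e.\ off $U$ and invoking the Lipschitz (minimally smooth) character of $\partial U$.
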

\begin{proof}
	Clearly, the solutions $\phi_n$ are uniformly bounded in $W^{1,2}(V)$ and hence they converge (up to a subsequence) weakly in $W^{1,2}(V)$ and strongly in $L^2(V)$ to some $\hat \phi \in W^{1,2}(V)$. It is not difficult to see that $\hat \phi \in W^{1,2}_0(U)$.	Let $U'$ be an arbitrary open set such that $\overline{U'}\subset U$. Then, as a consequence of the minimal smoothness assumption, for sufficiently large $n$ we have $U'\subset U_n$ (see the proof of Lemma \ref{l:5.21})). Let $\psi$ be an arbitrary test function from $W^{1,2}_0(U')$ extended by zero outside $U'$. We have 
		\begin{equation*}
		\int\limits_{U} (\nabla\phi_n\cdot \nabla \psi - \l\phi_n\psi) = \int\limits_{U} f \psi
	\end{equation*}
for sufficiently large $n$. Passing to the limit as $n\to\infty,$ we obtain
\begin{equation*}
	\int\limits_{U} (\nabla\hat\phi\cdot \nabla \psi - \l\hat\phi\psi) = \int\limits_{U} f \psi.
\end{equation*}
It follows from the density of smooth compactly supported in $U$ functions in  $W^{1,2}_0(U)$  that $\hat\phi$ is the solution of \eqref{225}, i.e. $\hat\phi = \phi$.

Since we can apply the above argument to an arbitrary subsequence,  the convergence holds for the entire sequence $\phi_n$.
\end{proof}


\end{document}